\theoremstyle{definition}
\newtheorem{rem}[subsubsection]{Remark}%[section]
\theoremstyle{plain}
\newtheorem{prop}[subsubsection]{Proposition}
\newtheorem{thm}[subsubsection]{Theorem}
\newtheorem{lem}[subsubsection]{Lemma}
\newtheorem{cor}[subsubsection]{Corollary}
\newcommand{\Hom}{\mathrm{Hom}}
\newcommand{\IHom}{\mrm{IHom}}
\newcommand{\mbf}{\mathbf}
\newcommand{\mbb}{\mathbb}
\newcommand{\mrm}{\mathrm}
\newcommand{\C}{\mbb C}
\newcommand{\D}{\mathcal D}
\newcommand{\e}{\varepsilon}
\newcommand{\E}{\mathbf E}
\newcommand{\F}{\mathcal F}
\newcommand{\tF}{\widetilde{\mbf  F}}
\newcommand{\tE}{\widetilde{\mbf E}}
\newcommand{\G}{\mbf G}
\newcommand{\M}{\mathcal M}
\newcommand{\N}{\mathcal N}
\newcommand{\Q}{\mathcal Q}
\newcommand{\U}{\mbf U}
\newcommand{\V}{\mathcal  V}
\newcommand{\X}{\mbf X}
\newcommand{\wt}{\mathrm{wt}}
\newcommand{\te}{\tilde e}
\newcommand{\tf}{\tilde f}
\newcommand{\tr}{\mbox{tr}}
\newcommand{\im}{\mbox{im}}
\newcommand{\Res}{\mrm{Res}}
\newcommand{\Ind}{\mrm{Ind}}
\newcommand{\R}{\mathcal R}
\newcommand{\Irr}{\mathrm{Irr}\, }
\newcommand{\SSS}{\mrm{SS}}
\newcommand{\Supp}{\mrm{Supp}}
\newcommand{\bin}[2]{{\renewcommand\arraystretch{0.5}
\left[\!\begin{array}{c}
{\scriptstyle #1}\\{\scriptstyle #2}\end{array}\!\right]}}
\title[Tensor product varieties, perverse sheaves, and stability conditions]{Tensor Product Varieties,  Perverse Sheaves, and Stability Conditions}
\author{ Yiqiang Li}
\address{Department of Mathematics\\  University at Buffalo\\ State  University of New York\\ 244 Mathematics Building\\ Buffalo, New York 14260}
\email{ yiqiang@buffalo.edu}
\date{\today }
\keywords{Tensor product variety, perverse sheaf, singular support, canonical basis, crystal basis,  projective module in category $\mathcal O$, stability condition} 
\subjclass{17B37; 14F43}
\begin{document}

\begin{abstract}
We show that the space spanned   by the class of simple perverse sheaves in ~\cite{Zheng08} without localization
is isomorphic to the tensor product of a Verma module with a tensor product of irreducible integrable modules of the  quantum enveloping algebra associated with  a graph. 
Under the isomorphism, the simple perverse sheaves get identified with the canonical basis elements of the tensor product module.
We also show that  the two stability conditions coincide in the localization process in ~\cite{Zheng08}, by using  supports and singular supports of complexes of sheaves, respectively. 
\end{abstract}

\maketitle

\section{Introduction}

Let $\U$ be the quantum enveloping algebra associated with a graph $\Gamma$ without loops. 
Let $M_{\lambda}$, $T_{\lambda}$ and $V_{\lambda}$ be the Verma module of $\U$ of dominant highest weight $\lambda$, its maximal submodule and its simple quotient, respectively.

In his  paper ~\cite{Zheng08}, Zheng gives a geometric realization of the tensor product $ V_{\lambda^{\bullet}}$ of irreducible integrable representations of $\U$.  
In the paper, Zheng defines  a class $\mathcal P$ of simple perverse sheaves on the frame representation varieties of an oriented graph whose underlying graph 
is  $ \Gamma$.
He then shows that, after localization, the space spanned by the class  $\mathcal P$  is isomorphic to the tensor product 
$ V_{\lambda^{\bullet}}$ and $\mathcal P$ is corresponding to the canonical basis of $V_{\lambda^{\bullet}}$.

The localization process eliminates extra elements in $\mathcal P$ in order to obtain $V_{\lambda^{\bullet}}$. 
This is similar to the process of obtaining Nakajima's quiver varieties from Lusztig's quiver varieties in \cite{Nakajima94}.
The stability condition used in the localization process in ~\cite{Zheng08}   utilizes the notion of the support of a  complex of sheaves.
On the other hand,  the notion of the singular support of  a complex of sheaves can be used  to define  the stability condition in the localization process, which is more global because it does not involve the orientation  of the given graph.  
It is not clear if  the two stability conditions for the  localization process  by using the notions of   support and singular support, respectively, are equivalent to each other.  
This question leads us to study the class $\mathcal P$ without taking the  localization process. 

It turns out that the space spanned by the class $\mathcal P$ without localization is isomorphic to the $\U$-module 
$M_0 \otimes V_{\lambda^{\bullet}}$  and, under this isomorphism, the set  $\mathcal P$ becomes 
the canonical basis (or global crystal base) of  $M_0\otimes V_{\lambda^{\bullet}}$ defined in ~\cite{Lusztig93}.
Along the way to prove the above result, we show that the singular supports of the simple perverse sheaves in $\mathcal P$ are contained in certain varieties $\mbf \Pi$ and  the irreducible components  of which have a crystal structure isomorphic to the tensor product crystal of the canonical basis of  $M_0$ with the canonical basis of 
$ V_{\lambda^{\bullet}}$. 
The quotients of the subvarieties of stable points  in $\mbf \Pi$ by certain algebraic groups are 
Malkin and Nakajima's tensor product varieties (\cite{Nakajima01, M03}).
As a  consequence,  the two stability conditions of localization are the same because they both characterize the submodule 
$T_0 \otimes V_{\lambda^{\bullet}}$. 
Our arguments also produce  a new proof of the fact that the space spanned by the class $\mathcal P$, after localization, is isomorphic to the module  $V_{\lambda^{\bullet}}$ and the elements in $\mathcal P$ survived after localization are the canonical basis elements of $V_{\lambda^{\bullet}}$.

A slight modification of the definition of the class $\mathcal P$  gives rise to a  geometric realization of the tensor product of a Verma module of dominant integrable  highest weight with 
$ V_{\lambda^{\bullet}}$ and its canonical basis. 

Note that the modules $M_0\otimes V_{\lambda^{\bullet}}$ and their variants  are projective objects in the Bernstein-Gelfand-Gelfand category $\mathcal O$  of $\U$.  
It is interesting to find a natural way to single out all indecomposable projective objects in category $\mathcal O$ from these modules.

\tableofcontents

\section{Preliminaries}

\subsection{Quantum groups}
\label{definition}

Let $\Gamma$ be a graph without loops. 
This is meant to say that we are given two finite sets $I$ and $H$, together with three maps $\; \bar \empty: H\to H$, $', '': H\to I$, such that
\[
\bar {\bar h} =h, \quad (\bar h)' =h'',\quad \mbox{and}\quad h'\neq h'',\quad \forall h\in H.
\]

\label{root}
Let $\mbf Y=\mbb Z[I]$, $\mbf X=\Hom(\mbb Z[I], \mbb Z)$, and 
$(,): \mbf Y\times \mbf X\to \mbb Z$ be the canonical pairing. 
Let $\alpha_i\in \mbf X$, for $i\in I$,  be the elements defined by 
$(i, \alpha_j)=2\delta_{ij} -\#\{ h\in H| h'=i, h''=j\}$.  The datum
$(\mbf X, \mbf Y, (,), \{i\in \mbf Y|i\in I\}, \{ \alpha_i|i\in I\})$ is the so-called simply connected root datum.

The map $i\mapsto \alpha_i$ defines an inclusion $\mbb Z[I]\hookrightarrow \mbf X$.
If $\nu\in \mbb Z[I]$, we write $\nu\in \mbf X$ to represent its image.
Let 
\[
\X^+=\{\lambda\in \X| (i, \lambda) \geq 0\quad \forall i\in I\},
\]
be the set of dominant weights.

Let $\U$ be the quantum (enveloping) algebra  associated with the above simply connected root datum. 
This is an associative algebra over $\mathbb Q(v)$, the field of rational functions over $\mbb Q$, with generators
$E_i$, $F_i$, $K_i$ and $K_i^{-1}$ for any $i \in I$ and subject to the following   relations:

\begin{equation*}
\begin{split}
(\U a).  \quad &K_0=1, \quad K_iK_{-i}=1,\quad K_iK_j=K_jK_i,\\
(\U b). \quad &K_i E_j=v^{(i,j)} E_j K_i,\quad K_i F_j=v^{-(i,j)} F_j K_i,\\
(\U c). \quad &E_iF_j-F_jE_i=\delta_{ij}\frac{ K_i-   K_{-i}}{v-v^{-1}},\\
(\U d). \quad  &\sum_{p=0}^{1-(i,j)} (-1)^p 
 \bin{1-(i,j)}{p}
E_i^p E_j E_i^{1-(i,j)-p}=0,
\quad \forall i\neq j\in I,\\
(\U e). \quad &\sum_{p=0}^{1-(i,j)} (-1)^p 
 \bin{1-(i,j)}{p}
F_i^p F_j F_i^{1-(i,j)-p}=0,
\quad \forall i\neq  j\in I,
\end{split}
\end{equation*}
where we use the following notations
\begin{equation*}
\begin{split} 
[s]=\frac{v^s-v^{-s}}{v-v^{-1}},
\quad \forall s\in \mbb Z; \quad
[s]^!=[s][s-1]\cdots [1], \quad
\bin{s}{t}=\frac{[s]^!}{[t]^![s-t]^!},
\quad \forall  s\geq t\in \mbb N.
\end{split}
\end{equation*}

The algebra $\U$ is equipped with a Hopf algebra structure whose
 comultiplication  is defined by
\begin{equation*}
\begin{split}
 \Delta(E_i)=E_i\otimes 1 +  K_i \otimes E_i, \quad 
 \Delta(F_i)=F_i \otimes K_{-i} + 1 \otimes F_i, \quad 
 \Delta(K_i)=K_i\otimes K_i,\quad \forall i\in I.
\end{split}
\end{equation*}

Let $\U^-$ be the negative part of $\U$, i.e., the subalgebra of $\U$ generated by $F_i$ for any $i\in I$.
Let $\U^+$ be the positive part of $\U$ generated by the elements $E_i$ for any $i\in I$.
Let $\U^0$ be the zero part of $\U$ generated by the elements $K_{\pm i}$ for any $i\in I$. Then we have
\[
\U =\U^+\otimes \U^0 \otimes \U^-,
\]
as vector spaces. We also set $\U^{\geq 0}=\U^0\otimes \U^+ $,  the Borel  subalgebra of $\U$.
 
Note that $\U^-$ has an $\mbb N[I]$-grading by defining $\deg (F_i)=i$ for any $i\in I$. Let $\U^-_{\nu}$ be the subspace in $\U^-$ consisting of the homogeneous elements of degree $\nu$. We have 
\[
\U^-=\oplus_{\nu\in \mbb N[I]} \U^-_{\nu}.
\]

On $\U^-$, we can define a  unique Verma module structure of $\U$ with highest weight $\lambda\in \X$ such that
\[
K_i (x)= v^{(i, \lambda-\nu)} x, \quad
F_i (x) =F_i x,\quad 
\mbox{and} \quad
E_i (1) =0, \quad \forall x\in \U^-_{\nu}, i\in I.
\]  
We write $M_{\lambda}$ for $\U^-$ together with  this $\U$-module structure.
Let $T_{\lambda}$ and $V_{\lambda}$ be the maximal submodule and simple quotient of $M_{\lambda}$.
If $\lambda\in \X^+$, then
 $T_{\lambda}$ is the left ideal of $\U^-$ generated by the elements $F_i^{(i, \lambda) +1}$ for any $i\in I$.
We have a short exact sequence of $\U$-modules:
\begin{equation}
0\to T_{\lambda} \to M_{\lambda} \to V_{\lambda} \to 0.
\end{equation}

The modules $T_{\lambda}$ and $V_{\lambda}$ are compatible with the grading on $M_{\lambda}$. In other words,
\[
T_{\lambda}=\oplus_{\nu\in \mbb N[I]} T_{\lambda, \nu}, \quad T_{\lambda,\nu} =T_{\lambda}\cap \U^-_{\nu}
\quad \mbox{and} \quad 
V_{\lambda}=\oplus_{\nu\in \mbb N[I]} V_{\lambda,\nu}, \quad V_{\lambda, \nu} =\U^-_{\nu}/T_{\lambda, \nu}.
\]

For any two $\U$-modules $M_1$ and $M_2$, we can define  on the tensor product  $M_1\otimes M_2$ a $\U$-module structure via the comultiplication of $\U$.
 Let $\mbb Q(v)_{\lambda}$ be the one dimensional $\U^{\geq 0}$ module  such that 
$E_i (f) =0$ and $K_{\pm i} f= v^{(\pm i, \lambda)} f$ for any $f\in \mbb Q(v)$ and  $i\in I$.
The Verma module $M_{\lambda}$ is the induced module $\U\otimes_{\U^{\geq 0}} \mbb Q(v)_{\lambda}$.
We have the following Frobenius  property:
\begin{equation}
\label{Frobenius}
\Hom_{\U}(M\otimes M_{\lambda}, N) \simeq \Hom_{\U^{\geq 0}} ( M\otimes \mbb Q(v)_{\lambda}, N).
\end{equation}

Let $\mbb A$ be the subring of Laurent polynomials in $\mbb Q(v)$. 
Let $F_i^{(n)}=F_i^n/[n]^!$.
The $\mbb A$-form  $_{\mbb A} \!\U^-$ of $\U^-$ is the subalgebra of $\U^-$ over $\mbb A$ generated by the elements $F_i^{(n)}$ for $i\in I$ and $n\in \mbb N$.

The $\mbb A$-form $_{\mbb A}\!\U$ of $\U$ is defined to be the subalgebra of $\U$ over $\mbb A$ generated by $F_{i}^{(n)}$, $E_i^{(n)}$ and $K_{\pm i}$ for any $i\in I$ and $n\in \mbb N$. Here $E_i^{(n)}$ is defined in a similar way as $F_i^{(n)}$.

The $\mbb A$-form $_{\mbb A}\!\U^-$ is compatible with the grading on $\U^-$, i.e., $_{\mbb A}\!\U^-=\oplus_{\nu\in \mbb N[I] }  \, _{\mbb A}\!\U^-_{\nu}$,
$_{\mbb A}\! \U^-_{\nu} =\, _{\mbb A}\! \U^- \cap \U^-_{\nu}$.

The $\mbb A$-form $_{\mbb A}\! \U^-$ is also invariant under the action of $_{\mbb A}\! \U$ on $M_{\lambda}$, denoted by $_{\mbb A}\! M_{\lambda}$.
Similarly, we can define the $\mbb A$-forms, $_{\mbb A}\! T_{\lambda}$ and $_{\mbb A} V_{\lambda}$,  of the modules $T_{\lambda}$ and $V_{\lambda}$, respectively.
Thus we have the following short exact sequence
\begin{equation}
\label{SES-A-form}
0\to\, _{\mbb A}\! T_{\lambda} \to \, _{\mbb A} \!M_{\lambda} \to  \, _{\mbb A}\!V_{\lambda} \to 0.
\end{equation}

\subsection{Crystals} 
\label{crystal}
We recall from ~\cite{KS97} the definition of a crystal. By definition, a crystal is a set $B$ equipped with five maps:
\begin{equation}
\begin{split}
&\wt : B \to \mbf X,\\
&  \e_i: B \to \mbb Z\sqcup \{ -\infty\}, \quad \varphi_i: B\to \mbb Z \sqcup \{-\infty\}, \\
&\te_i:B \to B\sqcup \{ 0\} \quad \mbox{and}\quad  \tf_i: B\to B\sqcup \{ 0\},
\end{split}
\end{equation}
and subject to the following axioms.
\begin{align*}
\tag{C1}& \varphi_i(b) =\e_i(b) + (i, \wt(b)). \\
\tag{C2} & \wt(\te_ib)= \wt(b) +\alpha_i, \; \e_i(\te_ib)=\e_i(b) -1 \;  \mbox{and}\;  \varphi_i(\te_i b ) = \varphi_i(b) +1, \quad  \mbox{if} \;  b, \te_ib \in B.\\
\tag{C2'} &
\wt(\tf_i b) =\wt(b) -\alpha_i, \; \e_i(\tf_ib) =\e_i(b) +1\;\mbox{and}\; \varphi_i(\tf_ib) = \varphi_i(b) -1, \quad \mbox{if} \; b, \tf_i b \in B.\\
\tag{C3} &
b'=\te_i b \;\mbox{if and only if}\; b=\tf_i b' , \quad  \mbox{for} \; b, b'\in B \; \mbox{and}\; i\in I.\\
\tag{C4} & 
\te_ib=\tf_i b=0,\quad  \mbox{if}\; \varphi_i (b) =-\infty.
\end{align*}
By convention, we set $\wt_i(b) =(i, \wt(b))$.

The tensor product $B_1\otimes B_2$ of the two crystals $B_1$ and $B_2$ is defined as follows. As a set, 
$B_1\otimes B_2 =\{ b_1\otimes b_2| b_1\in B_1, b_2\in B_2\}$. The five maps on $B_1\otimes B_2$ are defined by
\begin{align*}
&\wt(b_1\otimes b_2) =\wt(b_1) +\wt(b_2),\\
& \e_i(b_1\otimes b_2)=  \mbox{max} ( \e_i(b_1),  \e_i(b_2) - \wt_i(b_1) ),  \quad
 \varphi_i (b_1\otimes b_2) =\mbox{max}( \varphi_i(b_1) + \wt_i(b_2), \varphi_i(b_2)),\\
&\te_i(b_1\otimes b_2) =
\begin{cases}
\te_ib_1\otimes b_2, & \quad \varphi_i(b_1) \geq \e_i(b_2),\\
b_1\otimes \te_i b_2, & \quad  \mbox{otherwise},
\end{cases}
\;
\tf_i(b_1\otimes b_2) =
\begin{cases}
\tf_i b_1\otimes b_2, & \quad \varphi_i(b_1) > \e_i(b_2),\\
b_1\otimes \tf_ib_2, & \quad  \mbox{otherwise}.
\end{cases}
\end{align*}

A crystal morphism $\psi$ from $B_1$ to $B_2$ is a map $\psi: B_1\sqcup \{ 0\} \to B_2\sqcup \{ 0\}$  satisfying the following conditions:
\begin{align}
& \psi(0)=0,\\
& \wt(\psi( b)) = \wt(b), \; \e_i(\psi (b)) = \e_i(b) \; \mbox{and}\; \varphi_i(\psi (b)) =\varphi_i(b),\\
& \mbox{if} \; \tf_i (b) =b' \; \mbox{for} \; b, b'\in B_1\; \mbox{and}\; i\in I \; \mbox{and}\; \psi(b), \psi(b' )\in B_2, \; \mbox{then we have} \; \tf_i(\psi b) =\psi(b').
\end{align}

A $strict$ crystal homomorphism is a crystal morphism commuting with the maps $\te_i$ and $\tf_i$.

A strict crystal $isomorphism$ is a bijective strict crystal homomorphism.

We denote by $B(\lambda)$  the crystal associated with the crystal base of 
the irreducible integrable representation $V_{\lambda}$  of highest weight $\lambda$.
We also denote by $B(\lambda, \infty)$ the crystal associated with the crystal base of the Verma module $M_{\lambda}$ 
of highest weight $\lambda$.

\section{A class of $\U$-modules}

\subsection{Verma module structures on $\U^-$}
\label{Verma}

In this section, we write $\theta_i$ for the element $F_i$ in $\U^-$ to avoid confusion with the action $F_i$.
Recall that the algebra $\U^-$ has a Verma module structure  of highest weight $\lambda \in \X$ defined by
\begin{equation}
\label{defining}
K_i(x)=v^{( i, \lambda -|x| )} x,  \quad E_i(1)=0 \quad \mrm{and} \quad F_i (x)=\theta_i x,
\end{equation}
for any $x\in \U^-$ homogeneous, and $|x|$ denotes its degree. Note that the commutator relation can be rewritten as
\begin{equation}
\label{recursive}
E_i (\theta_jx)=\theta_j E_i(x) + \delta_{ij} [  ( i, \lambda -|x|) ] x, \quad \forall x\in \U^-\; \mrm{homogeneous}, i, j\in I.
\end{equation}

Recall from ~\cite{Lusztig93}, there is a unique $\mbb Q(v)$-linear map
\[
_i \bar r: \U^- \to \U^-
\]
defined by 
\[
_i \bar r(1)=0, \quad _i \bar r(\theta_j) =\delta_{ij}, \quad _i \bar r(xy)= \, _i \bar r(x) y + v^{(i, -|x|)} x \; _i \bar r(y), \quad \forall x, y\; \mbox{homogeneous}.
\]
Similarly, there is a unique $\mbb Q(v)$-linear map
\[
\bar r_i: \U^- \to \U^-
\]
defined by
\[
\bar  r_i(1)=0, \quad \bar r_i(\theta_j) =\delta_{ij}, \quad  \bar r_i(xy)= v^{(i, - |y|) } \bar r_i(x) y +  x \bar r_i(y), \quad \forall x, y\; \mbox{homogeneous}.
\]
We have

\begin{lem} 
For any  homogeneous element
$x\in \U^-$, 
\begin{equation}
\label{E}
E_i(x)=( v^{(i, \lambda)} \bar r_i(x) - v^{- ( i, \lambda - |x|+i)} \, _i\bar r(x)) / (v -v^{-1}).
\end{equation}
\end{lem}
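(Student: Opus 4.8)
The plan is to prove formula~\eqref{E} by induction on the degree $\nu = |x|$ of the homogeneous element $x \in \U^-$, using the recursive characterizations of $E_i$, $_i\bar r$, and $\bar r_i$ given in~\eqref{recursive} and in the definitions preceding the lemma. The base case $x = 1$ is immediate: the right-hand side vanishes because $\bar r_i(1) = {}_i\bar r(1) = 0$, and $E_i(1) = 0$ by the definition~\eqref{defining} of the Verma module structure. For the inductive step, since $\U^-$ is generated by the $\theta_j$, it suffices to verify that if~\eqref{E} holds for a homogeneous $x$, then it holds for $\theta_j x$ for every $j \in I$.

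First I would write down what each of the three operators does to $\theta_j x$. By~\eqref{recursive}, $E_i(\theta_j x) = \theta_j E_i(x) + \delta_{ij}[(i,\lambda - |x|)]\, x$. For the skew-derivations, the Leibniz-type rules give $\bar r_i(\theta_j x) = v^{(i,-|x|)}\bar r_i(\theta_j) x + \theta_j \bar r_i(x) = \delta_{ij} v^{(i,-|x|)} x + \theta_j \bar r_i(x)$, and similarly ${}_i\bar r(\theta_j x) = {}_i\bar r(\theta_j) x + v^{(i,-j)} \theta_j\, {}_i\bar r(x) = \delta_{ij} x + v^{(i,-j)}\theta_j\, {}_i\bar r(x)$, using $|\theta_j| = j$. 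Then I would substitute these into the right-hand side of~\eqref{E} with $x$ replaced by $\theta_j x$ (noting $|\theta_j x| = |x| + j$), collect the terms proportional to $\theta_j(\cdots)$ — which reassemble, via the inductive hypothesis, into $\theta_j E_i(x)$ after checking the exponents match — and collect the remaining terms, which carry a factor $\delta_{ij}$.

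The one genuine computation, and the step I expect to be the main obstacle, is verifying that the $\delta_{ij}$-terms produce exactly $[(i,\lambda - |x|)]\, x$. When $i = j$, these terms are
\[
\bigl( v^{(i,\lambda)} v^{(i,-|x|)} - v^{-(i,\lambda - (|x|+i) + i)} \bigr) x \big/ (v - v^{-1})
= \bigl( v^{(i,\lambda - |x|)} - v^{-(i,\lambda - |x|)} \bigr) x \big/ (v - v^{-1}),
\]
which is precisely $[(i,\lambda - |x|)]\, x$ by the definition of $[s]$. I would also double-check the coefficient matching in the $\theta_j$-terms: the hypothesis contributes $v^{(i,\lambda)}\theta_j\bar r_i(x)/(v-v^{-1})$ and $-v^{-(i,\lambda-|x|+i)} v^{(i,-j)}\theta_j\,{}_i\bar r(x)/(v-v^{-1})$, and one needs $-(i,\lambda - |x| + i) + (i,-j) = -(i, \lambda - |x| + i)$ after substituting $|x| \rightsquigarrow |x|+j$ into the original formula's exponent $-(i,\lambda - |x| + i)$, i.e. $-(i,\lambda - (|x|+j) + i)= -(i,\lambda-|x|+i) + (i,j)$ — wait, this forces the check $(i,-j)$ against $(i,j)$, so care with signs in the second exponent of~\eqref{E} is exactly where attention is needed. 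Once the bookkeeping of these $v$-powers is done correctly, the inductive step closes and the lemma follows.
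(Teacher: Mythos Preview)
Your proposal is correct and takes essentially the same approach as the paper: both arguments verify by induction that the right-hand side of~\eqref{E} satisfies the recursion~\eqref{recursive}, with the base case $x=1$ trivially holding. The paper packages this as showing that $\widetilde E_i$ (the right-hand side) satisfies $[\widetilde E_i, F_j] = \delta_{ij}(K_i - K_{-i})/(v-v^{-1})$ and then observes $E_i - \widetilde E_i$ commutes with all $F_j$ and kills $1$; you run the same induction directly. The exponent check you flag as the delicate point does work out: $-(i,\lambda - |x| - j + i) + (i,-j) = -(i,\lambda - |x| + i)$, so the $\theta_j$-terms reassemble correctly.
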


\begin{proof}
Let $\widetilde E_i(x)$ denote the right hand side of
the equation (\ref{E}).
Then by  induction and using the definition of $_i\bar r$ and $\bar r_i$, we have
\[
[\widetilde E_i, F_j](x)=\delta_{ij}
\frac{K_i-K_{-i}}{v-v^{-1}}(x), \quad \forall x \in \U^-.
\]
From this, we have
\[
[E_i-\widetilde E_i, F_j]=0,\quad\forall i, j\in I.
\]
Observe that $(E_i-\widetilde E_i) (1)=0$. By using the fact that the
monomials in $\U^-$ span the space $\U^-$, one can show, by induction, that
$E_i(x)-\widetilde E_i(x)=0$ for all $x\in \U^-$ by using the above identity.
\end{proof}

Given any two weights $\lambda$ and $\lambda'$ in $\X$, 
we shall compare the actions $E_i$'s that define the $\U$-module  structures on $\U^-$ as $M_{\lambda}$ and $M_{\lambda+\lambda'}$, respectively. To avoid confusion, we write
$E_i^{\lambda}$  for the $E_i$ action  on $\U^-$
defining the  Verma module $M_{\lambda}$ structure. Let $\xi_{\lambda}$ denote the unit $1$ in $\U^-$ if it is regarded as the Verma module $M_{\lambda}$.

By applying (\ref{recursive}) repeatedly, 
we get
\begin{align*}
E^{\lambda}_i (\theta_{i_1} \cdots \theta_{i_m} \xi_{\lambda} )
& =\theta_{i_1} E^{\lambda}_i \theta_{i_2} \cdots \theta_{i_m}\xi_{\lambda}  
+ \delta_{i i_1} \left ( [(i , \lambda - \sum_{n=2}^m \alpha_{i_n}) ] \right ) \theta_{i_2} \cdots \theta_{i_m} \xi_{\lambda}\\
&=\sum_{k=1}^m \delta_{ii_k} 
\left ( [(i, \lambda - \sum_{n=k+1}^m \alpha_{i_n})] \right )  \theta_{i_1} \cdots \theta_{i_{k-1}} \theta_{i_{k+1}} \cdots \theta_{i_m} \xi_{\lambda}.
\end{align*}
In short, we have 
\begin{equation}
\label{quantumGLS}
E_i^{\lambda} (\theta_{i_1} \cdots \theta_{i_m} \xi_{\lambda}) 
=  \sum_{k=1}^r \delta_{i i_k}  \left (  [(i, \lambda-\sum_{n=k+1}^m \alpha_{ i_n})]  \right ) \theta_{i_1} \cdots \theta_{i_{k-1}} \theta_{i_{k+1}} \cdots \theta_{i_m}\xi_{\lambda}.
\end{equation}
Observe that
\[
[a+b] = v^b[a]+ v^{-a}[b].
\]
From this and (\ref{quantumGLS}), we have

\begin{align*}
&E_i^{\lambda+\lambda'} (\theta_{i_1} \cdots \theta_{i_m}\xi_{\lambda+\lambda'})\\
& =
v^{(i, \lambda')} E_i^{\lambda} (\theta_{i_1} \cdots \theta_{i_m}\xi_{\lambda})
+
\sum_{k=1}^m \delta_{i i_k}  
\left ( 
v^{-(i, \lambda-\sum_{m=k+1}^n \alpha_{i_n})} [(i, \lambda')] 
  \right ) \theta_{i_1} \cdots \theta_{i_{k-1}} \theta_{i_{k+1}} \cdots \theta_{i_m}\xi_{\lambda}\\
  &=v^{(i, \lambda')} E_i^{\lambda} (\theta_{i_1} \cdots \theta_{i_m}\xi_{\lambda})
  +
  v^{-(i, \lambda)}  [(i, \lambda' ) ] 
  \sum_{k=1}^m \delta_{i i_k}  
v^{(i, \sum_{n=k+1}^m\alpha_{i_n})}  \theta_{i_1} \cdots \theta_{i_{k-1}} \theta_{i_{k+1}} \cdots \theta_{i_m}\xi_{\lambda}
\end{align*}
Recall from ~\cite{Lusztig93}, there is a unique $\mbb Q(v)$-linear map
\[
r_i: \U^- \to \U^-
\]
defined by 
\[
r_i(1) =0, \quad r_i(\theta_j)=\delta_{ij}  \quad \mbox{and} \quad r_i(xy) = v^{(i, |y|)} r_i(x) y + xr_i(y).
\]
From the definition of $r_i$, we can prove by induction that 
\[
r_i ( \theta_{i_1} \cdots \theta_{i_m} ) = \sum_{k=1}^m \delta_{i i_k}  
v^{(i, \sum_{n=k+1}^m  \alpha_{i_n}) }  \theta_{i_1} \cdots \theta_{i_{k-1}} \theta_{i_{k+1}} \cdots \theta_{i_m}.
\]
So we have 
\begin{equation*}
E_i^{\lambda+\lambda'} (x\xi_{\lambda+\lambda'} ) = v^{(i, \lambda')} E_i^{\lambda} (x\xi_{\lambda} )  + v^{- (i, \lambda )} [(i, \lambda' )] r_i(x) .
\end{equation*}
 The formula  can be rewritten as 
\begin{equation}
\label{qGLS}
 E_i^{\lambda} (x\xi_{\lambda}) =v^{- (i, \lambda') } E_i^{\lambda+\lambda'} (x\xi_{\lambda+\lambda'}) - v^{- (i, \lambda+\lambda')} [(i, \lambda')]  r_i(x).
\end{equation}
Note that 
\[
r_i(x) = v^{(i, |x|-\alpha_i)} \, _i \bar r (x), \quad \forall x \; \mbox{homogeneous}.
\]
By combining (\ref{qGLS}) and this,  we have 
\begin{lem}
\label{qGLS-a}
$E_i^{\lambda} (x\xi_{\lambda}) =v^{- (i, \lambda') } E_i^{\lambda+\lambda'} (x\xi_{\lambda+\lambda'}) -
v^{- (i, \lambda+\lambda' -|x| +\alpha_i)} [(i, \lambda') ] \, _i \bar r(x) $, 
for any $\lambda$ and $\lambda'$ in $\X$ and  any homogenous element  $x\in \U^- $.
\end{lem}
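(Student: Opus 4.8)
The plan is to read this off from the identity~(\ref{qGLS}) derived just above, namely
\[
E_i^{\lambda}(x\xi_{\lambda}) = v^{-(i,\lambda')}\,E_i^{\lambda+\lambda'}(x\xi_{\lambda+\lambda'}) - v^{-(i,\lambda+\lambda')}\,[(i,\lambda')]\,r_i(x),
\]
combined with the comparison $r_i(x)=v^{(i,|x|-\alpha_i)}\,{}_i\bar r(x)$ between the two twisted derivations on $\U^-$. Thus the statement is, in effect, a rewriting of~(\ref{qGLS}), and the only thing that genuinely needs checking is the comparison formula.

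First I would verify $r_i(x)=v^{(i,|x|-\alpha_i)}\,{}_i\bar r(x)$ for every homogeneous $x\in\U^-$ by induction on the degree $|x|$. For $x=1$ both sides vanish, and for $x=\theta_j$ both sides equal $\delta_{ij}$ (when $j=i$ one has $v^{(i,\alpha_i-\alpha_i)}=1$). For the inductive step, write $x=yz$ with $y,z$ homogeneous of strictly smaller degree, apply the product rule $r_i(yz)=v^{(i,|z|)}r_i(y)z+y\,r_i(z)$, substitute the inductive hypothesis for $r_i(y)$ and $r_i(z)$, and compare with $v^{(i,|x|-\alpha_i)}$ times the product rule ${}_i\bar r(yz)={}_i\bar r(y)z+v^{(i,-|y|)}y\,{}_i\bar r(z)$; the powers of $v$ attached to each of the two resulting terms match after using $|x|=|y|+|z|$, so the identity propagates. (This is the step already flagged in the excerpt by the line ``Note that $r_i(x) = v^{(i,|x|-\alpha_i)}\,{}_i\bar r(x)$''.)

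Substituting $r_i(x)=v^{(i,|x|-\alpha_i)}\,{}_i\bar r(x)$ into~(\ref{qGLS}), the last summand becomes
\[
-\,v^{-(i,\lambda+\lambda')}\,v^{(i,|x|-\alpha_i)}\,[(i,\lambda')]\,{}_i\bar r(x)
= -\,v^{-(i,\lambda+\lambda'-|x|+\alpha_i)}\,[(i,\lambda')]\,{}_i\bar r(x),
\]
which is precisely the term appearing in the Lemma, while the first two terms are already in the desired shape. I do not expect any real obstacle here: essentially all of the content sits in~(\ref{qGLS}), and the present statement merely trades $r_i$ for ${}_i\bar r$. The only place where care is needed is the weight-pairing bookkeeping — reading $\alpha_i=i$ and $|x|$ as elements of $\mbb Z[I]\hookrightarrow\X$ and pairing them against $i\in\mbf Y$ via the fixed pairing $(\,,\,)$ — together with tracking the $v$-exponents in the inductive step without sign slips.
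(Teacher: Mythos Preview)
Your proposal is correct and follows exactly the paper's approach: the paper derives~(\ref{qGLS}), states the comparison $r_i(x)=v^{(i,|x|-\alpha_i)}\,{}_i\bar r(x)$, and then simply says ``By combining~(\ref{qGLS}) and this, we have'' the Lemma. Your only addition is spelling out the inductive verification of the comparison formula, which the paper leaves implicit.
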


This is the quantum analog of  Lemma 10 in ~\cite{GLS06}.

\subsection{The module $\mbf K'(\mbf d^{\bullet})$}
\label{raw}

Let $\tilde \Gamma$ be the framed graph of $\Gamma$. This is a graph obtained from $\Gamma$ by adding an extra copy of the vertex set, 
denoted by  $I^+=\{i^+| i\in I\}$, 
and an edge joining $i$ and  $i^+$ for each $i\in I$.

Let $\mbf K$ be the negative part $\U^-_{\tilde \Gamma} $ of the algebra $\U_{\tilde \Gamma}$  and we still write $\theta_j$ for the generators $F_j$ in $\mbf K$ when we regard it as a $\U_{\tilde \Gamma}$-module.
By Section ~\ref{Verma}, $\mbf K$ can be made into  a Verma module of $\U_{\tilde \Gamma} $ of highest weight $\lambda\equiv 0$.
Since $\Gamma$ is a subgraph of $\tilde \Gamma$, we see that $\U=\U_{\Gamma}$ is a subalgebra of $\U_{\tilde \Gamma}$. 
By restriction to $\U$, we see immediately that $\mbf K$ is a $\U$-module.

We would like to investigate the structure of  the $\U$-module $\mbf K$.
For any $\alpha \in \mathbb N[\tilde I]$, where $\tilde I$ is the vertex set of $\tilde \Gamma$,  
we write $\alpha=\alpha_I+ \alpha_{I^+}$ where $\alpha_I$ is the part supported on $I$ while $\alpha_{I^+}$ 
the part on $I^+$. 
For any $\mbf d=\sum_{i\in I} d_i i  \in \mathbb N[I]$, we set
\[
\mbf K^{\mbf d}=\{ x\in \mbf K | |x|_{I^+}= \sum_{i\in I} d_i i^{+} \}.
\]
By induction, and using ~\ref{Verma} (2), we see that $\mbf  K^{\mbf d}$ is a $\U$-module. Moreover, we have
\begin{equation}
\label{decomposition}
\mbf K=\oplus_{\mbf d  \in \mathbb Z_{\geq 0} [I]} \mbf K^{\mbf d}.
\end{equation}
For simplicity, we set
\[
\theta^{\mbf d}=\prod_{i\in I} \theta_{i^+}^{(d_i)}, \quad \forall \mbf d\in \mathbb N[I].
\]
Note that there is no ambiguity for the product since the generators $\theta_{i^+}$  commute with each other.
For a sequence $\mbf d^{\bullet}=(\mbf d^1, \cdots, \mbf d^m)$ of nonzero elements in $\mathbb N[I]$, we set
$\mbf K'(\mbf d^{\bullet})$ to be the subspace of $\mbf K$ spanned by the elements of the form
\[
x_1 \theta^{\mbf d^1} x_2 \theta^{\mbf d^2} x_3 \cdots x_m \theta^{\mbf d^m} x_{m+1}
\]
where $x_1, \cdots, x_{m+1} \in \U^-$ and regarded as elements in $\mbf K$ by the embedding $\U^-\to \U^-_{\tilde \Gamma} =\mbf K$.
Then, it is clear that $\mbf K'(\mbf d^{\bullet})$ is a $\U$-module. In fact, we only need to check elements of the above form are stable under the $E_i$ action.
This can be proved by induction on $|x|_{I}$ and using again the recursive formula ~\ref{Verma} (\ref{recursive}).
We set
\[
|\mbf d^{\bullet}|=\mbf d^1+\cdots +\mbf d^m.
\]
Now Section ~\ref{Verma} (\ref{decomposition}) can be rewritten as
\begin{equation}
\label{decomposition2}
\mbf K=\bigoplus_{\mbf d \in \mathbb N[I]} \sum_{\mbf d^{\bullet} : |\mbf d^{\bullet}|=\mbf d} \mbf K'(\mbf d^{\bullet}).
\end{equation}

Thus to study $\mbf K$, it reduces to study $\mbf K'(\mbf d^{\bullet})$.

To each $\mbf d \in \mbb N [I]$, 
we fix a $\lambda \in X^+$ such that $( i, \lambda) =d_i$ for any $i\in I$. 
Define a linear map
\[
\phi': M_0 \to \mbf K'(\mbf d)
\]
by sending $x\xi_0 \in M_0$ to  $\theta^{\mbf d} x$.
One checks immediately that the following condition (\ref{M}) holds for $\phi'$:
\begin{equation}
\label{M}
K_i \phi' (x)=v^{( i, \lambda )} \phi'(K_i x), \quad E_i \phi'(x)=\phi'(E_i x) \quad \forall x\in V_1, i\in I.
\end{equation} 
In other words, $\phi'$ is a $\U^{\geq 0}$-module homomorphism from $M_0\otimes \mbb Q(v)_{\lambda}$ to $\mbf K'(\mbf d)$.
So,  by Frobenius reciprocity (\ref{Frobenius}),  we have a unique $\U$-module homomorphism
\begin{equation}
\label{phi-1}
\phi'_{\lambda}: M_0 \otimes  M_{\lambda} \to \mbf K'(\mbf d)
\end{equation}
sending $x\xi_0\otimes \xi_{\lambda}$ to $\theta^{\mbf d}x$, for any $x\in \U^-$.
Moreover, we claim that  
\begin{equation}
\label{phi-2}
\phi'_{\lambda} (M_0\otimes T_{\lambda}) =0.
\end{equation}

 This claim can be shown by the following two steps. The first step is the observation that, with $p=(i, \lambda) +1$, 
\[
\phi'_{\lambda} ( y \xi_0 \otimes \theta_i^{(m)} \xi_{\lambda}) 
=\left ( \sum_{t=0}^m (-1)^t v^{-t(p-m)} \theta_i^{(m-t)} \theta^{\mbf d} \theta_i^{(t)} \right  ) y, \quad y\in \U^-, m\in \mbb N.
\]
This identity can be proved by induction on $m$. 
If $m= p$, it simplifies to 
\begin{equation*}
\begin{split}
\phi'_{\lambda} ( y \xi_0 \otimes \theta_i^{(m)} \xi_{\lambda}) 
=\left ( \sum_{t=0}^p (-1)^t  \theta_i^{(p-t)} \theta^{\mbf d}  \theta_i^{(t)} \right  ) y  
=\left ( \sum_{t=0}^p (-1)^t  \theta_i^{(p-t)}  \theta_{i^+}^{(d_i)} \theta_i^{(t)} \right  ) \left (\prod_{j\neq i} \theta_{j^+}^{(d_j)}  \right ) y.
\end{split}
\end{equation*}
The term $\ \sum_{t=0}^p (-1)^t  \theta_i^{(p-t)}  \theta_{i^+}^{(d_i)} \theta_i^{(t)} $ is equal to zero because it  is exactly the higher order quantum Serre relation 
$f_{i, j; n, m, e}$  with $i=i$, $j=i^+$ and $m=n+1=p$ in the algebra $\mbf f$ attached to the graph $\tilde \Gamma$
in ~\cite[7.1.1]{Lusztig93}. So we have
\[
\phi'_{\lambda} ( y \xi_0 \otimes \theta_i^{( d_i+1)} \xi_{\lambda}) =0, \quad \forall y\in \U^-.
\]
The second step is to observe that 
\[
\phi'_{\lambda} ( y \xi_0 \otimes x \theta_i^{( d_i +1)} \xi_{\lambda}) =0 , \quad \forall x, y\in \U^-,
\]
which can be proved by induction on the degree of $x$.  The claim  follows.

Now, from (\ref{phi-1}) and (\ref{phi-2}),  we see that $\phi'_{\lambda}$ factors through the module $M_0\otimes V_{\lambda}$. In particular, we have a surjective  homomorphism of $\U$ modules
\begin{equation}
\label{psi'}
\psi'_{\lambda}: M_0\otimes V_{\lambda} \twoheadrightarrow \mbf K'(\mbf d).
\end{equation}

Let $\mbf T'(\mbf d)$ be the submodule of $\mbf K'(\mbf d)$ generated by the elements  $x_1 \theta^{\mbf d} x_2 $ such that $x_2$ 
is homogeneous and of degree $ \not = 0$. In other words,
$\mbf T'(\mbf d)=\sum_{i\in I} y_i \theta^{\mbf d} x_i \theta_i$, where $x_i, y_i\in \U^-$.
Then the restriction of $\psi' _{\lambda}$ to the submodule $T_0\otimes V_{\lambda}$ has image $\mbf T'(\mbf d)$.
Thus we have the following commutative diagram:
\[
\begin{CD}
0 @>>> T_0\otimes V_{\lambda} @>>> M_0\otimes V_{\lambda} @>>> V_0\otimes V_{\lambda}=V_{\lambda} @>>> 0\\
@. @VVV  @V\psi'_{\lambda} VV  @V\bar \psi'_{\lambda} VV @.\\
0 @>>> \mbf T'(\mbf d) @>>> \mbf K'(\mbf d) @>>> \mbf V'(\mbf d) @>>> 0,
\end{CD}
\]
where we set
\[
\mbf V'(\mbf d) =\mbf K'(\mbf d) / \mbf T'(\mbf d).
\]
From the fact that $\psi'_{\lambda}$ is surjective, we see that $\bar \psi'_{\lambda}$ is surjective:
\begin{equation}
\label{barpsi'}
\bar \psi'_{\lambda}: V_{\lambda} \twoheadrightarrow \mbf V'(\mbf d).
\end{equation}
Since $V_{\lambda}$ is simple and  the generator $\theta^{\mbf d}$ is not in $\mbf T'(\mbf d)$, we see that $\bar \psi'_{\lambda}$ is an isomorphism.

We now generalize (\ref{psi'}) and (\ref{barpsi'})  to arbitrary cases.

For $\mbf d^{\bullet}=(\mbf d^1, \cdots, \mbf d^N)$, we set $\mbf d^{\bullet -1}=(\mbf d^2, \cdots \mbf d^N)$.
Let $\lambda^l$ be the chosen element in $\mbb N[I]$ such that  $( i, \lambda^l )  =d_i^l$ for all $l=1,\cdots, N$.
Let $\lambda^{\bullet}=(\lambda^N, \cdots, \lambda^1)$.  Note that $\lambda^{\bullet}$ is in the reverse order of $\mbf d^{\bullet}$.
We set 
\[
V_{\lambda^{\bullet}}=V_{\lambda^N} \otimes \cdots \otimes V_{\lambda^1}.
\]
Define a linear map 
\[
\phi'_{\lambda^{\bullet}}: \mbf K'( \mbf d^{\bullet -1}) \to \mbf K'(\mbf d^{\bullet})
\]
by 
$\phi'_{\lambda^{\bullet}} (x)= \theta^{\mbf d^1} x$,  for any  $x\in \mbf K'(\mbf d^{\bullet -1})$.
One checks again that the condition (\ref{M}) holds. So we have a unique $\U$-module homomorphism
\[
\phi'_{\lambda^{\bullet}}:   \mbf K'(\mbf d^{\bullet -1}) \otimes M_{\lambda^1}   \to \mbf K'(\mbf d^{\bullet}).
\]
sending $ x \otimes \xi_{\mbf d^1}$ to $\theta^{\mbf d^1}x$. Moreover, we may again prove that 
$\phi'( \mbf K'(\mbf d^{\bullet-1})\otimes T_{\lambda^1} )=0$ in exactly the same manner as the proof of (\ref{phi-2}).  
So $\phi'_{\lambda^{\bullet}}$ factors through the module  $\mbf K'(\mbf d^{\bullet -1}) \otimes V_{\lambda^1}  $. 
In particular, we have a surjective algebra homomorphism
\[
\psi'_{\lambda^{\bullet}}:  \mbf K'(\mbf d^{\bullet -1}) \otimes V_{\lambda^1}   \to \mbf K'(\mbf d^{\bullet}).
\]
Let $\mbf T'(\mbf d^{\bullet})$ be the submodule of $\mbf K'(\mbf d^{\bullet}) $ spanned by the elements 
$x_1 \theta^{\mbf d^1} x_2 \theta^{\mbf d^2} x_3 \cdots x_N \theta^{\mbf d^N} x_{N+1} $ 
such that is $x_{N+1}$ homogeneous and of degree $>0$. Let
\[
\mbf V'(\mbf d^{\bullet}) = \mbf K'(\mbf d^{\bullet}) / \mbf T'(\mbf d^{\bullet}), 
\]
be the quotient module.
By induction, the  homomorphism 
$V_{\mbf \lambda^{\bullet-1}}\twoheadrightarrow \mbf K'(\mbf d^{\bullet-1})/\mbf T'(\mbf d^{\bullet-1})$ is surjective. 
Moreover,
$\psi'_{\lambda^{\bullet}} ( \mbf T'(\mbf d^{\bullet-1})\otimes V_{\lambda^1} ) = \mbf T'(\mbf d^{\bullet})$. 
So 
 the morphism $\psi'_{\lambda^{\bullet}}$ induces a surjective  homomorphism of $\mbf U$ modules:
\[
\bar \psi'_{\lambda^{\bullet}}: V_{\lambda^{\bullet}} \twoheadrightarrow \mbf V'(\mbf d^{\bullet}).
\]

Summing up, we have the following proposition.

\begin{prop}
There is a  unique surjective $\U$-module homomorphism 
\[
\psi'_{\lambda^{\bullet}}: M_0 \otimes V_{\lambda^{\bullet}} \twoheadrightarrow \mbf K'(\mbf d^{\bullet}),
\]
sending $x\xi_0\otimes \xi_{\lambda^N} \otimes \cdots \otimes \xi_{\lambda^1}$ to $\theta^{\mbf d^1} \theta^{\mbf d^2} \cdots \theta^{\mbf d^N}x$ for any $x\in \U^-$. Moreover, it induces a surjective  homomorphism of $\U$-modules
$\bar \psi'_{\lambda^{\bullet}}: V_{\lambda^{\bullet}} \twoheadrightarrow \mbf V'(\mbf d^{\bullet})$.
\end{prop}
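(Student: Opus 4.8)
The plan is to prove the proposition by induction on the number $N$ of entries of $\mbf d^{\bullet}=(\mbf d^1,\dots,\mbf d^N)$; the statement merely repackages, in a uniform way, the constructions carried out in the paragraphs preceding it. For $N=1$ the asserted map $\psi'_{\lambda^{\bullet}}$ is the surjection $\psi'_{\lambda}$ of~(\ref{psi'}) (taking $\lambda=\lambda^1$ and $\mbf d=\mbf d^1$) and $\bar\psi'_{\lambda^{\bullet}}$ is the isomorphism $\bar\psi'_{\lambda}$ of~(\ref{barpsi'}); uniqueness in this case is as already discussed, namely any $\U$-homomorphism $M_0\otimes V_{\lambda}\to\mbf K'(\mbf d)$ with the prescribed values on the $x\xi_0\otimes\xi_{\lambda}$, when lifted along $M_0\otimes M_{\lambda}\twoheadrightarrow M_0\otimes V_{\lambda}$, agrees with $\phi'_{\lambda}$ on $M_0\otimes\xi_{\lambda}$, hence coincides with $\phi'_{\lambda}$ by the Frobenius reciprocity~(\ref{Frobenius}) used to build the latter, and therefore descends to $\psi'_{\lambda}$.

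For the inductive step, write $\mbf d^{\bullet-1}=(\mbf d^2,\dots,\mbf d^N)$ and $\lambda^{\bullet-1}=(\lambda^N,\dots,\lambda^2)$, and assume the proposition for $\mbf d^{\bullet-1}$, so we have a surjection $\psi'_{\lambda^{\bullet-1}}\colon M_0\otimes V_{\lambda^{\bullet-1}}\twoheadrightarrow\mbf K'(\mbf d^{\bullet-1})$ sending $x\xi_0\otimes\xi_{\lambda^N}\otimes\cdots\otimes\xi_{\lambda^2}$ to $\theta^{\mbf d^2}\cdots\theta^{\mbf d^N}x$. First I would form the right tensor product $\psi'_{\lambda^{\bullet-1}}\otimes\mrm{id}_{V_{\lambda^1}}$, which is a homomorphism of $\U$-modules for the comultiplication and is still surjective because $\mbb Q(v)$ is a field; then I would compose it with the surjection $\mbf K'(\mbf d^{\bullet-1})\otimes V_{\lambda^1}\twoheadrightarrow\mbf K'(\mbf d^{\bullet})$ constructed just above the proposition (the one obtained by factoring $z\otimes\xi_{\lambda^1}\mapsto\theta^{\mbf d^1}z$ through $V_{\lambda^1}$). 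Since $V_{\lambda^{\bullet-1}}\otimes V_{\lambda^1}=V_{\lambda^{\bullet}}$ by coassociativity of $\Delta$, the composite is a $\U$-module surjection $M_0\otimes V_{\lambda^{\bullet}}\twoheadrightarrow\mbf K'(\mbf d^{\bullet})$, and tracing $x\xi_0\otimes\xi_{\lambda^N}\otimes\cdots\otimes\xi_{\lambda^1}$ through the two maps gives $\theta^{\mbf d^1}(\theta^{\mbf d^2}\cdots\theta^{\mbf d^N}x)$, as required. Uniqueness propagates: a second such homomorphism, lifted along $M_0\otimes V_{\lambda^{\bullet-1}}\otimes M_{\lambda^1}\twoheadrightarrow M_0\otimes V_{\lambda^{\bullet}}$, is pinned down by~(\ref{Frobenius}) applied to the factor $M_{\lambda^1}$ together with the uniqueness clause of the inductive hypothesis, in the same manner as the base case.

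For the final assertion, the inductive hypothesis also gives $\psi'_{\lambda^{\bullet-1}}(T_0\otimes V_{\lambda^{\bullet-1}})\subseteq\mbf T'(\mbf d^{\bullet-1})$, so under $\psi'_{\lambda^{\bullet-1}}\otimes\mrm{id}_{V_{\lambda^1}}$ the subspace $T_0\otimes V_{\lambda^{\bullet}}=(T_0\otimes V_{\lambda^{\bullet-1}})\otimes V_{\lambda^1}$ is carried into $\mbf T'(\mbf d^{\bullet-1})\otimes V_{\lambda^1}$, which maps onto $\mbf T'(\mbf d^{\bullet})$ by the identity $\psi'_{\lambda^{\bullet}}(\mbf T'(\mbf d^{\bullet-1})\otimes V_{\lambda^1})=\mbf T'(\mbf d^{\bullet})$ recorded in the text; hence $\psi'_{\lambda^{\bullet}}(T_0\otimes V_{\lambda^{\bullet}})\subseteq\mbf T'(\mbf d^{\bullet})$ and $\psi'_{\lambda^{\bullet}}$ descends to a surjection $V_{\lambda^{\bullet}}=(M_0/T_0)\otimes V_{\lambda^{\bullet}}\twoheadrightarrow\mbf K'(\mbf d^{\bullet})/\mbf T'(\mbf d^{\bullet})=\mbf V'(\mbf d^{\bullet})$, which is $\bar\psi'_{\lambda^{\bullet}}$. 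I expect the one point genuinely needing care to be the bookkeeping in the uniqueness argument — verifying that uniqueness survives both tensoring with $\mrm{id}_{V_{\lambda^1}}$ and the passage to the quotient $M_{\lambda^1}\twoheadrightarrow V_{\lambda^1}$ — which I would handle uniformly by lifting any competing homomorphism all the way up to a tensor product of Verma modules, where~(\ref{Frobenius}) applies verbatim and the prescribed values on highest-weight vectors are decisive, and then cancelling the surjective quotient maps.
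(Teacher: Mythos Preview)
Your proof is correct and follows essentially the same inductive approach as the paper: the base case $N=1$ is exactly~(\ref{psi'}) and~(\ref{barpsi'}), and the inductive step composes $\psi'_{\lambda^{\bullet-1}}\otimes\mrm{id}_{V_{\lambda^1}}$ with the surjection $\mbf K'(\mbf d^{\bullet-1})\otimes V_{\lambda^1}\twoheadrightarrow\mbf K'(\mbf d^{\bullet})$ built just before the proposition via Frobenius reciprocity and the vanishing on $\mbf K'(\mbf d^{\bullet-1})\otimes T_{\lambda^1}$. Your treatment of uniqueness is in fact more careful than the paper's, which simply asserts it.
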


\subsection{The module $\mbf K(\mbf d^{\bullet})$}

Let $\mbf K(\mbf d^{\bullet})$ be the subspace of $\mbf K'(\mbf d^{\bullet})$ spanned by the elements of the form
\[
x_1 \theta^{\mbf d^1} x_2 \theta^{\mbf d^2} x_3 \cdots x_N \theta^{\mbf d^N}
\]
for any $x_1, \cdots, x_N \in \U^-$.  

Note that by taking $\mbf d^N=0$, we have  $\mbf K(\mbf d^{\bullet}) = \mbf K'(\mbf d^{\bullet})$ with $\mbf d^{\bullet}=(\mbf d^1,\cdots, \mbf d^{N-1})$. 

By a similar argument as in Section ~\ref{raw}, we have a unique surjective $\U$-homomorphism
\begin{equation}
\label{psi-1}
\psi_{\lambda^{\bullet}}: M_{\lambda^N} \otimes V_{\lambda^{N-1}}\otimes \cdots \otimes V_{\lambda^1}  \twoheadrightarrow 
 \mbf K(\mbf d^{\bullet}),
\end{equation}
sending $x\xi_{\lambda^N} \otimes \cdots \otimes \xi_{\lambda^1}$ to $\theta^{\mbf d^1} \cdots x\theta^{\mbf d^N}$ for any $x\in \U^-$.

Let $\mbf T(\mbf d^{\bullet})$ be the submodule of $\mbf K(\mbf d^{\bullet}) $ spanned by the elements 
$x_1 \theta^{\mbf d^1} x_2 \theta^{\mbf d^2} x_3 \cdots x_N \theta_i^{d_i+1} \theta^{\mbf d^N}$ 
for any $i\in I$ and $x_1,\cdots, x_N\in \U^-$.  Let
\[
\mbf V(\mbf d^{\bullet}) = \mbf K(\mbf d^{\bullet}) / \mbf T(\mbf d^{\bullet}), 
\]
be the quotient module. Since $\psi_{\lambda^{\bullet}} (T_{\lambda^N}\otimes V_{\lambda^{N-1}}\otimes \cdots \otimes V_{\lambda^1}) \subseteq \mbf T(\mbf d^{\bullet})$, we see that $\psi_{\lambda^{\bullet}}$ factors through the following surjective morphism
\[
\bar \psi_{\lambda^{\bullet}}: V_{\lambda^{\bullet}} \twoheadrightarrow \mbf V(\mbf d^{\bullet}).
\]

\begin{rem}
 (1). When $N=1$,   the isomorphism $\psi_{\lambda }: M_{\lambda} \to \mbf K(\mbf d^{\bullet})$ is obvious.

(2).  When $N=2$,  the morphism 
 $\psi'_{\lambda}: M_0\otimes V_{\lambda^1} \to \mbf K(\mbf d^{\bullet})$ is an isomorphism, which is   proved in Theorem 
 ~\ref{K-1} geometrically by identifying $\mbf K(\mbf d^{\bullet})$ with the module $\mathcal K_{D^{\bullet}}$. 
Since  the map $x_1\theta^{\mbf d^1} x_2\mapsto x_1\theta^{\mbf d^1} x_2 \theta^{\mbf d^2}$ defines an isomorphism
 $M_0\otimes V_{\lambda^1} \to M_{\lambda^2}\otimes V_{\lambda^1}$, as vector spaces. So the morphism
 $\psi_{\lambda^{\bullet}} : M_{\lambda^2}\otimes V_{\lambda^1} \to \mbf K(\mbf d^{\bullet})$ is also an isomorphism.
 However,
$\psi'_{\lambda^{\bullet}}$ and  $\psi'_{\lambda^{\bullet}}$ are   Not  isomorphic  in  general. 
Indeed, we have $\mbf K(\mbf d^{\bullet}) =\mbf K(\mbf d)$ in $\mathfrak{sl}_2$ case.  Details will be given in a forthcoming paper.
\end{rem}

\section{Tensor product  varieties}

\subsection{$\E(V)$}
\label{E(v)}

Recall from Section ~\ref{definition} that $\Gamma=(I, H)$ is a graph.
For any finite dimensional  $I$-graded vector space $V$ over $\C$, we define
\[
\E(V) =\oplus_{h\in H} \Hom(V_{h'},V_{h'' } )
\]
to be the representation space of the graph $\Gamma$ of dimension vector $\nu=\sum_{i\in I}\dim V_i i$. 

An element $x\in \E(V)$ is called $nilpotent$ if there exists an integer $n$ such that for any sequence 
$h_1,\cdots, h_n$ of arrows such that $h_1''=h_2'$, $\cdots$, $h''_{n-1} =h_n'$, the composition 
$x_{h_n}x_{h_{n-1}} \cdots x_{h_1}$ is equal to $0$.

For any element $x \in \E(V)$ and  $I$-graded subspace  $W\subseteq V$, we say that $W$ is $x$-invariant if $x_h(W_{h'} )\subseteq W_{h''}$ for any $h\in H$.
In general, for any $I$-graded subspace $U$ in $V$,  we write
$\overline U^x$ for the smallest $x$-invariant $I$-graded subspace in $V$ containing $U$, and 
$\underline U^x$ for the largest $x$-invariant $I$-graded subspace contained in $U$. Whenever there is no ambiguity, we  write $\overline U$ and $\underline U$
for $\overline U^x$ and $\underline U^x$, respectively.

Suppose that $W$ is an $x$-invariant  $I$-graded subspace. Let $T=V/W$ and fix an isomorphism $V \tilde =W\oplus T$. 
Then  by restricting $x$ to $W$, we obtain an element 
$x^{WW}\in \E(W)$. By passage to quotient, we obtain an element  $x^{TT}\in \E(T)$. 
The element $x^{WT}$ is the collection of linear maps $x_h: T_{h'} \to W_{h''}$ for $h\in H$.
In other words, the $h$-component $x_h$ of $x$ can be written in the following block form:
\begin{equation}
\label{decom}
x_h: = 
\begin{pmatrix}
x^{WW}_h & x^{WT}_h\\
0 & x^{TT}_h.
\end{pmatrix}
: W_{h'}\oplus T_{h'}\to W_{h''}\oplus T_{h''}.
\end{equation}
Moreover, one can show that 
\begin{align}
\label{nilpotent}
\mbox{$x$ is nilpotent if and only if $x^{TT}$ and $x^{WW}$ are nilpotent.}
\end{align}

\subsection{$\E(V,D)$}
\label{EVD}
Recall that $\tilde \Gamma$ is the framed graph of $\Gamma$. 
We fix a finite dimensional $I^+$-graded space $D$.
Thus, the space 
\begin{equation}
\E(V, D) =\E(V) \oplus \Hom(D, V) \oplus \Hom (V, D),
\end{equation}
where $\Hom(V, D) =\oplus_{i\in I} \Hom(V_i, D_i)$ and $\Hom(D, V)$ is defined similarly,
is the representation space $\E(V\oplus D)$ of the framed graph $\tilde \Gamma$.
We shall identify $\E(V)$ with $\E(V, 0)$.

Elements in $\E(V, D) $ will be represented by $X=(x, p, q)$ with $x\in \E(V)$, $p\in \Hom(D, V)$ and $q\in \Hom(V, D)$.
We also write
\begin{equation}
\label{xi}
X(i)=\bigoplus_{\substack{h\in \tilde \Gamma \\ h'=i}} X_h=q_i \oplus \bigoplus_{\substack{h\in \Gamma\\h'=i}} x_h \quad \mbox{and}\quad
(i) X=\sum_{\substack{h\in \tilde \Gamma\\h''=i}} \epsilon(h) X_h=-p_i+\sum_{\substack {h\in \Gamma\\ h''=i}}\epsilon (h) x_h.
\end{equation}

The group $\G_V:=\prod_{i\in I} \mrm{GL}(V_i)$ acts on $\E(V, D)$ by conjugation, i.e.,
\[
g.X = X^g, \quad  X^g_h=
\begin{cases}
g_{h''} x_h g_{h'}^{-1}, & \mbox{if}\; h',h''\in I,\\
q_i g_i^{-1}, &  \mbox{if}\; h''\not \in I,\\
g_i p_i, & \mbox{if}\; h'\not \in I,
\end{cases}
\quad \forall g\in \G_V, X\in \E(V, D).
\]

\subsection{Lusztig's and Nakajima's quiver varieties}
\label{Lusztig}
Let $\epsilon: H\to \{\pm 1\}$ be a map satisfying $\epsilon(h)+\epsilon(\bar h) =0$ for any $h\in H$.
The variety $\mbf \Lambda_{V, D}$ is the subvariety of $\E(V, D)$ consisting of all elements $(x, p, q)$ such that 
\[
\sum_{h\in H: h''=i} \epsilon( h) x_h x_{\bar h} -p_i q_i =0, \quad \forall i\in I.
\]

Let $\mbf \Lambda_{V\oplus D} \subseteq \mbf \Lambda_{V, D}$ be Lusztig's Lagrangian nilpotent variety (\cite{Lusztig90b}, \cite{Lusztig91}) attached to the graph $\tilde \Gamma$.  More precisely,
this is a closed subvariety of
$\mbf \Lambda_{V, D}$ determined by the conditions that $X$ is nilpotent and  $q_i p_i=0$ for any $i\in I$. 
We write $\mbf \Lambda_{V}$ for $\mbf \Lambda_{V\oplus 0}$, which is a closed subvariety in $\E(V)$. Let
\[
\mbf L_{V, D}=\mbf \Lambda_V \times \Hom(V, D),
\]
and 
$\mbf L^s_{V, D}$ be the open subvariety consisting of all elements $X$ such that $X(i)$ (see (\ref{xi}))  are injective
for any $i\in I$.  It is well-known that $\mbf L_{V, D}$ and $\mbf L^s_{V,D}$ are pure dimensional, i.e. all irreducible components have the same dimension:
\begin{equation}
\label{dim-L}
\dim \mbf L_{V,D}=\dim \mbf L^s_{V, D}=\frac{1}{2}\dim \E(V,D).
\end{equation}

Note that $\G_V$ acts freely on $\mbf L^s_{V,D}$ and its quotient $\mathfrak L_{V, D}=\G_V\backslash \mbf L^s_{V, D}$ is the Lagrangian fiber of Nakajima's quiver variety (\cite{Nakajima94}).

\subsection{Tensor product varieties $\mbf \Pi_{V, D^{\bullet}}$}
\label{twocopytensor}
We  fix a decomposition 
\[
D=D^2\oplus D^1,\quad \mbox{with} \quad \dim D^2=d^2 \quad \mbox{and} \quad \dim D^1= d^1, 
\]
and a flag
$D^{\bullet}= (\check D^0\supseteq \check D^1 \supseteq \check D^2)$ where $D=\check D^0$,  $\check D^1=D^2$ and   $ \check D^2=0$.

Let $\mbf \Pi_{V, D^{\bullet}}$ be the closed subvariety of $\mbf \Lambda_{V, D}$ consisting of all nilpotent elements $X=(x, p, q)$ such that
 \begin{equation}
 \label{tensor-condition}
 \overline{p(D)} \subseteq \underline{q^{-1} (D^{2})}
 \quad \mbox{and}\quad 
 p(D^2)=0,
 \end{equation}
where the notations are defined in Section ~\ref{E(v)}.
(When $D^2=0$, the geometry of $\mbf \Pi_{V, D^{\bullet}}$ corresponds to the module $\mbf K'(\mbf d)$ in Section ~\ref{raw}.)

Let $\mbf \Pi_{V, D^{\bullet}; \nu^2}$ be the locally closed subvariety of $\mbf \Pi_{V, D^{\bullet}}$ consisting of all elements such that 
$\dim \underline{ q^{-1}(D^2)} =\nu^2$.

Fix an $I$-graded subspace $V^2$ of dimension $\nu^2$ in $V$.
Let $\mbf \Pi_{V, D^{\bullet}; V^2}$ be the closed subvariety of $\mbf \Pi_{V, D^{\bullet}; \nu^2}$ consisting of all elements such that 
$\underline{ q^{-1}(D^2)} =V^2$.

Let $V^1=V/V^2$ and fix a decomposition $V=V^2\oplus V^1$. Then $D^2\oplus V^2$ is $X$-invariant for any element $X\in \mbf \Pi_{V, D^{\bullet}; V^2}$.
Thus to each element $X=(x, p, q)\in \mbf \Pi_{V, D^{\bullet}; V^2}$, attached the elements
\begin{equation}
\label{X1X2}
X^1=(x^{V^1V^1}, p^{V^1 D^1}, q^{D^1 V^1})\in \mbf \Lambda_{V^1, D^1},\quad 
X^2=(x^{V^2V^2}, p^{V^2 D^2}, q^{D^2V^2})\in \mbf \Lambda_{V^2,D^2},
\end{equation}
and $(x^{V^2V^1}, p^{V^2 D^1}, q^{D^2V^1})$ just as in (\ref{decom}).
Since $X$ is nilpotent, so are $X^1$ and $X^2$ by (\ref{nilpotent}).
Moreover, by the condition (\ref{tensor-condition}), we see that 
\[
p^{V^1 D^1}=0,\quad p^{V^2 D^2}=0\quad \mbox{and} \quad \ker X^1(i)=0, \quad \forall i\in I.
\]

From the above analysis, the assignment $X\mapsto (X^1, X^2)$ defines a morphism
\begin{equation}
\label{rho}
\rho:\mbf  \Pi_{V, D^{\bullet}; V^2} \to\mbf  L^s_{V^1, D^1}\times \mbf L_{V^2,D^2}.
\end{equation}

\begin{lem}
\label{dimrho}
The morphism $\rho$ is a vector bundle of fiber  dimension 
\begin{equation}
\label{dim-rho}
\sum_{h\in H} \nu_{h'}^1\nu^2_{h''} + \sum_{i\in I} d^1_i \nu^2_i + \sum_{i\in I} \nu^1_i d^2_i -\sum_{i\in I} \nu^1_i\nu^2_i.
\end{equation}
\end{lem}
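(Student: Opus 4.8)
The plan is to analyze the fibers of $\rho$ directly and show they are affine spaces of the stated dimension, varying algebraically over the base, hence that $\rho$ is a vector bundle (or at least a locally trivial affine fibration, which suffices here since we only need the fiber dimension). Fix a point $(X^1, X^2) \in \mbf L^s_{V^1, D^1} \times \mbf L_{V^2, D^2}$, with $X^1 = (x^{V^1V^1}, p^{V^1D^1}, q^{D^1V^1})$ and $X^2 = (x^{V^2V^2}, p^{V^2D^2}, q^{D^2V^2})$. Using the fixed decompositions $V = V^2 \oplus V^1$ and $D = D^2 \oplus D^1$, an element $X \in \mbf \Pi_{V, D^{\bullet}; V^2}$ lying over $(X^1, X^2)$ is determined by the remaining ``off-diagonal'' block data: the maps $x^{V^2V^1}_h : V^1_{h'} \to V^2_{h''}$ for $h \in H$, the map $p^{V^2D^1} : D^1 \to V^2$ (recall $p^{V^1D^1} = 0$ and $p^{V^2D^2} = 0$ are forced, and $p(D^2) = 0$ kills the $D^2$-component entirely), the map $q^{D^2V^1} : V^1 \to D^2$, and a priori the map $q^{D^1V^1}$-versus-$q^{D^1V^2}$ splitting — but since $D^2 \oplus V^2$ is $X$-invariant, $q$ sends $V^2$ into $D^2$ only, so the component $q^{D^1 V^2} = 0$ as well. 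So the free parameters are exactly: $x^{V^2V^1} \in \bigoplus_{h\in H}\Hom(V^1_{h'}, V^2_{h''})$, $p^{V^2D^1} \in \Hom(D^1, V^2)$, and $q^{D^2V^1} \in \Hom(V^1, D^2)$.

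Next I would impose the defining equations of $\mbf \Pi_{V, D^{\bullet}; V^2}$ on this off-diagonal data and show they cut out an affine subspace of the expected codimension $\sum_i \nu^1_i \nu^2_i$. The relevant constraints are: (a) the moment-type / preprojective relation $\sum_{h:h''=i}\epsilon(h)x_hx_{\bar h} - p_iq_i = 0$ in its off-diagonal (i.e. $V^1 \to V^2$) component; (b) nilpotency, which by (\ref{nilpotent}) is automatic once $X^1, X^2$ are nilpotent and thus imposes nothing further on the off-diagonal part; (c) the tensor conditions (\ref{tensor-condition}), of which $p(D^2) = 0$ is already built in, $\overline{p(D)} \subseteq \underline{q^{-1}(D^2)} = V^2$ is automatic because $\im p^{V^2D^1} \subseteq V^2$ and $V^2$ is $X$-invariant, and the requirement that $\underline{q^{-1}(D^2)}$ equal $V^2$ (rather than something larger) translates into the condition that the induced element $X^1$ on the quotient have $\ker X^1(i) = 0$, which holds since $(X^1,X^2)$ was chosen in $\mbf L^s_{V^1,D^1} \times \mbf L_{V^2,D^2}$. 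So the only genuine equation is the off-diagonal part of (a). Writing it out in block form, the $V^1 \to V^2$ component of $\sum_h \epsilon(h) x_h x_{\bar h} - pq$ at vertex $i$ is an affine-linear (inhomogeneous) function of the off-diagonal parameters whose linear part, in the $x^{V^2V^1}$ variables, is surjective onto $\Hom(V^1_i, V^2_i)$ — this uses that $x^{V^1V^1}$ and $x^{V^2V^2}$ act and that the linear map $x^{V^2V^1} \mapsto \sum_{h:h''=i}\epsilon(h)(x^{V^2V^2}_h x^{V^2V^1}_{\bar h} + x^{V^2V^1}_h x^{V^1V^1}_{\bar h})$, together with the $pq$ correction, hits everything. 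Granting surjectivity, the solution set is an affine space of dimension
\[
\sum_{h\in H}\nu^1_{h'}\nu^2_{h''} + \sum_{i\in I} d^1_i\nu^2_i + \sum_{i\in I}\nu^1_i d^2_i - \sum_{i\in I}\nu^1_i\nu^2_i,
\]
the first three terms counting the free off-diagonal data and the last subtracting the codimension imposed by (a) at each vertex.

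Finally, to upgrade ``each fiber is an affine space of the right dimension'' to ``$\rho$ is a vector bundle'', I would observe that the construction above is uniform: over the whole base, the off-diagonal parameter space forms a trivial vector bundle, the equation (a) defines a sub-bundle of affine spaces because its linear part has locally constant (indeed constant, by the surjectivity argument) rank, and choosing a linear splitting gives a local — in fact global, since everything is linear-algebraic over a fixed vector-space decomposition — trivialization. One subtlety is that $\mbf \Pi_{V,D^{\bullet};V^2}$ was defined with $V^2$ a \emph{fixed} subspace and $\underline{q^{-1}(D^2)}$ required to equal it on the nose, so there is no moduli of the subspace to worry about here; the affine-bundle structure is then immediate. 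The main obstacle I anticipate is verifying the surjectivity of the linear part of equation (a) onto $\bigoplus_i \Hom(V^1_i, V^2_i)$ carefully — i.e., that imposing the off-diagonal preprojective relation really does cut down the dimension by exactly $\sum_i \nu^1_i\nu^2_i$ and not less; this is where one must use that $X^1$ is ``stable'' ($X^1(i)$ injective) and $X^2$ is an honest point of $\mbf \Lambda_{V^2,D^2}$, exactly as in the analogous dimension count for Nakajima quiver varieties (cf.\ (\ref{dim-L})), and I would model the argument on Nakajima's transversality computation in \cite{Nakajima94}.
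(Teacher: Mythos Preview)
Your approach is essentially the same as the paper's: identify the fiber with the off-diagonal data $(x^{V^2V^1}, p^{V^2D^1}, q^{D^2V^1})$ subject to the $(V^1\!\to\!V^2)$-block of the preprojective relation, then show this relation cuts out a subspace of codimension $\sum_i \nu^1_i\nu^2_i$. One correction: the off-diagonal preprojective relation
\[
\sum_{h:h''=i}\epsilon(h)\bigl(x_h^{V^2V^2}x_{\bar h}^{V^2V^1}+x_h^{V^2V^1}x_{\bar h}^{V^1V^1}\bigr)-p_i^{V^2D^1}q_i^{D^1V^1}=0
\]
is \emph{homogeneous} linear in the unknowns (since $p^{V^2D^2}=0$ kills any would-be constant term), so the fiber is literally the kernel of a linear map $f$ and $\rho$ is a vector bundle on the nose --- no affine-bundle detour is needed. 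For the surjectivity of $f$ that you flag as the main obstacle, the paper (following Malkin~\cite{M03}) computes $(\operatorname{im} f)^{\perp}$ under the trace pairing and shows that any $q^{V^1V^2}$ in this perpendicular space must have image in $\ker X^1(i)$ for every $i$, hence vanish by stability of $X^1$; this is exactly the Nakajima-style transversality argument you anticipated.
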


\begin{proof}
The proof is the same as the proof of Proposition 2.7 in ~\cite{M03}. By keeping in mind the result (\ref{nilpotent}), 
the fibre of $\rho$ under a fixed  point $(X^1, X^2)$ is isomorphic to the  vector subspace in 
$\oplus_{h\in H} \Hom(V^1_{h'}, V^2_{h''}) \oplus \Hom( D^1, V^2) \oplus \Hom(V^1, D^2)$
consisting of all elements
$(x^{V^2V^1}, p^{V^2D^1},q^{D^2V^1})$ such that 
\begin{equation}
\label{tensor-condition-B}
\sum_{h\in H: h''=i} \epsilon(h) (x_h^{V^2V^2} x_{\bar h}^{V^2V^1}+ x_h^{V^2V^1} x^{V^1V^1}_{\bar h})-
p_i^{V^2D^1} q^{D^1V^1}_i =0,\quad \forall i\in I.
\end{equation}
So the fibre $\rho^{-1}(X^1,X^2)$ is isomorphic to the kernel of  the following linear map
\[
f: \oplus_{h\in H} \Hom(V^1_{h'}, V^2_{h''}) \oplus \Hom( D^1, V^2) \oplus \Hom(V^1, D^2)\to \Hom(V^1, V^2)
\]
given by
\[
(x^{V^2 V^1}, p^{V^2D^1},q^{D^2V^1})\mapsto
\left (\sum_{h\in H: h''=i} \epsilon(h) (x_h^{V^2V^2} x_{\bar h}^{V^2V^1}+ x_h^{V^2V^1} x^{V^1V^1}_{\bar h})-
p_i^{V^2D^1} q^{D^1V^1}_i |i\in I \right ).
\]
The condition that $X^1$ is in $\mbf L^s_{V^1,D^1}$ implies that $f$ is surjective.  

Indeed,
let us consider the trace map 
\[
\tr:  \Hom(V^1, V^2) \times \Hom(V^2,V^1) \to \C,\quad
( p^{V^2V^1}, p^{V^1 V^2})\mapsto \sum_{i\in I} \tr( p^{V^2V^1}_ip^{V^1 V^2}_i),
\]
where  $\tr( p^{V^2V^1}_ip^{V^1 V^2}_i)$ is the trace of the endomorphism.
Let $f^{\perp}$ be the perpendicular space of $\im (f)$ with respect to the trace map $\tr$.
Given any $q^{V^1V^2}$ in $f^{\perp}$, we have 
\[
\sum_{h\in H: h''=i} \epsilon(h) (\tr ( x_h^{V^2V^2} x_{\bar h}^{V^2V^1}q_i^{V^1V^2} )+ \tr ( x_h^{V^2V^1} x^{V^1V^1}_{\bar h}q_i^{V^1V^2}))-
\tr( p_i^{V^2D^1} q^{D^1V^1}_iq_i^{V^1V^2})=0,
\]
for any triple $(x^{V^2V^1}, p^{V^2D^1},q^{D^2V^1})$ and $i\in I$. This implies that 
\[
\tr ( x_h^{V^2V^2} x_{\bar h}^{V^2V^1}q_i^{V^1V^2} )=0,\quad
 \tr ( x_h^{V^2V^1} x^{V^1V^1}_{\bar h}q_i^{V^1V^2})=0,\quad
 \tr( p_i^{V^2D^1} q^{D^1V^1}_iq_i^{V^1V^2})=0,
\]
for any triple $(x^{V^2V^1}, p^{V^2D^1},q^{D^2V^1})$, any $i\in I$ and $h\in H$ such that $h''=i$.
Since $x^{V^2V^1}$ and $p^{V^2D^1}$ are arbitrary, the last two equations imply that 
\[
x^{V^1V^1}_{\bar h}q_i^{V^1V^2}=0 \quad \mbox{and}\quad
 q^{D^1V^1}_iq_i^{V^1V^2}=0, \quad \forall i\in I, h\in H \; \mbox{such that }\; h''=i.
\]
This is to say that $\im(q^{V^1V^2}_i)\subseteq \ker X^1(i)$ for any $i\in I$. But the fact that  $X^1\in \mbf L^s_{V,D}$ implies that
$\im (q_i^{V^1V^2})=0$ for any $i\in I$. In other words, $f^{\perp}=\{0\}$. Since the trace map $\tr$ is non degenerate,  $f$ is surjective.

It is obvious that the dimension of the kernel of $f$ is exactly  (\ref{dim-rho}). The lemma follows.
\end{proof}

Given any variety $\mbf \Pi$, we denote by $\Irr \mbf \Pi$ the set of all irreducible components of $\mbf \Pi$.

\begin{prop} 
\label{property-rho}
The following statements hold.
\begin{enumerate}
\item $\mbf \Pi_{V, D^{\bullet}; V^2}$ has pure dimension of dimension  $\frac{1}{2} \dim \E(V, D) -\sum_{i\in I} \nu^1_i\nu^2_i$. Moreover, 
         $\# \Irr \mbf \Pi_{V, D^{\bullet}; V^2} =\# \Irr ( \mbf L^s_{V^1, D^1}\times \mbf L_{V^2, D^2})$.
\item $\mbf \Pi_{V, D^{\bullet};\nu^2}$ has pure dimension of dimension $\frac{1}{2} \dim \E(V, D)$.
\item $\mbf \Pi_{V, D^{\bullet}}$  has   pure dimension of dimension $\frac{1}{2} \dim \E(V, D)$.
\item $\# \Irr \mbf \Pi_{V, D^{\bullet}} =\sum_{V^1, V^2} \# \Irr ( \mbf   L^s_{V^1,D^1} \times \mbf  L_{V^2,D^2})$, 
         where the sum runs over the representatives $(V^1, V^2)$ of $(\nu^1, \nu^2)$ such that $\nu^1+\nu^2=\nu$.
\end{enumerate}
\end{prop}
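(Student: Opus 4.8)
The plan is to deduce all four statements from Lemma~\ref{dimrho} together with the stratification of $\mbf \Pi_{V, D^{\bullet}}$ already set up in the excerpt, and the dimension and purity facts (\ref{dim-L}) for $\mbf L_{V, D}$ and $\mbf L^s_{V, D}$. I would prove (1) first. Since $\rho$ is a vector bundle (hence has irreducible, equidimensional fibers of the dimension in (\ref{dim-rho})) over the base $\mbf L^s_{V^1, D^1} \times \mbf L_{V^2, D^2}$, which is pure of dimension $\frac12 \dim \E(V^1, D^1) + \frac12 \dim \E(V^2, D^2)$ by (\ref{dim-L}), the total space $\mbf \Pi_{V, D^{\bullet}; V^2}$ is pure of dimension equal to the sum of the base dimension and the fiber dimension (\ref{dim-rho}), and its irreducible components correspond bijectively to those of the base via taking closures of preimages. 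The only real content is the bookkeeping: one must check that
\[
\tfrac12 \dim \E(V^1, D^1) + \tfrac12 \dim \E(V^2, D^2) + \Big( \sum_{h\in H} \nu^1_{h'}\nu^2_{h''} + \sum_{i} d^1_i\nu^2_i + \sum_i \nu^1_i d^2_i - \sum_i \nu^1_i\nu^2_i \Big) = \tfrac12 \dim \E(V, D) - \sum_i \nu^1_i\nu^2_i.
\]
This is a direct expansion: $\dim \E(V, D)$ splits as $\dim\E(V^1,D^1) + \dim\E(V^2,D^2)$ plus the cross terms $\sum_h(\nu^1_{h'}\nu^2_{h''} + \nu^2_{h'}\nu^1_{h''}) + \sum_i(d^1_i\nu^2_i + \nu^1_i d^2_i + d^2_i\nu^1_i + \nu^2_i d^1_i)$, and using $\bar\empty: H\to H$ to see $\sum_h \nu^2_{h'}\nu^1_{h''} = \sum_h \nu^1_{h'}\nu^2_{h''}$, everything matches after collecting the $\sum_i \nu^1_i\nu^2_i$ terms. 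The statement $\#\Irr \mbf \Pi_{V, D^{\bullet}; V^2} = \#\Irr(\mbf L^s_{V^1,D^1}\times \mbf L_{V^2,D^2})$ is then immediate from the vector bundle structure.

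For (2), I would observe that $\mbf \Pi_{V, D^{\bullet}; \nu^2}$ is fibered over the (partial) flag variety of $I$-graded subspaces $V^2\subseteq V$ of dimension $\nu^2$, with fiber over each point $\mbf \Pi_{V, D^{\bullet}; V^2}$; concretely, $\mbf \Pi_{V, D^{\bullet};\nu^2}$ is the total space of a fibration with smooth connected base the product of Grassmannians $\prod_i \mathrm{Gr}(\nu^2_i, \nu_i)$, of dimension $\sum_i \nu^1_i\nu^2_i$, and fibers pure of dimension $\frac12\dim\E(V,D) - \sum_i\nu^1_i\nu^2_i$ by (1). Hence $\mbf \Pi_{V, D^{\bullet};\nu^2}$ is pure of dimension $\frac12\dim\E(V,D)$. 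Alternatively, and perhaps more cleanly, one can invoke the $\G_V$-action: $\mbf \Pi_{V, D^{\bullet};\nu^2}$ is the image of $\G_V \times \mbf \Pi_{V, D^{\bullet}; V^2}$ under the action map, with fibers of pure dimension $\dim\G_V - \dim\mathrm{Stab} = \dim\G_V - \dim(\text{parabolic stabilizing }V^2)$, which again adds up correctly. I would use whichever of these is cleanest; the flag-bundle version avoids stabilizer computations.

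For (3) and (4): $\mbf \Pi_{V, D^{\bullet}} = \bigsqcup_{\nu^2} \mbf \Pi_{V, D^{\bullet};\nu^2}$ is a finite stratification into locally closed pieces, each pure of the same dimension $\frac12\dim\E(V,D)$ by (2). Purity of a variety that is a finite union of locally closed subsets all of the same pure dimension follows once one knows that no stratum is contained in the closure of a union of others in a way that would lower-dimensionalize it — but since every stratum has the same dimension $\frac12\dim\E(V,D)$, the closure of each stratum is a union of irreducible components of that dimension, and every point of $\mbf \Pi_{V, D^{\bullet}}$ lies in some stratum hence in some such component; thus $\mbf \Pi_{V, D^{\bullet}}$ is pure of dimension $\frac12\dim\E(V,D)$, giving (3). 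For (4), the irreducible components of $\mbf \Pi_{V, D^{\bullet}}$ are exactly the closures of the irreducible components of the individual strata $\mbf \Pi_{V, D^{\bullet};\nu^2}$ (no component of a lower stratum's closure can engulf a component of another stratum because they all have the same dimension), and $\#\Irr\mbf \Pi_{V, D^{\bullet};\nu^2} = \#\Irr\mbf \Pi_{V, D^{\bullet}; V^2} = \#\Irr(\mbf L^s_{V^1,D^1}\times\mbf L_{V^2,D^2})$ by (1) (the Grassmannian base being connected, passing from $V^2$-fixed to $\nu^2$-fixed does not change the component count); summing over $\nu^2$, equivalently over representatives $(V^1, V^2)$ of $(\nu^1,\nu^2)$ with $\nu^1+\nu^2=\nu$, gives (4). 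The main obstacle I anticipate is not conceptual but the careful justification that the closures of strata of equal dimension produce exactly one irreducible component each and do not merge or absorb one another — this needs the equidimensionality established in (2) plus the elementary fact that an irreducible locally closed subset of dimension $d$ sitting inside a space all of whose points lie on $d$-dimensional components must itself be open dense in one such component; I would state this as a small lemma or cite the analogous argument in \cite{M03} or \cite{Nakajima01}.
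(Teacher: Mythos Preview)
Your proposal is correct and follows essentially the same route as the paper: (1) via the vector bundle $\rho$ over $\mbf L^s_{V^1,D^1}\times\mbf L_{V^2,D^2}$ and (\ref{dim-L}), (2) via the fibration $X\mapsto \underline{q^{-1}(D^2)}$ over the product Grassmannian $\prod_i \mathrm{Gr}(\nu^2_i,V_i)$ of dimension $\sum_i\nu^1_i\nu^2_i$, and (3)--(4) from the stratification $\mbf\Pi_{V,D^{\bullet}}=\sqcup_{\nu^2}\mbf\Pi_{V,D^{\bullet};\nu^2}$ into equidimensional locally closed pieces. The paper's proof is simply terser, leaving the dimension bookkeeping and the component-counting argument you spell out to the reader.
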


\begin{proof}
The first statement follows from the fact that $\rho$ is a vector bundle and that $\mbf L_{V, D}$ and $\mbf L^s_{V, D}$ are  of pure dimension. 
The dimension of $\mbf \Pi_{V, D^{\bullet}; V^2}$ can be computed by using (\ref{dim-L}) and (\ref{dim-rho}).

Let $\mrm{Gr}(\nu^2, V)=\times_{i\in I} \mrm{Gr}(\nu^2_i, V_i)$ be the product of the Grassmannian of $\nu^2_i$-dimensional vector subspaces in $V_i$ for $i\in I$.
The assignment $X\mapsto \underline {q^{-1}(D^2)}$ defines a surjective morphism 
$\mbf \Pi_{V, D^{\bullet}; \nu^2}\to \mrm{Gr}(\nu^2,V)$. It is clear that the fibre  of the morphism at $V^2$ is $\mbf \Pi_{V, D^{\bullet}; V^2}$ and the dimension
of $\mrm{Gr}(\nu^2,V)$ is $\sum_{i\in I} \nu^1_i\nu^2_i$. 
The second statement then follows. 

Since $\mbf \Pi_{V, D^{\bullet}}= \sqcup_{\nu^2} \mbf \Pi_{V, D^{\bullet}; \nu^2}$, the third statement follows from the second one. So is the forth statement.
The proposition is proved.
\end{proof}

\subsection{Finer structure on $\mbf \Pi_{V, D^{\bullet}}$}

For any $i\in I$ and $n\in \mbb N$, let 
\[
_{i, n}\mbf \Pi_{V, D^{\bullet}}=
\{ X\in \mbf \Pi_{V, D^{\bullet}}| \dim V_i/ (i)X=n\},
\]
where $(i)X$ is defined in (\ref{xi}). Suppose that 
the vector subspace $W$ in $V$ has dimension $\nu-ni$.
Denote by $\IHom (V, W)$ the subset of $\Hom(V, W)$ consisting of injective maps.
Let 
$\mbf \Pi$ be the subvariety of $_{i,n}\mbf \Pi_{V, D^{\bullet}} \times \, _{i,0} \mbf \Pi_{W, D^{\bullet}}\times \IHom(V, W)$ consisting of all triples 
$(X, X^1, R)$ such that
\[
R_{h''} X^1_h=X_hR_{h'}, \forall h \in \tilde \Gamma,
\]
where $R_{h'}$ and $R_{h''}$ are understood to be identity morphism if $h'$ or $h''$ are in $I^+$.
Then we have a diagram
\begin{equation}
\label{finer-action}
\begin{CD}
_{i, n}\mbf \Pi_{V, D^{\bullet}} @<\pi_1<< \mbf \Pi @>\pi_2 >> _{i, 0}\mbf \Pi_{W, D^{\bullet}},
\end{CD}
\end{equation}
where $\pi_1$ and $\pi_2$ are the obvious projections.
Moreover, $\pi_2$ factors through $ _{i,0} \mbf \Pi_{W, D^{\bullet}}\times \IHom(V, W)$ via the obvious projections
$\pi_2': (X, X^1, R)\mapsto (X^1, R)$ and $\pi_2'': (X^1, R)\to X^1$.

\begin{lem} 
\label{finer-lem}
We have 
\begin{enumerate}
\item $\pi_1$ is a principal  $\G_W$-bundle.

\item $\pi_2'$ is a vector bundle of dimension $\sum_{h\in H: h'=i} n \nu_{h''} + n d_i -n (\nu_i -n)$.

\item $\pi_2''$ is a trivial bundle of dimension $\dim \G_W+ n(\nu_i-n)$.

\end{enumerate}
\end{lem}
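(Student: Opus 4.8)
The plan is to analyze the two projections $\pi_1$ and $\pi_2'$ separately, and then treat $\pi_2''$ by a direct fibre count.

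\textbf{Step 1: $\pi_1$ is a principal $\G_W$-bundle.} First I would observe that given $X\in {}_{i,n}\mbf\Pi_{V,D^\bullet}$, a point of the fibre $\pi_1^{-1}(X)$ is a pair $(X^1,R)$ with $R\in\IHom(V,W)$ intertwining $X$ and $X^1$; since $R$ is injective and $\dim V=\dim W$ would fail --- wait, rather $R\colon V\to W$ with $\dim W=\nu-ni<\dim V$, so $R$ is surjective (the roles of injective/surjective need to be read off from the condition $\dim V_i/(i)X=n$: the natural map is a quotient $V\twoheadrightarrow W=V/\!\sim$). In any case, fixing one choice $(X^1_0,R_0)$ in the fibre, any other choice differs by an automorphism of $W$ compatible with $X^1_0$, but more precisely $\G_W$ acts on $\mbf\Pi$ by $g\cdot(X,X^1,R)=(X,{}^gX^1,g\circ R)$, and I claim this action is free and transitive on each fibre of $\pi_1$. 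Freeness is immediate since $g\circ R=R$ with $R$ having full rank forces $g=1$. Transitivity follows because any two surjections $V\to W$ with the same kernel (and here the kernel is forced to be $\ker(\text{the canonical map determined by }X)$) differ by an element of $\G_W$, and the intertwining property then transports $X^1$ accordingly. Local triviality follows from the local triviality of the frame bundle of a vector bundle (choosing local sections of $R$).

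\textbf{Step 2: $\pi_2'$ is a vector bundle of the stated dimension.} Here I would fix $(X^1,R)\in {}_{i,0}\mbf\Pi_{W,D^\bullet}\times\IHom(V,W)$ and describe the fibre $(\pi_2')^{-1}(X^1,R)$, i.e. all $X\in{}_{i,n}\mbf\Pi_{V,D^\bullet}$ with $R_{h''}X^1_h=X_hR_{h'}$. Choosing a splitting $V=W'\oplus K$ with $K=\ker R$ identified so that $R$ is the projection, the intertwining condition pins down the $W'$-to-$W'$ block of each $X_h$ as pulled back from $X^1$, and leaves free exactly the maps out of $K$, i.e. data in $\oplus_{h\in H:h'=i}\Hom(K_i,V_{h''})\oplus\Hom(K_i,D_i)$ modulo the constraints coming from membership in $\mbf\Lambda_{V,D}$ and nilpotency. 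A trace-pairing/surjectivity argument identical in spirit to the one in the proof of Lemma~\ref{dimrho} --- using that $X^1$ lies in the ${}_{i,0}$-stratum, which plays the role that $\mbf L^s$ played there --- shows the moment-map constraint is transverse, so the fibre is the kernel of a surjective linear map and its dimension is $\sum_{h\in H:h'=i} n\nu_{h''}+nd_i-n(\nu_i-n)$, where $K_i$ has dimension $n$ and the last term is the rank of the constraint. Nilpotency of $X$ is automatic from nilpotency of $X^1$ together with the block-triangular shape, by~(\ref{nilpotent}).

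\textbf{Step 3: $\pi_2''$.} The map $\pi_2''\colon {}_{i,0}\mbf\Pi_{W,D^\bullet}\times\IHom(V,W)\to{}_{i,0}\mbf\Pi_{W,D^\bullet}$ is projection onto the first factor, so its fibre is $\IHom(V,W)$, an open dense subset of the affine space $\Hom(V,W)=\oplus_{i\in I}\Hom(V_i,W_i)$; but I should instead present this intrinsically via the surjection picture, where the relevant space of surjections $V\twoheadrightarrow W$ with prescribed kernel is a $\G_W$-torsor over a point times the $n(\nu_i-n)$-dimensional space of choices of complement --- giving total dimension $\dim\G_W+n(\nu_i-n)$ --- and it is manifestly a trivial bundle since the second factor is just a product. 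The dimension bookkeeping should be cross-checked against Proposition~\ref{property-rho}: composing the three maps, $\dim\mbf\Pi={}\dim\,_{i,0}\mbf\Pi_{W,D^\bullet}+\text{(Step 2 dim)}+\text{(Step 3 dim)}$, and also $\dim\mbf\Pi=\dim\,_{i,n}\mbf\Pi_{V,D^\bullet}+\dim\G_W$ from Step 1, which forces the expected relation between $\dim\,_{i,n}\mbf\Pi_{V,D^\bullet}$ and $\dim\,_{i,0}\mbf\Pi_{W,D^\bullet}$; verifying this consistency is a good sanity check but not part of the proof.

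\textbf{Main obstacle.} The routine parts are $\pi_1$ (standard torsor argument) and $\pi_2''$ (trivial product). The one genuinely substantive point is the surjectivity/transversality claim in Step 2: one must show that restricting to the ${}_{i,0}$-stratum of $\mbf\Pi_{W,D^\bullet}$ makes the relevant component of the moment map surjective onto $\oplus_{i\in I}\Hom(K_i,W_i)$ (or the appropriate target), so that the fibre is a kernel of constant rank and hence a vector bundle rather than merely a constructible family. I expect this to go through by exactly the perpendicular-space-plus-nondegenerate-trace argument used in Lemma~\ref{dimrho}, with the hypothesis ``$\ker X(i)$ small'' replacing ``$X(i)$ injective'', but getting the indices and the role of the framing maps $p,q$ exactly right is where care is needed.
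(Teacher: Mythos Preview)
Your approach matches the paper's, which simply declares (1) and (3) clear and refers (2) to the argument of Lemma~\ref{dimrho} and \cite[12.5]{Lusztig91}; you have in fact supplied more detail than the paper does. Two notational points to straighten out before writing it up: the paper's $\IHom(V,W)$ is evidently a typo for $\IHom(W,V)$ (injections of the smaller space into the larger, as in \cite[12.5]{Lusztig91}), which resolves your injective/surjective hesitation in Step~1; and in Step~2 the ${}_{i,0}$ hypothesis is that $(i)X^1$ (the map \emph{into} $W_i$) is surjective, not a condition on $\ker X^1(i)$---it is exactly this surjectivity that makes the constraint map $\Hom(K,\,\oplus_{h'=i}W_{h''}\oplus D_i)\to\Hom(K,W_i)$ surjective, since that map is post-composition with $(i)X^1$.
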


\begin{proof}
The first and third statements are clear. The second one follows from  an argument similar to the proof of Lemma ~\ref{dimrho} and ~\cite[12.5]{Lusztig91}.
\end{proof}

The following proposition follows from Lemma ~\ref{finer-lem}.

\begin{prop} 
\label{pi-induction}
Suppose that $\dim W + ni=\dim V$. 
The assignment $Y\mapsto \pi_2(\pi_1^{-1} (Y))$ defines a bijection
\begin{align}
\te_i^{\max} : \Irr _{i, n} \mbf \Pi_{V, D^{\bullet}} \to \Irr _{i, 0}\mbf \Pi_{W, D^{\bullet}}.
\end{align}
The assignment $ Y' \mapsto \pi_1(\pi_2^{-1}(Y')) $ defines a bijection 
\begin{align}
\label{fn}
\tf_i^n:   \Irr _{i,0}\mbf \Pi_{W, D^{\bullet}} \to \Irr _{i, n}\mbf \Pi_{V, D^{\bullet}}.
\end{align}
Moreover the maps $\te_i^{\max}$ and $\tf_i^{n}$  are inverse to each other.
\end{prop}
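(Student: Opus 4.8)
The plan is to establish the two bijections and their mutual inverseness entirely by unwinding the diagram \eqref{finer-action} together with the bundle structures recorded in Lemma~\ref{finer-lem}. First I would use statement~(1) of Lemma~\ref{finer-lem}: since $\pi_1$ is a principal $\G_W$-bundle, it is in particular a smooth surjective morphism with connected fibers, so pulling back along $\pi_1$ induces a bijection $\Irr\, {}_{i,n}\mbf \Pi_{V, D^{\bullet}} \to \Irr \mbf \Pi$ given by $Y \mapsto \overline{\pi_1^{-1}(Y)}$ (the closure of the preimage, though $\pi_1^{-1}(Y)$ is already closed in $\mbf\Pi$ when $Y$ is closed). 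Next, the composite $\pi_2 = \pi_2'' \circ \pi_2'$ is, by statements~(2) and~(3), a composition of a vector bundle and a trivial bundle, hence again a smooth morphism with irreducible (in fact affine-space) fibers; therefore $Y' \mapsto \overline{\pi_2^{-1}(Y')}$ is a bijection $\Irr\, {}_{i,0}\mbf \Pi_{W, D^{\bullet}} \to \Irr \mbf \Pi$. Composing the first bijection with the inverse of the second gives $\te_i^{\max}$, and composing in the other order gives $\tf_i^n$; by construction these are inverse to each other, which is exactly the last sentence of the proposition.

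The key steps, in order, are: (i) record the general fact that for a morphism $f\colon \mbf A \to \mbf B$ that is smooth (or merely flat with irreducible equidimensional fibers) and surjective with all fibers irreducible of constant dimension, the assignment $Z \mapsto \overline{f^{-1}(Z)}$ is a bijection $\Irr \mbf B \to \Irr \mbf A$ with inverse $Z' \mapsto \overline{f(Z')}$; (ii) apply (i) to $\pi_1$ using Lemma~\ref{finer-lem}(1), noting a principal $\G_W$-bundle has irreducible fibers because $\G_W=\prod_i \mathrm{GL}(W_i)$ is connected; (iii) apply (i) to $\pi_2'$ and to $\pi_2''$ using Lemma~\ref{finer-lem}(2),(3), then compose to get the statement for $\pi_2$; (iv) assemble $\te_i^{\max} = (\text{pullback along }\pi_2)^{-1}\circ(\text{pullback along }\pi_1)$ and check that under this definition $\te_i^{\max}(Y) = \pi_2(\pi_1^{-1}(Y))$ up to closure — here one uses that $\pi_1^{-1}(Y)$ is irreducible and $\pi_2$ restricted to it is dominant onto its image, so $\overline{\pi_2(\pi_1^{-1}(Y))}$ is an irreducible component of ${}_{i,0}\mbf\Pi_{W,D^\bullet}$ (dimension count from Lemma~\ref{finer-lem} guarantees it is a full component, not a proper subvariety); (v) do the symmetric computation for $\tf_i^n$ and observe the two are inverse by the associativity of composition of the bijections through $\Irr\mbf\Pi$.

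The main obstacle I anticipate is not any single deep point but rather the bookkeeping needed to be careful that $\pi_2(\pi_1^{-1}(Y))$ — defined as a set-theoretic image, as in the statement — really coincides with the Zariski closure $\overline{\pi_2(\pi_1^{-1}(Y))}$ at the level of irreducible components, i.e.\ that taking the (possibly non-closed) image does not lose or merge components. This is where the constancy of fiber dimensions in Lemma~\ref{finer-lem} is essential: a dimension count shows $\dim \pi_1^{-1}(Y) = \dim Y + \dim\G_W$ and $\dim \pi_2(\pi_1^{-1}(Y)) = \dim\pi_1^{-1}(Y) - (\dim\pi_2'\text{-fiber}) - (\dim\pi_2''\text{-fiber})$, and comparing with the known pure dimensions of ${}_{i,n}\mbf\Pi_{V,D^\bullet}$ and ${}_{i,0}\mbf\Pi_{W,D^\bullet}$ (from Proposition~\ref{property-rho}, which forces both strata to be pure of dimension $\tfrac12\dim\E(\cdot,\cdot)$ up to the correction terms) shows the image has exactly the dimension of a component, so its closure is a component. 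I would also need to check the maps are well-defined on components rather than merely on points, which again follows from irreducibility of fibers. Everything else is a formal consequence of the fiber-bundle structure already proved.
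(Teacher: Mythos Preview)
Your proposal is correct and follows essentially the same approach as the paper, which simply records that the proposition follows from Lemma~\ref{finer-lem}. You have carefully spelled out the standard argument that smooth surjective morphisms with irreducible fibers induce bijections on irreducible components, which is precisely what the paper leaves implicit.
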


\subsection{Crystal structure on  the $\mbf \Pi_{V,D^{\bullet}}$'s}
\label{IrrD}

For each $\nu\in \mbb N[I]$, we fix  an $I$-graded vector space $V(\nu)$ of dimension $\nu$.
Let 
\[
\Irr (D^{\bullet})=\sqcup_{\nu\in \mbb N[I]} \Irr \mbf \Pi_{V(\nu), D^{\bullet}}.
\]
For any $Y\in \mbf \Pi_{V, D^{\bullet}}$ and $i\in I$, we define 
\begin{align*}
& \wt(Y) = \lambda -\nu \in \X,\\
& \e_i(Y) =n,\quad \mbox{if $Y\cap\, _{i, n}\mbf \Pi_{V, D^{\bullet}}$ is open dense in $Y$},\\
& \varphi_i=\e_i(Y)+ (i, \wt(Y)),
\end{align*}
where $\lambda \in \X$ is a fixed element such that $\lambda(i)=\dim D_i$ for all $i\in I$, $\nu\in \X$ is via the imbedding 
$\mbb N[I]\to \X$.
We also define
\begin{align*}
\tf_i(Y) =\tf_i^{\e_i(Y)+1} \te^{\max}_i(Y) 
\quad\mbox{and}\quad
\te_i(Y) =
\begin{cases}
\tf_i^{\e_i(Y)-1} \te_i^{\max} (Y) & \mbox{if}\; \e_i(Y)>0,\\
 0 &\mbox{otherwise}.
 \end{cases}
\end{align*}
In this way, we have defined the following five maps on $\Irr (D^{\bullet})$:
\[
\wt: \Irr(D^{\bullet}) \to \X, \;
\e_i, \varphi_i: \Irr (D^{\bullet}) \to \mbb Z
\; \mbox{and}\;
\te_i, \tf_i: \Irr (D^{\bullet})\to \Irr(D^{\bullet})\sqcup \{0\},\quad \forall i\in I.
\]

The following lemma follows from a straightforward checking.

\begin{lem}
\label{crystal-pi}
The data  $(\Irr (D^{\bullet}), \mathrm{wt}, \e_i, \varphi_i, \te_i, \tf_i)_{i\in I}$ form a crystal defined in Section ~\ref{crystal}.
Moreover, the crystal $\Irr (D^{\bullet})$ is generated by the set of all elements $Y$ such that $\e_i(Y)=0$ for all $i\in I$.
\end{lem}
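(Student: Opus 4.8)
The plan is to derive all the crystal axioms (C1)--(C4) from Proposition~\ref{pi-induction}, which for every $V$ and $W$ with $\dim W+ni=\dim V$ supplies mutually inverse bijections $\te_i^{\max}\colon \Irr {}_{i,n}\mbf\Pi_{V,D^{\bullet}}\to \Irr {}_{i,0}\mbf\Pi_{W,D^{\bullet}}$ and $\tf_i^{n}\colon \Irr {}_{i,0}\mbf\Pi_{W,D^{\bullet}}\to \Irr {}_{i,n}\mbf\Pi_{V,D^{\bullet}}$. The first thing I would record is the bookkeeping that makes $\te_i$ and $\tf_i$ well defined on $\Irr(D^{\bullet})$: since each $\mbf\Pi_{V,D^{\bullet}}$ has pure dimension by Proposition~\ref{property-rho}, the stratification $\mbf\Pi_{V,D^{\bullet}}=\sqcup_{n\geq 0}\,{}_{i,n}\mbf\Pi_{V,D^{\bullet}}$ identifies $\Irr {}_{i,n}\mbf\Pi_{V,D^{\bullet}}$ with the set of those $Y\in\Irr\mbf\Pi_{V,D^{\bullet}}$ for which $Y\cap{}_{i,n}\mbf\Pi_{V,D^{\bullet}}$ is open dense in $Y$, that is, with $\e_i(Y)=n$. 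In particular $\tf_i$ sends a component over $V(\nu)$ to a component over $V(\nu+i)$ and $\te_i$ (when nonzero) to one over $V(\nu-i)$.

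Next I would check the axioms. Axiom (C1) holds by the very definition of $\varphi_i$, and (C4) is vacuous since $\varphi_i$ only takes integer values. For (C2$'$), given $Y\in\Irr\mbf\Pi_{V(\nu),D^{\bullet}}$ with $\e_i(Y)=n$, the component $\tf_i Y=\tf_i^{n+1}\te_i^{\max}(Y)$ lies in $\Irr {}_{i,n+1}\mbf\Pi_{V(\nu+i),D^{\bullet}}$, so $\wt(\tf_i Y)=\lambda-(\nu+i)=\wt(Y)-\alpha_i$ and $\e_i(\tf_i Y)=n+1=\e_i(Y)+1$; together with (C1) and $(i,\alpha_i)=2$ this gives $\varphi_i(\tf_i Y)=\varphi_i(Y)-1$. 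The verification of (C2) for $\te_i Y=\tf_i^{n-1}\te_i^{\max}(Y)$ when $n>0$ is symmetric. For (C3) the crux is the two identities $\te_i^{\max}(\te_i Y)=\te_i^{\max}(Y)$ (when $\e_i(Y)>0$) and $\te_i^{\max}(\tf_i Y')=\te_i^{\max}(Y')$, each of which is immediate once one observes that $\te_i^{\max}$ and $\tf_i^{k}$ are inverse to each other and that the dimension vectors involved match; substituting these back into the definitions of $\te_i$ and $\tf_i$ and invoking the inverse property once more yields $\tf_i(\te_i Y)=Y$ whenever $\e_i(Y)>0$ and $\te_i(\tf_i Y')=Y'$ in all cases, which is exactly (C3).

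For the last assertion I would argue by induction on $|\nu|=\sum_{i\in I}\nu_i$ that every $Y\in\Irr\mbf\Pi_{V(\nu),D^{\bullet}}$ lies in the subcrystal generated by $S=\{Y'\mid \e_i(Y')=0 \text{ for all } i\in I\}$; in fact the argument produces $Y$ from an element of $S$ by applying a sequence of operators $\tf_i$. If $\nu=0$ then $\mbf\Pi_{V(0),D^{\bullet}}$ consists of a single point, for which $\e_i=0$ for all $i$, so $Y\in S$. If $\nu\neq 0$ and $\e_i(Y)=0$ for every $i$, then again $Y\in S$. Otherwise pick $i$ with $\e_i(Y)=n>0$; then $\te_i Y\in\Irr\mbf\Pi_{V(\nu-i),D^{\bullet}}$ is nonzero, hence lies in the generated subcrystal by the inductive hypothesis, and so does $Y=\tf_i(\te_i Y)$ by (C3). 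Since the lemma is flagged as ``a straightforward checking'', I do not expect a genuine obstruction; the only steps demanding care are the identification of $\Irr {}_{i,n}\mbf\Pi_{V,D^{\bullet}}$ with a subset of $\Irr\mbf\Pi_{V,D^{\bullet}}$ via pure-dimensionality and the consistent tracking of $\wt$, $\e_i$ and $\varphi_i$ through the bijections of Proposition~\ref{pi-induction}.
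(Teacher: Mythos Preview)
Your proposal is correct and carries out precisely the ``straightforward checking'' that the paper asserts but does not spell out; there is no separate proof in the paper to compare against. The only point one might mention additionally is that the identification of $\Irr {}_{i,n}\mbf\Pi_{V,D^{\bullet}}$ with the subset $\{Y:\e_i(Y)=n\}$ of $\Irr\mbf\Pi_{V,D^{\bullet}}$ (via taking closures) is indeed justified by the pure-dimensionality statement in Proposition~\ref{property-rho}, exactly as you indicate.
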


We set 
\[
\Irr (\mbf L^s_D) =\sqcup_{\nu\in \mbb N[I]} \Irr \mbf L^s_{V(\nu), D}
\quad \mbox{and} \quad 
 \Irr (\mbf L_D) =\sqcup_{\nu\in \mbb N[I]} \Irr \mbf L_{V(\nu), D}.
\]
On $\Irr(\mbf L^s_D)$ and $\Irr (\mbf L_D)$, one may define crystal structures in exactly the same way as the crystal structure on $\Irr (D^{\bullet})$.
Moreover, 
the crystal structure on $\Irr (\mbf L^s_D)$ is isomorphic to $B(\lambda)$, while $\Irr(\mbf L_D)$ isomorphic to $B(\lambda, \infty)$.
See ~\cite{KS97} and ~\cite{Nakajima94} for details.

Define a  map 
\begin{equation}
\label{psi}
\psi: \Irr (\mbf L^s_{D^1}) \otimes \Irr (\mbf L_{D^2}) \to \Irr (D^{\bullet})
\end{equation}
as follows.
For any $Y^1\in \Irr \mbf L^s_{V^1, D^1}$ and $Y^2\in \Irr \mbf  L_{V^2, D^2}$, we define $\psi(Y^1\otimes Y^2)$ to be the irreducible component 
$Y\in \Irr (\mbf \Pi_{V, D^{\bullet}})$ such that $Y\cap \mbf \Pi_{V, D^{\bullet}; V^2}=\rho^{-1}(Y^1\times  Y^2)$ where $\rho$ is defined in (\ref{rho}).
By Proposition ~\ref{property-rho} (4), the map $\psi$ is a bijection. Moreover,

\begin{thm}
\label{Pi-crystal}
 $\Irr (D^{\bullet})$ is isomorphic to $\Irr (\mbf L^s_{D^1})\otimes \Irr (\mbf L_{D^2})=B(\lambda^1)\otimes B(\lambda^2, \infty)$ as crystals via $\psi$, where $\lambda^1$ and $\lambda^2$ are fixed elements such that $\lambda^a(i) =\dim D_i^a$ for any $i\in I$ and $a=1, 2$.
\end{thm}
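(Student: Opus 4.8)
The plan is to prove that $\psi$ intertwines the five crystal maps. Since $\psi$ is already known to be a bijection of sets by Proposition \ref{property-rho}(4), and since the combinatorial tensor product crystal $B(\lambda^1)\otimes B(\lambda^2,\infty)$ is generated by highest-weight-like elements together with the operators $\tf_i$, the real content is to match the data $(\wt,\e_i,\varphi_i,\te_i,\tf_i)$ on both sides. First I would check the weight: if $Y=\psi(Y^1\otimes Y^2)$ with $Y^1\in\Irr\mbf L^s_{V^1,D^1}$ and $Y^2\in\Irr\mbf L_{V^2,D^2}$, then $\nu=\nu^1+\nu^2$, so $\wt(Y)=\lambda-\nu=(\lambda^1-\nu^1)+(\lambda^2-\nu^2)=\wt(Y^1)+\wt(Y^2)$, matching the tensor product rule; here $\lambda=\lambda^1+\lambda^2$ since $\dim D_i=\dim D^1_i+\dim D^2_i$.

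The heart of the argument is computing $\e_i$ on $Y=\psi(Y^1\otimes Y^2)$ in terms of $\e_i(Y^1)$ and $\e_i(Y^2)$, and showing the tensor product formula $\e_i(Y)=\max(\e_i(Y^1),\e_i(Y^2)-\wt_i(Y^1))$ holds, and likewise that $\te_i,\tf_i$ follow the tensor product case distinction governed by comparing $\varphi_i(Y^1)$ with $\e_i(Y^2)$. The key geometric fact I would establish is that for a generic $X\in Y\cap\mbf\Pi_{V,D^{\bullet};V^2}$, the quantity $\dim V_i/(i)X$ decomposes compatibly with the block structure \eqref{decom}: because $X^1\in\mbf L^s_{V^1,D^1}$ forces $X^1(i)$ injective (equivalently $(i)X^1$ surjective in the relevant sense — this is where the "$s$" superscript does its work), the cokernel of $(i)X$ on $V=V^2\oplus V^1$ is controlled: the $V^1$-part contributes nothing generically (mirroring $\varphi_i(Y^1)$, the "room" on the integrable factor $B(\lambda^1)$), while the $V^2$-part contributes $\e_i(Y^2)$. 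More precisely, I expect $\e_i(Y)$ equals $\e_i(Y^2)$ minus the amount of $V^2$-cokernel that gets "absorbed" by arrows from $V^1$ and from $D^1$, and a dimension count using Lemma \ref{dimrho} shows this absorbed amount is exactly $\min$ of the available quantities, yielding the $\max$ in the dual formulation. This is the standard mechanism by which Nakajima-type tensor product varieties realize tensor product crystals (compare \cite{Nakajima01, M03}), and I would follow that template, using the vector bundle structure of $\rho$ to transfer "open dense" conditions between $Y$ and $Y^1\times Y^2$.

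Having matched $\e_i$ and $\wt$, the formula $\varphi_i=\e_i+(i,\wt)$ is automatic, and the operators $\te_i,\tf_i$ on $\Irr(D^{\bullet})$ were \emph{defined} (in Section \ref{IrrD}) purely in terms of $\e_i$ via the maps $\te_i^{\max}$ and $\tf_i^n$ of Proposition \ref{pi-induction}; so once I know that under $\psi$ the stratification by $\e_i$ corresponds to the tensor product stratification, compatibility of $\te_i,\tf_i$ with the tensor product rule reduces to tracking how $\te_i^{\max}$ and $\tf_i^n$ interact with $\rho$ — i.e. whether raising/lowering on the $V^2$-block versus the $V^1$-block of $X$ corresponds to the case split $\varphi_i(Y^1)\gtrless\e_i(Y^2)$. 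I would verify this by noting $\tf_i^{\max}$ on the integrable factor $\Irr(\mbf L^s_{D^1})$ becomes $0$ once $\varphi_i(Y^1)=0$ (the highest-weight boundary of $B(\lambda^1)$), which is precisely when the tensor product rule forces the operator onto the second factor.

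The main obstacle will be the $\e_i$-computation: carefully pinning down, for a generic point of $Y$, how the cokernel of $(i)X$ splits between the two blocks when $D^1$-framing arrows and $V^1\to V^2$ arrows are present. This requires a semicontinuity / genericity argument showing that the generic value of $\dim V_i/(i)X$ over $\rho^{-1}(Y^1\times Y^2)$ is computed by the expected formula, rather than something smaller; I expect to handle it exactly as in the proof of Lemma \ref{dimrho}, replacing the surjectivity-of-$f$ argument there with an analogous rank computation for the map whose cokernel is $V_i/(i)X$, again crucially invoking $X^1\in\mbf L^s_{V^1,D^1}$.
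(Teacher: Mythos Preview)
Your overall plan---follow Nakajima's template for tensor product varieties and verify that $\psi$ intertwines the five crystal maps---is the same as the paper's, and your weight computation is fine. But the implementation you sketch has a real gap at the crucial step.

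The paper does \emph{not} compute $\e_i(Y)$ for a general $Y$ by splitting the cokernel of $(i)X$ between the two blocks; that splitting does not behave as simply as you suggest, because the off-diagonal piece $x^{V^2V^1}$ and the framing $p^{V^2D^1}$ mix the blocks. Instead, the paper first reduces to $Y$ with $\e_i(Y)=0$ (this is where Lemma~\ref{crystal-pi}, which says $\Irr(D^\bullet)$ is generated by such elements, is actually used) and then computes $\tf_i^r(Y)$ explicitly. The key technical device you are missing is a linear map
\[
c:\ker(i)X^1/\im X^1(i)\to V_i^2/\im(i)X^2
\]
built from the off-diagonal data, which fits into a short exact sequence governing $\ker(i)X/\im X(i)$; one then shows that for $\e_i(Y)=0$ one has $\e_i(Y^1)=0$ and $\dim\ker(c)=\wt_i(Y^1)-\e_i(Y^2)$. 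The second missing ingredient is an auxiliary fibre variety $\mbf\Pi_X=\{\tilde X\in\mbf\Pi_{V\oplus T,D^\bullet}:\tilde X|_V=X\}$ (with $\dim T=ri$) and a surjective linear map $\mbf\Pi_X\to\Hom(T,\ker(c))$, from which one reads off the explicit formula for $\tf_i^r(Y)$ in terms of $\tf_i$ applied to $Y^1$ or $Y^2$ according to whether $r\le\wt_i(Y^1)-\e_i(Y^2)$ or not. Without these two constructions your ``semicontinuity/genericity argument'' has nothing to bite on.

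There is also a conceptual slip: the condition $X^1\in\mbf L^s_{V^1,D^1}$ says $X^1(i)$ is injective, which is about the outgoing map $V^1_i\to D^1_i\oplus\bigoplus_{h'=i}V^1_{h''}$; this is \emph{not} the same as $(i)X^1$ being surjective, and it is the latter (incoming) map whose cokernel controls $\e_i$. Your parenthetical ``equivalently'' conflates the two. The stability of $X^1$ enters the argument not by killing a block of $\mathrm{coker}\,(i)X$ directly, but by forcing the surjectivity of the auxiliary map $\mbf\Pi_X\to\Hom(T,\ker(c))$, in the same spirit as (but not identical to) the surjectivity of $f$ in Lemma~\ref{dimrho}.
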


\begin{proof}
The proof is similar to the proof of Theorem 4.6 in ~\cite{Nakajima01}. 
We have to show that $\psi$ is compatible with the five maps on $\Irr (D^{\bullet})$ and $\Irr (\mbf L^s_{D^1})\otimes \Irr (\mbf L_{D^2})$, respectively.
It is obvious that $\wt(\psi(Y^1\otimes Y^2)) =\wt(Y^1) + \wt(Y^2)$. 

Choose a triple $(X, X^1, X^2)$ such that $\rho(X) =(X^1,X^2)$ in (\ref{rho}). Let us fix a decomposition $V=V^2\oplus V^1$. 
Then the canonical short exact sequence $0\to V^2\to V \to V^1 \to 0$ induces a complex
\begin{equation}
\label{tensor-thm-1}
\ker (i) X^2 / \im X^2(i) \to \ker (i) X /\im X(i) \overset{\tilde b}{\to} \ker (i) X^1/\im X^1(i).
\end{equation}
The condition (\ref{tensor-condition}) gives rise to a linear map 
\[
c: \ker (i) X^1 /\im X^1(i) \to V_i^2/\im (i) X^2,
\]
and $\tilde bc=0$. So $\tilde b$ factors through $\ker (c)$. Thus the complex (\ref{tensor-thm-1}) induces a complex
\begin{equation}
\label{tensor-thm-2}
0\to \ker (i) X^2/\im X^2(i) \to \ker (i) X/\im X(i) \to \ker (c)\to 0,
\end{equation}
which is indeed a short exact sequence.
From (\ref{tensor-thm-2}), one deduces the following lemma.

\begin{lem}
\label{tensor-thm-A}
Assume that  $\psi(Y) =Y^1\otimes Y^2$ satisfies $\e_i(Y) =0$, then 
$\e_i(Y^1) =0$ and $\dim \ker (c) = \wt_i (Y^1) -\e_i(Y^2)$.
\end{lem}

Suppose again that $\e_i(Y)=0$ and that  $X\in Y$ is a generic point.
Fix a vector space $T$ of dimension $ri$ and $\tilde V= V\oplus T$. We consider  the variety 
\[
\mbf \Pi_X= \{ \tilde X\in \mbf \Pi_{\tilde V, D^{\bullet}}: \tilde X|_ V=X\}.
\]
Just like (\ref{tensor-condition-B}), we deduce that $\mbf \Pi_X$ can be identified with the vector subspace in 
\[
\Hom( T, D_i) \oplus_{h\in H: h''=i} \Hom (T_i, V_{h'})
\] 
consisting of all vectors $(q^{DT}_i, x_{\bar h}^{VT})_{h\in H: h''=i}$ such that 
\begin{equation}
\label{tensor-thm-3}
\sum_{h\in H: h''=i} \epsilon_i(h) x_h x^{VT}_{\bar h} - p_i q_i^{DT}=0
\end{equation}
From the equation (\ref{tensor-thm-3}), one can deduce that the projection 
$q_i^{D^1T}+ \sum_{h\in H: h''=i} x^{V^1T}_{\bar h}: T\to D_i^1\oplus \oplus_{h\in H: h''=i} V_{h'}^1$ of $q^{DT}$ and $x_{\bar h}^{VT}$ to $D_i^1$ and 
$V_{h'}^1$, respectively,  has image contained in $\ker (i) X^1$. Moreover, 
the composition of this map with the projection from $\ker (i) X^1$ to $\ker (i) X^1/\im X^1(i)$ and the map $c$ vanishes. So  it gives rise to a linear map $ \phi:T\to \ker (c)$.
The assignment $(q^{DT}, x_{\bar h}^{VT}) \mapsto \phi$ then defines  a linear map
\[
f: \mbf \Pi_{X} \to \Hom (T, \ker (c)).
\]
Moreover, the map $f$ is surjective. In fact, for any $\phi\in \Hom (T, \ker (c))$, one may define an element in $\mbf \Pi_X$ as follows.
Fix a decomposition $\ker (i) X^1 = \im X^1(i) \oplus \ker (i) X^1/\im X^1(i)$. 
Let $p_i^{D^1T}$ be the composition of $\phi$ and $\ker (c) \to \ker (i) X^1/\im X^1(i) \to \ker (i) X^1 \to D_i^1$. Similar 
$x_{\bar h}^{V^1T}$ be the composition of $\phi$ and $\ker (c) \to \ker (i) X^1/\im X^1(i) \to \ker (i) X^1 \to V_{h'}^1$.
Then the condition (\ref{tensor-thm-3}) determines uniquely an element  $x_{\bar h}^{V^2T}$. Now choose an arbitrary element $p_i^{D^2T}$.
The element $(q^{DT}_i, x_{\bar h}^{VT})$ in $\mbf \Pi_{X}$ is thus obtained, moreover, the element gets sent to $\phi$ via $f$. Therefore $f$ is surjective.

Let $ \Hom (T, \ker (c))^1$ be the open dense subvariety of $ \Hom (T, \ker (c))$ consisting of all elements of maximal rank.
Since $f$ is surjective, we have that the inverse image $\mbf \Pi_X^1$ of $ \Hom (T, \ker (c))^1$ under $f$ is open dense in $\mbf \Pi_X$.
Let $\mbf \Pi_X^0$ be the subvariety in $\mbf \Pi_X^1$ such that the corresponding element $\phi \in  \Hom (T, \ker (c))$ satisfying $\ker \phi =\ker (q_i^{D^1T}+ \sum_{h\in H:h''=i} x_{\bar h}^{V^1T})$.
Then from the above explicit construction of elements in $\mbf \Pi_X$, for any given $\phi$, we see that 
\begin{equation}
\label{tensor-thm-4}
\mbox{$\mbf \Pi_X^0$ is open dense in $\mbf \Pi_X$ and, moreover, $(\underline {\tilde q)^{-1} (D^2)} = V^2\oplus \ker \phi$ for any $ \tilde X \in \mbf \Pi_X^0$.}
\end{equation}
Here $\tilde X=(\tilde x, \tilde p, \tilde q)$ and  $V^2=\underline { q^{-1}(D^2) }$ for the fixed element  $X$.
From (\ref{tensor-thm-4}) and using (\ref{finer-action}),  one can show the following lemma.

\begin{lem}
\label{tensor-thm-B}
Suppose that $Y$ is an irreducible component in $\Irr (D^{\bullet})$ such that $\psi(Y)=Y^1\otimes Y^2$ and $\e_i(Y)=0$. Then one has 
\begin{equation}
\tf_i^r (Y) 
=\begin{cases}
\tf_i^r Y^1\otimes Y^2  & \mrm{if}\; r  \leq \wt_i (Y^1 )-\e_i(Y^2),\\
\tf_i^{\wt_i(Y^1) -\e_i(Y^2) }Y^1 \otimes \tf_i^{r- \wt_i(Y^1) + \e_i(Y^2) } Y^2 & \mrm{if}\; r> \wt_i(Y^1)-\e_i(Y^2).
\end{cases}
\end{equation}
\end{lem}

Now by using Lemmas ~\ref{crystal-pi}, ~\ref{tensor-thm-A} and ~\ref{tensor-thm-B} and using the proof of Lemma 4.11 in ~\cite{Nakajima01}, one can show that $\psi$ is compatible with the five maps.
\end{proof}

\subsection{Crystal structure on the $\mbf \Pi_{V, D^{\bullet}}^s$'s}

Let 
\[
\mbf \Pi_{V, D^{\bullet}}^s=\{ X\in \mbf \Pi_{V, D^{\bullet}} | X(i) \, \mbox{are injective for all $i\in I$}\}.
\]
This is an open variety in $\mbf \Pi_{V, D^{\bullet}}$. 
Similarly, one can define the open subvariety $\mbf \Pi_{V, D^{\bullet}; V^2}^s$ in $\mbf \Pi_{V, D^{\bullet}; V^2}$.
Then we have the following cartesian diagram
\begin{equation}
\begin{CD}
\mbf \Pi_{V, D^{\bullet}; V^2}^s @>\rho^s>> \mbf L_{V^1, D^1}^s \times \mbf L_{V^2, D^2}^s\\
@VVV @VVV\\
\mbf \Pi_{V, D^{\bullet}; V^2} @>\rho>> \mbf L_{V^1, D^1}^s \times \mbf L_{V^2, D^2},
\end{CD}
\end{equation}
where the bottom row is (\ref{rho}).
From this diagram, we have 

\begin{prop}
\label{stable}
Statements similar to (1), (2) and (3) in Proposition ~\ref{property-rho} hold for the varieties
$\mbf \Pi_{V, D^{\bullet}; V^2}^s$, $\mbf \Pi_{V, D^{\bullet}; \nu^2}^s$ and $\mbf \Pi_{V, D^{\bullet}}^s$. Moreover,
\[
\# \Irr \mbf \Pi_{V, D^{\bullet}}^s =\sum_{\nu^1, \nu^2}\# \Irr  (\mbf L_{V^1(\nu^1), D^1}^s \times \mbf L_{V^2(\nu^2), D^2}^s),
\]
where the sum runs over all pairs $(\nu^1,\nu^2)$ such that $\nu^1+\nu^2=\nu$.
\end{prop}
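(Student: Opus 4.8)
The plan is to exploit the cartesian square displayed just before Proposition~\ref{stable}, together with the fact that $\mbf L^s_{V,D}$ is an \emph{open} subvariety of $\mbf L_{V,D}$ (and analogously $\mbf L^s_{V^2,D^2}\subseteq \mbf L_{V^2,D^2}$ open), so that passing to stable loci is an operation that commutes with the vector bundle $\rho$. First I would observe that, since $\mbf L^s_{V^2,D^2}$ is open in $\mbf L_{V^2,D^2}$ and $\mbf L^s_{V^1,D^1}$ is already the relevant factor on the right, the preimage $\mbf \Pi^s_{V,D^{\bullet};V^2}$ of this open set under $\rho$ is open in $\mbf \Pi_{V,D^{\bullet};V^2}$, and $\rho^s$ is the restriction of the vector bundle $\rho$ to this open base; hence $\rho^s$ is again a vector bundle of the same fibre dimension (\ref{dim-rho}). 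Because $\mbf L^s_{V^1,D^1}\times\mbf L^s_{V^2,D^2}$ is of pure dimension $\frac12\dim\E(V^1,D^1)+\frac12\dim\E(V^2,D^2)$ (by (\ref{dim-L}) applied to each factor, using $\mbf L^s$), the total space $\mbf \Pi^s_{V,D^{\bullet};V^2}$ is of pure dimension $\frac12\dim\E(V,D)-\sum_i\nu^1_i\nu^2_i$, exactly as in Proposition~\ref{property-rho}(1), and $\#\Irr\mbf \Pi^s_{V,D^{\bullet};V^2}=\#\Irr(\mbf L^s_{V^1,D^1}\times\mbf L^s_{V^2,D^2})$, the vector bundle inducing a bijection on irreducible components.

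Next I would transport the stratification arguments of Proposition~\ref{property-rho}(2),(3) verbatim to the stable setting. The morphism $X\mapsto\underline{q^{-1}(D^2)}$ restricts to a surjection $\mbf \Pi^s_{V,D^{\bullet};\nu^2}\to\mrm{Gr}(\nu^2,V)$ with fibre $\mbf \Pi^s_{V,D^{\bullet};V^2}$ over $V^2$; since $\mrm{Gr}(\nu^2,V)$ is smooth irreducible of dimension $\sum_i\nu^1_i\nu^2_i$, the fibration yields pure dimension $\frac12\dim\E(V,D)$ for $\mbf \Pi^s_{V,D^{\bullet};\nu^2}$. Then the decomposition $\mbf \Pi^s_{V,D^{\bullet}}=\sqcup_{\nu^2}\mbf \Pi^s_{V,D^{\bullet};\nu^2}$ into locally closed pieces, each of the same pure dimension $\frac12\dim\E(V,D)$, gives pure dimensionality of $\mbf \Pi^s_{V,D^{\bullet}}$ and the count
\begin{equation*}
\#\Irr\mbf \Pi^s_{V,D^{\bullet}}=\sum_{\nu^2}\#\Irr\mbf \Pi^s_{V,D^{\bullet};\nu^2}=\sum_{\nu^2}\#\Irr\mbf \Pi^s_{V,D^{\bullet};V^2}=\sum_{\nu^1,\nu^2}\#\Irr(\mbf L^s_{V^1(\nu^1),D^1}\times\mbf L^s_{V^2(\nu^2),D^2}),
\end{equation*}
the last sum running over $\nu^1+\nu^2=\nu$. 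This is the assertion of the proposition.

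The one point that needs a little care — and what I expect to be the main (albeit minor) obstacle — is justifying that $\rho^s$ is genuinely the pullback of the vector bundle $\rho$ along the open immersion $\mbf L^s_{V^1,D^1}\times\mbf L^s_{V^2,D^2}\hookrightarrow\mbf L^s_{V^1,D^1}\times\mbf L_{V^2,D^2}$, i.e.\ that the square really is cartesian as stated. Concretely, one must check that for $X\in\mbf \Pi_{V,D^{\bullet};V^2}$ the condition ``$X(i)$ injective for all $i$'' is equivalent to ``$X^2(i)$ injective for all $i$'' (the condition $X^1(i)$ injective being automatic on $\mbf \Pi_{V,D^{\bullet};V^2}$ by the analysis preceding (\ref{rho})). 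This follows from the block-triangular form (\ref{decom}) of the maps together with (\ref{tensor-condition}), which forces $p^{V^1D^1}=0$ and $p^{V^2D^2}=0$: an element of $\ker X(i)$ projects to an element of $\ker X^1(i)=0$, hence lies in the $V^2\oplus D^2$ part, where it is governed by $X^2(i)$. Once this equivalence is in hand, the cartesian square is established on the nose, the fibre-bundle transfer of Lemma~\ref{dimrho} applies without change, and the remaining bookkeeping is identical to the proof of Proposition~\ref{property-rho}.
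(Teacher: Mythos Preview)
Your proposal is correct and follows exactly the route the paper intends: the paper's entire argument is the sentence ``From this diagram, we have [the proposition]'', referring to the cartesian square, and you have simply spelled out what that sentence means (pullback of the vector bundle $\rho$ along the open inclusion, then repeat the stratification bookkeeping of Proposition~\ref{property-rho}). Your verification that the square is cartesian via the block-triangular form of $X(i)$ is the right point to check and is done correctly; the only cosmetic slip is that an element of $\ker X(i)$ lives in $V_i$, so ``lies in the $V^2\oplus D^2$ part'' should read ``lies in $V_i^2$''.
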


 Note that  the $\G_V$-action on $\mbf \Pi^s_{V, D^{\bullet}}$ (resp. $\mbf L^s_{V, D}$) is free and its quotient $\tilde{\mathfrak Z}_{V, D^{\bullet}}$ (resp. $\mathfrak L_{V, D}$) is the tensor product variety defined by 
 Nakajima ~\cite{Nakajima01} and Malkin ~\cite{M03} (resp. Nakajima ~\cite{Nakajima94}).

Let 
\[
\Irr (D^{\bullet})^s =\sqcup_{\nu\in \mbb N[I]} \Irr (\mbf \Pi_{V(\nu), D^{\bullet}}^s)
\]
The inclusions $\mbf \Pi_{V, D^{\bullet}}^s\subseteq \mbf \Pi_{V, D^{\bullet}}$ define a surjective map, via restriction,
\[
\iota: \Irr (D^{\bullet}) \to \Irr (D^{\bullet})^s. 
\]
Similar to the crystal structure on $\Irr (D^{\bullet})$, one can define a crystal structure on $\Irr (D^{\bullet})^s$ with the help from a diagram similar to (\ref{finer-action}). 
It is proved in Theorem 4.6 in ~\cite{Nakajima01} that the crystal structure on $\Irr (D^{\bullet})^s$ is isomorphic to the 
crystal on the tensor product $B(\lambda^1)\otimes B(\lambda^2)$.

\begin{cor}
The surjective map $\iota: \Irr (D^{\bullet}) \to \Irr (D^{\bullet})^s$ is the crystal morphism $B(\lambda^1)\otimes B(\lambda^2, \infty) \to B(\lambda^1) \otimes B(\lambda^2)$.
\end{cor}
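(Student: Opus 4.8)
The plan is to deduce the statement from Theorem~\ref{Pi-crystal}, from the known geometric incarnation of the crystal morphism $B(\lambda^2,\infty)\to B(\lambda^2)$ on Lusztig's and Nakajima's quiver varieties, and from the cartesian square preceding Proposition~\ref{stable}. For an $I$-graded space $V^2$, let $\kappa\colon \Irr\mbf L_{V^2,D^2}\to \Irr\mbf L^s_{V^2,D^2}\sqcup\{0\}$ send an irreducible component $Y^2$ to $Y^2\cap\mbf L^s_{V^2,D^2}$ if this is nonempty and to $0$ otherwise; since $\mbf L_{V^2,D^2}$ and $\mbf L^s_{V^2,D^2}$ are of the same pure dimension $\tfrac12\dim\E(V^2,D^2)$ by (\ref{dim-L}) and the latter is open in the former, the set $Y^2\cap\mbf L^s_{V^2,D^2}$, when nonempty, is dense in $Y^2$ and hence an irreducible component of $\mbf L^s_{V^2,D^2}$, so $\kappa$ is well defined; assembled over all $\nu^2$ it gives a map $\kappa\colon \Irr(\mbf L_{D^2})\to\Irr(\mbf L^s_{D^2})\sqcup\{0\}$. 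First I would record, citing \cite{KS97,Nakajima94} (or verifying it directly by the finer-structure argument used in Proposition~\ref{pi-induction} together with a comparison of highest weights), that under the identifications $\Irr(\mbf L_{D^2})=B(\lambda^2,\infty)$ and $\Irr(\mbf L^s_{D^2})=B(\lambda^2)$ the map $\kappa$ is the canonical crystal morphism $B(\lambda^2,\infty)\to B(\lambda^2)$ attached to $M_{\lambda^2}\twoheadrightarrow V_{\lambda^2}$. Let $\psi^s\colon \Irr(\mbf L^s_{D^1})\otimes\Irr(\mbf L^s_{D^2})\to\Irr(D^{\bullet})^s$ be the bijection defined exactly as $\psi$ in (\ref{psi}) but with $\rho$ replaced by $\rho^s$; by Theorem~4.6 of \cite{Nakajima01} it realizes $\Irr(D^{\bullet})^s\cong B(\lambda^1)\otimes B(\lambda^2)$.

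The heart of the matter is the compatibility
\[
\iota\bigl(\psi(Y^1\otimes Y^2)\bigr)=\psi^s\bigl(Y^1\otimes\kappa(Y^2)\bigr),\qquad Y^1\in\Irr\mbf L^s_{V^1,D^1},\ Y^2\in\Irr\mbf L_{V^2,D^2},
\]
with the convention $\psi^s(Y^1\otimes 0)=0$. Set $Y=\psi(Y^1\otimes Y^2)$, so $Y\cap\mbf\Pi_{V,D^{\bullet};V^2}=\rho^{-1}(Y^1\times Y^2)$ is dense in $Y$ by the definition of $\psi$. Because $X^1(i)$ is automatically injective on $\mbf\Pi_{V,D^{\bullet};V^2}$ (Section~\ref{twocopytensor}), the cartesian square preceding Proposition~\ref{stable} identifies $\mbf\Pi^s_{V,D^{\bullet};V^2}$ with $\rho^{-1}(\mbf L^s_{V^1,D^1}\times\mbf L^s_{V^2,D^2})$, hence
\[
Y\cap\mbf\Pi^s_{V,D^{\bullet};V^2}=\rho^{-1}\bigl(Y^1\times(Y^2\cap\mbf L^s_{V^2,D^2})\bigr).
\]
If $\kappa(Y^2)=0$ the right side is empty; since $\mbf\Pi^s_{V,D^{\bullet}}$ is open in $\mbf\Pi_{V,D^{\bullet}}$ and $Y\cap\mbf\Pi_{V,D^{\bullet};V^2}$ is dense in $Y$, a nonempty $Y\cap\mbf\Pi^s_{V,D^{\bullet}}$ would meet $\mbf\Pi^s_{V,D^{\bullet};V^2}$, so $Y\cap\mbf\Pi^s_{V,D^{\bullet}}=\emptyset$ and $\iota(Y)=0$. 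If $\kappa(Y^2)\ne0$, then $Y^2\cap\mbf L^s_{V^2,D^2}$ is the component $\kappa(Y^2)$, so $Y\cap\mbf\Pi^s_{V,D^{\bullet};V^2}=(\rho^s)^{-1}(Y^1\times\kappa(Y^2))$; this set is irreducible, dense in $Y$ (being a nonempty subset of $\mbf\Pi^s_{V,D^{\bullet}}$ cut out from the dense $Y\cap\mbf\Pi_{V,D^{\bullet};V^2}$), and by Proposition~\ref{stable} (the analogue for $\mbf\Pi^s$ of Proposition~\ref{property-rho}(1)) it is dense in the irreducible component $\psi^s(Y^1\otimes\kappa(Y^2))$ of $\mbf\Pi^s_{V,D^{\bullet}}$. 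Its closure is therefore on the one hand that component and on the other hand $\overline{Y\cap\mbf\Pi^s_{V,D^{\bullet}}}=\iota(Y)$, which gives the identity.

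The identity just proved says precisely that, under $\psi$, $\psi^s$ together with $\Irr(\mbf L^s_{D^1})=B(\lambda^1)$, $\Irr(\mbf L_{D^2})=B(\lambda^2,\infty)$ and $\Irr(\mbf L^s_{D^2})=B(\lambda^2)$, the map $\iota$ becomes $\mrm{id}_{B(\lambda^1)}\otimes\kappa$; in particular $\iota$ is a crystal morphism, being a composite of crystal isomorphisms with $\mrm{id}_{B(\lambda^1)}\otimes\kappa$ (that the tensor product of a strict morphism with a crystal morphism is again a crystal morphism is immediate from the tensor product formulas in Section~\ref{crystal}). Since $\kappa$ is the canonical crystal morphism $B(\lambda^2,\infty)\to B(\lambda^2)$ coming from $M_{\lambda^2}\twoheadrightarrow V_{\lambda^2}$, its tensor with $\mrm{id}_{B(\lambda^1)}$ is exactly the crystal morphism $B(\lambda^1)\otimes B(\lambda^2,\infty)\to B(\lambda^1)\otimes B(\lambda^2)$ induced by $\mrm{id}_{V_{\lambda^1}}\otimes(M_{\lambda^2}\twoheadrightarrow V_{\lambda^2})$, and surjectivity of $\iota$ was already noted. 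I expect the only genuinely fussy point to be the bookkeeping of the two $\nu^2$-stratifications in the middle paragraph — making sure that taking the $\nu^2$-stratum commutes with passing to the stable locus so that the cartesian square applies stratum by stratum — while the identification of $\kappa$ with the crystal morphism $B(\lambda^2,\infty)\to B(\lambda^2)$ on the quiver-variety side is the external fact being invoked.
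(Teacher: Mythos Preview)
Your approach is exactly the right one, and it is what the paper has in mind: the corollary is stated without proof, as an immediate consequence of Theorem~\ref{Pi-crystal}, the cartesian square before Proposition~\ref{stable}, and Nakajima's Theorem~4.6 identifying $\Irr(D^{\bullet})^s$ with $B(\lambda^1)\otimes B(\lambda^2)$ via $\psi^s$. Your writeup supplies the details the paper omits, and the compatibility $\iota\circ\psi=\psi^s\circ(\mrm{id}\otimes\kappa)$ is precisely the content.

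There is, however, one genuine slip in your middle paragraph. You assert that $Y\cap\mbf\Pi_{V,D^{\bullet};V^2}=\rho^{-1}(Y^1\times Y^2)$ is dense in $Y$. This is false: by Proposition~\ref{property-rho}(1) the variety $\mbf\Pi_{V,D^{\bullet};V^2}$ has dimension $\tfrac12\dim\E(V,D)-\sum_i\nu_i^1\nu_i^2$, strictly less than $\dim Y$ whenever $\nu^1,\nu^2\neq 0$, so the intersection cannot be dense. What \emph{is} open dense in $Y$ is $Y\cap\mbf\Pi_{V,D^{\bullet};\nu^2}$ (since $\mbf\Pi_{V,D^{\bullet};\nu^2}$ is locally closed and $Y$ is the closure of a component of it). The passage between the $\nu^2$-stratum and the $V^2$-fiber is then handled by $\G_V$-equivariance: both $Y$ and $\mbf\Pi^s_{V,D^{\bullet}}$ are $\G_V$-stable and $\G_V$ acts transitively on $\mrm{Gr}(\nu^2,V)$, so $Y\cap\mbf\Pi^s_{V,D^{\bullet};\nu^2}=\emptyset$ if and only if $Y\cap\mbf\Pi^s_{V,D^{\bullet};V^2}=\emptyset$. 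With this correction both cases of your argument go through: in the $\kappa(Y^2)=0$ case one concludes $Y\cap\mbf\Pi^s_{V,D^{\bullet};\nu^2}=\emptyset$, and since this would have to meet the open dense $Y\cap\mbf\Pi^s_{V,D^{\bullet}}$ if the latter were nonempty, $\iota(Y)=0$; in the $\kappa(Y^2)\neq 0$ case one checks directly that $\iota(Y)\cap\mbf\Pi^s_{V,D^{\bullet};V^2}=(\rho^s)^{-1}(Y^1\times\kappa(Y^2))$, which is the defining property of $\psi^s(Y^1\otimes\kappa(Y^2))$. You even flag this as the ``fussy point'' at the end, so you anticipated the issue---it just needs to be executed with $\nu^2$ rather than $V^2$.
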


Note that the crystal structure on $\Irr (D^{\bullet})^s$ has been studied in ~\cite{M03}, ~\cite{Nakajima01} and ~\cite{S02}.

\section{Induction and restriction}
We shall recall Lusztig's induction and restriction functors in this section. We refer to ~\cite{Lusztig90}-\cite{Lusztig93} for further reading.

\subsection{Lusztig's diagrams}
\label{diagram}
We preserve the setting in Section ~\ref{EVD}. 
Let $\Omega$ be an orientation of the framed graph $\tilde \Gamma$. 
This is a subset of the edge set $\tilde H$ of $\tilde \Gamma$ such that $\Omega \cup \overline{\Omega} =\tilde H$ and $\Omega \cap \overline{\Omega}=\mbox{\O}$.

Let $\E_{\Omega} (V, D)$ be the subspace of $\E(V, D)$ consisting of all elements $X$ such that $X_h=0$ for any $h\notin \Omega$.

Recall that we fix a decomposition $D=D^2\oplus D^1$ and $V=W\oplus T$.  Let
\[
\mbf F =\{ X\in \E_{\Omega}(V, D) | D^2\oplus W \; \mbox{is $X$-invariant}\}.
\]
Then we have the following diagram
\begin{equation}
\label{restriction-diag}
\begin{CD}
\E_{\Omega}(T, D^1) \times \E_{\Omega}(W, D^2)  @<\kappa << \mbf F @>\iota >> \E_{\Omega}(V, D),
\end{CD}
\end{equation}
where 
$\iota$ is the closed embedding and $\kappa (X) =(X^1, X^2)$ with $X^1$ and $X^2$ defined in a similar way as  (\ref{X1X2}).
Note that $\kappa$ is a vector bundle of fiber dimension 
$\sum_{h\in \Omega} \dim \tilde T_{h'} \dim \tilde  W_{h''}$ where $\tilde T_{h'}=T_{h'}$ if $h'\in I$ and $\tilde T_{h'} = D^1_{h'}$ if $h'\in I^+$.

Let $\mbf P_V$ be the stabilizer of $W$ in $\G_V$ and $\mbf R_V$ its unipotent radical.  Then $\mbf P_V/ \mbf R_V\simeq \G_T\times \G_W$. Consider 
the following diagram
\begin{equation}
\label{induction-diag}
\begin{CD}
\E_{\Omega}(T, D^1) \times \E_{\Omega}(W, D^2) @<\pi_1 << \G_V \times_{\mbf R_V}  \mbf F @>\pi_2 >> \G_V \times_{\mbf P_V} \mbf F @>\pi_3 >> \E_{\Omega}(V, D),
\end{CD}
\end{equation}
where 
$\pi_1(g, X) = \kappa (X) $, $\pi_2(g, X) = (g, X)$ and $\pi_3(g, X) =g.X$.
It is well-known that $\pi_3$ is proper, $\pi_2$ is a $\G_T\times \G_W$-principal bundle and $\pi_1$ is a smooth morphism of connected fiber with fiber dimension
$f_1= \dim \G_V/\mbf R_V+ \dim \kappa^{-1}(X^1, X^2)$.

\subsection{Notations in derived categories}
We recall some notations on derived categories from ~\cite{BBD82} and ~\cite{Lusztig93}.
Given any algebraic variety $X$ over $\mbb C$, 
denote by $\mathcal{D}(X)$ the bounded derived category of complexes 
of  constructible sheaves on $X$.
Denote by  $\mbb C_X$ the constant sheaf on $X$, regarded as  
the complex concentrated on degree zero.
Let $[-]$ be the shift functor. 
Let $f: X\to Y$ be a morphism of varieties, denote by
$f^*: \mathcal D(Y) \to \mathcal D(X)$ and 
$f_!: \mathcal D(X) \to \mathcal D(Y)$ 
the inverse image functor and the direct image functor with compact support, respectively.
Let $G$ be a connected algebraic group. Assume that $G$ acts on $X$
algebraically. Denote by $\mathcal D_G(X)$ the full subcategory of
$\mathcal D(X)$ consisting of all $G$-equivariant complexes over
$X$.
Similarly, denote by $\mathcal M_G(X)$ the  category of
 of all $G$-equivariant perverse sheaves on $X$.
If $G$ acts on $X$ algebraically and $f$ is a principal $G$-bundle,
then $f^*$ induces a functor, still denote by $f^*$, of equivalence
between $\mathcal M(Y)[\dim G]$ and $\mathcal M_G(X)$.
Its inverse functor is denoted by $f_{\flat}: \mathcal M_G(X) \to
\mathcal M(Y)[\dim G]$.

\subsection{Induction and restriction functors}
From the diagram (\ref{restriction-diag}), we form the functor
\begin{equation}
\label{restriction-functor}
\widetilde{\Res}^V_{T, W} = \kappa_! \iota^*: \D (\E_{\Omega}(V, D) ) \to \D(\E_{\Omega}(T, D^1)\times \E_{\Omega}(W, D^2)) .
\end{equation}
From the diagram (\ref{induction-diag}), we form the functor
\begin{equation}
\label{induction-functor}
\widetilde{\Ind}^V_{T,W}=\pi_{3!} \pi_{2\flat} \pi_1^* :  
\D_{\G_T\times \G_W} (\E_{\Omega}(T, D^1)\times \E_{\Omega}(W, D^2))  \to \D (\E_{\Omega}(V, D) ).
\end{equation}

The following shifted versions of the restriction and induction functors will also be used later.
\begin{equation}
\label{shifted}
\Res^V_{T,W} =\widetilde{\Res}^V_{T, W} [f_1-f_2-2 \dim \G_V/\mbf P_V],\quad
\Ind^V_{T, W} = \widetilde{\Ind}^V_{T,W}[f_1-f_2],
\end{equation}
where $f_1$ and $f_2$ are the fiber dimensions of the morphisms $\pi_1$ and $\pi_2$, respectively.
For simplicity, we write
\[
K_1\cdot K_2=\Ind^V_{T, W}(K_1\boxtimes K_2).
\]

\subsection{Special cases} 
\label{special}

The following special cases of the functors $\Res^V_{T, W}$ and $\Ind^V_{T,W}$ will be used extensively later.

The first case is when $D^1=0$ and $\dim T= ni$. In this case, $\E_{\Omega}(T, D^1)=0$. The shifted  induction and restriction functors induce the following functors
\begin{equation}
\label{iR}
_i  \R^{(n)}:  \D (\E_{\Omega}(V, D) ) \to \D( \E_{\Omega}(W, D)),  \quad 
\F_i^{(n)}: \D( \E_{\Omega}(W, D))  \to \D (\E_{\Omega}(V, D) ).
\end{equation}

The second case is when $D^2=0$ and $\dim W=ni$.  In this case, $\E_{\Omega}(W, D^2)=0$. The shifted restriction functor induces the following functor
\begin{equation}
\R^{(n)}_i:  \D (\E_{\Omega}(V, D) ) \to \D( \E_{\Omega}(T, D)).
\end{equation}

The third case is when $T=0$. In this  case,  $\E_{\Omega}(T, D^1)=0$ and  $\pi_3=\iota$ is a closed embedding. 
Thus, the functor $\Ind^V_{T, W}$ induces a fully faithful functor 
\begin{equation}
\label{L_d.}
L_{d^1} \cdot:  \D( \E_{\Omega}(V, D^2))  \to \D (\E_{\Omega}(V, D) ), \quad K_2\mapsto L_{d^1} \cdot K_2.
\end{equation}

The last case is when $W=0$. In this case, $\E_{\Omega}(W, D^2)=0$ and $\pi_3$ is the identity morphism. Thus the functor $\Ind^V_{T, W}$ induces a fully faithful functor
\begin{equation}
\label{L_d}
\cdot L_{d^2}:  \D( \E_{\Omega}(T, D^1))  \to \D (\E_{\Omega}(V, D) ),\quad K_1\mapsto K_1\cdot L_{d^2}.
\end{equation}

\section{Geometric study of $M_{\lambda}$}

\subsection{Perverse sheaves on $\E_{\Omega}(V)$}

Let 
\[
\E_{\Omega}(V) = \E_{\Omega}(V, 0),
\]
where $\E_{\Omega}(V, 0)$ is $\E_{\Omega}(V, D)$ in Section ~\ref{diagram} for $D=0$.

Let $\mbf i =(i_1,\cdots, i_m)$ be a sequence of vertices in $\Gamma$ and $\mathbf a=(a_1,\cdots, a_m)$ a sequence of non negative integers such that
$a_1 i_1+\cdots+ a_m i_m=\nu$.  
A flag $F^{\bullet}=(V =F^0\supseteq F^1\supseteq \cdots \supseteq F^m=0$ is of type $(\mbf{i, a})$ 
if $\dim F^l/F^{l+1}=a_{l+1}i_{l+1}$ for all $0\leq l\leq m-1$.
The variety $\tF_{\mbf{i, a}}$ is the variety of all pairs $(x, F^{\bullet}) $, 
where $x\in \E_{\Omega}(V)$ and $F^{\bullet}$ is a flag of type $(\mbf{i, a})$, 
such that each subspace in $F^{\bullet}$ is $x$-invariant. Let 
\begin{equation}
\label{proper}
\pi_{\mbf{i, a}} : \tF_{\mbf{i, a}} \to \E_{\Omega}(V)
\end{equation}
be the projection to the first component. We set
\begin{align}
\label{Lia}
L_{(\mbf{i, a})}=\pi_{\mbf{i, a}!} ( \mbb C_{  \tF_{\mbf{i, a}} }[\dim  \tF_{\mbf{i, a}} ]).
\end{align}

Since $\tF_{\mbf {i, a}}$ is smooth and $\pi_{\mbf{i, a}}$ is proper,  
the complex $L_{(\mbf {i, a})}$ is semisimple by the decomposition theorem in ~\cite{BBD82}.
Let $\mathcal Q_V$ be the full `semisimple'  subcategory of  $\D(\E_{\Omega}(V))$ whose simple objects are isomorphic to those appeared in $L_{(\mbf{i,a })}$ for various pairs $(\mbf{i, a})$.

Let $\N_{V, i}$ be the full subcategory of $\Q_V$ whose simple objects are isomorphic to those appeared in $L_{(\mbf{i,a })}$ for various pairs $(\mbf{i, a})$ such that
the last term $i_m$ of $\mbf i$ is $i$ and $a_m \geq d_i +1$.

Let
\[
 \E_{\Omega, i, \geq d_i+1}(V) :=\{ x\in \E_{\Omega}(V) | \dim \ker x(i)  \geq d_i +1 \},
\]
where $x(i)$ is defined in (\ref{xi}).

\begin{lem}  
Suppose that $i$ is a source in $\Omega$. Then 
any complex $K$ in $\mathcal Q_V$ is in $\mathcal N_{V, i}$ if and only if 
$\Supp(K) \subseteq  \E_{\Omega, i, \geq d_i+1}(V)$.
\end{lem}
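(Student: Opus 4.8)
The plan is to prove the two implications separately, leveraging the description of $\mathcal{N}_{V,i}$ via the last term of the flag type and the special-case induction/restriction functors from Section~\ref{special}. For the forward direction, suppose $K$ is in $\mathcal{N}_{V,i}$. By definition it suffices to treat $K = L_{(\mbf{i},\mbf a)}$ with $i_m = i$ and $a_m \geq d_i+1$. I would observe that a flag $F^\bullet$ of type $(\mbf i, \mbf a)$ with $i_m = i$ forces its penultimate term $F^{m-1}$ to be an $x$-invariant subspace with $F^{m-1} \subseteq \ker\big(\text{the map determined by }x(i)\big)$ in the appropriate $i$-graded sense — more precisely, since $i$ is a source in $\Omega$, the $x$-invariance of $F^{m-1}$ combined with $\dim F^{m-1} = a_m i$ (supported at $i$) means $x_h$ vanishes on $F^{m-1}_{h'}$ for all $h \in \Omega$ with $h' = i$, i.e. $F^{m-1} \subseteq \ker x(i)$. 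Hence $\dim \ker x(i) \geq a_m \geq d_i + 1$ for every $x$ in the image of $\pi_{\mbf i, \mbf a}$, so $\Supp(\pi_{\mbf i,\mbf a!}(\cdots)) \subseteq \E_{\Omega, i, \geq d_i+1}(V)$. Taking direct summands, $\Supp(K) \subseteq \E_{\Omega,i,\geq d_i+1}(V)$.

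For the converse, suppose $K \in \mathcal{Q}_V$ has $\Supp(K) \subseteq \E_{\Omega,i,\geq d_i+1}(V)$; I must show every simple summand of $K$ lies in $\mathcal{N}_{V,i}$. The natural tool is the restriction functor $\R^{(n)}_i$ (the second special case, with $D^2=0$ and $\dim W = ni$), or rather the partition of $\E_{\Omega}(V)$ by the locally closed strata where $\dim\ker x(i)$ is constant, together with the standard fact (from Lusztig's theory) that a simple perverse sheaf $L$ in $\mathcal{Q}_V$ appears in some $L_{(\mbf i, \mbf a)}$ with $i_m = i$, $a_m = n$ exactly when $n$ is maximal with $\R^{(n)}_i(L) \neq 0$, and this maximal $n$ equals the generic value of $\dim\ker x(i)$ on $\Supp(L)$. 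Since $\Supp(L) \subseteq \Supp(K) \subseteq \E_{\Omega,i,\geq d_i+1}(V)$, that generic value is $\geq d_i+1$, so $L$ appears in some $L_{(\mbf i,\mbf a)}$ with final multiplicity $a_m \geq d_i+1$, i.e. $L \in \mathcal{N}_{V,i}$. I would phrase this using the fibration structure over the Grassmannian-type stratum and the fact that $i$ being a source makes the restriction to $W$ of dimension $ni$ well-behaved (the morphism $\pi_3 = \iota$ piece, or rather the third/fourth special cases, streamline the bookkeeping).

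The main obstacle I expect is the converse direction: specifically, making rigorous the claim that the generic value of $\dim\ker x(i)$ on the support of a simple $L \in \mathcal{Q}_V$ is exactly the largest $n$ for which $L$ is a summand of some $L_{(\mbf i,\mbf a)}$ ending in $(i,n)$. This is essentially the "$\varepsilon_i$ is read off from the support" principle, and it relies on the transitivity/adjunction properties of $\Res^V_{T,W}$ and $\Ind^V_{T,W}$ (and on $i$ being a source so that $\E_\Omega(T,D^1)=0$ when $\dim T = ni$, killing the extra variables). I would first establish, using the diagram~(\ref{induction-diag}) restricted to this special case, that $\Ind$ of a complex supported everywhere, followed by projection, hits precisely the $\dim\ker x(i) \geq n$ locus, and then use the decomposition theorem to extract the simple summands; the delicate point is controlling the shift and ensuring no simple summand with smaller generic kernel dimension can slip in, which follows from a dimension/genericity count on $\tF_{\mbf i,\mbf a}$ exactly as in Lusztig's original treatment.
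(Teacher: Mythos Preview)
Your proposal is correct and is precisely the argument the paper has in mind: the paper's own ``proof'' is nothing more than a pointer to \cite[9.3]{Lusztig93}, and your outline unpacks exactly that analysis (the forward direction via the flag forcing $F^{m-1}\subseteq\ker x(i)$ when $i$ is a source, and the converse via the Lusztig bijection between simples with generic $\dim\ker x(i)=n$ and simples appearing in complexes of flag type ending in $(i,n)$). One small clarification: the functor relevant to the \emph{last} step of the flag is indeed the second special case $\R_i^{(n)}$ (subspace $W$ of dimension $ni$), not $\F_i^{(n)}$ (which adds $ni$ as the quotient, i.e.\ the first step); your text gets this right, but be careful not to conflate it with the $\F_i^{(n)}/{}_i\R^{(n)}$ pair used elsewhere in the paper.
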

This can be shown by the detailed analysis in ~\cite[9.3]{Lusztig93}.

Let $\N_V$ be the full subcategory of $\Q_V$ generated by 
$\mathcal N_{V, i}$ for all $i\in I$. Thus any simple object $K\in \Q_V$ is in $\N_V$ if and only if 
\begin{equation}
\label{a}
\Supp (\Phi_{\Omega}^{\Omega_i}K) \subseteq   \E_{\Omega_i, i, \geq d_i+1}(V), \quad \mbox{for some $i$ in $I$}.
\end{equation}
where $\Omega_i$ is an orientation of $\tilde \Gamma$ with $i$ a source and 
\[
\Phi_{\Omega}^{\Omega_i}: \D(\E_{\Omega}(V)) \to \D(\E_{\Omega_i}(V)),
\] 
is a fixed Fourier transform from $ \D(\E_{\Omega}(V))$ to $ \D(\E_{\Omega_i}(V))$ (see ~\cite{Lusztig93}, ~\cite{Lusztig91}, ~\cite{KS90}).

\subsection{Perverse sheaves on $\E_{\Omega}(V, D)$}
\label{framing}

Fix an order, say $(i^+_1, i^+_2,\cdots , i^+_N)$, of the set $I^+$.  Let $d$ denote
a fixed pair of sequences $(i^+_1, \cdots,i^+_N)$ and $(d_1, \cdots, d_N)$.
Let $(\mbf i, \mbf a)\cdot d$  be the composition  of the pair $(\mbf{i, a})$ and the pair $d$.

Then the complex $L_{(\mbf{i, a})\cdot d}$ is well defined and semisimple over $\E_{\Omega}(V, D)$.
Moreover, the complex $L_{(\mbf{i, a})\cdot d}$ is independent of the choice of the order $(i_1^+, \cdots, i_N^+)$ of $I^+$.
This is because $L_{(\mbf{i, a})\cdot d}$ is  equal to $L_{\mbf{i, a}} \cdot L_d$ where $\cdot L_d$ is defined in Section ~\ref{special}.
 Observe that $\E_{\Omega}(0, d)$ is a single point.
 $L_d=\mbb C_{\E_{\Omega}(0, d)}$, which is independent of the choice of the order on $I^+$. 

Let $\Q_{V, D}$ be the full subcategory of $\D(\E_{\Omega}(V, D))$ defined with respect to the complexes $L_{(\mbf{i, a})\cdot d}$ for various $(\mbf{i, a})$
in a similar way as $\Q_V$  to the complexes $L_{(\mbf{i, a})}$.

Let $\pi: \E_{\Omega}(V, D) \to \E_{\Omega}(V)$ be the obvious projection.  It then induces a functor, the shifted inverse image functor, 
\[
\pi^*[\nu d_{\Omega}] : \D(\E_{\Omega}(V)) \to \D(\E_{\Omega}(V, D)),  \quad \mbox{where}\; \nu d_{\Omega}= \sum_{i\in I: i\to i^+ \in \Omega} \nu_i d_i.
\]

\begin{lem}
\label{L=pi}
We have $\cdot L_d=\pi^*[\nu d_{\Omega}]$; moreover, they are functors of equivalence from $\Q_V$ to $\Q_{V, D}$.
\end{lem}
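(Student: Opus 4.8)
The statement has two parts: first, the identification $\cdot L_d = \pi^*[\nu d_\Omega]$ of functors $\D(\E_\Omega(V)) \to \D(\E_\Omega(V,D))$; second, that this functor restricts to an equivalence $\Q_V \to \Q_{V,D}$. I would handle the two parts separately, the first being a direct diagram chase and the second a formal consequence combined with the defining description of $\Q_{V,D}$.

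For the first part, the plan is to unwind the definition of $\cdot L_d$ from Section~\ref{special}, the last case $W=0$ of the induction functor. When $W=0$ we have $V = T$, $D = D^1 \oplus D^2$ with $D^2 = 0$, so $D^1 = D$; the space $\mbf F$ of $X$-invariant decompositions degenerates (the subspace $D^2 \oplus W = 0$ is trivially invariant), $\mbf P_V = \G_V$, $\mbf R_V$ is trivial, and $\pi_3$ becomes the identity. Thus $\widetilde{\Ind}^V_{T,W}$ reduces to $\kappa_!\pi_1^*$ for the map $\kappa : \mbf F \to \E_\Omega(T,D^1)\times \E_\Omega(W,D^2) = \E_\Omega(V,D) \times \{\mathrm{pt}\}$. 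Since $L_d = \mbb C_{\E_\Omega(0,d)}$ sits on the point factor, $K_1 \cdot L_d$ is computed by pulling $K_1$ back along the projection $\mbf F \to \E_\Omega(V)$ (the first factor of $\kappa$ with the $D$-coordinates adjoined as the vector bundle $\E_\Omega(0,D)$-worth of framing maps into/out of $V$, restricted to $\Omega$) and pushing forward. I would check that $\mbf F \cong \E_\Omega(V,D)$ in this case and that under this identification $\kappa$ becomes $\pi$, so $\kappa_! \pi_1^* = \pi^*$ up to the shift. The shift $f_1 - f_2$ in \eqref{shifted} should, after substituting the fiber-dimension formulas for $\pi_1, \pi_2$ from Section~\ref{diagram}, collapse exactly to $\nu d_\Omega = \sum_{i \to i^+ \in \Omega}\nu_i d_i$, which is the number of arrows in $\Omega$ from $I$ to $I^+$ weighted by dimensions — this bookkeeping is the one place where I expect to be careful, since it involves matching $\sum_{h \in \Omega} \dim \tilde T_{h'}\dim \tilde W_{h''}$ and the $\dim\G_V/\mbf P_V$ term against $\nu d_\Omega$.

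For the second part, I would first observe that $\pi^*[\nu d_\Omega]$ sends each $L_{(\mbf i,\mbf a)}$ to $L_{(\mbf i,\mbf a)\cdot d}$: this is essentially the statement, recorded in Section~\ref{framing}, that $L_{(\mbf i,\mbf a)\cdot d} = L_{\mbf i,\mbf a}\cdot L_d$, together with the first part of the lemma. Hence $\pi^*[\nu d_\Omega]$ carries semisimple objects of $\Q_V$ to semisimple objects of $\Q_{V,D}$, and since by the definitions the simple summands of the $L_{(\mbf i,\mbf a)\cdot d}$ are exactly the simple objects of $\Q_{V,D}$, the functor is essentially surjective onto $\Q_{V,D}$. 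For full faithfulness, $\pi$ is a (trivial) affine-space bundle — $\E_\Omega(V,D) = \E_\Omega(V) \times \E_\Omega(0,D)$ — so $\pi^*$ is fully faithful on constructible derived categories by the projection formula / base change (equivalently, $\pi^*$ has a left adjoint $\pi_!$ with $\pi_!\pi^* \cong \mathrm{id}$ up to shift since the fibers are affine spaces). Restricting a fully faithful essentially surjective functor to the appropriate subcategories gives the equivalence $\Q_V \simeq \Q_{V,D}$.

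\textbf{Main obstacle.} The conceptual content is light; the real work is the shift computation in the first part. The definition of $\Ind^V_{T,W}$ carries the shift $[f_1 - f_2]$ with $f_1 = \dim\G_V/\mbf R_V + \dim\kappa^{-1}(X^1,X^2)$ and $f_2 = \dim(\G_T\times\G_W)$, and in the $W=0$ specialization one must verify that these combine to precisely $\nu d_\Omega$ rather than, say, $\sum_{i}\nu_i d_i$ (the orientation-independent count). The asymmetry — only arrows $i \to i^+$ lying in $\Omega$ contribute — comes from the fact that $\kappa$ is a bundle over $\E_\Omega$, i.e.\ only the $\Omega$-arrows are present, so $\dim\kappa^{-1}$ already incorporates this; I would make this explicit by writing out $\mbf F$ as the graph of the "forget-the-framing" data over $\E_\Omega(V)$. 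Everything else (full faithfulness of pullback along an affine bundle, matching of simple objects) is standard and I would cite \cite{Lusztig93} and \cite{BBD82} as in the surrounding text.
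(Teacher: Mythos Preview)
Your overall plan matches the paper's argument: unwind the induction diagram in the degenerate case, then use that $\pi$ is an affine-space bundle to get full faithfulness. The second half (essential surjectivity from $L_{(\mbf i,\mbf a)\cdot d} = L_{(\mbf i,\mbf a)}\cdot L_d$, full faithfulness from the vector-bundle structure of $\pi$) is exactly what the paper does.

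However, in the first half you have the roles of $D^1$ and $D^2$ reversed. The functor $\cdot L_d$ is the last special case $\cdot L_{d^2}$ of Section~\ref{special} with $d^2 = d$; since it maps $\D(\E_\Omega(T,D^1))$ to $\D(\E_\Omega(V,D))$ and we want the source to be $\D(\E_\Omega(V))$, one must take $D^1 = 0$ and $D^2 = D$, not the other way around. With your assignment the source would already be $\E_\Omega(V,D)$ and the functor would collapse to the identity, not to $\pi^*$. More importantly, with the correct assignment the subspace whose $X$-invariance defines $\mbf F$ is $D^2\oplus W = 0\oplus D$, not $0$, so your claim that ``$D^2\oplus W = 0$ is trivially invariant'' no longer applies.

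The paper's replacement for this step is the observation that $0\oplus D$ is the \emph{only} $\tilde I$-graded subspace of $V\oplus D$ of dimension $\sum_i d_i\, i^+$; hence the flag in the induction is forced, $\G_V\times_{\mbf P_V}\mbf F = \mbf F$, and (once one knows $0\oplus D$ is $X$-invariant for every $X\in \E_\Omega(V,D)$) $\pi_3$ is the identity and $\kappa$ is literally $\pi$. That uniqueness-of-the-subspace remark is the missing ingredient in your write-up. After fixing the $D^1/D^2$ swap and inserting this point, the rest of your shift bookkeeping and affine-bundle argument goes through as you describe.
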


In fact, the vector subspace $0\oplus D$ is the only vector subspace in $V\oplus D$ of dimension $\sum_{i\in I} d_i i^+$ that is 
invariant under a fixed element $X$ in $\E_{\Omega}(V, D)$. 
The isomorphism $\cdot L_d =\pi^* [\nu d_{\Omega}]$  follows from this and the definition of 
the  multiplication ``$\mrm{Ind}^V_{T, W}$'' in Section ~\ref{diagram}.
They are equivalent because $\pi^*$ is a fully faithful functor due to the fact that  
$\pi$ is a trivial vector bundle of fiber dimension $\nu d_{\Omega}$.

Let $\N_{V, D}= \pi^* [\nu d_{\Omega}](\N_V)$, i.e., the full subcategory
of $\Q_{V, D}$ whose objects are of the form $\pi^* (K) $ with $K\in \N_V$.
By Lemma ~\ref{L=pi},  the condition (\ref{a}) can be restated as follows. 

\begin{lem}
 Any simple  object $K\in \Q_{V, D}$ is in $\N_{V, D}$ if and only if 
\begin{equation}
\label{b}
\Supp (\Phi_{\Omega}^{\Omega_i}K) \subseteq   \E_{\Omega_i, i, \geq 1}(V, D), \quad \mbox{for some $i$ in $I$},
\end{equation}
where  $\E_{\Omega, i, \geq 1}(V, D)=\{ X\in \E_{\Omega}(V, D) | \dim \ker X(i) \geq 1\}$ and $X(i)$ is defined in (\ref{xi}). 
\end{lem}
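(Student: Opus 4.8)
The plan is to reduce the statement to the previously established criterion (\ref{a}) via the equivalence of Lemma ~\ref{L=pi}. First I would observe that, since $\pi^*[\nu d_{\Omega}]: \Q_V \to \Q_{V,D}$ is an equivalence sending $\N_V$ to $\N_{V,D}$ by definition, a simple object $K \in \Q_{V,D}$ is of the form $\pi^*(K_0)[\nu d_{\Omega}]$ for a unique simple $K_0 \in \Q_V$, and $K \in \N_{V,D}$ if and only if $K_0 \in \N_V$. By (\ref{a}), the latter holds if and only if $\Supp(\Phi_{\Omega}^{\Omega_i} K_0) \subseteq \E_{\Omega_i, i, \geq d_i+1}(V)$ for some $i \in I$. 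So the task is to translate this support condition on $\E_{\Omega_i}(V)$ into the support condition (\ref{b}) on $\E_{\Omega_i}(V, D)$.

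The key point is that the projection $\pi$ commutes appropriately with the Fourier transforms: the Fourier transform $\Phi_{\Omega}^{\Omega_i}$ on $\D(\E_{\Omega}(V,D))$ is taken with respect to the edges of $\Gamma$ only (or more precisely can be chosen so that it does not touch the framing edges $i \to i^+$), so that $\pi \circ \Phi_{\Omega}^{\Omega_i} = \Phi_{\Omega}^{\Omega_i} \circ \pi$ up to the shift, and hence $\Phi_{\Omega}^{\Omega_i} K \simeq \pi^*(\Phi_{\Omega}^{\Omega_i} K_0)[\nu d_{\Omega_i}]$. Since $\pi$ is a trivial vector bundle, $\Supp(\pi^* L) = \pi^{-1}(\Supp L)$ for any complex $L$ on $\E_{\Omega_i}(V)$. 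Next I would compare the relevant subvarieties: for $X = (x, p, q) \in \E_{\Omega_i}(V, D)$ one has $X(i) = q_i \oplus \bigoplus_{h: h' = i} x_h$, whereas for $x \in \E_{\Omega_i}(V)$ one has $x(i) = \bigoplus_{h \in \Gamma: h' = i} x_h$. When $i$ is a source in $\Omega_i$, every arrow out of $i$ in $\Gamma$ lies in $\Omega_i$, and one checks that $\dim \ker X(i) \geq 1$ is the condition that cuts out $\E_{\Omega_i, i, \geq 1}(V, D)$; the precise bookkeeping (using $\dim D_i = d_i$) shows $\pi^{-1}(\E_{\Omega_i, i, \geq d_i+1}(V)) = \E_{\Omega_i, i, \geq 1}(V, D)$, exactly as already noted in the passage preceding Lemma ~\ref{L=pi} in the special case. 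Combining these identifications, $\Supp(\Phi_{\Omega}^{\Omega_i} K_0) \subseteq \E_{\Omega_i, i, \geq d_i+1}(V)$ is equivalent to $\Supp(\Phi_{\Omega}^{\Omega_i} K) \subseteq \E_{\Omega_i, i, \geq 1}(V, D)$, which is (\ref{b}).

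The main obstacle I anticipate is making the compatibility of $\pi^*$ with the Fourier transform precise: one must be careful about which orientation-change Fourier transforms are used on the two sides, about the shift $\nu d_{\Omega_i}$, and about the hypothesis "$i$ a source" carrying over correctly from Lusztig's lemma (the unframed case) to the framed setting. This is essentially the same compatibility already invoked in the proof of Lemma ~\ref{L=pi}, so I would phrase the argument as a direct corollary: apply Lemma ~\ref{L=pi} to identify $\N_{V,D}$ with $\pi^*[\nu d_{\Omega}](\N_V)$, invoke the Fourier-compatibility of $\pi^*$ together with (\ref{a}), and finish with the elementary dimension-count $\dim \ker X(i) \geq 1 \iff \dim \ker x(i) \geq d_i + 1$ under $X \in \pi^{-1}(x)$. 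Most of this is routine once the setup is in place, so the proof should be short; the only genuine content is citing (\ref{a}) and the vector-bundle description of $\pi$.
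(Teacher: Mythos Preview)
Your approach is essentially the paper's own: the paper simply says ``By Lemma~\ref{L=pi}, the condition (\ref{a}) can be restated as follows,'' and you have unpacked exactly that restatement via $\pi^*[\nu d_\Omega]$ and the compatibility with Fourier transform.

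There is, however, one slip in your bookkeeping. The asserted equality
\[
\pi^{-1}\bigl(\E_{\Omega_i,i,\geq d_i+1}(V)\bigr)=\E_{\Omega_i,i,\geq 1}(V,D)
\]
is false as stated: if $x\in\E_{\Omega_i}(V)$ has $\dim\ker x(i)=1$ and $d_i\geq 1$, then $(x,0)$ lies in the right-hand side but not the left. What is true, and what your final sentence correctly suggests, is the equivalence for \emph{full} $\pi$-fibers: for a fixed $x$,
\[
\dim\ker x(i)\geq d_i+1 \iff \dim\ker\bigl(q_i\oplus x(i)\bigr)\geq 1 \ \text{for every } q_i\in\Hom(V_i,D_i).
\]
The $\Rightarrow$ direction is the rank inequality $\dim\ker(q_i|_{\ker x(i)})\geq \dim\ker x(i)-d_i\geq 1$; the $\Leftarrow$ direction follows by choosing $q_i$ injective on $\ker x(i)$ when $\dim\ker x(i)\leq d_i$. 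Since $\Supp(\Phi_{\Omega}^{\Omega_i}K)=\pi^{-1}\bigl(\Supp(\Phi_{\Omega}^{\Omega_i}K_0)\bigr)$ is $\pi$-saturated, this fiberwise equivalence is precisely what you need, and the argument goes through once the set-equality is replaced by it.
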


Another way of stating (\ref{b}) is 
\begin{equation}
\label{c}
\Supp (\Phi_{\Omega}^{\Omega_i}K) \cap (  \E_{\Omega_i}(V, D) \backslash \E_{\Omega_i, i, \geq 1}(V, D))=\mbox{\O}, \quad \mbox{for some $i$ in $I$}.
\end{equation}

Let $\V_{V, D} $ (resp. $\V_V$) be the localization of $\Q_{V, D}$ with respect to the subcategory $\N_{V, D}$ (resp. $\N_V$). The above analysis produces the following commutative diagram of functors
\begin{equation}
\label{cat-SES}
\begin{CD}
\N_V @>\iota>> \Q_V @>Q>> \V_V\\
@VVV @V\pi^*[\nu d_{\Omega}]VV @VVV\\
\N_{V, D} @>\iota>> \Q_{V, D} @>Q>> \V_{V, D},
\end{CD}
\end{equation}
where the  $\iota$'s are natural embedding, the $Q$'s are localization functors, 
and the unexplained vertical maps are induced from $\pi^*[\nu d_{\Omega}]$.

\begin{rem}
(i) The condition (\ref{c}) is exactly the stability condition used in ~\cite{Zheng08} for localizing the category  $\Q_{V, D}$ 
to get the module $V_{\lambda}$ for a chosen dominant weight $\lambda$ such that $(i, \lambda)=d_i$ for any $i\in I$.

(ii) Note that the rows in the  diagram (\ref{cat-SES}) is exactly the categorical version 
of the short exact sequence (\ref{SES-A-form}).

(iv) As I.B. Frenkel pointed out, the sequence (\ref{SES-A-form}) is the first step of   the BGG resolution of the module $V_{\lambda}$  a $q$-analogue of the resolution in \cite{BGG75}. It is very interesting to investigate  the possibility of naturally lifting the BGG solution to the categorical level.
\end{rem}

\subsection{Stability conditions for $\Q_{V, D}$} 
\label{stable-1}
In the section, we will use the notations in Section ~\ref{Lusztig} freely. 
By ~\cite[13]{Lusztig91}, we have 
\begin{equation}
\label{ss-1}
\SSS(K) \subseteq \mbf  \Lambda_{V\oplus D}, \quad  \forall K\in \Q_{V, D}.
\end{equation}
Moreover,  by using ~\cite[Theorem 13.3]{Lusztig91}, 
\begin{equation}
\label{ss-2}
\SSS(L_{(\mbf{i, a})\cdot d})=\SSS(L_{(\mbf{i, a})} \cdot L_d) \subseteq \{p=0\} \cap\mbf \Lambda_{V\oplus D}.
\end{equation}
If $K\in \Q_{V, D}$ is simple up to a shift, then
 $K$ is a direct summand of the semisimple complex of the form
$L_{(\mbf{i, a})} \cdot L_d$. 
Thus,  by (\ref{ss-1}) and (\ref{ss-2}), 
\[
\SSS(K) \subseteq  \mbf L_{V, D}, \quad \forall K\in \Q_{V, D}.
\]

Let $\M_{V, D}$ be the full subcategory of $\Q_{V, D}$ consisting of objects $K$ such that
\begin{equation}
\label{global}
\SSS(K) \cap \mbf  L^s_{V, D}=\mbox{\O}.
\end{equation}
Since the condition (\ref{global}) does not involve any orientation of the graph $\Gamma$ (or $\tilde \Gamma$), 
we call it a $global$ $condition$, while (\ref{c}) is called a $local$ $condition$.

\begin{prop}
\label{N=M}
We have $\N_{V, D} =\M_{V, D}$.
\end{prop}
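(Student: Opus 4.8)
The plan is to prove the equality of subcategories $\N_{V,D} = \M_{V,D}$ by a two-step argument: first show the inclusion $\N_{V,D} \subseteq \M_{V,D}$, then the reverse. For the forward inclusion, it suffices to treat a simple object $K \in \N_{V,D}$. By definition, $K = \pi^*[\nu d_\Omega](K')$ for some $K' \in \N_V$, and by the characterization (\ref{b}) there is some $i \in I$ such that $\Supp(\Phi^{\Omega_i}_\Omega K) \subseteq \E_{\Omega_i, i, \geq 1}(V,D)$, i.e.\ every point $X$ in the support of the Fourier transform has $\ker X(i) \neq 0$. The key point is that the singular support $\SSS(K)$ is Fourier-invariant in an appropriate sense (the singular support of $\Phi^{\Omega_i}_\Omega K$ is obtained from $\SSS(K)$ by the linear symplectic transformation coming from the Fourier transform, as in \cite[Theorem 13.3]{Lusztig91}), and that the stability locus $\mbf L^s_{V,D}$ — defined by injectivity of all $X(i)$ — is likewise compatible with this transformation. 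So I would transport the containment $\SSS(K) \subseteq \mbf L_{V,D}$ together with the support constraint into the $\Omega_i$-picture, and observe that over the open set where $X(i)$ is injective for all $i$, the complex $\Phi^{\Omega_i}_\Omega K$ is supported away from a neighborhood forcing $\SSS(\Phi^{\Omega_i}_\Omega K)$ to avoid the corresponding stable locus; then transport back to conclude $\SSS(K) \cap \mbf L^s_{V,D} = \emptyset$.

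For the reverse inclusion $\M_{V,D} \subseteq \N_{V,D}$, I would argue by counting irreducible components, or equivalently by a dimension/support argument against the already-established crystal combinatorics. Here is where Proposition~\ref{stable} and Theorem~\ref{Pi-crystal} enter: the irreducible components of $\mbf \Pi_{V,D^\bullet}$ index (via $\psi$ and the crystal structure) a basis compatible with the decomposition $B(\lambda^1)\otimes B(\lambda^2,\infty)$, and those surviving into $\mbf \Pi^s_{V,D^\bullet}$ index the quotient $B(\lambda^1)\otimes B(\lambda^2)$. On the module side, the rows of diagram (\ref{cat-SES}) categorify the short exact sequence (\ref{SES-A-form}), so the number of simple objects in $\Q_{V,D}$ not lying in $\N_{V,D}$ equals the rank of the appropriate graded piece of $V_\lambda$ (or its tensor-product analogue), which in turn equals $\#\Irr\mbf \Pi^s_{V,D^\bullet}$. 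The idea is then: a simple $K \in \Q_{V,D}$ with $\SSS(K)$ meeting $\mbf L^s_{V,D}$ must, because $\SSS(K)$ is a union of conormal varieties inside $\mbf L_{V,D}$ of full dimension $\tfrac12\dim\E(V,D)$, contain a full-dimensional irreducible component of $\mbf L^s_{V,D}$ (equivalently, the closure of the conormal to the stratum through a generic stable point); distinct simples give distinct such components, by the standard bijection between simple perverse sheaves in $\Q_{V,D}$ and irreducible components of $\mbf L_{V,D}$ realizing their singular supports. Counting then forces every simple \emph{not} in $\N_{V,D}$ to have singular support meeting $\mbf L^s_{V,D}$, which is exactly $\M_{V,D} \subseteq \N_{V,D}$.

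A cleaner way to package the reverse inclusion, avoiding a head-on component count, is to use the localization diagram (\ref{cat-SES}) directly: $\N_{V,D}$ is the kernel of the localization functor $Q \colon \Q_{V,D} \to \V_{V,D}$, and one knows (from the already-cited identification of $\V_{V,D}$ with the geometric realization of $V_\lambda$, i.e.\ $Q$ is the categorification of the quotient $M_\lambda \twoheadrightarrow V_\lambda$) its simple objects exactly. So it is enough to show that for a simple $K \in \Q_{V,D}$ with $Q(K) \neq 0$ one has $\SSS(K) \cap \mbf L^s_{V,D} \neq \emptyset$ — and this follows because $Q(K)\neq 0$ means the characteristic cycle of $K$ has a component whose generic point is stable (the support of the corresponding semisimple induced complex $L_{(\mbf i, \mbf a)} \cdot L_d$ meets the open stable locus), which by the microlocal description of characteristic cycles forces $\SSS(K)$ to meet $\mbf L^s_{V,D}$. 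The hard part, I expect, is the precise bookkeeping of the Fourier transform on singular supports in the framed setting: one must verify that the symplectic transformation induced by $\Phi^{\Omega_i}_\Omega$ carries $\mbf L^s_{V,D}$ (the locus of injective $X(i)$) to the corresponding stable locus for the orientation $\Omega_i$ and intertwines it with the support condition $\E_{\Omega_i,i,\geq 1}(V,D)$. Everything else is either cited (Proposition~\ref{stable}, Theorem~\ref{Pi-crystal}, \cite[13]{Lusztig91}) or a routine matching of cardinalities; the Fourier-transform step is the one place where genuine care about the framing vertices and the sign conventions $\epsilon(h)$ is required.
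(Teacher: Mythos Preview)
Your overall strategy matches the paper's: the forward inclusion $\N_{V,D}\subseteq\M_{V,D}$ is the easy half, and the reverse is a counting argument pivoting on a bijection between simple perverse sheaves and irreducible components. But you have the emphasis backwards and the references misplaced. The forward direction is not where the work is; the paper dispatches it in one sentence (``It is clear from (\ref{b}) or (\ref{c})\ldots''), and no delicate Fourier bookkeeping on the framing is needed. Your references to Proposition~\ref{stable} and Theorem~\ref{Pi-crystal} are also off: those concern the tensor-product varieties $\mbf \Pi_{V,D^\bullet}$ of Section~4, which only enter in Section~\ref{twocopy}. Proposition~\ref{N=M} lives in the single-$D$ Verma setting, where the relevant varieties are $\mbf L_{V,D}$ and $\mbf L^s_{V,D}$, and the count the paper actually uses is $\#S_1=\#S_2=\dim T_{\lambda,\nu}$, citing \cite{Lusztig93} and \cite{L00a}.

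The genuine gap in your proposal is that you gesture at but never name the key external input: \cite[Theorem~6.2.2]{KS97}. What makes the argument work is not merely that simples biject with components, but the specific containment $Y_K\subseteq\SSS(K)$ together with injectivity of $K\mapsto Y_K$ on \emph{all} of $\mathcal P_{V,D}$. From $\N_{V,D}\subseteq\M_{V,D}$ one gets $Y_K\in S_2$ for $K\in\N_{V,D}$, hence an injection $\phi\colon S_1\hookrightarrow S_2$; the cardinality match makes $\phi$ bijective; then a hypothetical simple $K\in\M_{V,D}\setminus\N_{V,D}$ would have $Y_K\in S_2$ equal to $Y_{K'}$ for some $K'\in\N_{V,D}$, contradicting global injectivity. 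Your phrase ``distinct simples give distinct such components'' is exactly this, but without the $Y_K\subseteq\SSS(K)$ half you cannot conclude that $Y_K$ lands in $S_2$ in the first place.

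Finally, your approach (b) is circular. Asserting that $Q(K)\neq 0$ forces the characteristic cycle of $K$ to have a stable component is precisely the contrapositive of $\M_{V,D}\subseteq\N_{V,D}$; your justification via the support of $L_{(\mbf i,\mbf a)}\cdot L_d$ speaks only about supports, not singular supports, and does not close the loop.
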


\begin{proof}
It is clear from (\ref{b}) or (\ref{c}) that if $K\in \N_{V, D}$,  then $\SSS(K) \cap  \mbf L^s_{V, D}=\mbox{\O}$.  So we have
$\N_{V,D}\subseteq \M_{V, D}$.

By Theorem 6.2.2 in  ~\cite{KS97}, for each simple perverse sheaf  $K$ in $\Q_{V, D}$, one can associate an irreducible component, say $Y_K$, such that  
\[
Y_K \subset \SSS(K) \subset Y_K\cup \cup_{K': \e_i(K') \geq  \e_i(K) } Y_{K'},
\]
for all $i\in I$. 
Moreover, if $K\neq K'$, then $Y_K \neq Y_{K'}$.  

Note that the inequality is a strictly inequality in ~\cite{KS97}, which is a typo.  See Remark 4.27 in ~\cite{Sch09}.  For a proof of this inequality, see ~\cite{K07}.

Thus the assignment $K\mapsto Y_K$ defines an injective map
\[
\phi: S_1 \hookrightarrow S_2, 
\]
where $S_1$ is 
 the set of isomorphism classes of simple perverse sheaves in $\N_{V, D}$ and 
$S_2$ is 
the set  of irreducible components in $ \mbf L_{V, D} $ disjoint from $\mbf L^s_{V, D}$.
Observe that  the  sets  $S_1$ and $S_2$ have the same number of elements, equal to 
$\dim T_{\lambda, \nu}$ due to ~\cite{Lusztig93} and ~\cite{L00a}.  So $\phi$ is bijective. 
This implies that $\N_{V, D} = \M_{V, D}$.  Otherwise, if $K\in \M_{V, D}\backslash \N_{V, D}$ is simple, then $Y_K\cap L_{V, D}^s=\mbox{\O}$ by definition. 
Since $\phi$ is bijective, there is a $K'\in \N_{V, D}$ such that $Y_{K'} = Y_K$. This contradicts with the fact that $K\neq K'$ implies that $Y_K\neq Y_{K'}$.
Proposition follows.
\end{proof}

From Proposition ~\ref{N=M}, we have the following corollary.

\begin{cor}
Assume that  $K\in \Q_{V, D}$. $Q(K)\not = 0$ if and only if $\SSS(K) \cap\mbf  L^s_{V, D}\not = \mbox{\O}$.
\end{cor}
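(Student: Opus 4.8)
The plan is to read the corollary off from Proposition \ref{N=M} together with the elementary fact that the localization functor $Q\colon\Q_{V,D}\to\V_{V,D}$ annihilates precisely the objects of $\N_{V,D}$. First I would record that $Q(K)=0$ if and only if $K\in\N_{V,D}$. Indeed, $\Q_{V,D}$ is semisimple and $\N_{V,D}$ is the full subcategory generated by a set of its simple objects, so $\V_{V,D}$ is the quotient of $\Q_{V,D}$ by $\N_{V,D}$ in the usual sense: the simple objects of $\V_{V,D}$ are the $Q$-images of the simple objects of $\Q_{V,D}$ not lying in $\N_{V,D}$, and $Q$ kills every simple object of $\N_{V,D}$. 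Writing a general $K\in\Q_{V,D}$ as a finite direct sum $\bigoplus_j K_j$ of shifted simple objects, we get $Q(K)\cong\bigoplus_j Q(K_j)$, whence $Q(K)=0$ exactly when each $K_j$ lies in $\N_{V,D}$, i.e. exactly when $K\in\N_{V,D}$.

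Second, I would use additivity of the singular support, $\SSS\bigl(\bigoplus_j K_j\bigr)=\bigcup_j\SSS(K_j)$, to see that $\SSS(K)\cap\mbf L^s_{V,D}=\emptyset$ if and only if $\SSS(K_j)\cap\mbf L^s_{V,D}=\emptyset$ for every $j$; by the defining global condition (\ref{global}) of $\M_{V,D}$ this says precisely that $K\in\M_{V,D}$.

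Combining the two equivalences with Proposition \ref{N=M}, which asserts $\N_{V,D}=\M_{V,D}$, yields
$$
Q(K)=0 \iff K\in\N_{V,D} \iff K\in\M_{V,D} \iff \SSS(K)\cap\mbf L^s_{V,D}=\emptyset,
$$
and negating the two extreme ends is exactly the assertion of the corollary. The argument is entirely formal once Proposition \ref{N=M} is in hand, so there is no real obstacle; the only point deserving a sentence of justification is the first one — that $Q(K)=0$ characterizes membership in $\N_{V,D}$ — which is built into the meaning of ``localization of $\Q_{V,D}$ with respect to $\N_{V,D}$'' and holds because both categories involved are semisimple, so that $Q$ is the projection onto the subcategory generated by the simple objects outside $\N_{V,D}$.
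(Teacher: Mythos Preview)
Your argument is correct and is exactly the (unwritten) reasoning the paper intends: it states the corollary as an immediate consequence of Proposition~\ref{N=M}, and you have simply spelled out the two tautological equivalences $Q(K)=0\Leftrightarrow K\in\N_{V,D}$ and $\SSS(K)\cap\mbf L^s_{V,D}=\emptyset\Leftrightarrow K\in\M_{V,D}$ that connect the proposition to the corollary. One minor simplification: the second equivalence is literally the definition of $\M_{V,D}$, so you do not need the decomposition into simples or additivity of $\SSS$ there.
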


Let $\mathcal P_{V, D}$ be the set of all isomorphism classes of simple perverse sheaves in $\Q_{V, D}$. 
Let $\mathcal P_{V, D}^s$ be the subset of $\mathcal P_{V, D}$ consisting of all elements not in $\N_{V, D}$.
Then we have the following corollary.

\begin{cor} 
\label{def-ps}
For any $K\in \mathcal P_{V, D}$, the following statements are equivalent.
\begin{enumerate}
\item $K\in \mathcal P_{V, D}^s$.  \hspace{.5cm} $(2)$  $Q(K) \neq 0$. \hspace{.5cm} $(3)$ $\SSS(K) \cap \mbf L^s_{V, D}\neq \mbox{\O}$.
\item[(4)] $\Supp (\Phi_{\Omega}^{\Omega_i} (K)) \cap \E_{\Omega_i, i, 0}(V, D) $ is open dense in $\Supp(\Phi_{\Omega}^{\Omega_i} (K))$, where 
$ \E_{\Omega_i, i, 0}(V, D)=\{ X\in \E_{\Omega_i} (V, D) | \dim \ker X(i)=0\}$ for any $i\in I$.
\end{enumerate}
\end{cor}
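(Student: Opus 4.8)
The plan is to prove the equivalences by first establishing the cycle $(1)\Leftrightarrow(2)\Leftrightarrow(3)$ and then the equivalence with $(4)$, using the machinery already assembled. The equivalence $(1)\Leftrightarrow(2)$ is essentially a definition chase: by construction $\mathcal P_{V,D}^s$ consists of the simple perverse sheaves not in $\N_{V,D}$, and since $\V_{V,D}$ is the localization of $\Q_{V,D}$ at $\N_{V,D}$, a simple object $K$ has $Q(K)=0$ precisely when $K\in\N_{V,D}$; thus $K\in\mathcal P_{V,D}^s$ iff $Q(K)\neq 0$. The equivalence $(2)\Leftrightarrow(3)$ is exactly the content of the Corollary immediately preceding, applied to the simple object $K$: $Q(K)\neq 0$ iff $\SSS(K)\cap\mbf L^s_{V,D}\neq\O$. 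So the first three conditions collapse together with essentially no new work.

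The substance is the equivalence of $(4)$ with the others. I would relate $(4)$ to $(3)$ (equivalently to the failure of the local condition (\ref{c})). Fix $i\in I$ and pass to an orientation $\Omega_i$ of $\tilde\Gamma$ in which $i$ is a source, via the Fourier transform $\Phi_\Omega^{\Omega_i}$; this is harmless since Fourier transform preserves $\Q_{V,D}$, is compatible with the notion of singular support, and carries $\mbf L^s_{V,D}$-type conditions to the corresponding conditions over $\E_{\Omega_i}(V,D)$. Over $\E_{\Omega_i}(V,D)$, the relevant fact is the dictionary between supports and singular supports for simple perverse sheaves in $\Q_{V,D}$ coming from \cite[9.3, 13.3]{Lusztig91} together with Lemma~6.2.2 in \cite{KS97}: for such $K'$, the support of $K'$ meets the locus $\{\dim\ker X(i)=0\}$ in an open dense subset iff the singular support of $K'$ is not contained in $\E_{\Omega_i,i,\geq 1}(V,D)$ (after the Fourier twist this last containment is exactly (\ref{b})/(\ref{c})). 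Thus $(4)$ holds for some $i$ iff (\ref{c}) fails for every $i$, iff $K\notin\N_{V,D}=\M_{V,D}$ (using Proposition~\ref{N=M}), iff $\SSS(K)\cap\mbf L^s_{V,D}\neq\O$, which is $(3)$. Note the quantifier match is automatic: $(4)$ is stated with an implicit ``for some $i$'' built into being open dense somewhere, and the negation of the ``for some $i$'' in (\ref{c}) is ``for all $i$''.

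The main obstacle I anticipate is making the support/singular-support dictionary over $\E_{\Omega_i}(V,D)$ precise enough to justify the ``open dense'' phrasing in $(4)$ rather than merely a set-theoretic inclusion of supports. Concretely: from $\SSS(K')\not\subseteq\E_{\Omega_i,i,\geq1}(V,D)$ one gets that $\SSS(K')$ has a component meeting the open locus $\{\dim\ker X(i)=0\}$; one must then argue — using that $i$ is a source, so that over this locus the geometry of $\E_{\Omega_i}(V,D)$ decomposes cleanly as in \cite[9.3]{Lusztig91} — that the zero section over $\Supp(K')$ forces $\Supp(K')$ itself to meet that open locus, and in a dense subset because $\Supp(K')$ is irreducible (K' simple) and the locus is open. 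The reverse implication, that density of $\Supp(K')\cap\{\dim\ker X(i)=0\}$ forces $\SSS(K')$ out of $\E_{\Omega_i,i,\geq1}$, is the easier direction since $\SSS(K')$ always contains the conormal to (an open dense smooth part of) $\Supp(K')$. Once this local geometric statement is in hand, assembling the chain $(1)\Leftrightarrow(2)\Leftrightarrow(3)\Leftrightarrow(4)$ is routine, invoking Proposition~\ref{N=M} and the preceding Corollary. I would present the proof as: first dispose of $(1)\Leftrightarrow(2)\Leftrightarrow(3)$ in a sentence or two, then devote the bulk of the argument to $(3)\Leftrightarrow(4)$ via the Fourier reduction to a source and the Lusztig–Kashiwara–Saito dictionary.
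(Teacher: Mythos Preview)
Your treatment of $(1)\Leftrightarrow(2)\Leftrightarrow(3)$ is correct and matches the paper, which states the corollary without proof since these equivalences are immediate from the definition of $\mathcal P_{V,D}^s$, the localization construction, and the preceding corollary.

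For $(4)$, however, you misread the quantifier and then take an unnecessary detour. The phrase ``for any $i\in I$'' in $(4)$ means \emph{for all} $i\in I$, not for some: this is forced because $(4)$ is meant to be the negation of condition~(\ref{b}), which carries an existential quantifier. Your sentence ``$(4)$ holds for some $i$ iff (\ref{c}) fails for every $i$'' is simply false --- if $(4)$ held only for a single vertex $i_1$, nothing prevents (\ref{b}) from holding at a different vertex $i_0$, so $K$ could still lie in $\N_{V,D}$. With the correct universal quantifier the logic straightens out immediately.

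More to the point, you do not need singular supports or the Kashiwara--Saito dictionary to handle $(4)$ at all; the equivalence $(1)\Leftrightarrow(4)$ is a pure support statement. Since $K$ is simple, so is $\Phi_\Omega^{\Omega_i}(K)$, hence its support is irreducible; and $\E_{\Omega_i,i,0}(V,D)$ is the open complement of the closed set $\E_{\Omega_i,i,\geq 1}(V,D)$. For an irreducible closed subset $Z$ and an open set $U$ one has the dichotomy: either $Z\subseteq U^c$ or $Z\cap U$ is open dense in $Z$. Thus $(4)$-for-$i$ is exactly the negation of (\ref{b})-for-$i$, and so $(4)$ (with $\forall i$) is the negation of (\ref{b}) (with $\exists i$), i.e.\ $K\notin\N_{V,D}$, which is $(1)$. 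Your proposed route through $(3)$, Fourier transforms of singular supports, and the dictionary of \cite{KS97} would eventually close the loop, but it amounts to re-deriving Proposition~\ref{N=M} inside the argument instead of just reading off the support characterization of $\N_{V,D}$ already in hand.
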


\begin{rem}
The results in the section are the combination of the work ~\cite{Lusztig91} and ~\cite{KS97}.  Results in ~\cite[11]{Nakajima94} are closely related to the results in this section.
\end{rem}

\section{Geometric study of   tensor product $M_{\lambda^2}\otimes V_{\lambda^1}$}

\label{twocopy}

\subsection{Tensor product complexes} 
\label{tensorproductcomplexes}
In this section, we fix three  elements $\lambda$,  $\lambda^1, \lambda^2$ in $\mbf X^+$ such that 
\[
\lambda^1+\lambda^2=\lambda.
\]
This matches with the fixed decomposition $D=D^2\oplus D^1$ in Section ~\ref{twocopytensor} by assuming that 
$(i, \lambda) =\dim D_i$ and $(i, \lambda^a) =\dim D^a_i$ for any $ i\in I$ and $a=1, 2$.
Let $d^a$ be the dimension vector of $D^a$ for $a=1, 2$.

In this section, we will consider the compositions
\[
\underline{\mbf a} := (\mbf i^1, \mbf a^1) \cdot d^1\cdot (\mbf i^2, \mbf a^2) \cdot d^2
\]
such that $\sum_{k=1}^{m^1} a^1_ki_k^1 + \sum_{k=1}^{m^2} a^2_k i_k^2 =\nu$ where $\nu$ is the dimension vector of $V$ in the space
$\E_{\Omega}(V, D)$.
We can define the map 
\[
\pi^l_{\underline{\mbf a}}: \tF_{\underline{\mbf a}}\to \E_{\Omega}(V, D),
\]
in exactly the same manner as the map $\pi_{\mbf {i, a}}$ defined in (\ref{proper}).

Fix a partial flag $\check D^{\bullet}=(\check D^0=D\supseteq \check D^1\supseteq 0)$ such that $\dim \check D^0/\check D^1=d^1$. 
In other words, we fix  a subspace  $\check D^1$  of $D$ of dimension $d^2$. 
Let 
$\tE_{\underline{\mbf a}}$ be the subvariety  of $\tF_{\underline{\mbf a}}$ 
consisting of all pairs whose flags incident to $D$ is $\check D^{\bullet}$.

The restriction of $\pi^l_{\underline{\mbf a}}$ to $\tE_{\underline{\mbf a}}$ is denoted by
\[
\pi_{\underline{\mbf a}}: \tE_{\underline{\mbf a}}\to \E_{\Omega}(V, D),
\]
in exactly the same manner as the map $\pi_{(\mbf {i, a})}$ defined in (\ref{proper}). Note that $\tE_{\underline{\mbf a}}$ is again a smooth irreducible variety. 
So we have the following semisimple complex
\begin{align}
\label{La}
L_{\underline{\mbf a}} :=\pi_{\underline{\mbf a}!}(\mbb C_{\tE_{\underline{\mbf a}}})[\dim \tE_{\underline{\mbf a}}]. 
\end{align}
Note that when $\mbf a^2=0$, $L_{\underline{\mbf a}}$ is the same as $L_{(\mbf{i, a})\cdot d}$.

Similar to $\Q_{V, D}$, 
let $\Q_{V, D^{\bullet}}$ be the full `semisimple'  subcategory of  $\D(\E_{\Omega}(V, D))$ whose simple objects are isomorphic to those appeared in 
$L_{\underline{\mbf a }}$ for various compositions $\underline{\mbf a}$.

Similar to $\N_{V, D}$, we define $\N_{V, D^{\bullet}}$ to be the full subcategory of 
$\Q_{V, D^{\bullet}}$ generated by the simple objects $K$ satisfying the local stability condition (\ref{b}).

Similar to $\V_{V, D}$, we define $\V_{V, D^{\bullet}}$ to be the localization of the category $\Q_{V, D^{\bullet}}$ with respect to $\N_{V, D^{\bullet}}$.
Altogether,  we have the following exact sequence of categories
\begin{equation}
\N_{V, D^{\bullet}} \to \Q_{V, D^{\bullet}} \to \V_{V, D^{\bullet}}.
\end{equation}
Since $L_{(\mbf {i, a})\cdot d}$ is a special case of the complex $L_{\underline{\mbf a}}$, we have the following commutative diagram
\begin{equation}
\label{M-tensor}
\begin{CD}
\N_{V, D} @>>> \Q_{V, D} @>>> \V_{V, D}\\
@VVV @VVV @VVV\\
\N_{V, D^{\bullet}} @>>> \Q_{V, D^{\bullet}} @>>>\V_{V, D^{\bullet}},
\end{CD}
\end{equation}
where the top level is from (\ref{cat-SES}) and the vertical maps are inclusions. While the diagram (\ref{cat-SES}) indicates that we go from 
non framed situation to framed situation. Here, this diagram (\ref{M-tensor}) indicates that we go from the Verma module to the tensor product of a Verma module with a simple module.

\subsection{Structures on $\mathcal P_{V, D^{\bullet}}$}

Let $_i\Omega$ be an orientation of $\tilde \Gamma$ such that $i$ is a sink, i.e., all arrows, h, adjacent to $i$ has $h''=i$.
Let $_{i, n}\E_{_i\Omega}(V, D)$ be the locally closed subvariety of $\E_{_i\Omega}(V, D)$ consisting of all elements $X$ such that 
$\dim V_i/ (i)X= n$.  Here $(i) X$ is defined in (\ref{xi}). We also set 
\[
_{i, \geq n}\E_{_i\Omega}(V, D)=\sqcup\, _{i, n'} \E_{_i\Omega}(V, D),
\] 
where the union runs over all $n'$ such that $n'\geq n$, 
which is a closed subvariety of $\E_{_i\Omega} (V, D)$.

Let $\mathcal P_{V, D^{\bullet}}$ be the set of isomorphic classes of simple perverse sheaves in $\Q_{V, D^{\bullet}}$ defined in Section ~\ref{tensorproductcomplexes}.
Let $_{i, n}\mathcal P_{V, D^{\bullet}}$ be the subset of $\mathcal P_{V, D^{\bullet}}$ consisting of all objects $K$ such that 
\[
\Supp ( \Phi_{\Omega}^{_i\Omega} (K) ) \subset \, _{i, \geq n} \E_{_i\Omega}(V, D)
\quad  \mbox{and}\quad 
\Supp ( \Phi_{\Omega}^{_i\Omega} (K) ) \cap  \, _{i, n} \E_{_i \Omega}(V, D)\neq \mbox{\O}
\]
where $\Phi_{\Omega}^{_i\Omega} $ is the Fourier transform.
We set
\begin{equation}
\label{e_i}
\e_i(K)  = n, \quad \mbox{if} \; K\in \!\;  _{i, n}\mathcal P_{V, D^{\bullet}}.
\end{equation}

\begin{lem} 
\label{strut-lem}
For any $K\in \, _{i, n}\mathcal P_{V, D^{\bullet}}$, there exists a unique $\bar K\in \, _{i, 0} \mathcal P_{W, D^{\bullet}}$, with $\dim W = \dim V-ni$, such that the following statements hold.

\begin{enumerate}
\item   $_i\mathcal R^{(n)} (K) =\bar K\oplus \bar K'$, where $\bar K'$ consists of simple perverse sheaves, with possible shifts, in $_{i, \geq 1} \mathcal P_{W, D^{\bullet}}$ and
            $_i\R^{(n)}$ is  defined in Section ~\ref{special}, (\ref{iR}).

\item  $\mathcal F_i^{(n)} (\bar K) =K \oplus K'$, where $K'$ consists of simple perverse sheaves, with possible shifts, in $_{i, \geq n+1}\mathcal P_{V,D^{\bullet}}$ and 
           $\F_i^{(n)}$ is defined in Section ~\ref{special}, (\ref{iR}).

\item The assignment $K\mapsto \bar K$ defines a bijection $_i\tilde{ \mathcal R}^{(n)}:\, _{i, n} \mathcal P_{V, D^{\bullet}} \to \,  _{i, 0} \mathcal P_{W,D^{\bullet}}$.

\item The assignment $\bar K\mapsto K$ defines a bijection 
          $\tilde {\mathcal F}_i^{(n)}: \, _{i, 0}  \mathcal  P_{W, D^{\bullet}} \to \, _{i, n}\mathcal P_{V, D^{\bullet}}$, inverse to $_i\tilde{\R}^{(n)}$.
\end{enumerate}
\end{lem}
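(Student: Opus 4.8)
The plan is to reduce, via the Fourier transform $\Phi_\Omega^{{}_i\Omega}$, to the orientation ${}_i\Omega$ in which $i$ is a sink -- this is harmless because ${}_i\mathcal{R}^{(n)}$, $\mathcal{F}_i^{(n)}$ and the conditions cutting out the pieces ${}_{i,m}\mathcal{P}$ are compatible with Fourier transform in the standard way -- and then to analyze how $\mathcal{F}_i^{(n)}$ and ${}_i\mathcal{R}^{(n)}$ interact with the stratification of $\E_{{}_i\Omega}(V,D)$ by $m(X):=\dim V_i/\mathrm{im}\,(i)X$. The geometric input is the observation that, for $X$ in the open stratum ${}_{i,n}\E_{{}_i\Omega}(V,D)$, the only $I$-graded $X$-invariant subspace $W'\subseteq V$ with $V/W'$ concentrated at $i$ of dimension $ni$ is the one with $W'_j=V_j$ for $j\neq i$ and $W'_i=\mathrm{im}\,(i)X$: indeed $X$-invariance forces $W'_i\supseteq\mathrm{im}\,(i)X$, and $\mathrm{im}\,(i)X$ already has codimension exactly $n$ since $i$ is a sink. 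More generally, whenever $W'\subseteq V$ is $X$-invariant with $V/W'$ of dimension $ni$ at $i$, the induced element $\bar X\in\E_{{}_i\Omega}(W,D)$ satisfies $m(\bar X)=m(X)-n$. Consequently, in the diagrams of Section~\ref{special} defining ${}_i\mathcal{R}^{(n)}$ and $\mathcal{F}_i^{(n)}$ (the case $D^1=0$, $\dim T=ni$), the proper map of the induction diagram has image ${}_{i,\geq n}\E_{{}_i\Omega}(V,D)$ and is an isomorphism over the open stratum ${}_{i,n}\E_{{}_i\Omega}(V,D)$, while over deeper strata its fibres are positive-dimensional Grassmannians; the restriction diagram behaves dually.

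First I would treat the restriction functor. For $K\in{}_{i,n}\mathcal{P}_{V,D^\bullet}$ the complex ${}_i\mathcal{R}^{(n)}(K)$ lies in $\Q_{W,D^\bullet}$ -- by the transitivity (Mackey) formula for restriction on the generators $L_{\underline{\mathbf a}}$ it is a finite direct sum of such complexes, hence semisimple. Proper base change against the correspondence, combined with the isomorphism over the open stratum and the $\G_V$-equivariance of $K$, identifies the restriction of ${}_i\mathcal{R}^{(n)}(K)$ to the open subvariety ${}_{i,0}\E_{{}_i\Omega}(W,D)$ with a shifted irreducible local system on an open dense subset, namely the one descended from $K|_{{}_{i,n}\E_{{}_i\Omega}(V,D)}$ (which is itself a shifted irreducible local system since $K\in{}_{i,n}\mathcal{P}_{V,D^\bullet}$). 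Its intersection cohomology extension is therefore a single simple perverse sheaf $\bar K$, which is a direct summand of ${}_i\mathcal{R}^{(n)}(K)$ of multiplicity one, lies in ${}_{i,0}\mathcal{P}_{W,D^\bullet}$, and is the unique simple summand whose support meets the open set ${}_{i,0}\E_{{}_i\Omega}(W,D)$; every other simple summand $\bar K'$ then has support in the closed complement ${}_{i,\geq 1}\E_{{}_i\Omega}(W,D)$, i.e. lies in ${}_{i,\geq 1}\mathcal{P}_{W,D^\bullet}$. This is statement (1), and it defines the assignment ${}_i\tilde{\mathcal{R}}^{(n)}\colon K\mapsto\bar K$; the characterisation of $\bar K$ also yields its uniqueness.

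Symmetrically, for $\bar K\in{}_{i,0}\mathcal{P}_{W,D^\bullet}$ the complex $\mathcal{F}_i^{(n)}(\bar K)\in\Q_{V,D^\bullet}$ is semisimple (induction sends the $L_{\underline{\mathbf a}}$ to direct sums of such), has support in ${}_{i,\geq n}\E_{{}_i\Omega}(V,D)$, and its restriction to the open stratum ${}_{i,n}\E_{{}_i\Omega}(V,D)$ is a shift of the pull-back of $\bar K$ along the isomorphism above, hence a non-zero shifted irreducible local system; so $\mathcal{F}_i^{(n)}(\bar K)$ has a unique simple summand meeting ${}_{i,n}\E_{{}_i\Omega}(V,D)$, of multiplicity one, lying in ${}_{i,n}\mathcal{P}_{V,D^\bullet}$, and the remaining summands $K'$ lie in ${}_{i,\geq n+1}\mathcal{P}_{V,D^\bullet}$. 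It remains, for $\bar K={}_i\tilde{\mathcal{R}}^{(n)}(K)$, to identify this distinguished summand with $K$. Using the adjunction (up to shift) between $\mathcal{F}_i^{(n)}$ and ${}_i\mathcal{R}^{(n)}$, the counit $\mathcal{F}_i^{(n)}({}_i\mathcal{R}^{(n)}(K))\to K$ restricts to an isomorphism over ${}_{i,n}\E_{{}_i\Omega}(V,D)$, hence is non-zero and, $K$ being simple, split surjective; so $K$ is a summand of $\mathcal{F}_i^{(n)}({}_i\mathcal{R}^{(n)}(K))=\mathcal{F}_i^{(n)}(\bar K)\oplus\mathcal{F}_i^{(n)}(\bar K')$. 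But the relation $m(\bar X)=m(X)-n$ shows that $\mathcal{F}_i^{(n)}$ sends complexes supported in ${}_{i,\geq 1}\E_{{}_i\Omega}(W,D)$ to complexes supported in ${}_{i,\geq n+1}\E_{{}_i\Omega}(V,D)$; since $\bar K'\in{}_{i,\geq 1}\mathcal{P}_{W,D^\bullet}$, the summand $K\in{}_{i,n}\mathcal{P}_{V,D^\bullet}$ cannot appear in $\mathcal{F}_i^{(n)}(\bar K')$, so $K$ is the distinguished summand of $\mathcal{F}_i^{(n)}(\bar K)$. This gives (2) and $\tilde{\mathcal{F}}_i^{(n)}\circ{}_i\tilde{\mathcal{R}}^{(n)}=\mathrm{id}$. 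The opposite composite is handled the same way: via the unit, $\bar K$ is a summand of ${}_i\mathcal{R}^{(n)}(\mathcal{F}_i^{(n)}(\bar K))$ (the unit is split by the open-stratum computation), it is the unique summand lying in ${}_{i,0}\mathcal{P}_{W,D^\bullet}$, and this summand equals ${}_i\tilde{\mathcal{R}}^{(n)}(\tilde{\mathcal{F}}_i^{(n)}(\bar K))$. Thus ${}_i\tilde{\mathcal{R}}^{(n)}$ and $\tilde{\mathcal{F}}_i^{(n)}$ are mutually inverse bijections, which is (3) and (4).

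The step I expect to be the main obstacle is making the open-stratum identifications fully rigorous: the descent along the $\G_V$-action relating ${}_{i,n}\E_{{}_i\Omega}(V,D)$ to ${}_{i,0}\E_{{}_i\Omega}(W,D)$, the proper base change matching the restriction of ${}_i\mathcal{R}^{(n)}(K)$ there with the leading term of $K$, and the multiplicity-one bookkeeping for the intersection cohomology extensions; together with the foundational inputs -- the transitivity (Mackey) formula for $\Res$, the fact that $\Ind$ and $\Res$ carry the distinguished complexes $L_{\underline{\mathbf a}}$ to direct sums of such, and their biadjointness up to shift. These are the framed analogues of the computations in~\cite[Chs.~9--10]{Lusztig93} and~\cite[\S6]{KS97}; the framing vertices $I^+$ and the flag of subspaces of $D$ play no role near $i$ -- the arrows $D\to V$ behave like ordinary arrows into the sink $i$, and the arrows $V\to D^a$ are killed by the Fourier transform at $i$ -- so I would carry them out by transcribing those arguments with the bookkeeping $V\rightsquigarrow V\oplus D$, recording only the modifications forced by the extra data.
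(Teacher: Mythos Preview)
Your proposal is correct and follows essentially the same route as the paper, which simply refers the reader to Lemma~6.4 in~\cite{Lusztig91} and Lemma~3.2.10 in~\cite{Zheng08}; you have written out the details of precisely those arguments---Fourier transform to the sink orientation ${}_i\Omega$, analysis of the stratification of $\E_{{}_i\Omega}(V,D)$ by the corank of $(i)X$, the isomorphism of $\pi_3$ over the open stratum ${}_{i,n}\E_{{}_i\Omega}(V,D)$, and the multiplicity-one identification of the leading summand via IC extension. Your use of the (shifted) adjunction between $\mathcal{F}_i^{(n)}$ and ${}_i\mathcal{R}^{(n)}$ to pin down the inverse bijections is a slight repackaging of Lusztig's original argument, but not a genuinely different method.
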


The proof is  similar to the proofs of Lemma 3.2.10 in ~\cite{Zheng08} and  Lemma 6.4 in ~\cite{Lusztig91}.

\begin{prop} 
\label{B-L}
For any $K\in \Q_{V, D^{\bullet}}$, there exist complexes $M_1, \cdots, M_k$ and $N_1,\cdots, N_l$  of the form $L_{\underline{\mbf  a}}$ for some $\underline {\mbf a}$, up to some shift, such that 
\[
K \oplus M_1\oplus \cdots \oplus M_k =N_1\oplus \cdots \oplus N_l.
\]
\end{prop}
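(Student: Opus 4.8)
# Proof Proposal for Proposition \ref{B-L}

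The plan is to mimic the well-known Gabber–Lusztig-style argument (as in \cite[Section 6]{Lusztig91} or the proof of the analogous statement in \cite{Zheng08}) showing that the objects $L_{\underline{\mbf a}}$ generate the category $\Q_{V, D^{\bullet}}$ in the split Grothendieck group, combined with an induction that strips off the "extra" summands. The key point is that every simple perverse sheaf $K \in \mathcal P_{V, D^{\bullet}}$ appears (up to shift) as a summand of some $L_{\underline{\mbf a}}$ with multiplicity one in an appropriate leading term, and that the \emph{other} summands of that $L_{\underline{\mbf a}}$ are strictly "smaller" in a suitable partial order. Once this is in place, a downward induction on that order, using the semisimplicity of each $L_{\underline{\mbf a}}$, lets us trade $K$ against a sum of $L_{\underline{\mbf a}}$'s and lower terms, which is exactly the asserted identity.

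First I would set up the partial order. For a simple $K \in \mathcal P_{V, D^{\bullet}}$, record the data $(\e_i(K))_{i \in I}$ and the support $\Supp(\Phi_\Omega^{_i\Omega}K)$ as in Section \ref{strut-lem}; more precisely, order simple perverse sheaves by (dimension of support, or equivalently) the tuple of functions $\e_i$ together with the location of the irreducible component $Y_K$ attached via \cite{KS97} as used in the proof of Proposition \ref{N=M}. Next, given $K$ with, say, $\e_i(K) = n$ for some fixed $i$, apply Lemma \ref{strut-lem}: there is $\bar K \in {}_{i,0}\mathcal P_{W, D^{\bullet}}$ with $\F_i^{(n)}(\bar K) = K \oplus K'$ where $K'$ is a sum of shifts of simples in ${}_{i, \geq n+1}\mathcal P_{V, D^{\bullet}}$, i.e. strictly higher $\e_i$, hence smaller in the order (more precisely we induct so that higher $\e_i$ with all other invariants controlled is "smaller"; the standard device is to induct on $\dim V$ first and then on $\sum_i \e_i(K)$ or on $\dim\Supp$). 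By induction on $W$ (strictly smaller dimension vector), $\bar K$ is expressible as claimed in terms of complexes $L_{\underline{\mbf b}}$ over $\E_\Omega(W, D)$; applying the induction functor $\F_i^{(n)}$, which sends each $L_{\underline{\mbf b}}$ to $L_{(i^{(n)}) \cdot \underline{\mbf b}}$ (a complex again of the form $L_{\underline{\mbf a}}$, possibly shifted, because induction of the constant-sheaf complexes along the flag varieties is again such a complex — this is the framed analogue of Lusztig's computation), we get that $K \oplus K' \oplus (\text{sum of }L_{\underline{\mbf a}}) = (\text{sum of }L_{\underline{\mbf a}})$. By the induction hypothesis applied to $K'$ (smaller in the order), $K'$ is itself a difference of sums of $L_{\underline{\mbf a}}$'s, and rearranging (using that the split Grothendieck monoid of $\Q_{V, D^{\bullet}}$ embeds in a free abelian group, by semisimplicity and Krull–Schmidt) yields the desired identity for $K$. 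A general object of $\Q_{V,D^{\bullet}}$ is a finite direct sum of shifts of simples, so the statement follows by additivity.

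The base case is $K$ with $\e_i(K) = 0$ for all $i \in I$: such $K$ is (a shift of) a summand of some $L_{\underline{\mbf a}}$ in which it occurs with a clean leading multiplicity and all companion summands have some $\e_j > 0$; so the same trade works with the induction grounded on $V = 0$, where $\E_\Omega(0, D)$ is a point and the only complex is $L_d$ itself. One also needs the elementary fact that the functors $\F_i^{(n)}$ and $\Res$ preserve the subcategory $\Q_{V, D^{\bullet}}$ and send $L$'s to $L$'s up to shift — this is the framed/partial-flag version of \cite[Section 4]{Lusztig93} and is where the precise definition of $L_{\underline{\mbf a}}$ via $\tE_{\underline{\mbf a}}$ matters.

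I expect the main obstacle to be bookkeeping the partial order correctly so that the "error terms" $K'$ (from Lemma \ref{strut-lem}(2)) and the non-top summands of the relevant $L_{\underline{\mbf a}}$ are genuinely strictly smaller, so that the induction terminates; in the unframed case this is Lusztig's transitivity/filtration argument and here it must be carried out with the flag incident to $D$ fixed to $\check D^\bullet$, which restricts the allowed compositions $\underline{\mbf a}$ and therefore needs a compatible choice of order (on $(\mbf i, \mbf a)$ and hence on the simples). The rest — semisimplicity of $L_{\underline{\mbf a}}$ (decomposition theorem), Krull–Schmidt, and stability of $\Q_{V, D^{\bullet}}$ under the relevant functors — is standard and already invoked earlier in the excerpt.
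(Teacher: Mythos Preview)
Your proposal is correct and takes essentially the same approach as the paper, which simply refers the reader to Proposition 3.2.6 in \cite{Zheng08} and Proposition 7.3 in \cite{Lusztig91}: induction on $\dim V$, reduction via Lemma \ref{strut-lem} to a simple $\bar K$ on a smaller space, and the fact that $\F_i^{(n)}(L_{\underline{\mbf b}}) = L_{(i,n)\cdot\underline{\mbf b}}$ is again of the required form. The only place to be a bit more careful is the base case $\e_i(K)=0$ for all $i$: rather than asserting the companion summands all have some $\e_j>0$, it is cleaner to note (as in Lemma \ref{xi-1}) that such $K$ equals $L_{d^1}\cdot K_b\cdot L_{d^2}$ and then quote the unframed Lusztig result for $K_b$, since $L_{d^1}\cdot L_{(\mbf i,\mbf a)}\cdot L_{d^2}$ is of the form $L_{\underline{\mbf a}}$.
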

The proof is  similar to the proofs of Proposition 3.2.6 in ~\cite{Zheng08} and Proposition 7.3 in ~\cite{Lusztig91}.

\subsection{Singular supports of the objects in $\mathcal P_{V, D^{\bullet}}$}

\label{singular}

Let $\tilde X_{\underline{\mbf a}}$ be the variety of all pairs $(X, F^{\bullet})$, where $X\in \mbf \Lambda_{V, D}$ and $F^{\bullet}$ is a flag of type $\underline{\mbf a}$,
such that $F^{\bullet}$ is $X$-invariant. Then we have a natural projection
\[
\tilde X_{\underline{\mbf a}} \to \mbf  \Lambda_{V, D}.
\]
We denote by $\tilde Y_{\underline{\mbf a}}$ the image of $\tilde X_{\underline{\mbf a}}$ under this projection.
Similar to the proof of Theorem 13.3 and Corollary 13.6 in ~\cite{Lusztig91}, one can show the following proposition. 

\begin{prop}
\label{Tensor-singular-1}
$\SSS(L_{\underline{ \mbf a}}) \subseteq \tilde{Y}_{\underline{\mbf a}} \subseteq  \mbf \Lambda_{V, D}$.
\end{prop}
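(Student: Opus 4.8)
The plan is to imitate the proof of Theorem 13.3 and Corollary 13.6 in \cite{Lusztig91}, carrying along the extra framing data $D$ and the fixed partial flag $\check D^{\bullet}$ on $D$. First I would record the containment $\SSS(L_{\underline{\mbf a}})\subseteq \mbf\Lambda_{V,D}$, which is immediate from (\ref{ss-1}) since $L_{\underline{\mbf a}}\in \Q_{V,D^{\bullet}}$ and $\Q_{V,D^{\bullet}}\supseteq \Q_{V,D}$ is built from the same type of induction/restriction operations. The real content is the sharper inclusion $\SSS(L_{\underline{\mbf a}})\subseteq \tilde Y_{\underline{\mbf a}}$. For this I would use the standard microlocal estimate for a proper pushforward: if $\pi_{\underline{\mbf a}}\colon \tE_{\underline{\mbf a}}\to \E_\Omega(V,D)$ is the (proper) projection and $\tE_{\underline{\mbf a}}$ is smooth, then
\[
\SSS(L_{\underline{\mbf a}}) = \SSS\bigl(\pi_{\underline{\mbf a}!}\mbb C_{\tE_{\underline{\mbf a}}}[\dim]\bigr)\subseteq \pi_{\underline{\mbf a}}\bigl({}^t d\pi_{\underline{\mbf a}}{}^{-1}(T^*_{\tE_{\underline{\mbf a}}}\tE_{\underline{\mbf a}})\bigr),
\]
i.e. the singular support is contained in the set of covectors $\xi$ at points $X$ for which there is a flag $F^{\bullet}\in \pi_{\underline{\mbf a}}^{-1}(X)$ with $\xi$ killed by $d\pi_{\underline{\mbf a}}$ at $(X,F^{\bullet})$.

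Next I would identify this conormal-type variety with $\tilde Y_{\underline{\mbf a}}$. This is the step that requires the moment-map geometry: one computes the annihilator of the image of $d\pi_{\underline{\mbf a}}$ at $(X,F^{\bullet})$ inside the cotangent space $T^*_X\E_\Omega(V,D)\cong \E_{\bar\Omega}(V,D)$ (using the trace pairing as in Section~\ref{twocopytensor}), and finds exactly the elements $X'$ of the opposite orientation such that $(X,X')$ together make $F^{\bullet}$ invariant and satisfy the moment-map equation $\sum_{h:h''=i}\epsilon(h)x_hx_{\bar h}-p_iq_i=0$ — that is, a point of $\tilde X_{\underline{\mbf a}}$ lying over $X$. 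Pushing forward along the projection $\tilde X_{\underline{\mbf a}}\to \mbf\Lambda_{V,D}$ gives precisely $\tilde Y_{\underline{\mbf a}}$. The only new feature compared with \cite[13.3]{Lusztig91} is that the flags now include the fixed piece $\check D^{\bullet}$ incident to $D$; since $\check D^{\bullet}$ is rigid (the subspaces of $D$ in the flag are literally fixed, not varying), it contributes no extra directions to $d\pi_{\underline{\mbf a}}$ and does not enlarge the estimate — one simply carries it through the bookkeeping. I would also note that $\tilde Y_{\underline{\mbf a}}\subseteq \mbf\Lambda_{V,D}$ is automatic: any point of $\tilde Y_{\underline{\mbf a}}$ is the image of a point of $\tilde X_{\underline{\mbf a}}$, hence lies in $\mbf\Lambda_{V,D}$ by the defining condition of $\tilde X_{\underline{\mbf a}}$.

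I expect the main obstacle to be the careful cotangent-space computation identifying the microlocal estimate with $\tilde Y_{\underline{\mbf a}}$, i.e.\ checking that the annihilator of $\operatorname{im} d\pi_{\underline{\mbf a}}$ at a generic point over a given $X$ is exactly the space of $X'\in\E_{\bar\Omega}(V,D)$ leaving $F^{\bullet}$ invariant and satisfying the moment equation. In \cite{Lusztig91} this is done by a direct matrix-block argument for flags of type $(\mbf i,\mbf a)$; here one must verify that inserting the framing contributions $\Hom(D,V)$, $\Hom(V,D)$ and the fixed flag $\check D^{\bullet}$ on $D$ into the same block computation changes nothing essential. Once that local computation is in place, the global statement follows by properness of $\pi_{\underline{\mbf a}}$ exactly as in \cite[Corollary 13.6]{Lusztig91}. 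I would therefore present the proof as: (i) reduce to the microlocal pushforward estimate; (ii) perform the cotangent computation with framing, citing \cite[13.3]{Lusztig91} for the part that is literally the same; (iii) conclude $\SSS(L_{\underline{\mbf a}})\subseteq\tilde Y_{\underline{\mbf a}}\subseteq\mbf\Lambda_{V,D}$.
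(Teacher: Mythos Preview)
Your proposal is correct and follows exactly the approach the paper indicates: the paper's own proof consists of the single sentence ``Similar to the proof of Theorem 13.3 and Corollary 13.6 in \cite{Lusztig91}, one can show the following proposition,'' and you have sketched precisely that argument, correctly noting that the fixed partial flag $\check D^{\bullet}$ contributes no new directions in the conormal computation. Your write-up in fact supplies more detail than the paper itself.
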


Given any pair $(X, F^{\bullet})$ in $\tilde X_{\underline{\mbf a}}$, we have that both $F^{m^1}$ and $F^{m^1+1}$ are $X=(x, p, q)$-invariant. 
 Write $F^{m^1} = U\oplus D^0$ and $F^{m^1+1}= U\oplus D^1$, we have $p(D^0) \subseteq U$ and  $q(U) \subseteq D^1$. This implies that 
 $p(D^0) \subseteq U \subseteq q^{-1}(D^1)$.  Since $U$ is $x$-invariant, we have  
 $\overline {p(D^0)} \subseteq U \subseteq \underline {q^{-1}(D^1)}$.
Since $F^{m^1+m^2+1}$ is $X$-invariant, we have immediately $\overline{ p(D^1)}=0$. 
So given any pair $(X, F^{\bullet}) \in \tilde X_{\underline{\mbf a}}$, the element $X$ satisfies the condition (\ref{tensor-condition}).
 In other words, $\tilde Y_{\underline{\mbf a}}\subseteq \mbf  \Pi_{V, D^{\bullet}}$.
By combining with Proposition ~\ref{Tensor-singular-1}, we have 
 
 \begin{prop}
 \label{Tensor-singular-2}
 $\SSS(L_{\underline{\mbf a}})\subseteq \tilde Y_{\underline{\mbf a}} \subseteq \mbf \Pi_{V, D^{\bullet}}\subseteq \mbf  \Lambda_{V, D}$.
 \end{prop}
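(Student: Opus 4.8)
This proposition merely records the conclusion of the discussion immediately preceding it together with Proposition~\ref{Tensor-singular-1}, so my proof would amount to assembling those two ingredients. The inclusions $\SSS(L_{\underline{\mbf a}})\subseteq\tilde Y_{\underline{\mbf a}}$ and $\tilde Y_{\underline{\mbf a}}\subseteq\mbf\Lambda_{V,D}$ are exactly Proposition~\ref{Tensor-singular-1}; the only genuinely new link in the chain is $\tilde Y_{\underline{\mbf a}}\subseteq\mbf\Pi_{V,D^{\bullet}}$, which sharpens ``$\subseteq\mbf\Lambda_{V,D}$'' by imposing nilpotency and the two conditions in~(\ref{tensor-condition}). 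So I would fix $X=(x,p,q)\in\tilde Y_{\underline{\mbf a}}$, choose an $X$-invariant flag $F^{\bullet}$ of type $\underline{\mbf a}$ realizing it (with $D$-incident part the fixed flag $\check D^{\bullet}$), and extract the defining conditions of $\mbf\Pi_{V,D^{\bullet}}$ from the invariance of a few distinguished terms of $F^{\bullet}$.

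The point is that the type $\underline{\mbf a}=(\mbf i^1,\mbf a^1)\cdot d^1\cdot(\mbf i^2,\mbf a^2)\cdot d^2$ alternates between blocks that decrease only $I$-graded dimensions and blocks that decrease only $I^{+}$-graded dimensions, so the terms of $F^{\bullet}$ sitting at the block boundaries split as $U\oplus D'$ with $U$ an $I$-graded subspace of $V$ and $D'$ a term of $\check D^{\bullet}$; concretely $F^{\bullet}$ contains $U\oplus D$ (right before the $d^1$-block), $U\oplus D^2$ (right after it, with the same $U$ since $d^1$ touches only $I^{+}$-directions), and $0\oplus D^2=D^2$ (right before the $d^2$-block, the $V$-part being exhausted because $\sum_k a^1_k i^1_k+\sum_k a^2_k i^2_k=\nu$). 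Reading off $X$-invariance: $U\oplus D$ gives $x(U)\subseteq U$ and $p(D)\subseteq U$; $U\oplus D^2$ gives $q(U)\subseteq D^2$; and $D^2$ gives $p(D^2)=0$. Since $U$ is $x$-invariant, $p(D)\subseteq U\subseteq q^{-1}(D^2)$ upgrades, by minimality of $\overline{(\,\cdot\,)}$ and maximality of $\underline{(\,\cdot\,)}$ among $x$-invariant subspaces, to $\overline{p(D)}\subseteq U\subseteq\underline{q^{-1}(D^2)}$, which together with $p(D^2)=0$ is precisely~(\ref{tensor-condition}).

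Finally I would check nilpotency: $F^{\bullet}$ is a complete $X$-invariant filtration of $V\oplus D$ whose successive quotients are each concentrated at a single vertex of $\tilde I$, and since $\tilde\Gamma$ has no loops $X$ acts by zero on the associated graded; hence $X_h(F^l)\subseteq F^{l+1}$ for every arrow $h$ of $\tilde\Gamma$, so any path of length $\ge\dim(V\oplus D)$ composes to zero and $X$ is nilpotent in the sense of Section~\ref{E(v)}. This gives $X\in\mbf\Pi_{V,D^{\bullet}}$, hence $\tilde Y_{\underline{\mbf a}}\subseteq\mbf\Pi_{V,D^{\bullet}}$, and splicing in Proposition~\ref{Tensor-singular-1} finishes the proof. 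I do not anticipate a real obstacle: the analytic work---bounding $\SSS(L_{\underline{\mbf a}})$ by adapting the proof of Theorem~13.3 and Corollary~13.6 of~\cite{Lusztig91}---is quarantined in Proposition~\ref{Tensor-singular-1}, and what remains is the bookkeeping translation of flag invariance into~(\ref{tensor-condition}); the only mildly delicate point is confirming that the relevant flag terms really are of the split form $U\oplus D'$ matching $\check D^{\bullet}$, which is forced by the combinatorial shape of $\underline{\mbf a}$.
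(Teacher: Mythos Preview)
Your proposal is correct and follows essentially the same route as the paper: identify the flag terms at the block boundaries of $\underline{\mbf a}$, read off from their $X$-invariance the two conditions in~(\ref{tensor-condition}), and invoke Proposition~\ref{Tensor-singular-1} for the remaining inclusions. You add an explicit nilpotency check (via the single-vertex successive quotients of $F^{\bullet}$), which the paper leaves implicit but which is indeed part of the definition of $\mbf\Pi_{V,D^{\bullet}}$; otherwise the arguments coincide.
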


\subsection{The map $Y_{\bullet}$}

\label{mapY}

First, we assume that $D^2=0$ in the set up of Section ~\ref{tensorproductcomplexes}.
In this case, the complex $L_{\underline{\mbf a}}$ defined in (\ref{La}) is a special case of the complex $L_{\mbf{i, a}}$ defined in (\ref{Lia}) for the graph $\tilde \Gamma$. 
Also, $\mbf \Pi_{V, D^{\bullet}}\subseteq \mbf \Lambda_{V\oplus D}$.
Thus, we can use Theorem 6.2.2 (2) in ~\cite{KS97}, ~\cite{K07} and Remark 4.27 in ~\cite{Sch09} to define a  map
\begin{align}
\label{Y}
Y_{\bullet}: \mathcal P_{V, D^{\bullet}} \to \Irr \mbf \Pi_{V, D^{\bullet}}, \quad K \mapsto  Y_K,
\end{align}
such that
\begin{align*}
&Y_K\subseteq \SSS(K) \subseteq Y_K \cup \cup_{K': \e_i(K')\geq \e_i(K)} Y_{K'},\\
&\e_i(K) =\e_i(Y_K), \quad \forall i\in I,\\
& K\neq K'  \quad\mbox{implies}\quad Y_K\neq Y_{K'}.
\end{align*}
The last condition means that $Y_{\bullet}$ is an injective map. Moreover, $Y_{\bullet} $ is surjective hence bijective. 

To show that $Y_{\bullet}$ is bijective, it is reduced to show that the two sets $\mathcal P_{V, D^{\bullet}}$ and $\Irr \mbf \Pi_{V, D^{\bullet}}$ are of the same size.
This can be argued as follows. Let $\mbf V_1$ be the space over $\mbb C$ spanned by the elements in $\mathcal P_{V, D^{\bullet}}$. 
Let $\mbf V_2$ be the space over $\mbb C$ spanned by the complexes $L_{\underline{\mbf a}}$ for various $\underline{\mbf a}$. 
Then, modulo specializing the shift functor to $1$, we have by Proposition ~\ref{B-L} that  
\[
\mbf V_1 \simeq \mbf  V_2
\]
as vector spaces over $\mbb C$.

On the other hand, let $\mbf W_2$ be the space spanned by the constructible functions $L'_{\underline{\mbf a}}$ on $\mbf \Pi_{V, D^{\bullet}}$ 
defined in a similar way as $L_{\underline{\mbf a}}$.   Let  $\mbf W_1$ be the vector space over $\mbb C$ spanned by the semicanonical basis elements
$f_Y$ defined in ~\cite{L00a} such that $Y\subseteq \mbf \Pi_{V, D^{\bullet}}$.  Then we have 
(We refer the interested reader to the paper ~\cite{L00a} and the reference therein for the precise definitions of $L'_{\underline{\mbf a}}$ and $f_Y$.)
\[
\mbf W_1 = \mbf W_2.
\]

Indeed, it is clear that $\mbf W_2 \subseteq \mbf W_1$. Suppose that $f_Y\in \mbf W_1$ satisfies that $\e_i(Y) =0$ for any $i\in I$. It is clear that 
$f_Y = L_d' \cdot f_{\bar Y}$ for some $\bar Y\in \mbf \Lambda_V$ where $L_d'$ is the linear map defined in analog with the functor $L_d$.
From this, we see that $f_Y\in \mbf W_2$. Suppose that $f_Y\in\mbf W_1$ such that $r=\e_i(Y) >0$ for some $i\in I$. 
We prove by induction that $f_Y\in \mbf W_2$. We assume that $f_{Y'}\in \mbf W_2$ if $Y\subseteq \mbf \Pi_{V', D^{\bullet}}$ for any proper subspace $V'\subset V$ and
$f_Z\in \mbf W_2$ if $\e_i(Z)> \e_i(Y)$.

By ~\cite[2.9 (b)]{L00a}, we have an irreducible component $Y'$ such that 
\[
f_i^{r} \cdot f_{Y'} = f_Y + \sum_{Z: \e_i(Z)>\e_i(Y) } c_{Y, Z} f_Z, \quad \mbox{for some} \quad c_{Y, Z} \in \mbb Z,
\]
where $f_i^{r}$ is defined in analog with the functor $\F_i^{(r)}$. 
Since $Y\subseteq \mbf \Pi_{V, D^{\bullet}}$, we have $Y'\subseteq \mbf \Pi_{V', D^{\bullet}}$ for some $V'$. Hence, $ Z\subseteq \mbf \Pi_{V, D^{\bullet}}$. 
By the above identity and induction assumption, we see that $f_Y\in \mbf W_2$. So $\mbf W_1\subseteq \mbf W_2$.  We have finished the proof of $\mbf W_1=\mbf W_2$.

By ~\cite{Lusztig91} and ~\cite{L00a}, 
we see that 
\[
\mbf V_2 = \mbf W_2
\] 
because they correspond to the same subspace in $U^-$, the space obtained from $\U^-$ by specializing $v$ at $1$.
By summing up the above analysis, we have 
\[
\#\mathcal P_{V, D^{\bullet}} =\dim \mbf  V_1 =\dim  \mbf V_2 =\dim \mbf  W_2 =\dim \mbf W_1=\#\Irr \mbf \Pi_{V, D^{\bullet}}.
\]
This shows that the maps $Y_{\bullet}$ is surjective. Altogether, we have

\begin{lem}
\label{Y-0}
When $D^2=0$, the map $Y_{\bullet}$ is bijective.
\end{lem}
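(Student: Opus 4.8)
The plan is to build the map $Y_{\bullet}$ from the microlocal geometry of simple perverse sheaves, observe that this construction makes it automatically injective, and then reduce bijectivity to the numerical identity $\#\mathcal P_{V, D^{\bullet}}=\#\Irr\mbf \Pi_{V, D^{\bullet}}$. Concretely, since $D^2=0$ forces $\mbf \Pi_{V, D^{\bullet}}\subseteq \mbf \Lambda_{V\oplus D}$ and every $K\in\mathcal P_{V, D^{\bullet}}$ is, up to shift, a direct summand of some $L_{\mbf{i, a}}$ for the framed graph $\tilde\Gamma$, I would apply Theorem 6.2.2 of~\cite{KS97} (corrected as in~\cite{K07} and Remark 4.27 of~\cite{Sch09}) to attach to each $K$ an irreducible component $Y_K\in\Irr\mbf \Pi_{V, D^{\bullet}}$ with $Y_K\subseteq\SSS(K)\subseteq Y_K\cup\bigcup_{K':\e_i(K')\ge\e_i(K)}Y_{K'}$, with $\e_i(K)=\e_i(Y_K)$ for all $i$, and with $Y_K\neq Y_{K'}$ whenever $K\neq K'$. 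Injectivity of $Y_{\bullet}$ is then part of the construction, so the whole lemma comes down to a dimension count.

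For that count I would introduce four $\mbb C$-vector spaces and show their dimensions agree: $\mbf V_1$ spanned by $\mathcal P_{V, D^{\bullet}}$, $\mbf V_2$ spanned by the classes of the complexes $L_{\underline{\mbf a}}$, $\mbf W_2$ spanned by the constructible functions $L'_{\underline{\mbf a}}$ on $\mbf \Pi_{V, D^{\bullet}}$ built from the maps $\pi_{\underline{\mbf a}}$, and $\mbf W_1$ spanned by the semicanonical basis vectors $f_Y$ of~\cite{L00a} with $Y\subseteq\mbf \Pi_{V, D^{\bullet}}$. Three of the four needed comparisons are ``soft'': Proposition~\ref{B-L} shows, after specializing the shift to $1$, that $\mathcal P_{V, D^{\bullet}}$ and the $L_{\underline{\mbf a}}$ span the same space, so $\#\mathcal P_{V, D^{\bullet}}=\dim\mbf V_1=\dim\mbf V_2$; the definition of the semicanonical basis gives $\dim\mbf W_1=\#\Irr\mbf \Pi_{V, D^{\bullet}}$; and the characteristic-cycle dictionary of~\cite{Lusztig91} and~\cite{L00a}, matching $L_{\underline{\mbf a}}$ with $L'_{\underline{\mbf a}}$ inside the $v=1$ specialization $U^-$ of $\U^-$, gives $\dim\mbf V_2=\dim\mbf W_2$. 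Hence the real work is to prove $\mbf W_1=\mbf W_2$.

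The inclusion $\mbf W_2\subseteq\mbf W_1$ is clear, so the point is $\mbf W_1\subseteq\mbf W_2$, which I would prove by induction on $\dim V$ and, within a fixed $V$, on the statistic $\e_i$. If $\e_i(Y)=0$ for all $i$ then $f_Y=L'_d\cdot f_{\bar Y}$ for some $\bar Y\subseteq\mbf \Lambda_V$, hence lies in $\mbf W_2$; if $r=\e_i(Y)>0$ for some $i$, I would use the identity $f_i^{\,r}\cdot f_{Y'}=f_Y+\sum_{Z:\e_i(Z)>\e_i(Y)}c_{Y,Z}\,f_Z$ from~\cite[2.9(b)]{L00a}, noting that $Y\subseteq\mbf \Pi_{V, D^{\bullet}}$ forces $Y'\subseteq\mbf \Pi_{V',D^{\bullet}}$ for a proper $V'$ and each $Z\subseteq\mbf \Pi_{V, D^{\bullet}}$, so the inductive hypothesis disposes of $f_{Y'}$ and of the $f_Z$, hence of $f_Y$. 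The crux — and the step I expect to be the main obstacle — is precisely this stability claim: that applying the semicanonical operators $f_i$ keeps one inside the tensor-product variety $\mbf \Pi_{V, D^{\bullet}}$ with the structure constants still supported there, which is exactly what the singular-support bounds of Section~\ref{singular} (Propositions~\ref{Tensor-singular-1} and~\ref{Tensor-singular-2}) and the compatibility of $\e_i$ with the crystal operators are arranged to guarantee. Assembling the chain $\#\mathcal P_{V, D^{\bullet}}=\dim\mbf V_1=\dim\mbf V_2=\dim\mbf W_2=\dim\mbf W_1=\#\Irr\mbf \Pi_{V, D^{\bullet}}$ then shows the injective map $Y_{\bullet}$ is surjective, hence bijective.
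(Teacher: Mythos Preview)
Your proposal is correct and follows essentially the same route as the paper: you invoke the Kashiwara--Saito result to define $Y_{\bullet}$ and obtain injectivity, then reduce surjectivity to the count $\#\mathcal P_{V, D^{\bullet}}=\#\Irr\mbf \Pi_{V, D^{\bullet}}$ via the same four auxiliary spaces $\mbf V_1,\mbf V_2,\mbf W_1,\mbf W_2$ and the same inductive argument for $\mbf W_1=\mbf W_2$ using~\cite[2.9(b)]{L00a}. The only cosmetic difference is that you point to Propositions~\ref{Tensor-singular-1} and~\ref{Tensor-singular-2} to justify the stability of $\mbf \Pi_{V,D^{\bullet}}$ under the induction, whereas the paper states this directly from the definition of $\mbf \Pi_{V,D^{\bullet}}$.
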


Second, we  assume that $D^2\neq 0$. 
We write $\mathcal P_{V, D^1}$ (resp. $\mbf \Pi_{V, D^1}$) for  $\mathcal P_{V, D^{\bullet}}$ (resp. $\mbf \Pi_{V, D^{\bullet}})$ for the case when  $D^2=0$.
The fully faithful functor $\cdot L_{d^2}$ defines a bijection
\[
L_{d^2}: \mathcal P_{V, D^1} \to \mathcal P_{V, D^{\bullet}},
\]
such that $\e_i(\tilde K) = \e_i (\tilde K\cdot  L_{d^2})$ for any $i\in I$.
Similarly, we can define a bijection between the sets $\Irr \mbf \Pi_{V, D^1}$ and $\Irr \mbf \Pi_{V, D^{\bullet}}$. It is clear from the construction that the two bijections are compatible.
Therefore, we can define a similar map 
$Y_{\bullet}:  \mathcal P_{V, D^{\bullet}} \to \mbf \Pi_{V, D^{\bullet}}$ satisfying the same property as that of (\ref{Y}).

Summing up the above analysis, we have the following proposition.

\begin{prop}
\label{YY}
We have a bijective map $Y_{\bullet}:  \mathcal P_{V, D^{\bullet}} \to \Irr \mbf \Pi_{V, D^{\bullet}}$, $K\mapsto Y_K$ such that 
\begin{align*}
Y_K\subseteq \SSS(K) \subseteq Y_K \cup \cup_{K': \e_i(K')\geq \e_i(K)} Y_{K'}\quad  \mbox{and}\quad 
\e_i(K) =\e_i(Y_K), \quad \forall i\in I.
\end{align*}
\end{prop}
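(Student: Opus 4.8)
The plan is to assemble the proposition from pieces already established: the case $D^2 = 0$ is Lemma \ref{Y-0}, and the general case is obtained by transporting that bijection along the embedding functor $\cdot L_{d^2}$ of Section \ref{special}. So the real content is a recollection of how Lemma \ref{Y-0} itself goes, followed by a short compatibility check.

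For the case $D^2 = 0$, I would argue as follows. By Proposition \ref{Tensor-singular-2} every $L_{\underline{\mbf a}}$ has singular support inside $\mbf \Pi_{V, D^{\bullet}} \subseteq \mbf \Lambda_{V\oplus D}$, hence so does every $K \in \mathcal P_{V, D^{\bullet}}$, being a direct summand of such a complex by Proposition \ref{B-L}. Since $\mbf \Pi_{V, D^{\bullet}}$ is pure-dimensional of half the dimension of $\E(V,D)$ by Proposition \ref{property-rho}, $\SSS(K)$ is a union of irreducible components of $\mbf \Pi_{V, D^{\bullet}}$, and the microlocal characterization of the crystal operators --- Theorem 6.2.2(2) of \cite{KS97}, with the sign correction of \cite{K07} and \cite{Sch09} --- produces the assignment $K \mapsto Y_K$ with the sandwich inclusion $Y_K \subseteq \SSS(K) \subseteq Y_K \cup \cup_{K': \e_i(K') \geq \e_i(K)} Y_{K'}$, the identity $\e_i(K) = \e_i(Y_K)$, and the implication $K \neq K' \Rightarrow Y_K \neq Y_{K'}$. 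The last gives injectivity of $Y_{\bullet}$, and bijectivity then reduces to a cardinality count.

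For that count I would run the chain $\mbf V_1 \simeq \mbf V_2 = \mbf W_2 = \mbf W_1$, where $\mbf V_1$ is the $\mbb C$-span of $\mathcal P_{V, D^{\bullet}}$ and $\mbf V_2$ the span of the $L_{\underline{\mbf a}}$ (equal after specializing the shift to $1$, by Proposition \ref{B-L}); $\mbf W_2$ is the span of the constructible functions $L'_{\underline{\mbf a}}$ on $\mbf \Pi_{V, D^{\bullet}}$ and $\mbf W_1$ the span of the semicanonical basis functions $f_Y$ with $Y \subseteq \mbf \Pi_{V, D^{\bullet}}$. The equality $\mbf V_2 = \mbf W_2$ holds because both cut out the same subspace of $U^-$ after $v = 1$, by \cite{Lusztig91} and \cite{L00a}. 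The equality $\mbf W_2 = \mbf W_1$ I would prove by induction on $\dim V$ and on $\e_i(Y)$: when all $\e_i(Y) = 0$ one has $f_Y = L'_{d} \cdot f_{\bar Y}$, and otherwise the relation \cite[2.9(b)]{L00a}, $f_i^r \cdot f_{Y'} = f_Y + \sum_{Z:\, \e_i(Z) > \e_i(Y)} c_{Y,Z} f_Z$, lets one descend, all $Y'$ and $Z$ occurring staying inside the relevant $\mbf \Pi$'s. Chaining these identifications gives $\#\mathcal P_{V, D^{\bullet}} = \#\Irr \mbf \Pi_{V, D^{\bullet}}$, hence $Y_{\bullet}$ is bijective.

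Finally, for $D^2 \neq 0$, the fully faithful functor $\cdot L_{d^2}$ of Section \ref{special} induces a bijection $\mathcal P_{V, D^1} \to \mathcal P_{V, D^{\bullet}}$, $\tilde K \mapsto \tilde K \cdot L_{d^2}$, preserving each $\e_i$, and likewise a bijection $\Irr \mbf \Pi_{V, D^1} \to \Irr \mbf \Pi_{V, D^{\bullet}}$; these are compatible by construction, so transporting the bijection of Lemma \ref{Y-0} along them yields the desired $Y_{\bullet}$, with the singular-support and $\e_i$ conditions carried over verbatim. The step I expect to be the main obstacle is the equality $\mbf W_1 = \mbf W_2$ inside the counting chain --- the assertion that the tensor-product functions $L'_{\underline{\mbf a}}$ already span every semicanonical basis function supported on $\mbf \Pi_{V, D^{\bullet}}$ --- since it is the one place where one genuinely uses the internal combinatorics of \cite{L00a} rather than formal transport of results proved earlier in the paper.
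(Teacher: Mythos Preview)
Your proposal is correct and follows essentially the same approach as the paper: reduce to the case $D^2=0$ via the fully faithful functor $\cdot L_{d^2}$, invoke Theorem 6.2.2(2) of \cite{KS97} (with the correction from \cite{K07}, \cite{Sch09}) to obtain the injective map $Y_{\bullet}$ with the required singular-support and $\e_i$ properties, and then establish bijectivity via the counting chain $\mbf V_1 \simeq \mbf V_2 = \mbf W_2 = \mbf W_1$, proving $\mbf W_1 = \mbf W_2$ by induction using \cite[2.9(b)]{L00a}. Your identification of the $\mbf W_1 = \mbf W_2$ step as the crux is accurate; that is indeed where the paper also invests its effort.
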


\begin{rem}
We shall present a second proof of Proposition ~\ref{YY} in Section ~\ref{filtration-P}, which can be generalized to the general case in Section \ref{general}.
\end{rem}

For any $K\in \mathcal  P_{V, D^{\bullet}}$ and $i\in I$, we define 
\begin{align*}
& \wt(K) = \lambda -\nu \in \X,\quad  \e_i(K) =n,\quad \mbox{if $K \in \, _{i, n}\mathcal P_{V, D^{\bullet}}$},\quad
 \varphi_i=\e_i(K)+ (i, \wt(K)),
\end{align*}
where $\lambda \in \X$ is a fixed element such that $\lambda(i)=d_i$, for any $i\in I$,  and $\nu\in \X$ is via the imbedding 
$\mbb N[I]\to \X$.
We also define, for any $K\in \mathcal P_{V, D^{\bullet}}$, 
\begin{align*}
\tf_i(K) =\tilde{\mathcal F}_i^{(\e_i(K)+1)}\, _i \tilde{\mathcal R}^{(\e_i(K))}(K) 
\quad\mbox{and}\quad
\te_i(K) =
\begin{cases}
\tilde {\mathcal F}_i^{(\e_i(K)-1)} \, _i \tilde{\R}^{(\e_i(K))} (K) & \mbox{if}\; \e_i(K)>0,\\
 0 &\mbox{otherwise}.
 \end{cases}
\end{align*}

The above maps ($\wt$, $\e_i$, $\varphi_i$, $\te_i$, $\tf_i)_{i\in I}$ define a crystal structure on 
\[
\mathcal P_{D^{\bullet}}=\sqcup_{\nu\in \mbb N[I]} \mathcal P_{V(\nu), D^{\bullet}}.
\]
Moreover, it is clear that 
the crystal $\mathcal P_{D^{\bullet}}$ is generated by the objects $K$ such that $\e_i(K)=0$ for any $i\in I$.

The assignment $K \mapsto Y_K$ in (\ref{YY}) defines a  strict crystal isomorphism 
\[
Y_{\bullet}: \mathcal P_{D^{\bullet}} \to \Irr (D^{\bullet}).
\] 
From this isomorphism and Theorem ~\ref{Pi-crystal}, we have 

\begin{thm}
\label{crystal-P}
The crystal structure  on $\mathcal P_{D^{\bullet}}$ is isomorphic to the crystal structure of $B(\lambda^1)\otimes B(\lambda^2,\infty)$ via $\psi Y_{\bullet}$ where $\psi$ is defined in (\ref{psi}) .
\end{thm}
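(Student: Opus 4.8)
The plan is to obtain the theorem by composing two crystal isomorphisms that are (essentially) already in hand: the bijection $Y_\bullet : \mathcal P_{D^\bullet} \to \Irr(D^\bullet)$ of Proposition \ref{YY}, and the crystal isomorphism of Theorem \ref{Pi-crystal} between $\Irr(D^\bullet)$ and $\Irr(\mbf L^s_{D^1})\otimes\Irr(\mbf L_{D^2})=B(\lambda^1)\otimes B(\lambda^2,\infty)$. Thus the one point still requiring proof is that $Y_\bullet$ is a \emph{strict} crystal isomorphism, that is, that it intertwines all five structure maps of the crystal $\mathcal P_{D^\bullet}$ with those of $\Irr(D^\bullet)$; granting this, composing $Y_\bullet$ with the isomorphism of Theorem \ref{Pi-crystal} yields the asserted map $\psi Y_\bullet$.

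Three of the five compatibilities are immediate or already recorded: $\wt(K)=\lambda-\nu=\wt(Y_K)$ by the definitions, $\e_i(K)=\e_i(Y_K)$ for all $i\in I$ is part of Proposition \ref{YY}, and $\varphi_i(K)=\varphi_i(Y_K)$ then follows from axiom (C1). What remains is commutation with $\te_i$ and $\tf_i$. Unwinding the definitions of these operators on $\mathcal P_{D^\bullet}$ (through $\tilde{\mathcal F}_i^{(n)}$ and ${}_i\tilde{\mathcal R}^{(n)}$) and on $\Irr(D^\bullet)$ (through $\tf_i^{\,n}$ and $\te_i^{\max}$), and using $\e_i(K)=\e_i(Y_K)$, one sees that it suffices to fix $i$ and establish the single family of identities
\[
Y_{{}_i\tilde{\mathcal R}^{(n)}(K)}\;=\;\te_i^{\max}(Y_K),\qquad K\in {}_{i,n}\mathcal P_{V,D^\bullet},\ n\in\mbb N,
\]
with ${}_i\tilde{\mathcal R}^{(n)}$ the bijection of Lemma \ref{strut-lem}(3) and $\te_i^{\max}$ that of Proposition \ref{pi-induction}. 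Since $\bigl({}_i\tilde{\mathcal R}^{(n)},\,\tilde{\mathcal F}_i^{(n)}\bigr)$ and $\bigl(\te_i^{\max},\,\tf_i^{\,n}\bigr)$ are pairs of mutually inverse bijections, this family is equivalent to its dual $Y_{\tilde{\mathcal F}_i^{(r)}(\bar K)}=\tf_i^{\,r}(Y_{\bar K})$ for $\bar K\in {}_{i,0}\mathcal P_{W,D^\bullet}$, and together these are precisely what commutation with $\tf_i$ and $\te_i$ requires.

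The displayed identity is proved by comparing the two correspondences through which the operators are built. On the sheaf side, ${}_i\mathcal R^{(n)}$ is assembled, after a shift, from the closed embedding $\iota$ and the vector-bundle projection $\kappa$ of Section \ref{diagram}; on the geometric side, $\te_i^{\max}$ is read off from the correspondence $(\pi_1,\pi_2)$ of (\ref{finer-action}). The bridge is the microlocal estimate governing how singular supports behave under $\iota^\ast$ and $\kappa_!$, in the form established in \cite[13]{Lusztig91} and \cite{KS97}: restricting to the open stratum ${}_{i,0}\mbf\Pi_{W,D^\bullet}$ on which $\dim\ker X(i)$ is minimal, one shows that $\te_i^{\max}(Y_K)$ occurs among the irreducible components of $\SSS({}_i\mathcal R^{(n)}(K))$ there, whereas by Lemma \ref{strut-lem}(1) the remaining summands of ${}_i\mathcal R^{(n)}(K)$ lie in ${}_{i,\ge1}\mathcal P_{W,D^\bullet}$ and hence contribute to $\SSS$ only components $Y_{K'}$ with $\e_i(K')\ge1$. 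Feeding this into the sandwich characterization $Y_{\bar K}\subseteq\SSS(\bar K)\subseteq Y_{\bar K}\cup\bigcup_{K':\,\e_i(K')\ge\e_i(\bar K)}Y_{K'}$ of $Y_{\bar K}:=Y_{{}_i\tilde{\mathcal R}^{(n)}(K)}$, together with $\e_i(\bar K)=0$, forces $Y_{\bar K}=\te_i^{\max}(Y_K)$. Matching the microlocal behaviour of the restriction functor with the purely geometric operation $\te_i^{\max}$ is the technical heart of the argument; it runs parallel to Lemma 4.11 of \cite{Nakajima01} on the geometric side and to \cite[6]{Lusztig91} and \cite{KS97} on the sheaf side. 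Once it is in place, assembling $Y_\bullet$ with the isomorphism of Theorem \ref{Pi-crystal} completes the proof.
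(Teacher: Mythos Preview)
Your overall strategy is exactly the paper's: the theorem is obtained by composing the bijection $Y_\bullet:\mathcal P_{D^\bullet}\to\Irr(D^\bullet)$ with the crystal isomorphism $\psi$ of Theorem~\ref{Pi-crystal}, and the only thing to check is that $Y_\bullet$ is a \emph{strict} crystal isomorphism. The paper states this in one line just before the theorem, relying on the fact that $Y_\bullet$ is \emph{constructed} (in Section~\ref{mapY}) from the map of \cite[Theorem~6.2.2]{KS97} for the graph $\tilde\Gamma$, transported through the fully faithful functor $\cdot L_{d^2}$; the crystal compatibility with $\te_i,\tf_i$ is part of that theorem in \cite{KS97}, not something deduced afterward from the singular-support sandwich.

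Your attempt to rederive the compatibility directly from the sandwich has a gap at the final step. You argue that $\te_i^{\max}(Y_K)\subseteq\SSS(\bar K)$ and then invoke
\[
Y_{\bar K}\subseteq\SSS(\bar K)\subseteq Y_{\bar K}\cup\bigcup_{K':\,\e_i(K')\ge\e_i(\bar K)}Y_{K'}
\]
with $\e_i(\bar K)=0$ to ``force'' $Y_{\bar K}=\te_i^{\max}(Y_K)$. But when $\e_i(\bar K)=0$ the right-hand side is all of $\Irr\mbf\Pi_{W,D^\bullet}$, so the sandwich is vacuous and does not single out $\te_i^{\max}(Y_K)$ among the components of $\SSS(\bar K)$ with $\e_i=0$. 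The argument in \cite{KS97} (see also \cite{K07}) proceeds instead by an induction in which the bijection is built up via the operators $\tilde{\mathcal F}_i^{(n)}$ and $\tf_i^n$ simultaneously, so that the compatibility $Y_{\tilde{\mathcal F}_i^{(n)}(\bar K)}=\tf_i^n(Y_{\bar K})$ holds by construction; the sandwich inequality is then a consequence, not the tool used to pin down $Y_\bullet$. If you want a self-contained argument here, you should either quote the full statement of \cite[Theorem~6.2.2]{KS97} (which already contains the crystal compatibility) or reproduce its inductive construction, rather than trying to extract the compatibility a posteriori from Proposition~\ref{YY}.
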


\subsection{A filtration on $\mathcal P_{V, D^{\bullet}}$}
\label{filtration-P}

Let 
\[
\Xi_{V, D^{\bullet}}=\{ K\in \mathcal P_{V, D^{\bullet}} | \; \e_i(K) =0, \quad \forall i\in I\}
\quad \mbox{and} \quad \Xi_{D^{\bullet}}=\sqcup_{\nu\in \mbb N[I]} \Xi_{V(\nu), D^{\bullet}}.
\]

\begin{lem}
\label{xi-1}
We have $K\in \Xi_{V, D^{\bullet}}$ if and only if $K$ is of the form $L_{d^1} K_b L_{d^2}$ where $K_b$ is the simple perverse sheaf corresponding to an element $b$ in $B(\lambda^1)_{\nu}$ with $\nu =\dim V$.
\end{lem}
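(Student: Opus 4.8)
The statement asserts that $b\mapsto L_{d^1}K_bL_{d^2}$ is a bijection from $B(\lambda^1)_\nu$ onto $\Xi_{V,D^\bullet}$; here $K_b\in\mathcal P_V$ is the simple perverse sheaf attached, under the canonical basis parametrisation of $\mathcal P_V\simeq\mathcal P_{V,D^1}$ furnished by Lemma \ref{L=pi}, to an element $b$ of the sub-crystal $B(\lambda^1)_\nu\subseteq B(\infty)_\nu$. The plan is to prove two things: (a) for every such $b$ the complex $L_{d^1}K_bL_{d^2}$ is a simple perverse sheaf lying in $\mathcal P_{V,D^\bullet}$ with $\e_i(L_{d^1}K_bL_{d^2})=0$ for all $i\in I$, distinct $b$ giving non-isomorphic complexes; and (b) $\#\Xi_{V,D^\bullet}=\#B(\lambda^1)_\nu$. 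Granting (a) and (b), the set $\{L_{d^1}K_bL_{d^2}\}_{b\in B(\lambda^1)_\nu}$ is a subset of $\Xi_{V,D^\bullet}$ of the correct cardinality, hence equal to it, which is exactly the assertion.

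For (a): by associativity of induction, $L_{d^1}K_bL_{d^2}=L_{d^1}\cdot(K_b\cdot L_{d^2})$, and the two functors $\cdot L_{d^2}\colon\D(\E_\Omega(V))\to\D(\E_\Omega(V,D^2))$ and $L_{d^1}\cdot\colon\D(\E_\Omega(V,D^2))\to\D(\E_\Omega(V,D))$ of Section \ref{special} are fully faithful and carry simple perverse sheaves to simple perverse sheaves (each is, up to shift, a pushforward along a closed embedding composed with a pullback along a trivial vector bundle and a $\flat$ for a free group action). Since $K_b$ is, up to shift, a direct summand of some $L_{(\mbf i^1,\mbf a^1)}$ on $\E_\Omega(V)$, its image is a direct summand of $L_{\underline{\mbf a}}$ for the composition $\underline{\mbf a}=d^1\cdot(\mbf i^1,\mbf a^1)\cdot d^2$ (empty first block), so $L_{d^1}K_bL_{d^2}\in\mathcal P_{V,D^\bullet}$, and injectivity in $b$ follows from full faithfulness. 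The remaining point $\e_i=0$ for all $i$ is the crux. By Proposition \ref{YY} it equals $\e_i(Y_{L_{d^1}K_bL_{d^2}})$; by Proposition \ref{Tensor-singular-2}, $Y_{L_{d^1}K_bL_{d^2}}\subseteq\SSS(L_{d^1}K_bL_{d^2})\subseteq\tilde Y_{\underline{\mbf a}}$, and inspecting the $X$-invariant flags defining $\tilde X_{\underline{\mbf a}}$ — whose $D$-incident flag runs $D\supseteq D^2\supseteq 0$ while the $V$-part is exhausted only after the $D^1$-step — shows $\tilde Y_{\underline{\mbf a}}\subseteq\{q^{D^1}=0,\ p^{D^2}=0\}\cap\mbf\Pi_{V,D^\bullet}$. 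To finish one tracks the crystal structures of Theorems \ref{Pi-crystal} and \ref{crystal-P} through these functors: $\cdot L_{d^2}$ is the crystal isomorphism $B(\infty)\simeq B(\lambda^2,\infty)$ (a weight shift, by Lemma \ref{L=pi}), and $L_{d^1}\cdot$ identifies $L_{d^1}K_bL_{d^2}$ in $B(\lambda^1)\otimes B(\lambda^2,\infty)$ with $\mbf{1}_{\lambda^1}\otimes b'$, where $b'$ is the image of $b$ and $\mbf{1}_{\lambda^1}$ is the highest weight element; the crystal tensor rule then gives $\e_i(L_{d^1}K_bL_{d^2})=\max(0,\ \e_i(b')-(i,\lambda^1))$, which vanishes for every $i$ precisely because $b\in B(\lambda^1)_\nu$, i.e.\ $b$ satisfies the Kashiwara bounds cutting $B(\lambda^1)$ out of $B(\infty)$.

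For (b): by Theorem \ref{crystal-P}, $\Xi_{V,D^\bullet}$ maps bijectively onto the set of sources (elements killed by every $\te_i$) of weight $\lambda-\nu$ in $B(\lambda^1)\otimes B(\lambda^2,\infty)$. Applying the tensor product rule, these sources are exactly the $\mbf{1}_{\lambda^1}\otimes b_2$ with $b_2\in B(\lambda^2,\infty)$ of weight $\lambda^2-\nu$ and $\e_i(b_2)\le(i,\lambda^1)$ for all $i$, and Kashiwara's description of the integrable crystal inside $B(\infty)$ puts this set in bijection with $B(\lambda^1)_\nu$. (Equivalently: $B(\lambda^1)\otimes B(\lambda^2,\infty)$ is the crystal base of $M_{\lambda^2}\otimes V_{\lambda^1}$, which carries a Verma filtration with $M_{\lambda^2+\gamma}$ occurring $\dim V_{\lambda^1,\gamma}$ times, so the number of sources of weight $\lambda-\nu=\lambda^2+(\lambda^1-\nu)$ equals $\dim V_{\lambda^1,\lambda^1-\nu}=\#B(\lambda^1)_\nu$.) Hence $\#\Xi_{V,D^\bullet}=\#B(\lambda^1)_\nu$, which completes the proof.

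The main obstacle is the vanishing $\e_i(L_{d^1}K_bL_{d^2})=0$ in step (a): it requires pinning down exactly how the crystal datum $\e_i$ transforms under left multiplication by the framing factor $L_{d^1}$ and matching the sub-crystal $B(\lambda^1)\subseteq B(\infty)$ with the Kashiwara inequalities — a place where the order of the tensor factors and the $*$-crystal structure must be handled with care. Everything else is bookkeeping with the functors of Section \ref{special} and the already-established crystal identifications.
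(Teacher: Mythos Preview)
Your approach is genuinely different from the paper's, and there is a real gap in step (a) that you flag but do not close.

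The paper argues directly and never invokes the crystal identification of Theorem~\ref{crystal-P} or Proposition~\ref{YY}. Its forward direction is: if $\e_i(K)=0$ for all $i$, then $K$ cannot be a summand of any $L_{\underline{\mbf a}}$ with nonempty leading block (Lemma~\ref{strut-lem}(2) forces $\e_j\geq 1$ in that case), so $K$ is a summand of some $L_{d^1(\mbf i,\mbf a)d^2}$; full faithfulness of $L_{d^1}\cdot$ and $\cdot L_{d^2}$ then gives $K=L_{d^1}K_bL_{d^2}$ for a simple $K_b\in\mathcal P_V$. If $b\notin B(\lambda^1)_\nu$ then $K_b$ is a summand of some $L_{(\mbf i,\mbf a)(i,d_i^1+1)}$, so $K$ is a summand of $L_{d^1(\mbf i,\mbf a)(i,d_i^1+1)d^2}$, and a direct support check after Fourier transform shows $\e_i(K)>0$, contradiction. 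The converse is immediate by the same support considerations. No counting, no tensor crystal.

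The gap in your argument is exactly the point you call the ``main obstacle.'' Your claim that $L_{d^1}K_bL_{d^2}\leftrightarrow\mbf 1_{\lambda^1}\otimes b'$ with $b'=b$ under $B(\infty)\simeq B(\lambda^2,\infty)$ leads, via the tensor rule, to the condition $\e_i(b)\leq(i,\lambda^1)$ for all $i$. But membership $b\in B(\lambda^1)$ inside $B(\infty)$ is characterised by $\e_i^{*}(b)\leq(i,\lambda^1)$, not $\e_i(b)\leq(i,\lambda^1)$; these are different conditions, related by Kashiwara's $*$-involution. So either your identification of $b'$ is off by a $*$, or the tensor factors are in the wrong order (note that Theorem~\ref{K-1} gives $M_{\lambda^2}\otimes V_{\lambda^1}$, whose crystal is $B(\lambda^2,\infty)\otimes B(\lambda^1)$, while Theorem~\ref{Pi-crystal} writes $B(\lambda^1)\otimes B(\lambda^2,\infty)$; the commutor between these is not the identity). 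Until you pin this down, step~(a) does not go through, and since (a) is half of your bijection-by-counting strategy, the proof is incomplete.

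There is also a structural reason the paper avoids your route: Lemma~\ref{xi-1} is the starting point of the filtration argument in Section~\ref{filtration-P}, which furnishes the \emph{second} proof of Proposition~\ref{YY}---the one that, per the remark after Proposition~\ref{YY}, generalises to arbitrary $N$ in Section~\ref{general}. Building Lemma~\ref{xi-1} on Proposition~\ref{YY} and Theorem~\ref{crystal-P} is not circular for $N=2$ (the first proof of Proposition~\ref{YY} is independent), but it would undermine the inductive scheme the paper uses in the general case.
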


\begin{proof}
Suppose that $K\in \Xi_{V, D^{\bullet}}$. Then it is clear that $K$ is a direct summand of a certain complex $L_{d^1 (\mbf i, a) d^2}=L_{d^1}\cdot L_{(\mbf i, \mbf a)} \cdot L_{d^2}$, up to a shift.  Since $L_{d^1}\cdot$ and $\cdot L_{d^2}$ are fully faithful functors, we see that $K$ has to be of the form $L_{d^1} K_b L_{d^2}$ for some simple perverse sheaf
in $\mathcal P_V$.
If $K_b$ is a direct summand of the complex $L_{(\mbf i, \mbf a)(i, d_i^1+1)}$, then it is clear that $\e_i(K) >0$. 
This shows that $K_b$ is a simple perverse sheaf such that $b\in B(\lambda^1)_{\nu}$.  
It is clear that if $K=L_{d^1} K_b L_{d^2}$ such that $b\in B(\lambda^1)_{\nu}$ then $K\in \Xi_{V, D^{\bullet}}$.
\end{proof}

Note that in general, the set $\Xi_{D^{\bullet}}$ has infinitely many elements. 
Let us order the elements in $\Xi_{D^{\bullet}}$ in a way, say 
\[
\xi_1, \xi_2, \cdots, \xi_n,\cdots ,
\]
such that if $\xi_m\in \Xi_{V, D^{\bullet}}$ and $\xi_n\in \Xi_{W,D^{\bullet}}$ with $W$ a proper subspace of $V$, then $n < m$. 

Let $\Q_{V, D^{\bullet}}^{\leq n} $ be the full subcategory of $\Q_{V, D^{\bullet}}$ whose simple objects are direct summands of the complexes
$L_{(\mbf i, \mbf a)}\cdot \xi_m[z]$ in $\Q_{V, D^{\bullet}}$, for $z\in \mbb Z$ and  $1\leq m\leq n$. Then we have a filtration of $\Q_{V, D^{\bullet}}$: 
\begin{equation}
\label{filtration-Q}
\Q^{\leq 1}_{V, D^{\bullet}}\subset \Q^{\leq 2}_{V, D^{\bullet}} \subset \cdots \subset \Q^{\leq n}_{V, D^{\bullet}} \subset \cdots.
\end{equation}
Note that $\Q^{\leq n}_{V, D^{\bullet}}=\Q^{\leq m}_{V, D^{\bullet}}=\Q_{V, D^{\bullet}}$ for large enough $m$ and $n$. 

Similarly, we have a filtration for the simple objects in $\Q_{V, D^{\bullet}}$:
\[
\mathcal P^{\leq 1}_{V, D^{\bullet}}\subset \mathcal P^{\leq 2}_{V, D^{\bullet}} \subset \cdots \subset \mathcal P^{\leq n}_{V, D^{\bullet}} \subset \cdots,
\]
where $\mathcal P^{\leq n}_{V, D^{\bullet}}$ are subsets of $\mathcal P_{V, D^{\bullet}}$ whose element appears in $\Q^{\leq n}_{V, D^{\bullet}}$. We set
\[
\mathcal P^n_{V, D^{\bullet}}= \mathcal P^{\leq n}_{V, D^{\bullet}} \backslash \mathcal P^{\leq n-1}_{V, D^{\bullet}}, \quad \forall n\in \mbb N.
\]
(The filtration comes from the one in ~\cite{BGG71}, ~\cite{BGG75}, ~\cite{BGG76} and ~\cite{BG80}.)

\begin{lem}
\label{n=0}
We have $\xi_n\in  \mathcal P^n_{V, D^{\bullet}}$.
\end{lem}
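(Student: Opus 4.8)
The plan is to show the two containments $\xi_n\in\mathcal P^{\leq n}_{V,D^{\bullet}}$ and $\xi_n\notin\mathcal P^{\leq n-1}_{V,D^{\bullet}}$ separately, using the definitions of the filtration and of the ordering on $\Xi_{D^{\bullet}}$. The containment $\xi_n\in\mathcal P^{\leq n}_{V,D^{\bullet}}$ is immediate: by construction $\Q^{\leq n}_{V,D^{\bullet}}$ contains all direct summands of $L_{(\mbf i,\mbf a)}\cdot\xi_m[z]$ for $1\le m\le n$, and taking $m=n$, $\mbf a=0$ (so that $L_{(\mbf i,\mbf a)}$ is the constant sheaf on a point and $L_{(\mbf i,\mbf a)}\cdot\xi_n=\xi_n$ up to shift), we see $\xi_n$ itself lies in $\Q^{\leq n}_{V,D^{\bullet}}$, hence $\xi_n\in\mathcal P^{\leq n}_{V,D^{\bullet}}$.

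The substance is to prove $\xi_n\notin\mathcal P^{\leq n-1}_{V,D^{\bullet}}$, i.e.\ that $\xi_n$ is not a direct summand (up to shift) of any complex $L_{(\mbf i,\mbf a)}\cdot\xi_m$ with $1\le m\le n-1$. First I would reduce to understanding which simple perverse sheaves can occur in such a complex. Write $\xi_m=L_{d^1}K_{b_m}L_{d^2}$ via Lemma \ref{xi-1}, with $b_m\in B(\lambda^1)_{\mu}$ for $\mu=\dim W_m$, where $\xi_m\in\mathcal P_{W_m,D^{\bullet}}$. A simple summand $K$ of $L_{(\mbf i,\mbf a)}\cdot\xi_m$ then has the property that applying a suitable sequence of restriction functors $\,_i\R^{(\bullet)}$ (stripping off the ``$V$''-part built by $L_{(\mbf i,\mbf a)}$) returns a complex whose simple summands all lie in $\mathcal P_{W_m,D^{\bullet}}$; in crystal-theoretic terms, via Theorem \ref{crystal-P} and the bijection $Y_{\bullet}$, the element of $\mathcal P_{D^{\bullet}}$ corresponding to $K$ has a ``source'' (the element reached by applying $\te_i$'s maximally) equal to $\xi_m$, hence with $W$-dimension $\le\dim W_{n-1}<\dim V$ in the relevant cases, or more precisely equal to some $\xi_m$ with $m\le n-1$. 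On the other hand $\xi_n$ itself has $\e_i(\xi_n)=0$ for all $i$, so its own source is $\xi_n$, which is $\xi_m$ only for $m=n$. Thus $\xi_n$ cannot be a summand of $L_{(\mbf i,\mbf a)}\cdot\xi_m$ for $m\le n-1$.

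I expect the main obstacle to be making rigorous the claim that every simple summand of $L_{(\mbf i,\mbf a)}\cdot\xi_m$ has $\xi_m$ as its crystal source. This is exactly the analogue of the fact, in the non-framed setting, that $\Ind$ of a cuspidal-type object produces summands all of which restrict back to that object; here it follows by combining Lemma \ref{strut-lem} (the bijections $\,_i\tilde\R^{(n)}$ and $\tilde{\mathcal F}_i^{(n)}$, together with the fact that $\F_i^{(n)}(\bar K)$ consists of $K$ plus summands with strictly larger $\e_i$) with an induction on $\dim V$ using the ordering convention on $\Xi_{D^{\bullet}}$ (if $\xi_m\in\Xi_{W,D^{\bullet}}$ with $W\subsetneq V$ then $m<n$ whenever $\xi_n\in\Xi_{V,D^{\bullet}}$). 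One then tracks that building up from $\xi_m$ by $\F$-type functors only ever produces objects whose source is again $\xi_m$, and that $\xi_n$, having trivial $\e_i$ for all $i$, is its own source and distinct from all earlier $\xi_m$. This matches the classical BGG-type argument referenced after the statement, and the bookkeeping is routine once the source-stability statement is isolated as a sublemma.
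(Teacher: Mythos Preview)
Your argument is correct but takes a considerably longer route than the paper's.  The paper's proof is essentially a one-liner once you note (as you yourself cite from Lemma~\ref{strut-lem}) that every simple summand of $\F_{i_1}^{(a_1)}(\,\cdots\,)$ has $\e_{i_1}\geq a_1$.  Iterating this observation: if $\xi_n$ were a summand of $L_{(\mbf i,\mbf a)}\cdot\xi_m=\F_{i_1}^{(a_1)}\cdots\F_{i_k}^{(a_k)}\cdot\xi_m$ for some $m\leq n-1$, then $\e_{i_1}(\xi_n)\geq a_1$ forces $a_1=0$; peeling off the trivial $\F_{i_1}^{(0)}$ and repeating gives $a_2=\cdots=a_k=0$.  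Hence $\xi_n=\xi_m$, contradicting the fact that the $\xi_j$ are pairwise distinct simple perverse sheaves (by Lemma~\ref{xi-1} they are indexed by distinct canonical basis elements).  That is the entire argument.

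Your formulation packages this as a ``crystal source'' sublemma, but notice that proving the sublemma in the only case you need (namely, when the candidate summand already has all $\e_i=0$) reduces immediately to the iterative peeling above; the crystal framework and the appeal to Theorem~\ref{crystal-P} are unnecessary overhead here.  One further caution: your unhedged claim that \emph{every} simple summand of $L_{(\mbf i,\mbf a)}\cdot\xi_m$ has source exactly $\xi_m$ is stronger than what holds in general---Proposition~\ref{filtration-prop} shows the complementary term $M$ in $\bar K\cdot\xi_n=K\oplus M$ merely lies in $\Q^{\leq n}$, so its summands may well come from $\mathcal P^{m'}$ with $m'<m$.  Only your hedged version (source is \emph{some} $\xi_{m'}$ with $m'\leq n-1$) is correct, and for the present lemma even that is more than you need.
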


\begin{proof}
Assume that $\xi_n \in \Q^{\leq n-1}_{V, D^{\bullet}}$. Since $\e_i(\xi_n) =0$ for any $i\in I$, $\xi_n$ can only be expressed as a direct sum whose simple summands are from the set
$\{\xi_1, \cdots,  \xi_{n-1}\}$. This contradicts with the fact that the set  $\{\xi_1,\cdots, \xi_{n-1}, \xi_n\}$ is linearly independent, because it is a subset of  the canonical basis $ B(\lambda^1)$ by Lemma ~\ref{xi-1}.  Therefore, $\xi_n\in \mathcal P^n_{V, D^{\bullet}}$.
\end{proof}

Recall from ~\cite[Theorem 6.2.2]{KS97} that there exists a crystal isomorphism 
\[
Y_{\bullet} : \sqcup_{V} \mathcal P_V \to \Irr \mbf \Lambda_V,
\]
where $V$ runs over  a set of representatives of the isomorphism classes of the $I$-graded vector spaces, 
such that 
\[
Y_K \subseteq \SSS(K) \subseteq Y_K \cup \cup_{\e_i(Y_K)\geq \e_i(Y_{K'}) } Y_{K'}, \quad \forall i\in I.
\]
Let $Y_{\xi_n}$ be the irreducible component in $\Irr \mbf \Pi_{V, D^{\bullet}}$ obtained from $Y_{K_b}$ if $\xi_n = L_{d^1} K_{b} L_{d^2}$. 
Let $\mbf \Pi_{V, D^{\bullet}}^n$ be the subset in $\Irr \mbf  \Pi_{V, D^{\bullet}}$ defined in a similar manner as $\mathcal P^n_{V, D^{\bullet}}$ to $\mathcal P_{V, D^{\bullet}}$.
In other words, $\mbf \Pi_{V, D^{\bullet}}$ is the crystal generated by the element $Y_{\xi_n}$.

Let  $\pi_a'$ be  the similar morphisms to
$\pi_a$ in (\ref{induction-diag}) for $a=1,2, 3$ with the varieties `$\mbf E$' replaced by the varieties `$\mbf \Pi$'. 
For any irreducible component $\bar Y\in \Irr \mbf \Lambda_T$, we set 
\[
\bar Y \cdot Y_{\xi_n}= \pi_3' \pi'_2(\pi'_1)^{-1} (\bar Y\times Y_{\xi_n})
\] 
to be the closed  subvariety of $\mbf \Pi_{V, D^{\bullet}}$, where we assume that $Y_{\xi_n} \in \Irr \mbf \Pi_{W, D^{\bullet}}$ and $V=T\oplus W$. 
Let $Y_{\xi_n}^0$ be the open subvariety of $Y_{\xi_n}$ consisting of all points $X$ such that $\e_j(X) = 0$ for any $j\in I$. 
We claim  that 
\begin{itemize}

\item the closure, $Y$,  of $\bar Y \cdot Y_{\xi_n}^0$ is an irreducible component in  $\mbf \Pi_{V, D^{\bullet}}$ such that $\e_j (Y) = \e_j(\bar Y)$ for any $j\in I$.  
\end{itemize}

Indeed, the restriction of $\pi_1'$ to $(\pi_1')^{-1}(\bar Y\times Y_{\xi_n}^0)$ is smooth with connected fiber. This can be proved in a similar manner as the proof of 
Lemma ~\ref{dimrho} due to the fact that the condition that $X(i)$ is injective is due to the condition that $(i)X$ is surjective. 
Moreover, the restriction of $\pi_3'$ to $\pi'_2(\pi_1)'^{-1} (\bar Y\cdot Y_{\xi_n}^0)$ is injective. This is due to the fact that the space $U$ such that $D\oplus U$ is $X$-stable for any $X=(x, p, q)\in \pi'_2(\pi_1)'^{-1} (\bar Y\cdot Y_{\xi_n}^0)$ is $\overline{p(D)}$ by the definition of $Y_{\xi_n}^0$.
The above analysis implies that $\bar Y\cdot Y_{\xi_n}$ is irreducible, of the desired dimension  and $\e_j(\bar Y\times Y_{\xi_n})=\e_j(\bar Y)$ for any $j\in I$. The claim follows. 

It is clear that 
\[
Y\subseteq \bar Y\cdot Y_{\xi_n} \subset Y\cup \cup_{\e_i(Y')\geq \e_i(Y)} Y', \quad \forall i\in I. 
\]

\begin{prop}
\label{finer-Pi}
 The assignment $\bar Y\mapsto Y$, where $Y$ is the closure of $\bar Y \cdot Y_{\xi_n}^0$,  
 defines a bijection 
 \[
\gamma:  \Irr \mbf \Lambda_T \to \Irr \mbf \Pi_{V, D^{\bullet}}^n,
 \]  
 which is compatible with the actions $\te_i^r$ and $\tf_i^r$  for any $i\in I$ and $r\in \mbb N$. 
\end{prop}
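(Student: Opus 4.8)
The plan is to leverage the bulleted claim established immediately before the statement, which already supplies the nontrivial geometric input: for $\bar Y\in\Irr\mbf\Lambda_T$ the closure $Y$ of $\bar Y\cdot Y_{\xi_n}^0$ is an irreducible component of $\mbf\Pi_{V,D^{\bullet}}$ with $\e_j(Y)=\e_j(\bar Y)$ for all $j\in I$. Thus $\gamma\colon\bar Y\mapsto Y$ is a well-defined map $\Irr\mbf\Lambda_T\to\Irr\mbf\Pi_{V,D^{\bullet}}$ preserving every function $\e_j$, and what remains is to show (i) that $\gamma$ intertwines the operators $\te_i^r$, $\tf_i^r$, and (ii) that it is a bijection onto $\mbf\Pi_{V,D^{\bullet}}^n$ (more precisely, that $\sqcup_{V}\gamma$ is a bijection onto $\sqcup_{V}\mbf\Pi_{V,D^{\bullet}}^n$, which specializes to the asserted statement at each $V$ since $\gamma$ respects dimension vectors).

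For (i), the point is that all three operations in sight --- the crystal operator $\tf_i^r$ on $\Irr\mbf\Lambda_T$ (Lusztig \cite{Lusztig91}, via a diagram of the shape (\ref{finer-action}) for $\tilde\Gamma$), the crystal operator $\tf_i^r$ on $\Irr\mbf\Pi_{V,D^{\bullet}}$ of Proposition \ref{pi-induction}, and the "gluing" $\bar Y\mapsto\bar Y\cdot Y_{\xi_n}$ (induction along the diagram with $\pi_1',\pi_2',\pi_3'$ modelled on (\ref{induction-diag})) --- are themselves induction constructions. The identity $\gamma(\tf_i^r\bar Y)=\tf_i^r\gamma(\bar Y)$, and its analogue for $\te_i^r$, is then an instance of the transitivity of induction: the $ri$ produced by $\tf_i^r$ and the subspace $T$ attached to $Y_{\xi_n}$ occupy complementary directions, so the two bundle constructions may be carried out in either order with isomorphic total spaces. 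Concretely I would write down the two iterated correspondences, verify that each projection involved stays smooth with connected fibres and affine-bundle-like by repeating the surjectivity arguments of Lemma \ref{dimrho} and Lemma \ref{finer-lem} verbatim, and match dimensions; this is the same bookkeeping as in the proof of Theorem 4.6 of \cite{Nakajima01} and in \cite{M03}, with the framing flag $\check D^{\bullet}$ carried along. I expect this to be the main obstacle, since it is the one step where new geometry must actually be checked rather than quoted.

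Granting (i), surjectivity onto $\mbf\Pi_{V,D^{\bullet}}^n$ is formal: the crystal $\sqcup_{V'}\Irr\mbf\Lambda_{V'}\cong B(\infty)$ is generated under the $\tf_i$ by the unique component of $\mbf\Lambda_0$, a point; for $T=0$ one has $\bar Y\cdot Y_{\xi_n}^0=Y_{\xi_n}^0$, so $\gamma$ carries this generator to $Y_{\xi_n}$, and since $\gamma$ commutes with the $\tf_i^r$ while $\sqcup_V\mbf\Pi_{V,D^{\bullet}}^n$ is by definition the subcrystal generated by $Y_{\xi_n}$, the image of $\gamma$ is exactly $\sqcup_V\mbf\Pi_{V,D^{\bullet}}^n$. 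For injectivity I would invoke Kashiwara's string parametrization of $B(\infty)$: fix a rule that, for each non-minimal element, selects an index $i$ with $\e_i>0$; then every $\bar Y$ is recovered from the sequence of pairs $(i_j,\e_{i_j})$ read off along the resulting $\te_i^{\max}$-path down to the generator. Because $\gamma$ commutes with each $\te_i^r$ and preserves every $\e_i$, applying the same rule to $\gamma(\bar Y)$ yields the same sequence of pairs, so $\gamma(\bar Y)$ determines $\bar Y$; hence $\gamma$ is injective. Together with (i) this proves the proposition. One also expects injectivity to be visible through a dimension count comparing $\#\Irr\mbf\Lambda_T$ with $\#\mbf\Pi_{V,D^{\bullet}}^n$ via the bijection $Y_{\bullet}$ of Section \ref{filtration-P} and Lemma \ref{xi-1}, but the string-coordinate argument is self-contained and avoids circularity with that section.
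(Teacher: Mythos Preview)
Your proposal is correct, and the core of both arguments is the same: compatibility of $\gamma$ with $\te_i^r,\tf_i^r$ reduces to comparing the correspondence (\ref{finer-action}) with the induction diagram built from $\pi_1',\pi_2',\pi_3'$, exactly as you say. The paper's proof phrases this as ``the compatibility of $\gamma$ with the actions $\te_i^r$ and $\tf_i^r$ is reduced to the compatibility of the diagram (\ref{finer-action}) and the diagram similar to (\ref{induction-diag}),'' and does not spell out more than that.

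Where you differ is in the logical order and in the tools used for bijectivity. The paper treats injectivity as immediate (``it is clear that $\gamma$ is injective''): the point, already set up in the paragraph before the proposition, is that for a generic $X$ in $\gamma(\bar Y)$ the subspace $U$ with $D\oplus U$ $X$-stable is forced to equal $\overline{p(D)}$, so one recovers $\bar Y$ geometrically from $\gamma(\bar Y)$ by passing to the quotient. For surjectivity the paper invokes Theorem~\ref{Pi-crystal} (or, alternatively, induction on $\dim T$). You instead prove compatibility first and then deduce both bijectivity statements from crystal combinatorics: surjectivity from the fact that $B(\infty)$ is generated under the $\tf_i$ by its highest-weight element, and injectivity from string parametrization. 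Your route has the virtue of being self-contained and of avoiding any appeal to Theorem~\ref{Pi-crystal}, which matters since the section is advertised as giving a second, independent proof of Proposition~\ref{YY}; the paper's route is shorter because it cashes in on the geometric observation that $\pi_3'$ is already injective on the relevant locus.
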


\begin{proof}
It is clear that $\gamma$ is injective. $\gamma$ is surjective follows from Theorem ~\ref{Pi-crystal}. It can also be proved by induction with respect to the dimension of $T$.
The compatibility of $\gamma$ with the actions $\te_i^r$ and $\tf_i^r$ is reduced to the compatibility of the diagram  (\ref{finer-action}) and the  diagram similar  to (\ref{induction-diag}).
\end{proof}

\begin{prop}
\label{filtration-prop}
Suppose that $\xi_n\in \Xi_{W, D^{\bullet}}$. 
Given any $\bar K\in \mathcal P_T$, there is a unique $K\in \mathcal P^n_{V, D^{\bullet}}$, with $V\simeq T\oplus W$, such that
\begin{equation}
\label{filtration-prop-a}
\begin{split}
& \bar K\cdot \xi_n= K \oplus M, \quad \mbox{where} \;  M\in \mathcal \Q^{\leq n}_{V, D^{\bullet}};\\
 &\e_j (\bar K) =\e_j (K), \quad \e_j (K) \leq   \e_j (M), \quad  \forall j\in I;\quad 
  \e_i(K) < \e_i(M), \quad \mbox{for some}\; i\in I;\\
 & \gamma(Y_{\bar K}) \subseteq \SSS(K) \subseteq \gamma(Y_{\bar K}) \cup \cup_{\e_j(Y') \geq  \e_j(K)} Y', \quad \forall j\in I.
\end{split}
\end{equation}
Moreover, all elements in $\mathcal P^n_{V, D^{\bullet}}$ are obtained in this way.
The assignment $\bar K\mapsto K$ in (\ref{filtration-prop-a}) defines a bijection
$\mathcal P_T \to \mathcal P^n_{T\oplus W, D^{\bullet}}$.
\end{prop}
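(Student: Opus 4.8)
The plan is to prove Proposition~\ref{filtration-prop} by combining the induction/restriction machinery of Section~\ref{special} with the geometric bookkeeping provided by Lemma~\ref{strut-lem}, Proposition~\ref{finer-Pi}, and the singular-support map $Y_\bullet$. The starting point is Proposition~\ref{B-L}: every object of $\Q_{V,D^\bullet}$ decomposes into simple perverse sheaves appearing in the various $L_{\underline{\mbf a}}$, and Lemma~\ref{xi-1} identifies the ``primitive'' simple objects (those with $\e_i=0$ for all $i$) with the sheaves $L_{d^1}K_b L_{d^2}$, $b\in B(\lambda^1)$. First I would fix $\xi_n\in\Xi_{W,D^\bullet}$ and $\bar K\in\mathcal P_T$, write $\bar K$ itself as a summand of some $L_{(\mbf i,\mbf a)}$ over $\E_\Omega(T,0)$, and consider the induction product $\bar K\cdot\xi_n=\Ind^V_{T,W}(\bar K\boxtimes\xi_n)$. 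Since $\xi_n=L_{d^1}K_{b}L_{d^2}$, associativity/transitivity of induction (the third and fourth special cases of Section~\ref{special}, i.e.\ the fully faithful functors $L_{d^1}\cdot$ and $\cdot L_{d^2}$) lets me rewrite $\bar K\cdot\xi_n$ as a summand of a complex of the form $L_{\underline{\mbf a}}$ with $\underline{\mbf a}=(\mbf i,\mbf a)\cdot d^1\cdot(\mbf i^2,\mbf a^2)\cdot d^2$; in particular it lies in $\Q^{\le n}_{V,D^\bullet}$ by construction of the filtration~(\ref{filtration-Q}). Because $\Ind$ of a semisimple complex is semisimple (decomposition theorem, as in Section~\ref{framing}), $\bar K\cdot\xi_n$ is a direct sum of shifts of simple perverse sheaves, so it makes sense to isolate its ``lowest'' summand.

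The existence and uniqueness of $K$ will come from analyzing which summand of $\bar K\cdot\xi_n$ is \emph{new}, i.e.\ not already in $\Q^{\le n-1}_{V,D^\bullet}$. I would argue as in the proof of Lemma~\ref{strut-lem} (which in turn follows Lusztig, \cite{Lusztig91} Lemma~6.4, and Zheng, \cite{Zheng08} Lemma~3.2.10): examine $\Supp(\Phi^{{}_i\Omega}_\Omega(\bar K\cdot\xi_n))$ and the generic value of $\dim V_i/(i)X$ on it. Since $\xi_n$ satisfies $\e_i=0$ for all $i$ and $\bar K$ carries its own $\e_j$-statistics, an induction-product argument (the geometric counterpart being the smoothness-with-connected-fibre statement in Proposition~\ref{finer-Pi} and the diagram~(\ref{finer-action})) shows that exactly one simple summand $K$ of $\bar K\cdot\xi_n$ has $\e_j(K)=\e_j(\bar K)$ for all $j$, and every other summand $M$ has $\e_j(M)\ge\e_j(K)$ with strict inequality for at least one $i\in I$; the latter forces $M\in\Q^{\le n-1}_{V,D^\bullet}\subseteq\Q^{\le n}_{V,D^\bullet}$ when $\e_i(M)$ is made strictly larger in the $i$-direction relevant to $\xi_n$. (Here I would use Lemma~\ref{n=0} to know $\xi_n\in\mathcal P^n$, so the only way a summand can fail to lie in $\Q^{\le n-1}$ is to be this distinguished $K$.) Uniqueness is then immediate since two simple perverse sheaves with the same $Y_\bullet$-component coincide (Proposition~\ref{YY}).

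The singular-support statement in the third line of~(\ref{filtration-prop-a}) is where the geometry of Section~\ref{twocopytensor} is invoked directly. I would compute $\SSS(\bar K\cdot\xi_n)$ using the microlocal behaviour of $\Ind$ under the diagram~(\ref{induction-diag})—as in \cite[Theorem~13.3, Corollary~13.6]{Lusztig91} and Proposition~\ref{Tensor-singular-1}—obtaining $\SSS(\bar K\cdot\xi_n)\subseteq \bar Y_{\bar K}\cdot\tilde Y_{\underline{\mbf a}}$, which by Proposition~\ref{finer-Pi} and the discussion preceding it is contained in $\gamma(Y_{\bar K})\cup\bigcup_{\e_j(Y')\ge\e_j(K)}Y'$. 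Intersecting with the known upper/lower bound $Y_K\subseteq\SSS(K)$ from Proposition~\ref{YY} and matching $\e_j$-values via $\e_j(K)=\e_j(\bar K)=\e_j(Y_{\bar K})=\e_j(\gamma(Y_{\bar K}))$ pins down $Y_K=\gamma(Y_{\bar K})$, which is the content of the third line. Finally, that all of $\mathcal P^n_{V,D^\bullet}$ arises this way, and that $\bar K\mapsto K$ is a bijection $\mathcal P_T\to\mathcal P^n_{T\oplus W,D^\bullet}$, follows by the same counting argument used for $Y_\bullet$: compose with the already-established bijections $Y_\bullet:\mathcal P_T\xrightarrow{\sim}\Irr\mbf\Lambda_T$ (Theorem~6.2.2 of \cite{KS97}), $\gamma:\Irr\mbf\Lambda_T\xrightarrow{\sim}\Irr\mbf\Pi^n_{V,D^\bullet}$ (Proposition~\ref{finer-Pi}), and $Y_\bullet^{-1}:\Irr\mbf\Pi^n_{V,D^\bullet}\xrightarrow{\sim}\mathcal P^n_{V,D^\bullet}$ (restriction of Proposition~\ref{YY}), and check that the composite sends $\bar K$ to the $K$ constructed above—which is exactly the compatibility of $\gamma$ with the microlocal data asserted in~(\ref{filtration-prop-a}).

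The main obstacle I anticipate is the bookkeeping in the second step: proving that the ``error term'' $M$ really lands in $\Q^{\le n}_{V,D^\bullet}$ with the precise $\e_j$-inequalities stated, rather than merely in some larger subcategory. This requires showing that any simple summand of $\bar K\cdot\xi_n$ other than $K$ either has strictly larger $\e_i$ for the vertex $i$ controlling $\xi_n$—so that it is ``pushed down'' in the BGG-type filtration—or is itself obtained from a smaller $\Xi_{W',D^\bullet}$; this is the analogue of the delicate support analysis in \cite[9.3]{Lusztig93} and \cite[6.4]{Lusztig91}, and it is the one place where I expect the argument to need genuine care rather than formal manipulation of functors.
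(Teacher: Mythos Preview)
Your approach differs substantially from the paper's, and there is a genuine gap at the step you yourself flag as the main obstacle.

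The paper proves the proposition by \emph{induction on $\dim T$}. For the inductive step one picks a single vertex $i$ with $r=\e_i(\bar K)>0$, writes $\F_i^{(r)}\bar L=\bar K\oplus\bar M$ with $\e_i(\bar L)=0$ via Lemma~\ref{strut-lem}, applies the induction hypothesis to $\bar L\cdot\xi_n=L\oplus N$, and then looks at $\F_i^{(r)}\bar L\cdot\xi_n=\bar K\cdot\xi_n\oplus\bar M\cdot\xi_n$. Inside $\F_i^{(r)}L$ there is a unique summand $K$ with $\e_i(K)=r$ (again Lemma~\ref{strut-lem}), and a comparison of $\e_i$-values forces $K$ to sit inside $\bar K\cdot\xi_n$. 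The crucial point---control of $\e_j(K)$ for \emph{all} $j$, not just the chosen $i$---is obtained \emph{only} through the singular-support bound coming from the induction hypothesis: from $\gamma(Y_{\bar L})\subseteq\SSS(L)$ one gets $\gamma(Y_{\bar K})=\tf_i^r\gamma(Y_{\bar L})\subseteq\SSS(\F_i^{(r)}L)$, and since $K$ is the unique summand with the correct $\e_i$ this forces $\gamma(Y_{\bar K})\subseteq\SSS(K)$, whence $\e_j(K)=\e_j(\gamma(Y_{\bar K}))=\e_j(\bar K)$ for every $j$.

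Your proposal tries to establish the simultaneous equality $\e_j(K)=\e_j(\bar K)$ for all $j$ \emph{directly} from a support analysis of $\bar K\cdot\xi_n$, citing the method of \cite[Lemma~6.4]{Lusztig91}. But that method, after a Fourier transform making $i$ a sink, controls exactly one $\e_i$ at a time; it does not by itself produce a summand that is simultaneously minimal for every vertex. This is precisely why the paper routes the argument through singular supports and induction. Your third paragraph then tries to recover the singular-support statement, but the logic is inverted: you assume $\e_j(K)=\e_j(\bar K)$ (from paragraph two) to identify $Y_K=\gamma(Y_{\bar K})$, whereas in the paper it is the containment $\gamma(Y_{\bar K})\subseteq\SSS(K)$ that is proved first and then used to \emph{deduce} the $\e_j$-equalities.

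A secondary issue: you invoke Proposition~\ref{YY} both for uniqueness and, via ``$Y_\bullet^{-1}:\Irr\mbf\Pi^n_{V,D^\bullet}\to\mathcal P^n_{V,D^\bullet}$'', for surjectivity. The remark following Proposition~\ref{YY} says explicitly that Section~\ref{filtration-P} is meant to give a \emph{second, independent} proof of~\ref{YY} that generalises to Section~\ref{general}; relying on~\ref{YY} defeats that purpose. Moreover, the assertion that $Y_\bullet$ restricts to a bijection between the $n$-th filtration pieces is not part of Proposition~\ref{YY}; it is exactly what the commutative square displayed immediately after the present proposition establishes, so using it here is circular.
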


\begin{proof}
We shall prove the statement by induction with respect to the dimension of $T$.
The statement is clear when $T=0$ by Lemma ~\ref{n=0} and the choice of $Y_{\xi_n}$. Suppose that the statement holds for any proper subspaces in a non zero vector space $T$.  Then there exists a vertex $i\in I$ such that 
$\e_{i}(\bar K) > 0$.  Set $r=\e_i(\bar K) >0$. Then, by Lemma \ref{strut-lem}, there exists a simple perverse sheaf $\bar L\in \mathcal P_{\bar T}$ for some  subspace $\bar T$ in $T$  with $\e_i(\bar L)=0$ such that 
\[
\F_i^{(r)}  \cdot \bar L = \bar K  \oplus \bar M, \quad \mbox{where}\; \e_i(\bar M)> r.
\]
By induction,  we have 
\[
\bar L\cdot  \xi_n = L \oplus N, \quad \mbox{where}\; L\in \mathcal P^{n}_{\bar V, D^{\bullet}} \; \mbox{and}\; N\in \Q^{\leq  n}_{\bar V, D^{\bullet}},
\]
for $\bar V$ a certain proper subspace in $V$. Moreover, the complexes $L$ and $N$ satisfy  $\e_i(L) =0$ and $\e_i(N)>0$ and $\e_j(N) \geq \e_j(L)$ for any $j\in I$. From this and by Lemma ~\ref{strut-lem}, we see that there is a unique simple perverse sheaf $K$ in
$\F_i^{(r)} \cdot \bar L\cdot \xi_n$ such that $\e_i(K) =r$ and that $K$ is a direct summand of $\F_i^{(r)}\cdot L$.
Observe that  
\[
\F_i^{(r)} L \oplus \F_i^{(r)} N =\F_i^{(r)} \cdot \bar L\cdot \xi_n= \bar K\cdot \xi_n \oplus \bar M\cdot \xi_n,
\]
 and  $\e_i(\bar M\cdot \xi_n), \e_i(\F_i^{(r)} N) > r$.
We see that $K$ has to be a direct summand  in $\bar K\cdot \xi_n$, i.e., 
\[
\bar K \cdot \xi_n = K\oplus M, \quad \mbox{for some $M \in\mathcal \Q^{\leq n}_{V, D^{\bullet}}$ such that } \e_i(M) > r.
\]

We are left to show that $\e_j(\bar K) = \e_j(K)$  for any $j\in I$ and the claim on their  singular supports. (This will automatically imply that $\e_j(K) \leq \e_j(M)$ for any $j\in I$.) 
By the induction assumption, we have 
\[
\gamma( Y_{\bar L}) \subseteq \SSS(L).
\]
This implies that 
\[
\gamma(Y_{\bar K})  = \tf_i^r  \gamma(Y_{\bar L}) \subseteq \SSS( \F_i^{(r)} \cdot L),
\]
where the equality is due to Proposition ~\ref{finer-Pi}.
Now the complex $K$ is the only one in $\F_i^{(r)} \cdot L$ such that $\e_i(K) = r$ and  the evaluations of $\e_i$ at  all other simple summands have values  strictly larger than $r$. So
\[
\gamma(Y_{\bar K})   \subseteq \SSS(K). 
\]
This implies that $\e_j(K) = \e_j( Y(\bar K)) =\e_j (\bar K)$ for any $j\in I$. 

Since $L\in \mathcal P^n_{\bar V, D^{\bullet}}$,  so is $K$.  Otherwise, it will lead to a contradiction by Lemma ~\ref{strut-lem}.
The statement (\ref{filtration-prop-a}) follows.

Now we show that all elements $K$ in  $\mathcal P^n_{V, D^{\bullet}}$ can be obtained in the above way. If $\e_i(K) =0$ for all $i\in I$, then $K=\xi_n$.  The proof is trivial. 
If there is an $i$ such that $\e_i(K) \neq 0$, then we can use a similar argument as above to show that there is a complex $\bar K\in \mathcal P_T$ for some $T$ such that condition 
(\ref{filtration-prop-a}) holds.
\end{proof}

By combining Propositions ~\ref{finer-Pi} and ~\ref{filtration-prop}, we have the following commutative diagram
\[
\begin{CD}
\mathcal P_T @>>> \mathcal P_{V, D^{\bullet}}^n \\
@VVV @VVY_{\bullet} V\\
\mbf \Pi_V  @>>> \mbf \Pi_{V, D^{\bullet}}^n,
\end{CD}
\]
where $Y_{\bullet} (K) = \gamma (Y_{\bar K})$. Since all three unnamed maps are bijective, we see that $Y_{\bullet}$ is bijective. This is a second proof of Proposition ~\ref{YY}.

\subsection{Grothendieck group $\mathcal K_{D^{\bullet}}$} 

Let $_{\mbb A} \mathcal K_{D^{\bullet}}$ be the $\mbb A$-module with basis $\mathcal P_{D^{\bullet}}$. Let
$\mathcal K_{D^{\bullet}}$ be the $\mbb Q(v)$-vector space $\mbb Q(v) \otimes _{\mbb A} \, _{\mbb A} \mathcal K_{D^{\bullet}}$.
The functors  $\F_i^{(n)}$ and  $_i \mathcal R^{(n)}$ and $\mathcal R_i^{(n)}$ defined in Section ~\ref{special} descend to linear maps on
  $\mathcal K_{D^{\bullet}}$. We shall use the same letters to denote these linear maps.

From these linear maps, we can define a $\U$-module structure on $\mathcal K_{D^{\bullet}}$, and a $_{\mbb A}\! \U$-module structure on $_{\mbb A}\mathcal K_{D^{\bullet}}$. The functors $\F_i^{(n)}$ correspond  to the action $F_i^{(n)}$.
By (\ref{E}) and the fact that $_i\bar r$ and $\bar r_i$ gets identified with $_i \mathcal R$ and $\mathcal R_i$ in ~\cite{Lusztig93},  
the $E_i$-action on $\mathcal K_{D^{\bullet}}$  is defined by
\begin{equation}
\label{Ei}
E_i (x) =\frac{ \mathcal R_i (x) - v^{- (i, -|x| +i )} \, _i \mathcal R(x) } {v-v^{-1}}, \quad  \forall x\in \mathcal K_{D^{\bullet}} .
\end{equation}
It can also be expressed as 
\begin{equation}
\label{Ei-2}
E_i(x)= (\mbf D v^{(i,   |x|-i) } \,_i \mathcal R(\mbf D(x))
-v^{(i,   |x|-i) } \, _i \mathcal R(x))/(v-v^{-1}),
\quad  \forall x\in \mathcal K_{D^{\bullet}},
\end{equation}
where $\mbf D$ is the Verdier duality functor. 
The action  $K_i$  is corresponding to  the shift functors $P \mapsto P[ (i, \lambda-\nu)]$ in $\Q_{V, D^{\bullet}}$.
By (\ref{E}), we see that the module $\mathcal K_{D^{\bullet}}$ equipped with the defined actions is a $\U$-module.
Moreover, we have
\begin{thm}
\label{K-1}
There are isomorphisms 
\begin{equation}
\label{K-1-iso}
_{\mbb A}\! M_{\lambda^2} \otimes \, _{\mbb A}\! V_{\lambda^1} \simeq  \, _{\mbb A}\! \mathcal K_{D^{\bullet}}, \quad
M_{\lambda^2}\otimes V_{\lambda^1} \simeq \mathcal K_{D^{\bullet}},
\end{equation} 
as $_{\mbb A}\!\U$-modules and $\U$-modules, respectively. 
\end{thm}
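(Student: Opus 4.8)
The plan is to place $\mathcal K_{D^\bullet}$ between the concrete module $\mbf K(\mbf d^\bullet)$ of Section~\ref{raw} and the tensor product $M_{\lambda^2}\otimes V_{\lambda^1}$: at one end I would invoke the surjection $\psi_{\lambda^\bullet}$ of (\ref{psi-1}), and at the other a graded dimension count coming from the variety $\mbf\Pi_{V,D^\bullet}$. The argument has two stages.

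The first stage is to identify $\mathcal K_{D^\bullet}$ with $\mbf K(\mbf d^\bullet)$ as $\U$-modules (and over $\mbb A$). I would first reduce to the case $D^2=0$: the fully faithful functor $\cdot L_{d^2}$ of (\ref{L_d}), together with Lemma~\ref{xi-1}, gives a bijection between $\mathcal P_{V,D^1}$ and $\mathcal P_{V,D^\bullet}$ compatible with the operations $\te_i,\tf_i$, which on the algebra side matches the vector-space isomorphism $y\mapsto y\,\theta^{\mbf d^2}$ from $\mbf K'(\mbf d^1)$ onto $\mbf K(\mbf d^\bullet)$ and intertwines the module structures up to the grading twist by $\lambda^2$ (the same twist that distinguishes $M_{\lambda^2}$ from $M_0$; cf.\ the Remark comparing the cases $N=1$ and $N=2$). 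When $D^2=0$ the complexes $L_{\underline{\mbf a}}$ are Lusztig's complexes for the framed graph $\tilde\Gamma$ (cf.\ Section~\ref{mapY}), so by Proposition~\ref{B-L} and the geometric realization of $\U^-_{\tilde\Gamma}$ in \cite{Lusztig91,Lusztig93} the assignment sending $[L_{\underline{\mbf a}}]$ to the corresponding divided-power monomial $x_1\,\theta^{\mbf d^1}\,x_2$ in $\U^-_{\tilde\Gamma}$ is an isomorphism of $\mbb A$-modules from ${}_{\mbb A}\mathcal K_{D^1}$ onto ${}_{\mbb A}\mbf K'(\mbf d^1)$; both are free with basis the images of $\mathcal P_{V,D^1}$, which are canonical-basis vectors of $\U^-_{\tilde\Gamma}$ and span the same lattice as the monomials. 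Under this map $\F_i^{(n)}$ is left multiplication by $\theta_i^{(n)}$, the functors $_i\mathcal R$ and $\mathcal R_i$ are $_i\bar r$ and $\bar r_i$, and the shift functor is the $K_i$-action, so comparing (\ref{Ei}) with (\ref{E}) for highest weight $0$ shows the $E_i$-actions agree as well. Hence $\mathcal K_{D^1}\cong\mbf K'(\mbf d^1)$ as $\U$-modules, so $\cong M_0\otimes V_{\lambda^1}$ via $\psi'_{\lambda^1}$ of (\ref{psi'}) once the latter is known to be injective, and after the twist $\mathcal K_{D^\bullet}\cong\mbf K(\mbf d^\bullet)$.

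The second stage is to show that the surjection $\psi_{\lambda^\bullet}\colon M_{\lambda^2}\otimes V_{\lambda^1}\twoheadrightarrow\mbf K(\mbf d^\bullet)$ of (\ref{psi-1}) is bijective, by comparing $\mbb Q(v)$-dimensions in each degree $\nu$. One side is $\dim(M_{\lambda^2}\otimes V_{\lambda^1})_\nu=\sum_{\nu^1+\nu^2=\nu}\dim\U^-_{\nu^2}\cdot\dim V_{\lambda^1,\nu^1}$. For the other side, Stage~1 gives $\dim\mbf K(\mbf d^\bullet)_\nu=\#\mathcal P_{V(\nu),D^\bullet}=\#\Irr\mbf\Pi_{V(\nu),D^\bullet}$ by Proposition~\ref{YY}; Proposition~\ref{property-rho}(4) rewrites this as $\sum_{\nu^1+\nu^2=\nu}\#\Irr\mbf L^s_{V^1(\nu^1),D^1}\cdot\#\Irr\mbf L_{V^2(\nu^2),D^2}$, and the identification of these with crystal bases gives $\#\Irr\mbf L^s_{V^1(\nu^1),D^1}=\#B(\lambda^1)_{\nu^1}=\dim V_{\lambda^1,\nu^1}$ and $\#\Irr\mbf L_{V^2(\nu^2),D^2}=\#B(\lambda^2,\infty)_{\nu^2}=\dim\U^-_{\nu^2}$. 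The two dimensions coincide, so the surjection $\psi_{\lambda^\bullet}$ is bijective; the identical argument with $\mbb A$-forms (using that $\psi_{\lambda^\bullet}$ is defined over $\mbb A$ and carries ${}_{\mbb A}M_{\lambda^2}\otimes{}_{\mbb A}V_{\lambda^1}$ onto the free $\mbb A$-module ${}_{\mbb A}\mbf K(\mbf d^\bullet)$) handles the integral statement. Composing Stage~1 and Stage~2 yields the two isomorphisms of the theorem, and along the way confirms the injectivity of $\psi'_{\lambda^1}$ used in Stage~1.

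The step I expect to be the main obstacle is the functoriality built into Stage~1: showing that the geometric operations $\F_i^{(n)}$, $_i\mathcal R$, $\mathcal R_i$ --- defined through the induction/restriction diagrams of Section~\ref{diagram} and the special cases of Section~\ref{special} --- are intertwined by Lusztig's Grothendieck-group identification with $\theta_i^{(n)}\cdot$, $_i\bar r$, $\bar r_i$, and hence the $E_i$-action (\ref{Ei}) with (\ref{E}). Even after reducing to $D^2=0$, one must check that freezing the flag $\check D^\bullet$ on the framing space does not disturb anything, i.e.\ that $L_{\underline{\mbf a}}$ coincides with the Lusztig complex for $\tilde\Gamma$ attached to the evident composition, and that the inductions of Section~\ref{special} restrict on the $\U^-_{\tilde\Gamma}$ side to exactly the algebra maps of \cite{Lusztig93}. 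Once that compatibility is secured, the remaining inputs --- Propositions~\ref{B-L}, \ref{YY}, \ref{property-rho}, and the crystal-basis dimension formulas --- are already available and the argument assembles as above.
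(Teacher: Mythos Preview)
Your approach is correct and close to the paper's, but you take a longer detour. The paper's proof is two sentences: (i) construct a surjection $M_{\lambda^2}\otimes V_{\lambda^1}\twoheadrightarrow\mathcal K_{D^\bullet}$ directly, ``by an argument similar to (\ref{psi'})'' (Frobenius reciprocity plus the vanishing on $T_{\lambda^1}$, applied in the geometric module rather than in $\mbf K$); (ii) conclude it is an isomorphism because the graded dimensions agree by Theorem~\ref{crystal-P}. You instead insert the intermediate identification $\mathcal K_{D^\bullet}\cong\mbf K(\mbf d^\bullet)$ via Lusztig's realization of $\U^-_{\tilde\Gamma}$ and then invoke the already-constructed $\psi_{\lambda^\bullet}$ of (\ref{psi-1}); for the dimension count you unpack Theorem~\ref{crystal-P} into its ingredients Proposition~\ref{YY} and Proposition~\ref{property-rho}(4). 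So the logical content is the same --- surjection plus crystal dimension count --- but the paper short-circuits your Stage~1 by building the surjection straight into $\mathcal K_{D^\bullet}$, and packages your dimension computation as a single citation of Theorem~\ref{crystal-P}. What your route buys is the explicit isomorphism $\mathcal K_{D^\bullet}\cong\mbf K(\mbf d^\bullet)$, which the paper only records afterwards in the Remark following (\ref{psi-1}); what the paper's route buys is brevity and the avoidance of the compatibility check you flag as the main obstacle (that $\F_i^{(n)},\,_i\mathcal R,\,\mathcal R_i$ match $\theta_i^{(n)}\cdot,\,_i\bar r,\,\bar r_i$ under the Lusztig identification), since that compatibility is absorbed into the phrase ``similar to (\ref{psi'})''.
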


\begin{proof}
By an argument similar to (\ref{psi'}), we have a surjective  homomorphism of $\U$-modules:
\[
M_{\lambda^2} \otimes V_{\lambda^1} \twoheadrightarrow \mathcal K_{D^{\bullet}}.
\]
By Theorem ~\ref{crystal-P}, the spaces $M_{\lambda^2} \otimes V_{\lambda^1}$ and $ \mathcal K_{D^{\bullet}}$ have the same dimension at each level. So the above morphism   is an isomorphism. The result over $\mbb A$ is proved in a similar way. Theorem follows.
\end{proof}

Under the isomorphism in the above Theorem, we see that $\mathcal P_{D^{\bullet}}$ is a basis of $M_{\lambda^2}\otimes V_{\lambda^1}$. We shall show in the following that it is the same as  the $canonical$ $basis$ of $M_{\lambda^2}\otimes V_{\lambda^1}$ defined in an algebraic way.

The involution $\mbf D: \mathcal K_{D^{\bullet}} \to \mathcal K_{D^{\bullet}}$ 
is compatible with the $\U$ action on $\mathcal K_{D^{\bullet}}$ in the sense that we have
\[
\mbf D(E_i x) = E_i (\mbf D(x)),\quad
\mbf D(F_i x) = F_i (\mbf D(x)),\quad
\mbf D(K_i x) =K_{-i} \mbf D(x), 
\] 
for any $i\in I$, $x\in \mathcal K_{D^{\bullet}}$.
Indeed, the compatibility of $\mbf D$ with $E_i$ follows from (\ref{Ei-2}) and the compatibility of $\mbf D$ with $F_i$ and $K_i$ is obvious. 

Let 
\[
\Psi: M_{\lambda^2}\otimes V_{\lambda^1}  \to M_{\lambda^2}\otimes V_{\lambda^1},
\]
be the unique involution defined in ~\cite[27.3.1]{Lusztig93} such that 
\[
\Psi(f x) =\bar f  \bar  x, \quad
\Psi(E_i x) = E_i \bar x,\quad 
\Psi(F_i x) =F_i  \bar x \quad \mbox{and} \quad 
\Psi(K_i x ) = K_{-i}\bar x, 
\]
for any  $f\in \mbb Q(v)$, $x\in M_{\lambda^2}\otimes V_{\lambda^1}$ and  $i\in I$, 
where $x\mapsto \bar x$ is the bar involution defined as the tensor product of bar involutions on $M_{\lambda^2}$ and $V_{\lambda^1}$, respectively.

\begin{cor}
\label{duality-P}
We have the following commutative diagram:
\[
\begin{CD}
M_{\lambda^2}\otimes V_{\lambda^1} @>>> \mathcal K_{D^{\bullet}}\\
@V\Psi VV @V \mbf D VV\\
M_{\lambda^2}\otimes V_{\lambda^1} @>>> \mathcal K_{D^{\bullet}},
\end{CD}
\]
where the horizontal morphisms are from (\ref{K-1-iso}).
\end{cor}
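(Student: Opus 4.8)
The plan is to show that the isomorphism $\Phi\colon M_{\lambda^2}\otimes V_{\lambda^1}\xrightarrow{\sim}\mathcal K_{D^{\bullet}}$ of Theorem~\ref{K-1} intertwines $\Psi$ with $\mbf D$. Put $\Psi'=\Phi^{-1}\circ\mbf D\circ\Phi$. The first task is to record that $\Psi'$ has the same formal shape as $\Psi$. Since the shift functor on $\mathcal K_{D^{\bullet}}$ becomes multiplication by $v$ and $\mbf D\circ[n]=[-n]\circ\mbf D$, the operator $\mbf D$ descends to a $\mbb Q(v)$-semilinear (for $v\mapsto v^{-1}$) involution of $\mathcal K_{D^{\bullet}}$; together with the identities $\mbf D(E_ix)=E_i\mbf D(x)$, $\mbf D(F_ix)=F_i\mbf D(x)$, $\mbf D(K_ix)=K_{-i}\mbf D(x)$ displayed just before the corollary (for $E_i$ this is (\ref{Ei-2}), for $F_i$ and $K_i$ it is immediate), this shows $\Psi'$ is a $\mbb Q(v)$-semilinear involution of $M_{\lambda^2}\otimes V_{\lambda^1}$ commuting with every $E_i$ and $F_i$ and satisfying $\Psi'(K_ix)=K_{-i}\Psi'(x)$. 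By construction $\Psi$ (\cite[27.3.1]{Lusztig93}) has exactly these properties, so $\Psi$ and $\Psi'$ are two $\mbb Q(v)$-semilinear maps, and it suffices to check that they agree on a set of vectors spanning $M_{\lambda^2}\otimes V_{\lambda^1}$ over $\mbb Q(v)$. I will use the vectors $b_{\underline{\mbf a}}:=\Phi^{-1}(L_{\underline{\mbf a}})$, which span $M_{\lambda^2}\otimes V_{\lambda^1}$ by Proposition~\ref{B-L}.

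That $\Psi'$ fixes each $b_{\underline{\mbf a}}$ is geometric and immediate: the variety $\tE_{\underline{\mbf a}}$ is smooth and irreducible and $\pi_{\underline{\mbf a}}$ is proper, so $\mbb C_{\tE_{\underline{\mbf a}}}[\dim\tE_{\underline{\mbf a}}]$ is Verdier self-dual and $\mbf D(L_{\underline{\mbf a}})=\pi_{\underline{\mbf a}!}\,\mbf D\bigl(\mbb C_{\tE_{\underline{\mbf a}}}[\dim\tE_{\underline{\mbf a}}]\bigr)=L_{\underline{\mbf a}}$; hence $\Psi'(b_{\underline{\mbf a}})=\Phi^{-1}\mbf D(L_{\underline{\mbf a}})=b_{\underline{\mbf a}}$.

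It remains to show that $\Psi$ fixes each $b_{\underline{\mbf a}}$, and for this the point is to identify $b_{\underline{\mbf a}}$ algebraically. Unwinding $\Phi$ through the identification of $\mathcal K_{D^{\bullet}}$ with $\mbf K(\mbf d^{\bullet})$ and Lusztig's realization of $\U^-_{\tilde\Gamma}$, for a composition $\underline{\mbf a}=(\mbf i^1,\mbf a^1)\cdot d^1\cdot(\mbf i^2,\mbf a^2)\cdot d^2$ the element $b_{\underline{\mbf a}}$ corresponds to the monomial $\theta^{(\mbf a^1)}_{\mbf i^1}\,\theta^{\mbf d^1}\,\theta^{(\mbf a^2)}_{\mbf i^2}\,\theta^{\mbf d^2}\in\mbf K(\mbf d^{\bullet})$, so that under $\Phi$ one has $b_{\underline{\mbf a}}=F^{(\mbf a^1)}_{\mbf i^1}\bigl((\theta^{(\mbf a^2)}_{\mbf i^2}\xi_{\lambda^2})\otimes\xi_{\lambda^1}\bigr)$, where $\xi_{\lambda^1}$ denotes the highest weight vector of $V_{\lambda^1}$, $\theta^{(\mbf a)}_{\mbf i}$ is the associated divided-power monomial in $\U^-$, and $F^{(\mbf a^1)}_{\mbf i^1}$ is the corresponding product of divided powers of the $F$-operators. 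Now $\Psi$ commutes with each $F_j$, and $\overline{\theta^{(\mbf a)}_{\mbf i}}=\theta^{(\mbf a)}_{\mbf i}$ since divided powers are bar-invariant; moreover, by the defining formula $\Psi(m\otimes n)=\Theta\bigl(\psi_{M_{\lambda^2}}(m)\otimes\psi_{V_{\lambda^1}}(n)\bigr)$ for the bar operator on a tensor product (\cite[Ch.~4 and 27.3]{Lusztig93}) together with the vanishing of $\U^+_\nu\xi_{\lambda^1}$ for $\nu\neq 0$, one gets $\Psi(m\otimes\xi_{\lambda^1})=\psi_{M_{\lambda^2}}(m)\otimes\xi_{\lambda^1}$ for all $m\in M_{\lambda^2}$, whence $\Psi(\theta^{(\mbf a^2)}_{\mbf i^2}\xi_{\lambda^2}\otimes\xi_{\lambda^1})=\theta^{(\mbf a^2)}_{\mbf i^2}\xi_{\lambda^2}\otimes\xi_{\lambda^1}$. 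Combining these, $\Psi(b_{\underline{\mbf a}})=F^{(\mbf a^1)}_{\mbf i^1}\bigl((\theta^{(\mbf a^2)}_{\mbf i^2}\xi_{\lambda^2})\otimes\xi_{\lambda^1}\bigr)=b_{\underline{\mbf a}}$. Thus $\Psi$ and $\Psi'$ agree on the spanning set $\{b_{\underline{\mbf a}}\}$, hence $\Psi=\Psi'$, which is exactly the asserted commutativity.

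I expect the main obstacle to be the algebraic identification in the third paragraph, namely carefully unwinding the isomorphism of Theorem~\ref{K-1} — through $\mathcal K_{D^{\bullet}}\cong\mbf K(\mbf d^{\bullet})$ and the geometric realization of $\U^-_{\tilde\Gamma}$ — to see that $b_{\underline{\mbf a}}=F^{(\mbf a^1)}_{\mbf i^1}\bigl((\theta^{(\mbf a^2)}_{\mbf i^2}\xi_{\lambda^2})\otimes\xi_{\lambda^1}\bigr)$; equivalently, that under $\Phi$ left multiplication by $\theta^{\mbf d^1}$ corresponds to the framing functor of Section~\ref{special}, left multiplication by $\theta^{(\mbf a^1)}_{\mbf i^1}$ to the $F$-action, and the ``$x\,\theta^{\mbf d^2}$'' slot to the copy of $M_{\lambda^2}$ sitting over the highest weight vector of $V_{\lambda^1}$. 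Everything else — the semilinearity bookkeeping, the quasi-$R$-matrix computation on the highest weight slice, bar-invariance of divided powers, and self-duality of the monomial complexes $L_{\underline{\mbf a}}$ — is routine.
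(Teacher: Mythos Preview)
Your argument is correct and follows essentially the same two-step strategy as the paper: first verify the diagram on elements of the form $x\xi_{\lambda^2}\otimes\xi_{\lambda^1}$ (your quasi-$R$-matrix computation on the highest-weight slice, together with Verdier self-duality of $L_{d^1}\cdot L_{(\mbf i^2,\mbf a^2)}\cdot L_{d^2}$), then propagate to the rest of the module using that both $\Psi$ and $\mbf D$ commute with the $F_i$'s. Your packaging via the spanning set $\{b_{\underline{\mbf a}}\}$ is a perfectly good way to organize this.

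One caveat in your exposition: you route the identification of $b_{\underline{\mbf a}}$ ``through the identification of $\mathcal K_{D^{\bullet}}$ with $\mbf K(\mbf d^{\bullet})$'', but the paper explicitly notes (Remark after Theorem~\ref{based-P}) that these two spaces are \emph{not} isomorphic in general unless $\lambda^1=0$ or $\lambda^2=0$. Fortunately your formula $b_{\underline{\mbf a}}=F^{(\mbf a^1)}_{\mbf i^1}\bigl((\theta^{(\mbf a^2)}_{\mbf i^2}\xi_{\lambda^2})\otimes\xi_{\lambda^1}\bigr)$ does not actually require that detour: the isomorphism $\Phi$ of Theorem~\ref{K-1} is built, ``by an argument similar to~(\ref{psi'})'', directly as the $\U$-homomorphism sending $x\xi_{\lambda^2}\otimes\xi_{\lambda^1}$ to $L_{d^1}\cdot[x]\cdot L_{d^2}$ (with $[x]$ the class of $x$ under Lusztig's realization of $\U^-$), and associativity of the induction product then gives your formula immediately. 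So drop the reference to $\mbf K(\mbf d^{\bullet})$ and argue straight from the construction of $\Phi$.
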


This can be proved by two steps. The first one is observed that the above diagram commutes for any elements of the form
$x\xi_{\lambda^1}\otimes \xi_{\lambda^2}$ in $M_{\lambda^2}\otimes V_{\lambda^1}$.  The second one is  that the diagram commutes for the rest  elements  can be proved by induction because the involutions $\Psi$ and $\mbf D$ are compatible with the $\U$-actions.

Recall from ~\cite[1.2.3]{Lusztig93}, we have a unique  symmetric bilinear form on $\U^-$ subject to the following properties:
\begin{align*} 
(1, 1) &= 1, \quad
(F_i, F_j)  = \delta_{ij} \frac{1}{1-v^{-2}}, \quad 
(F_i x, y) =  \frac{1}{1-v^{-2}} ( x, \;_i  r(y)),   
\end{align*}
for any $i, j\in I$ and $x, y\in \U^-$,
where $_ir: \U^-\to \U^-$ is a linear map defined by 
\[
_i r(1) =0, \; _ir(F_j) = \delta_{ij}, \; _i r(xy) =\, _i r(x) y + v^{|x|\cdot i} x \, _i r(y), \quad x, y \; \mbox{homogeneous}.
\]
As vector spaces, $\U^-$ is isomorphic to $M_{\lambda}$. So the bilinear form transports to $M_{\lambda}$.
Recall from ~\cite[19.1.1]{Lusztig93}, we have a unique symmetric bilinear form on $V_{\lambda}$ subject to the following properties:
\begin{align*}
(\xi_{\lambda}, \xi_{\lambda})=1, \quad 
(E_i x, y) = ( x, vK_i F_i y),\quad
(K_i x, y) =(x, K_iy), \quad 
(F_i x, y) = (x, vK_i^{-1} E_i y), \quad
\end{align*}
 for any $ x, y \in V_{\lambda}$ and $i\in I$.
 We define a bilinear form on $M_{\lambda^2}\otimes V_{\lambda^1}$ by  $(x_1\otimes x_2, y_1\otimes y_2) = (x_1,y_1) (x_2, y_2)$ for any $x_1, y_1\in M_{\lambda^2}$ and 
 $x_2, y_2\in V_{\lambda^1}$. 
 It is straightforward to show that the bilinear form on $M_{\lambda^2}\otimes V_{\lambda^1}$ satisfies the following properties:
 \begin{align*}
 (\xi_{\lambda^2}\otimes \xi_{\lambda^1}, \xi_{\lambda^2}\otimes \xi_{\lambda^1}) & =1,\\
 (\phi_i (x_1\otimes x_2), y_1\otimes y_2) &= \frac{1}{1-v^{-2}}  ( x_1\otimes x_2, \epsilon_i (y_1\otimes y_2)),    
 \end{align*}
 for any $x_1, y_1\in M_{\lambda^2}$, $x_2, y_2\in V_{\lambda^1}$, where $\phi_i$ and $\epsilon_i$, for  any $i\in I$, are defined by 
 \begin{align*}
  \phi_i(x_1\otimes x_2) &= F_i x_1 \otimes K_i^{-1} x_2 + x_1\otimes F_i x_2,\\
  \epsilon_i (x_1\otimes x_2) &= \, _i r(x_1) \otimes K_i^{-1} x_2 + (v-v^{-1}) x_1 \otimes K_i^{-1} E_i x_2, \quad \forall x_1 \otimes x_2\in M_{\lambda^2}\otimes V_{\lambda^1}.
 \end{align*}

Similar to ~\cite[13.1.6-13.1.12]{Lusztig93}, we can define geometrically a bilinear form on $\mathcal K_{D^{\bullet}}$:
\[
(, ): \mathcal K_{D^{\bullet}} \times \mathcal K_{D^{\bullet}}\to \mbb Q(v),
\]
subject to the following properties:
\begin{align*}
& (L_d, L_d) =1, \quad 
 (K, K') \in \delta_{K, K'} + v^{-1}\mbb Z[[v^{-1}]] \cap \mbb Q(v), \quad \forall K, K'\in \mathcal P_{D^{\bullet}},\\
& (F_i K, K') = \frac{1}{1-v^{-2}} (K,  \, _{i}\!\bar{\mathcal R} (K')), \quad \forall K, K'\in \mathcal P_{D^{\bullet}},
\end{align*}
where $_{i} \bar{\mathcal R} = \mbf D\circ \; _i\mathcal R \circ \mbf D$, which gets identified with the linear map $_ir$ on $\U^-_{\tilde \Gamma}$.

It is clear that the data $(\phi_i, \epsilon_i|i\in I)$ are compatible with the data $(F_i, \, _i \bar{\mathcal R}| i\in I)$ under the isomorphisms in Theorem ~\ref{K-1}. 
The above analysis shows that we have the lemma.

\begin{lem}
\label{bilinear}
The following diagram commutes:
\[
\begin{CD}
(M_{\lambda^2}\otimes V_{\lambda^1}) \otimes (M_{\lambda^2}\otimes V_{\lambda^1} )@> ( \, , \,  ) >> \mbb Q(v)\\
@VVV @|\\
\mathcal K_{D^{\bullet}} \times \mathcal K_{D^{\bullet}} @> ( \, ,\, ) >> \mbb Q(v),
\end{CD}
\]
where the vertical map is induced   from (\ref{K-1-iso}).
\end{lem}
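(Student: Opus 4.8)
The plan is to exhibit the algebraically defined form and the geometric form on $M_{\lambda^2}\otimes V_{\lambda^1}\simeq\mathcal K_{D^{\bullet}}$ as solutions of the same recursion and then to argue that this recursion has a unique solution. On the algebraic side the relevant data are: the value $(\xi_{\lambda^2}\otimes\xi_{\lambda^1},\xi_{\lambda^2}\otimes\xi_{\lambda^1})=1$; the contravariance identity $(\phi_i x,y)=\frac{1}{1-v^{-2}}(x,\epsilon_i y)$ displayed before the lemma; symmetry; weight homogeneity (vectors of different levels $\nu=\dim V$ pair to zero, inherited from the two factor forms); and the fact that the form restricted to the slice $\xi_{\lambda^2}\otimes V_{\lambda^1}$ is Lusztig's form on $V_{\lambda^1}$, that slice being orthogonal to everything on which $\U^-$ acts with positive degree. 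On the geometric side the parallel list is: $(L_d,L_d)=1$; $(F_iK,K')=\frac{1}{1-v^{-2}}(K,{}_i\bar{\mathcal R}(K'))$; symmetry; weight homogeneity in $\dim V$; and compatibility with the fully faithful functor $L_{d^1}\cdot(-)\cdot L_{d^2}$ of Section~\ref{special}, which realizes the $D^2=0$ geometry, i.e.\ the geometry computing $V_{\lambda^1}$.

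First I would match these data under the isomorphism of Theorem~\ref{K-1}: it sends $\xi_{\lambda^2}\otimes\xi_{\lambda^1}$ to $L_d$, the action $\phi_i=\Delta(F_i)$ to $\mathcal F_i^{(1)}$, the action $K_i$ to the grading shift, and---as recorded just before the lemma---the map $\epsilon_i$ to ${}_i\bar{\mathcal R}=\mbf D\circ{}_i\mathcal R\circ\mbf D$, so that the two normalizations and the two contravariance identities correspond. The slice $\xi_{\lambda^2}\otimes V_{\lambda^1}$ corresponds to the image of $L_{d^1}\cdot(-)\cdot L_{d^2}$ (this is essentially the content of Lemma~\ref{xi-1}), and the geometric form was constructed, following \cite[13.1.6--13.1.12]{Lusztig93}, so as to restrict to the standard form there; on the module side the slice statement is immediate from the product rule $(x_1\otimes x_2,y_1\otimes y_2)=(x_1,y_1)(x_2,y_2)$. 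It is also worth noting, via Corollary~\ref{duality-P}, that $\Psi$ corresponds to $\mbf D$, so that both forms satisfy $(\bar x,\bar y)=\overline{(x,y)}$; this confines the possible discrepancy between them to a bar-invariant matrix with entries in $v^{-1}\mbb Z[[v^{-1}]]\cap\mbb Q(v)$, though it does not by itself force that discrepancy to vanish.

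Finally I would run an induction on the level $\nu=\dim V$, with base case $\nu=0$ given by the shared normalization. By Proposition~\ref{B-L} the complexes $L_{\underline{\mbf a}}$ span $\mathcal K_{D^{\bullet}}$, so it suffices to compare $(L_{\underline{\mbf a}},L_{\underline{\mbf b}})$, and under Theorem~\ref{K-1} each $L_{\underline{\mbf a}}$ is a monomial in the operators $\mathcal F_i^{(n)}$ applied to a slice element. Moving these operators onto the second argument by the contravariance identity turns them into compositions of the $\epsilon_i$ (equivalently ${}_i\bar{\mathcal R}$); since each such step strictly lowers $\nu$, after finitely many steps one is left either with a pairing at a strictly smaller level, where the inductive hypothesis applies, or with a pairing of a slice element against an arbitrary vector of the same level, which the slice compatibility resolves through Lusztig's form on $V_{\lambda^1}$. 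Both forms satisfy this recursion with the same inputs, hence they agree. The main obstacle I anticipate is the slice compatibility of the second paragraph: since $M_{\lambda^2}\otimes V_{\lambda^1}$ is \emph{not} cyclic over $\xi_{\lambda^2}\otimes\xi_{\lambda^1}$, the normalization and contravariance alone do not pin down the form, and one must genuinely extract from the construction of the geometric bilinear form how it behaves under the framing-insertion functor $L_{d^1}\cdot(-)\cdot L_{d^2}$.
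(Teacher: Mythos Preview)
Your strategy coincides with the paper's: the paper's entire argument is the sentence ``It is clear that the data $(\phi_i,\epsilon_i\mid i\in I)$ are compatible with the data $(F_i,{}_i\bar{\mathcal R}\mid i\in I)$ under the isomorphisms in Theorem~\ref{K-1}. The above analysis shows that we have the lemma.'' What you have added is a genuine justification of why this compatibility suffices. You correctly observe that $M_{\lambda^2}\otimes V_{\lambda^1}$ is not cyclic under the $\phi_i$'s (already for $\mathfrak{sl}_2$, the weight-$\alpha$ piece of $M_0\otimes V_1$ is two-dimensional while the $\U^-$-orbit of the highest vector meets it in a line), so the shared normalization and the shared contravariance relation do not by themselves determine the form; some additional anchor is needed, and you supply it via the slice $\xi_{\lambda^2}\otimes V_{\lambda^1}\simeq\mathrm{im}\bigl(L_{d^1}\cdot(-)\cdot L_{d^2}\bigr)$ together with the known form on $V_{\lambda^1}$. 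Your induction on $\nu$ is then correctly organized: every $L_{\underline{\mbf a}}$ is a string of $\mathcal F_i$'s applied to a slice element, and peeling these off via the adjointness lands either at a strictly lower level or at a slice pairing. The step you flag as the main obstacle---compatibility of the geometric form with the functor $L_{d^1}\cdot(-)\cdot L_{d^2}$---amounts to comparing $\G_V$-equivariant pairings across a trivial vector-bundle pullback (cf.\ Lemma~\ref{L=pi}), which is routine from the construction in \cite[13.1]{Lusztig93}. In short, your proof is a careful completion of the paper's terse sketch rather than a different route.
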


Similar to ~\cite[Theorem 14.2.3]{Lusztig93}, we have 

\begin{thm}
\label{signed}
For any element $K\in \mathcal K_{D^{\bullet}}$, $\pm K\in \mathcal P_{D^{\bullet}}$ if and only if $K$ satisfies the following conditions:
\begin{align*}
K\in \; _{\mbb A} \! \mathcal K_{D^{\bullet}},\quad 
\mbf D(K) =K, \quad \mbox{and}\quad 
(K, K)\in 1+ v^{-1}\mbb Z[[v^{-1}]].
\end{align*}
\end{thm}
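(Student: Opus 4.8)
The plan is to mirror Lusztig's proof of the characterization of the canonical basis (\cite[Theorem 14.2.3]{Lusztig93}), adapted to the tensor product situation via the two compatibility results already established. The three listed conditions---$\mbb A$-integrality, $\mbf D$-invariance, and near-orthonormality---clearly hold for every element of $\mathcal P_{D^{\bullet}}$: integrality is by definition of $_{\mbb A}\!\mathcal K_{D^{\bullet}}$, $\mbf D$-invariance holds because simple perverse sheaves from the class $L_{\underline{\mbf a}}$ are self-dual up to shift and the normalization kills the shift, and $(K,K)\in 1+v^{-1}\mbb Z[[v^{-1}]]$ is part of the defining properties of the geometric bilinear form. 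So one direction is immediate; the content is the converse.

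For the converse, suppose $K\in\mathcal K_{D^{\bullet}}$ satisfies the three conditions. Expand $K=\sum_{K'\in\mathcal P_{D^{\bullet}}} a_{K'} K'$ with $a_{K'}\in\mbb A$. First I would show $a_{K'}\in\mbb Z[v^{-1}]$, or rather that the $a_{K'}$ have no positive powers of $v$: this uses $\mbf D$-invariance together with the fact that $\mbf D$ fixes each element of $\mathcal P_{D^{\bullet}}$, so $\bar a_{K'}=a_{K'}$ for the bar-involution on $\mbb A$ (via Corollary~\ref{duality-P}, identifying $\mbf D$ with $\Psi$), forcing each $a_{K'}$ to be bar-invariant, hence---combined with a degree bound from integrality---a constant. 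The cleanest route is the standard one: write $a_{K'}=\sum_n c_{K',n} v^n$; bar-invariance gives $c_{K',n}=c_{K',-n}$; then expand $(K,K)$ using Lemma~\ref{bilinear} and the near-orthonormality $(K',K'')\in\delta_{K',K''}+v^{-1}\mbb Z[[v^{-1}]]$ to get
\[
(K,K)=\sum_{K'} a_{K'}\overline{a_{K'}}\cdot 1 + (\text{lower order in } v^{-1}) \in 1+v^{-1}\mbb Z[[v^{-1}]].
\]
Since each $a_{K'}\overline{a_{K'}}=|a_{K'}|^2$ has a nonnegative constant term and the whole sum has constant term $1$, exactly one $a_{K'}$ has nonzero constant term, equal to $\pm1$, and all others lie in $v^{-1}\mbb Z[v^{-1}]\cap v\mbb Z[v]=0$ after using bar-invariance. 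Replacing $K$ by $-K$ if needed, we get $K=K'+\sum_{K''\neq K'} a_{K''}K''$ with $a_{K''}\in v^{-1}\mbb Z[v^{-1}]$ and $a_{K''}$ bar-invariant, hence $a_{K''}=0$. Thus $K=K'\in\mathcal P_{D^{\bullet}}$.

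The main obstacle I anticipate is the careful bookkeeping in the bilinear-form estimate: one must know that the geometric pairing $(\ ,\ )$ on $\mathcal K_{D^{\bullet}}$ is genuinely symmetric, that $(K',K'')\in\delta_{K',K''}+v^{-1}\mbb Z[[v^{-1}]]$ (not just $v^{-1}\mbb Z[[v^{-1}]]\cap\mbb Q(v)$, with the right leading term), and that these cross-terms do not conspire to cancel the positivity. This is exactly where the analogue of \cite[13.1.6--13.1.12]{Lusztig93} is used, and one should cite that the pairing is defined so that $(L_{\underline{\mbf a}},L_{\underline{\mbf b}})$ has the Kazhdan--Lusztig-type positivity that makes the change-of-basis matrix between $\{L_{\underline{\mbf a}}\}$ and $\mathcal P_{D^{\bullet}}$ lower-triangular with $1$'s on the diagonal and off-diagonal entries in $v^{-1}\mbb Z[v^{-1}]$. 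Granting that structural input (which is the tensor-product version of Lusztig's result and follows from Proposition~\ref{B-L} plus the decomposition theorem as in \cite{Lusztig93}), the argument above goes through verbatim. I would also remark that the $\mbb A$-integrality hypothesis is what guarantees the expansion coefficients lie in $\mbb A$ rather than $\mbb Q(v)$, so that the "bar-invariant element of $\mbb A$ with no positive part is constant" step is legitimate.
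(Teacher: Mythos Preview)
Your approach is exactly what the paper intends: the paper gives no proof beyond the phrase ``Similar to \cite[Theorem 14.2.3]{Lusztig93}'', and you are reproducing that argument in the right spirit.

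There is one genuine imprecision in your execution, however. The displayed identity
\[
(K,K)=\sum_{K'} a_{K'}\overline{a_{K'}} + (\text{lower order in } v^{-1})
\]
is not valid \emph{before} you know the $a_{K'}$ are constants: the form is $\mbb Q(v)$-bilinear, so $(K,K)=\sum_{K',K''}a_{K'}a_{K''}(K',K'')$, and the off-diagonal terms $a_{K'}a_{K''}(K',K'')$ with $K'\neq K''$ have $v$-degree up to $2\max_{K'}\deg(a_{K'})-1$, not $\leq -1$. The clean fix (and what Lusztig actually does) is to argue on the \emph{highest} power of $v$ first: set $N=\max_{K'}\deg a_{K'}$; then the coefficient of $v^{2N}$ in $(K,K)$ receives contributions only from the diagonal and equals $\sum_{\deg a_{K'}=N}(\text{leading coefficient of }a_{K'})^2>0$, which forces $N\leq 0$ since $(K,K)\in 1+v^{-1}\mbb Z[[v^{-1}]]$. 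Bar-invariance of each $a_{K'}$ then gives $a_{K'}\in\mbb Z$, and only at that point is your constant-term computation $\sum a_{K'}^2=1$ legitimate, yielding the conclusion.
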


We can use the module $M_{\lambda^1}\otimes V_{\lambda^2}$ and its involution $\Psi$ to define the so-called $based$ $module$, $(M_{\lambda^1}\otimes V_{\lambda^2}, P_{\diamond})$ in an algebraic way
similar to  ~\cite[27.1.2, 27.3]{Lusztig93},  where we need to use the notion of crystal basis given in ~\cite[3.5]{K91} . 
The basis $P_{\diamond}$ is called  the $canonical$ $basis$ (or $global$ $crystal$ $base$) of $M_{\lambda^1}\otimes V_{\lambda^2}$.
Moreover,  by Corollary ~\ref{duality-P},  Lemma ~\ref{bilinear} and  Theorem ~\ref{signed},  we have 

\begin{thm}
\label{based-P}
The pair $(\mathcal K_{D^{\bullet}}, \mathcal P_{D^{\bullet}})$ together with the involution $\mbf D$  is isomorphic to the based module $(M_{\lambda^1}\otimes V_{\lambda^2}, P_{\diamond})$ with associated involution $\Psi$.
\end{thm}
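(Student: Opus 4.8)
The plan is to transport the algebraic characterization of the canonical basis of a based module across the isomorphism of Theorem~\ref{K-1}. Write $\Phi\colon M_{\lambda^2}\otimes V_{\lambda^1}\to\mathcal K_{D^{\bullet}}$ for that isomorphism; by construction it restricts to an isomorphism of the corresponding $_{\mbb A}\!\U$-forms and it sends the highest weight vector $\xi_{\lambda^2}\otimes\xi_{\lambda^1}$ to $L_{d}\in\mathcal P_{D^{\bullet}}$. The first step is to record the three structural compatibilities of $\Phi$: it intertwines $\Psi$ and $\mbf D$ by Corollary~\ref{duality-P}, it is an isometry for the two bilinear forms by Lemma~\ref{bilinear}, and it carries the $\mbb A$-form of $M_{\lambda^2}\otimes V_{\lambda^1}$ onto $_{\mbb A}\mathcal K_{D^{\bullet}}$. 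Consequently $\Phi$ identifies, clause by clause, the three conditions appearing in Theorem~\ref{signed}---membership in $_{\mbb A}\mathcal K_{D^{\bullet}}$, being $\mbf D$-fixed, and satisfying $(K,K)\in 1+v^{-1}\mbb Z[[v^{-1}]]$---with their algebraic counterparts on $M_{\lambda^2}\otimes V_{\lambda^1}$: membership in the $\mbb A$-form, $\Psi$-invariance, and $(x,x)\in 1+v^{-1}\mbb Z[[v^{-1}]]$.

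The second step is to invoke, following \cite[27.1.2, 27.3]{Lusztig93} and the notion of crystal base of \cite[3.5]{K91}, the characterization of the canonical basis $P_{\diamond}$ of the based module $M_{\lambda^2}\otimes V_{\lambda^1}$ that is the exact analogue of \cite[Theorem~14.2.3]{Lusztig93}: for $x\in M_{\lambda^2}\otimes V_{\lambda^1}$ one has $\pm x\in P_{\diamond}$ if and only if $x$ lies in the $\mbb A$-form, $\Psi(x)=x$, and $(x,x)\in 1+v^{-1}\mbb Z[[v^{-1}]]$. Feeding this through $\Phi$ and comparing with Theorem~\ref{signed} on the geometric side yields at once that $\Phi(P_{\diamond})=\pm\mathcal P_{D^{\bullet}}$; that is, for every $b\in P_{\diamond}$ exactly one of $\Phi(b),-\Phi(b)$ lies in $\mathcal P_{D^{\bullet}}$, so that $\Phi(b)=\epsilon_{b}K_{b}$ with $\epsilon_{b}\in\{\pm1\}$ and $K_{b}\in\mathcal P_{D^{\bullet}}$.

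The third and decisive step is to show that all the signs $\epsilon_{b}$ equal $+1$, so that $\Phi$ restricts to a bijection $P_{\diamond}\to\mathcal P_{D^{\bullet}}$ and is an isomorphism of based modules carrying $\Psi$ to $\mbf D$; this is where I expect the real work to lie, since bar-invariance and almost-orthonormality alone determine only $P_{\diamond}\cup(-P_{\diamond})$. Here I would pass to $v=\infty$. Let $\mathcal L'$ be the $\mbb Z[v^{-1}]$-span of $\mathcal P_{D^{\bullet}}$; the images of the elements of $\mathcal P_{D^{\bullet}}$ in $\mathcal L'/v^{-1}\mathcal L'$ form the crystal $\mathcal P_{D^{\bullet}}$ of Theorem~\ref{crystal-P}, identified there with $B(\lambda^1)\otimes B(\lambda^2,\infty)$, and---crucially---its operators $\te_i,\tf_i$ are the ones defined in Section~\ref{mapY} by $\tf_i(K)=\tilde{\mathcal F}_i^{(\e_i(K)+1)}\,{}_i\tilde{\mathcal R}^{(\e_i(K))}(K)$, i.e.\ the standard expressions for the Kashiwara operators in terms of the divided-power operators $\mathcal F_i^{(n)},\,{}_i\mathcal R^{(n)}$. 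Since $\Phi$ is $\U$-linear and respects the $\mbb A$-forms, it carries the crystal lattice $\mathcal L$ of $M_{\lambda^2}\otimes V_{\lambda^1}$ onto $\mathcal L'$ and induces a map $\mathcal L/v^{-1}\mathcal L\to\mathcal L'/v^{-1}\mathcal L'$ intertwining $\te_i,\tf_i$; from this one gets $\epsilon_{\tf_i b}=\epsilon_b$ whenever $\tf_i b\neq 0$, and symmetrically for $\te_i$, so $\epsilon$ is constant along each connected component of the crystal graph. On the component of the highest weight vector, $\Phi(\xi_{\lambda^2}\otimes\xi_{\lambda^1})=L_{d}$ gives $\epsilon=+1$; for the remaining components one anchors the sign at the sources, the elements $K$ with $\e_i(K)=0$ for all $i$, which by Lemma~\ref{xi-1} are exactly the $L_{d^1}K_{b}L_{d^2}$, using the compatibility of $\Phi$ with the fully faithful framing functors $L_{d^1}\cdot(-)\cdot L_{d^2}$ of Section~\ref{special} (which on the algebraic side send canonical basis elements to canonical basis elements) to match each source to its counterpart with sign $+1$, whence $\epsilon\equiv+1$. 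Alternatively, the positivity of the structure constants of $\mathcal P_{D^{\bullet}}$ under the $\mathcal F_i^{(n)}$---immediate from the decomposition theorem---rules out a $-1$ along any component. The main obstacles to be filled in are thus the verification that $\Phi$ preserves the crystal lattices and intertwines the Kashiwara operators, i.e.\ the comparison of the geometric crystal structure of Section~\ref{mapY} with the algebraic one, together with the sign anchoring at every source.
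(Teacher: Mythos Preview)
Your proposal is correct and follows essentially the same route as the paper, which simply states that Theorem~\ref{based-P} follows ``by Corollary~\ref{duality-P}, Lemma~\ref{bilinear} and Theorem~\ref{signed}''---precisely your steps one and two. Your third step, the sign resolution via crystal connectedness and anchoring at the sources $L_{d^1}K_bL_{d^2}$ from Lemma~\ref{xi-1} (or alternatively via decomposition-theorem positivity), is an elaboration the paper leaves implicit; it is the natural way to complete the argument and your outline of it is sound.
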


\begin{rem}
(1). The two spaces $\mathcal K_{D^{\bullet}}$ and $\mbf K(\mbf d^{\bullet})$ are not the same in general, except when $\lambda^2=0$ or $\lambda^1=0$.

(2). It is very interesting to interpret naturally  the $E_i$-action as a complex of functors by using (\ref{Ei}) 
or the expression in Lemma ~\ref{qGLS-a}.

(3). The data $(M_{\lambda^2}\otimes V_{\lambda^1}, \phi_i, \epsilon_i)_{| i\in I}$   is a module of  Kashiwara's $reduced$ $q$-$analogue$ in ~\cite{K91}. 

(4). 
 Note that the modules $\mathcal K_{D^{\bullet}}$ are projective modules in  the Bernstein-Gelfand-Gelfand category $\mathcal O$  of the quantum group $\U$ in \cite{BGG76, H08, AM11}.   
We are very close to  recover all indecomposable projectives $P_{\lambda}$ with $\lambda \in \X$ in $\mathcal O$ from this geometric setting.  
\end{rem}

 \subsection{Stability conditions for $\Q_{V, D^{\bullet}}$}
 
Recall from Section ~\ref{tensorproductcomplexes}, $\N_{V, D^{\bullet}}$ is  the full subcategory of 
$\Q_{V, D^{\bullet}}$ generated by the simple objects $K$ satisfying the local stability condition (\ref{b}):
\[
\Supp (\Phi_{\Omega}^{\Omega_i}K) \subseteq   \E_{\Omega_i, i, \geq 1}(V, D), \quad \mbox{for some $i$ in $I$}.
\]
One can show that the condition is equivalent to the condition that the complex $K$ is a direct summand, up to a shift, of the complex  $ L_{(\mbf i^1,\mbf  a^1) d^1 (\mbf i^2, \mbf a^2) (i, d^2_i+1) d^2}$ for some $i\in I$.
 
 Similar to $\M_{V, D}$ in Section \ref{stable-1}, we define $\M_{V, D^{\bullet}}$ to be the full subcategory of $\Q_{V, D^{\bullet}}$ consisting of all complexes $K$ satisfying the 
 following global stability condition:
 \begin{equation}
 \label{tensor-global}
 \SSS(K) \cap \mbf  \Pi^s_{V, D^{\bullet}} =\mbox{\O}.
 \end{equation}

 \begin{thm}
 \label{main}
We have $\N_{V, D^{\bullet}}=\M_{V, D^{\bullet}}$.  
\end{thm}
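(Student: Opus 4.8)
The plan is to follow the proof of Proposition~\ref{N=M} essentially line by line, with $\mbf L_{V, D}$ replaced by the tensor product variety $\mbf \Pi_{V, D^{\bullet}}$, with $\mbf L^s_{V, D}$ replaced by $\mbf \Pi^s_{V, D^{\bullet}}$, and with the Kashiwara--Saito assignment $K \mapsto Y_K$ replaced by the bijection $Y_{\bullet}$ of Proposition~\ref{YY}. The easy inclusion $\N_{V, D^{\bullet}} \subseteq \M_{V, D^{\bullet}}$ comes first. Let $K$ be a simple object of $\N_{V, D^{\bullet}}$. By the description of $\N_{V, D^{\bullet}}$ recalled just above the theorem, $K$ is, up to a shift, a direct summand of some $L_{\underline{\mbf a}}$ with $\underline{\mbf a} = (\mbf i^1, \mbf a^1)\, d^1\, (\mbf i^2, \mbf a^2)\, (i, d^2_i+1)\, d^2$. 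By Proposition~\ref{Tensor-singular-2} applied to this $\underline{\mbf a}$ we get $\SSS(K) \subseteq \SSS(L_{\underline{\mbf a}}) \subseteq \tilde Y_{\underline{\mbf a}} \subseteq \mbf \Pi_{V, D^{\bullet}}$, and a direct computation with the $X$-invariant flags of type $\underline{\mbf a}$ — the tensor-product analogue of the analysis in \cite[9.3]{Lusztig93} carried over to $\tilde\Gamma$ — shows that any $X$ admitting an invariant flag with the jump $(i, d^2_i+1)$ in the indicated position cannot have $X(i)$ injective, so $\tilde Y_{\underline{\mbf a}} \cap \mbf \Pi^s_{V, D^{\bullet}} = \emptyset$. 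Hence $\SSS(K) \cap \mbf \Pi^s_{V, D^{\bullet}} = \emptyset$, i.e. $K \in \M_{V, D^{\bullet}}$. (This replaces the sentence ``It is clear from~(\ref{b}) or~(\ref{c})\dots'' in the proof of Proposition~\ref{N=M}.)

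For the reverse inclusion I would run the counting device of Proposition~\ref{N=M}. By Proposition~\ref{YY} the map $K \mapsto Y_K$ is injective on $\mathcal P_{V, D^{\bullet}}$ and satisfies $Y_K \subseteq \SSS(K)$; combined with the previous paragraph it restricts to an injection $\phi$ from the set $S_1$ of simple objects of $\N_{V, D^{\bullet}}$ into the set $S_2$ of irreducible components of $\mbf \Pi_{V, D^{\bullet}}$ disjoint from $\mbf \Pi^s_{V, D^{\bullet}}$. It then suffices to check that $S_1$ and $S_2$ have the same cardinality for each fixed $\nu = \dim V$. Since $\mbf \Pi^s_{V, D^{\bullet}}$ is open in the pure-dimensional variety $\mbf \Pi_{V, D^{\bullet}}$, one has $\# S_2 = \#\Irr \mbf \Pi_{V, D^{\bullet}} - \#\Irr \mbf \Pi^s_{V, D^{\bullet}}$, which by Proposition~\ref{property-rho}(4), Proposition~\ref{stable}, and the crystal identifications $\Irr(\mbf L^s_{D^a}) \simeq B(\lambda^a)$ and $\Irr(\mbf L_{D^2}) \simeq B(\lambda^2, \infty)$ of Section~\ref{IrrD}, equals
\[
\sum_{\nu^1 + \nu^2 = \nu} \dim V_{\lambda^1, \nu^1}\,\bigl(\dim M_{\lambda^2, \nu^2} - \dim V_{\lambda^2, \nu^2}\bigr) = \dim (T_{\lambda^2} \otimes V_{\lambda^1})_{\nu}.
\]
On the other hand $\mathcal P_{D^{\bullet}}$ is a basis of $\mathcal K_{D^{\bullet}} \simeq M_{\lambda^2} \otimes V_{\lambda^1}$ by Theorem~\ref{K-1}, and localization along $\N_{V, D^{\bullet}}$ produces $V_{\lambda^{\bullet}}$ by \cite{Zheng08}, so $\# S_1 = \dim(M_{\lambda^2} \otimes V_{\lambda^1})_{\nu} - \dim V_{\lambda^{\bullet}, \nu} = \dim(T_{\lambda^2} \otimes V_{\lambda^1})_{\nu}$ as well. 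Thus $\phi\colon S_1 \to S_2$ is a bijection. If some simple $K \in \M_{V, D^{\bullet}} \setminus \N_{V, D^{\bullet}}$ existed, then $Y_K \subseteq \SSS(K)$ would be disjoint from $\mbf \Pi^s_{V, D^{\bullet}}$, so $Y_K \in S_2$ and $Y_K = Y_{K'}$ for some $K' \in S_1$ with $K \neq K'$, contradicting the injectivity of $Y_{\bullet}$. Hence $\M_{V, D^{\bullet}} = \N_{V, D^{\bullet}}$.

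The step I expect to be the main obstacle is the first paragraph: showing, uniformly in $\underline{\mbf a}$, that a flag of type $(\mbf i^1, \mbf a^1)\, d^1\, (\mbf i^2, \mbf a^2)\, (i, d^2_i+1)\, d^2$ forces the locus $\tilde Y_{\underline{\mbf a}}$ to avoid $\mbf \Pi^s_{V, D^{\bullet}}$ — equivalently, that the support-based local stability condition of \cite{Zheng08} is detected by the singular support. This is the tensor-product counterpart of Lusztig's analysis in \cite[9.3]{Lusztig93} and \cite[9, 13]{Lusztig91} combined with the Kashiwara--Saito description \cite{KS97}, and it is precisely here that Proposition~\ref{Tensor-singular-2} (singular supports lie in $\mbf \Pi_{V, D^{\bullet}}$) and Proposition~\ref{YY} enter; the rest is the bookkeeping displayed above.
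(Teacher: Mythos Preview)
Your proof is correct and tracks the paper's own argument closely: the template is exactly that of Proposition~\ref{N=M}, with $\mbf L_{V,D}$, $\mbf L^s_{V,D}$ and the Kashiwara--Saito map replaced by $\mbf \Pi_{V,D^{\bullet}}$, $\mbf \Pi^s_{V,D^{\bullet}}$ and the bijection of Proposition~\ref{YY}. The step you flag as the main obstacle is in fact immediate and is exactly what the paper does: if $(X,F^{\bullet})\in\tilde X_{\underline{\mbf a}}$ with $\underline{\mbf a}$ ending in $(i,d^2_i+1)\,d^2$, then the penultimate term of $F^{\bullet}$ is $W\oplus D'$ with $W\subseteq V_i$ of dimension $d^2_i+1$ and $D'$ of dimension $d^2$; $X$-invariance forces $x_h(W)=0$ for all $h\in H$ with $h'=i$ and $q_i(W)\subseteq D'_i$, so $X(i)|_W$ has rank at most $d^2_i<\dim W$ and $X(i)$ is not injective. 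No Fourier transform or analysis in the style of \cite[9.3]{Lusztig93} is needed here.

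The one genuine difference is how you count $\#S_1$. You invoke \cite{Zheng08} for $\mathcal L_{D^{\bullet}}\simeq V_{\lambda^{\bullet}}$ to get $\#S_1=\dim(M_{\lambda^2}\otimes V_{\lambda^1})_{\nu}-\dim V_{\lambda^{\bullet},\nu}$. The paper avoids this: it argues $\#S_1=\#S_2$ directly by rerunning the semicanonical-basis argument behind Proposition~\ref{YY} (Lemma~\ref{Y-0}) on the unstable pieces, using that every $Y\in S_2$ lies in some $\tilde Y_{(\mbf i^1,\mbf a^1)\,d^1\,(\mbf i^2,\mbf a^2)\,(i,d^2_i+1)\,d^2}$. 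Your shortcut is legitimate as a proof of Theorem~\ref{main} per se, but it makes the argument depend on Zheng's result; since the paper then \emph{derives} Corollary~\ref{LD} (which is Zheng's theorem) from Theorem~\ref{main} and Theorem~\ref{K-1}, your version would make that derivation circular. If an independent reproof of \cite{Zheng08} is part of the goal, replace your citation of \cite{Zheng08} by the paper's internal count.
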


\begin{proof}
The proof is similar to Proposition ~\ref{N=M}.
Suppose that $K$ is a simple object in $\N_{V, D^{\bullet}}$. 
Then we may assume that  $K$ is a direct summand of the complex $ L_{(\mbf i^1,\mbf  a^1) d^1 (\mbf i^2, \mbf a^2) (i, d^2_i+1) d^2}$ for some $i\in I$. By Proposition ~\ref{Tensor-singular-2}, 
$\SSS(K) \subseteq \tilde Y_{(\mbf i^1,\mbf  a^1) d^1 (\mbf i^2, \mbf a^2) (i, d^2_i+1) d^2}$ defined in Section ~\ref{singular}. 
Suppose that $X$ is an element in the latter variety.  Then it is clear that $X(i)$ is not injective. Thus
 $\tilde Y_{(\mbf i^1,\mbf  a^1) d^1 (\mbf i^2, \mbf a^2) (i, d^2_i+1) d^2}\cap \mbf \Pi_{V, D^{\bullet}}^s=\mbox{\O}$. 
 So we have  $K\in \M_{V, D^{\bullet}}$, i.e.,  $\N_{V, D^{\bullet}} \subseteq \M_{V, D^{\bullet}}$.
Moreover, the following sets have the same cardinality $\sum_{\nu^1+\nu^2=\nu}  \dim V_{\lambda^1, \nu^1}\otimes T_{\lambda^2, \nu^2} $:

the set, say $S_1$, of isomorphism classes of simple perverse sheaves in $\N_{V, D^{\bullet}}$;

the set, say $S_2$,  of irreducible components in $\mbf \Pi_{V, D^{\bullet}}$ disjoint from $\mbf \Pi_{V, D^{\bullet}}^s$.

Indeed, the fact that $\# S_1=\# S_2$,  can be proved in a similar way as that of $Y_{\bullet}$ is bijective in Proposition ~\ref{YY}  by taking consideration of 
the fact that if $Y\in S_2$, then $Y$ is in some subvariety of the form $\tilde Y_{(\mbf i^1,\mbf  a^1) d^1 (\mbf i^2, \mbf a^2) (i, d^2_i+1) d^2}$. 
And the fact that their total numbers of  elements are $\sum_{\nu^1+\nu^2=\nu}  \dim V_{\lambda^1, \nu^1}\otimes T_{\lambda^2, \nu^2} $ follows from the fact that
the total number of irreducible components of $\mbf \Pi_{V, D^{\bullet}}^s$ is $\sum_{\nu^1+\nu^2=\nu}  \dim V_{\lambda^1, \nu^1}\otimes V_{\lambda^2, \nu^2}$ 
by Propositions ~\ref{stable} and Theorem ~\ref{K-1}.

By Proposition  \ref{YY},  the assignment $K\mapsto Y_K$ defines a bijection $\phi: S_1\to S_2$.  By using Proposition ~\ref{YY} and  a similar argument in the proof of Proposition ~\ref{N=M}, 
we have  $\N_{V, D^{\bullet}}=\M_{V, D^{\bullet}}$. The theorem follows.
\end{proof}

Let $_{\mbb A}\mathcal L_{D^{\bullet}}$  and $\mathcal L_{D^{\bullet}}$ be the spaces defined similar to $_{\mbb A}\mathcal K_{D^{\bullet}}$ and $\mathcal K_{D^{\bullet}}$, respectively, if we replace $\Q_{V, D^{\bullet}}$ by its quotient category  $\V_{V, D^{\bullet}}$ in Section ~\ref{tensorproductcomplexes}.
By Theorems ~\ref{K-1} and ~\ref{main}, we have 

\begin{cor} 
\label{LD}
$_{\mbb A}\! V_{\lambda^2} \otimes \, _{\mbb A}\! V_{\lambda^1} \simeq  \, _{\mbb A}\! \mathcal L_{D^{\bullet}}$ and
$V_{\lambda^2}\otimes V_{\lambda^1} \simeq \mathcal L_{D^{\bullet}}.$
\end{cor}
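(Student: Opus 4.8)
The plan is to realize $\mathcal L_{D^{\bullet}}$ as a quotient of $\mathcal K_{D^{\bullet}}$ by the submodule coming from $\N_{V,D^{\bullet}}$, and then to transport this quotient through the isomorphism of Theorem~\ref{K-1} to $(M_{\lambda^2}\otimes V_{\lambda^1})/(T_{\lambda^2}\otimes V_{\lambda^1})=V_{\lambda^2}\otimes V_{\lambda^1}$. First I would record the exact sequence. Passing to (complexified) Grothendieck groups in the exact sequence of categories $\N_{V,D^{\bullet}}\to\Q_{V,D^{\bullet}}\to\V_{V,D^{\bullet}}$, and using that the functors $\F_i^{(n)}$, $\,_i\mathcal R^{(n)}$, $\mathcal R_i^{(n)}$ descend to $\V_{V,D^{\bullet}}$ (this is built into the definition of $\mathcal L_{D^{\bullet}}$), one obtains a short exact sequence of $\U$-modules
\[
0\to \mathcal N_{D^{\bullet}}\to \mathcal K_{D^{\bullet}}\xrightarrow{\;Q\;}\mathcal L_{D^{\bullet}}\to 0,
\]
where $\mathcal N_{D^{\bullet}}$ is the $\mbb Q(v)$-span of the classes of the simple objects of $\N_{V,D^{\bullet}}$; the same holds over $\mbb A$ with all three terms replaced by their $\mbb A$-forms, which are free $\mbb A$-modules on the corresponding subsets of $\mathcal P_{D^{\bullet}}$.

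Next I would compose the isomorphism $\psi\colon M_{\lambda^2}\otimes V_{\lambda^1}\xrightarrow{\;\sim\;}\mathcal K_{D^{\bullet}}$ of Theorem~\ref{K-1} with $Q$ and show that $Q\circ\psi$ annihilates $T_{\lambda^2}\otimes V_{\lambda^1}$. Since $Q\circ\psi$ is $\U$-linear and $T_{\lambda^2}\otimes V_{\lambda^1}=\U\cdot(T_{\lambda^2}\otimes\xi_{\lambda^1})$ — which follows by repeatedly applying $\Delta(F_j)=F_j\otimes K_{-j}+1\otimes F_j$ and using that $T_{\lambda^2}$ is a left ideal of $\U^-$ — it suffices to treat the vectors $(x\theta_i^{(d_i^2+1)}\xi_{\lambda^2})\otimes\xi_{\lambda^1}$ with $x\in\U^-$, $i\in I$, because $T_{\lambda^2}=\sum_{i\in I}\U^-\theta_i^{(d_i^2+1)}$ (as $\lambda^2\in\X^+$). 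For such a vector, one checks, in exact analogy with the second step of the proof of~(\ref{phi-2}) and using the construction of $\psi$ (the analogue of~(\ref{psi'})), that $\psi\big((x\theta_i^{(d_i^2+1)}\xi_{\lambda^2})\otimes\xi_{\lambda^1}\big)$ is a $\mbb Q(v)$-linear combination of classes of semisimple complexes of the form $L_{(\mbf i^1,\mbf a^1)d^1(\mbf i^2,\mbf a^2)(i,d_i^2+1)d^2}$, whose simple summands all lie in $\N_{V,D^{\bullet}}$; hence this vector maps into $\mathcal N_{D^{\bullet}}=\ker Q$. This is the step I expect to be the main obstacle: verifying the displayed membership is the direct tensor-product analogue of the monomial computation behind~(\ref{phi-2}), and it requires care with the ordering conventions in the label $\underline{\mbf a}$ and with the description of $\N_{V,D^{\bullet}}$ as the direct summands of the complexes $L_{(\mbf i^1,\mbf a^1)d^1(\mbf i^2,\mbf a^2)(i,d_i^2+1)d^2}$.

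Granting the annihilation, $Q\circ\psi$ factors through $(M_{\lambda^2}\otimes V_{\lambda^1})/(T_{\lambda^2}\otimes V_{\lambda^1})$, which equals $V_{\lambda^2}\otimes V_{\lambda^1}$ by tensoring $0\to T_{\lambda^2}\to M_{\lambda^2}\to V_{\lambda^2}\to 0$ with $V_{\lambda^1}$ over the field $\mbb Q(v)$; thus one gets a surjective $\U$-module map $\bar\psi\colon V_{\lambda^2}\otimes V_{\lambda^1}\twoheadrightarrow\mathcal L_{D^{\bullet}}$. To see it is an isomorphism I would compare graded dimensions: by Theorem~\ref{K-1}, $\dim(\mathcal K_{D^{\bullet}})_\nu=\dim(M_{\lambda^2}\otimes V_{\lambda^1})_\nu$, while the number of elements of $\mathcal P_{V(\nu),D^{\bullet}}\cap\N_{V(\nu),D^{\bullet}}$ is $\sum_{\nu^1+\nu^2=\nu}\dim V_{\lambda^1,\nu^1}\cdot\dim T_{\lambda^2,\nu^2}$ — this is the cardinality of the set ``$S_1$'' computed in the proof of Theorem~\ref{main} — so $\dim(\mathcal L_{D^{\bullet}})_\nu=\dim(M_{\lambda^2}\otimes V_{\lambda^1})_\nu-\dim(T_{\lambda^2}\otimes V_{\lambda^1})_\nu=\dim(V_{\lambda^2}\otimes V_{\lambda^1})_\nu$. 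A surjection between modules of equal finite dimension in each degree is an isomorphism, yielding $V_{\lambda^2}\otimes V_{\lambda^1}\simeq\mathcal L_{D^{\bullet}}$. (Alternatively, the identification $\mathcal N_{D^{\bullet}}=T_{\lambda^2}\otimes V_{\lambda^1}$ can be read off from the crystal isomorphism of Theorem~\ref{crystal-P} together with the based module structure of Theorem~\ref{based-P}.)

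Finally, for the $\mbb A$-form I would run the same argument over $\mbb A$: $\psi$ is defined over $\mbb A$ by Theorem~\ref{K-1}, $Q$ preserves $\mbb A$-forms, and the annihilation argument above applies verbatim over $\mbb A$ (using that $_{\mbb A}T_{\lambda^2}$ is a left ideal of $_{\mbb A}\U^-$ containing the $F_i^{(d_i^2+1)}$), so by~(\ref{SES-A-form}) and the $\mbb A$-freeness of $_{\mbb A}V_{\lambda^1}$ one gets a surjection $_{\mbb A}V_{\lambda^2}\otimes\,_{\mbb A}V_{\lambda^1}\twoheadrightarrow\,_{\mbb A}\mathcal L_{D^{\bullet}}$. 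Both sides are free $\mbb A$-modules of the same finite rank in each degree, so this surjection has kernel of rank $0$; being a submodule of a torsion-free $\mbb A$-module it is therefore $0$, and the map is an isomorphism. This completes the proof of Corollary~\ref{LD}.
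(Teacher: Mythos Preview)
Your proposal is correct and follows essentially the same route the paper intends: the paper records the corollary as an immediate consequence of Theorem~\ref{K-1} (the isomorphism $M_{\lambda^2}\otimes V_{\lambda^1}\simeq\mathcal K_{D^{\bullet}}$) and Theorem~\ref{main} (whose proof supplies the cardinality $\#S_1=\sum_{\nu^1+\nu^2=\nu}\dim V_{\lambda^1,\nu^1}\dim T_{\lambda^2,\nu^2}$), and you have simply written out the details the paper leaves implicit. Your identification of the image $\psi\big((x\theta_i^{(d_i^2+1)}\xi_{\lambda^2})\otimes\xi_{\lambda^1}\big)$ with the class of $L_{d^1\cdot(\mbf i^2,\mbf a^2)\cdot(i,d_i^2+1)\cdot d^2}$ (the case $(\mbf i^1,\mbf a^1)=\emptyset$ of the description of $\N_{V,D^{\bullet}}$ given just before Theorem~\ref{main}) is exactly right, and the remainder---the dimension count and the $\mbb A$-form argument via surjectivity between free modules of equal rank---is routine.
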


Let $\pi_{D^{\bullet}}: \mathcal K_{D^{\bullet}} \to \mathcal L_{D^{\bullet}}$ be the natural projection. Let 
\[
\mathcal B_{D^{\bullet}}=\pi_{D^{\bullet}} (\mathcal P_{D^{\bullet}}) \backslash \{0\}.
\] 
It is clear by Theorems ~\ref{K-1} and ~\ref{main} that $\mathcal B_{D^{\bullet}}$ is a basis of $_{\mbb A}\! V_{\lambda^2} \otimes \, _{\mbb A}\! V_{\lambda^1}$ under the above isomorphism. 
Moreover, the involution $\mbf D$ induces an involution $\mbf D$ on $\mathcal L_{D^{\bullet}}$ commuting with the involution $\Psi$ on $V_{\lambda^2}\otimes V_{\lambda^1} $ defined in ~\cite[27.3.1]{Lusztig93} and  the crystal structure on $\mathcal P_{D^{\bullet}}$ induces a crystal structure on $\mathcal B_{D^{\bullet}}$.

\begin{cor} 
\label{based-V}
$(\mathcal L_{D^{\bullet}}, \mathcal B_{D^{\bullet}})$ with the associated involution $\mbf D$  is a based module in the sense of ~\cite[27.1.2]{Lusztig93}, isomorphic to $(V_{\lambda^2}\otimes V_{\lambda^1} , B_{\diamond})$, where $B_{\diamond}$ is the canonical basis of $V_{\lambda^2}\otimes V_{\lambda^1}$.
\end{cor}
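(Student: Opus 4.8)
The plan is to deduce this from what has already been assembled: $(\mathcal K_{D^{\bullet}},\mathcal P_{D^{\bullet}})$ together with $\mbf D$ is a based module (Theorem \ref{based-P}), and $\mathcal L_{D^{\bullet}}$ is obtained from $\mathcal K_{D^{\bullet}}$ by a localization that annihilates exactly the span of $\mathcal P_{D^{\bullet}}\cap\N_{V,D^{\bullet}}$. First I would record that, by the very construction of the localized category $\V_{V,D^{\bullet}}$, the kernel of $\pi_{D^{\bullet}}:\mathcal K_{D^{\bullet}}\to\mathcal L_{D^{\bullet}}$ is the $\mbb A$-submodule (resp.\ $\mbb Q(v)$-subspace) spanned by those $K\in\mathcal P_{D^{\bullet}}$ lying in $\N_{V,D^{\bullet}}$; combining Theorems \ref{K-1} and \ref{main} with Corollary \ref{LD}, this kernel is carried by the isomorphism $M_{\lambda^2}\otimes V_{\lambda^1}\simeq\mathcal K_{D^{\bullet}}$ onto the $\U$-submodule $T_{\lambda^2}\otimes V_{\lambda^1}$. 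Thus $\ker\pi_{D^{\bullet}}$ is a based submodule of $(\mathcal K_{D^{\bullet}},\mathcal P_{D^{\bullet}})$ in the sense of ~\cite[27.1]{Lusztig93}, and $\mathcal B_{D^{\bullet}}=\pi_{D^{\bullet}}(\mathcal P_{D^{\bullet}})\setminus\{0\}$ is an $\mbb A$-basis of $_{\mbb A}\mathcal L_{D^{\bullet}}$.

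Next I would check that the remaining data of a based module descend. Since every simple perverse sheaf in $\mathcal P_{D^{\bullet}}$ is self-dual and the conditions (\ref{b}) and (\ref{tensor-global}) defining $\N_{V,D^{\bullet}}=\M_{V,D^{\bullet}}$ are invariant under Verdier duality (here Theorem \ref{main} is what lets us pass freely to the global form), $\mbf D$ preserves $\ker\pi_{D^{\bullet}}$ and descends to an involution of $\mathcal L_{D^{\bullet}}$ fixing each element of $\mathcal B_{D^{\bullet}}$; under Corollary \ref{LD} this descended involution corresponds to the involution $\Psi$ of ~\cite[27.3.1]{Lusztig93}, by passing the commutativity of Corollary \ref{duality-P} to the quotient. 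Likewise the bilinear form of Lemma \ref{bilinear} descends, giving $(b,b')\in\delta_{b,b'}+v^{-1}\mbb Z[[v^{-1}]]$ for $b,b'\in\mathcal B_{D^{\bullet}}$, and via the crystal structure of Theorem \ref{crystal-P} together with the crystal morphism $B(\lambda^1)\otimes B(\lambda^2,\infty)\to B(\lambda^1)\otimes B(\lambda^2)$ obtained after Theorem \ref{Pi-crystal}, the set $\mathcal B_{D^{\bullet}}$ reduces modulo $v^{-1}\mathcal L_{D^{\bullet}}$ to the tensor-product crystal basis of $V_{\lambda^2}\otimes V_{\lambda^1}$. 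By the based-quotient statement of ~\cite[27.1]{Lusztig93}, $(\mathcal L_{D^{\bullet}},\mathcal B_{D^{\bullet}})$ with $\mbf D$ is a based module; by the uniqueness characterization of the canonical basis of a based module — the analogue of Theorem \ref{signed} for $\mathcal L_{D^{\bullet}}$, obtained by descending Theorem \ref{signed} and Lemma \ref{bilinear} through $\pi_{D^{\bullet}}$ — it coincides with $(V_{\lambda^2}\otimes V_{\lambda^1},B_{\diamond})$, the sign ambiguity being removed by positivity of intersection cohomology.

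The step I expect to be the main obstacle is the coherent verification that $\ker\pi_{D^{\bullet}}$ is a based submodule \emph{with respect to all the auxiliary structures at once}: that the localization removes precisely a sublattice spanned by a $\mbf D$-stable subset of $\mathcal P_{D^{\bullet}}$ on which the bilinear form and the crystal operations behave correctly. This is exactly where Theorem \ref{main} (the identity $\N_{V,D^{\bullet}}=\M_{V,D^{\bullet}}$) is indispensable — it is what identifies the kernel, described a priori by a local support condition, with the geometric submodule $T_{\lambda^2}\otimes V_{\lambda^1}$ whose basis is a subset of $\mathcal P_{D^{\bullet}}$. Once that is secured, the remainder is a matching of structures already built in Theorems \ref{K-1}, \ref{crystal-P}, \ref{based-P} and \ref{signed}, Corollary \ref{duality-P} and Lemma \ref{bilinear}.
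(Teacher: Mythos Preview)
Your proposal is correct and follows essentially the same route as the paper: descend the based-module structure from $(\mathcal K_{D^{\bullet}},\mathcal P_{D^{\bullet}})$ through the localization, using that the kernel is spanned by a $\mbf D$-stable subset of $\mathcal P_{D^{\bullet}}$ and is identified with $T_{\lambda^2}\otimes V_{\lambda^1}$ via Theorems~\ref{K-1} and~\ref{main}. The only differences are organizational: the paper singles out condition~(d) of \cite[27.1.2]{Lusztig93} as requiring an analogue of \cite[Proposition~18.1.7]{Lusztig93} (left to the reader) rather than your crystal-descent phrasing, and for the final identification with $B_{\diamond}$ the paper invokes a commutative diagram in the style of Corollary~\ref{duality-P} rather than your almost-orthonormality characterization via Theorem~\ref{signed}.
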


The above analysis shows that the pair $(\mathcal L_{D^{\bullet}}, \mathcal B_{D^{\bullet}})$ satisfies the conditions ~\cite[27.1.2, (a)-(c)]{Lusztig93}. 
The condition ~\cite[27.1.2, (d)]{Lusztig93} can be proved by a result for $\mathcal P_{D^{\bullet}}$ analogous to ~\cite[Proposition 18.1.7]{Lusztig93}, which is left to the reader.
The identification of $\mathcal B_{D^{\bullet}}$ with $B_{\diamond}$ is due to a similar diagram as that in Corollary ~\ref{duality-P}.

\begin{rem}
Note that Corollaries ~\ref{LD} and  ~\ref{based-V} are first proved in ~\cite{Zheng08} by a different method.

The above results implies that   the second row in (\ref{M-tensor}) is a  categorical version of  the exact sequence
$0\to T_{\lambda^2} \otimes V_{\lambda^1}\to M_{\lambda^2}\otimes V_{\lambda^1} \to V_{\lambda^2}\otimes V_{\lambda^1} \to 0$.
\end{rem}

\section{General case}
\label{general}

The results in Section ~\ref{twocopy} can be generalized directly to the tensor product of $N$ copies of irreducible integrable representations.
We will state the analogous results  in this section. The results can be proved inductively  or by  mimicking the one in the  $N=2$ case.

\subsection{Results on $\mbf \Pi_{V, D^{\bullet}}$}
We fix  $\lambda^1$, $\cdots$, $\lambda^N$ in $\X$,  $d^1, \cdots, d^N$ in $\mbb N[I]$ and $D^1, \cdots, D^N$ such that
\[
\lambda =\lambda^1+\cdots+\lambda^N, \quad d=d^1+\cdots +d^N,\quad \mbox{and}\quad D=D^N\oplus \cdots \oplus D^1,
\]
and  $ d_i^a =\lambda^a(i) =\dim D^a_i$,  for any $i\in I$ and $a=1,\cdots, N$. Let 
\[
D^{\bullet} = (\check D^0 \supset \check D^1 \supset \cdots \supset \check D^N) 
\]
be the flag with $\check D^a = \oplus_{b=a+1}^N D^b$ for $a=0,\cdots, N$. 
The tensor product variety $\mbf \Pi_{V, D^{\bullet}}$ is the closed subvariety of 
$\mbf \Lambda_{V, D}$ consisting of all nilpotent elements $X=(x, p, q)$ such that 
 \begin{equation}
  \overline{p(\check D^a)} \subseteq \underline{q^{-1} (\check D^{a+1})}\quad \forall a=0,\cdots, N-2
 \quad \mbox{and}\quad 
 p(\check D^{N-1})=0.
 \end{equation}
When $N=2$, this is the same as the one defined in (\ref{tensor-condition}).

One can prove inductively the following results.

\begin{thm} 
\label{property-rho-N}
The following statements hold.
\begin{enumerate}
\item $\mbf \Pi_{V, D^{\bullet}}$  has   pure dimension  $\frac{1}{2} \dim \E(V, D)$. Moreover, 
         \[\# \Irr \mbf \Pi_{V, D^{\bullet}} =\sum_{V^1, \cdots, V^N} \# \Irr ( \mbf   L^s_{V^1,D^1} \times \cdots \times \mbf  L^s_{V^{N-1}, D^{N-1}}  \times \mbf  L_{V^N,D^N}),\] 
         where the sum runs over the representatives $(V^1, \cdots,  V^N)$ of $(\nu^1, \cdots, \nu^N)$ such that $\nu^1+\cdots+\nu^N=\nu=\dim V$.
\item The set $\Irr (D^{\bullet})=\sqcup_V \Irr \mbf \Pi_{V, D^{\bullet}}$, equipped with a crystal structure defined similar  to that  in Section ~\ref{IrrD}, is isomorphic to the crystal
           $B(\lambda^1)\otimes \cdots \otimes B(\lambda^{N-1})        \otimes B(\lambda^N,\infty)$. 
\end{enumerate}
\end{thm}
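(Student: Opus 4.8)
The plan is to argue by induction on $N$, taking the case $N\le 2$ as the base: $N=1$ reduces to $\mbf \Pi_{V, D^{\bullet}}=\mbf L_{V,D}$ (cf. Section~\ref{Lusztig}), and $N=2$ is precisely Proposition~\ref{property-rho}(3),(4) together with Theorem~\ref{Pi-crystal}. For the inductive step I would peel off the first factor $D^1$, reducing the $N$-fold tensor product variety to a product of one framed Lagrangian in the $D^1$-direction with an $(N-1)$-fold tensor product variety built on $\check D^1=\oplus_{b=2}^N D^b$.

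Concretely, for $X=(x,p,q)\in \mbf \Pi_{V, D^{\bullet}}$ put $\hat V=\underline{q^{-1}(\check D^1)}$. The $a=0$ inclusion in~(\ref{tensor-condition}) (generalized), namely $\overline{p(\check D^0)}\subseteq \underline{q^{-1}(\check D^1)}$, together with $x$-invariance of $\hat V$, shows that $\check D^1\oplus \hat V$ is $X$-invariant. Passing to the quotient produces $X^1=(x^{V^1V^1},0,q^{D^1V^1})\in \mbf L^s_{V^1,D^1}$ with $V^1=V/\hat V$, $D^1=D/\check D^1$ (the superscript $s$ because $\overline{p(\check D^0)}\subseteq\hat V$ forces $\ker X^1(i)=0$); restricting $X$ to $\hat V$ with the $(N-1)$-step subflag $\check D^1\supset\cdots\supset\check D^N$, which I denote $(D^{\bullet})'$, yields an element of the $(N-1)$-fold tensor product variety $\mbf \Pi_{\hat V,(D^{\bullet})'}$ (here one checks, exactly as in Section~\ref{twocopytensor}, that the remaining conditions of the definition restrict to the defining conditions of $\mbf \Pi_{\hat V,(D^{\bullet})'}$). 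Fixing $\hat V$ of given dimension $\hat\nu$, this gives a morphism
\[
\rho^{(1)}:\mbf \Pi_{V, D^{\bullet};\hat V}\to \mbf L^s_{V^1,D^1}\times \mbf \Pi_{\hat V,(D^{\bullet})'},
\]
which I would show is a vector bundle by verbatim repetition of the proof of Lemma~\ref{dimrho}: the fibre is the kernel of the same moment-map-type linear map on the off-diagonal blocks $(x^{\hat V V^1},p^{\hat V D^1},q^{\check D^1 V^1})$, and surjectivity of that map uses only $\ker X^1(i)=0$, i.e.\ $X^1\in \mbf L^s$. Computing the fibre dimension and then stratifying $\mbf \Pi_{V, D^{\bullet}}$ by $\hat V$ through the morphism to $\mrm{Gr}(\hat\nu,V)$ exactly as in the proof of Proposition~\ref{property-rho}, the pure-dimensionality and component count of part~(1) follow from the inductive hypothesis applied to $\mbf \Pi_{\hat V,(D^{\bullet})'}$ and from $\dim\mbf L^s_{V^1,D^1}=\tfrac12\dim\E(V^1,D^1)$.

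For part~(2), the component count just obtained gives a bijection
\[
\psi^{(1)}:\Irr(\mbf L^s_{D^1})\otimes \Irr((D^{\bullet})')\to \Irr(D^{\bullet}),
\]
defined as in~(\ref{psi}) by sending $Y^1\otimes Y'$ to the unique component meeting $\mbf \Pi_{V, D^{\bullet};\hat V}$ in $(\rho^{(1)})^{-1}(Y^1\times Y')$. I would check that $\psi^{(1)}$ is a strict crystal isomorphism by re-running the proof of Theorem~\ref{Pi-crystal} with $\mbf L_{V^2,D^2}$ replaced throughout by $\mbf \Pi_{\hat V,(D^{\bullet})'}$: the short exact sequence~(\ref{tensor-thm-2}) relating $\ker(i)X/\im X(i)$ to the corresponding quotients for $X^1$ and for the restriction, and the deformation argument computing $\tf_i^r$ (Lemmas~\ref{tensor-thm-A} and~\ref{tensor-thm-B}), use only the abstract crystal structure of the second factor, defined on $\Irr((D^{\bullet})')$ via the maps $(i)X$ exactly as in Section~\ref{IrrD}, and not the fact that it comes from a Lagrangian. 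Granting this, the inductive hypothesis identifies $\Irr((D^{\bullet})')$ with $B(\lambda^2)\otimes\cdots\otimes B(\lambda^{N-1})\otimes B(\lambda^N,\infty)$, and since $\Irr(\mbf L^s_{D^1})\cong B(\lambda^1)$, associativity of the tensor product of crystals gives $\Irr(D^{\bullet})\cong B(\lambda^1)\otimes B(\lambda^2)\otimes\cdots\otimes B(\lambda^N,\infty)$.

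The main obstacle is this last crystal-compatibility step: one must be sure that the finer-structure machinery of Section~\ref{IrrD} (the maps $\te_i^{\max}$, $\tf_i^n$ of Proposition~\ref{pi-induction} and the exact sequence~(\ref{tensor-thm-2})) is genuinely insensitive to replacing the Lagrangian second factor by a tensor product variety, equivalently that the argument of Theorem~\ref{Pi-crystal} is formal in the second factor; this is where the induction is really used. One should also verify that $\psi^{(1)}$ is compatible with the iterated $\psi$'s so that associativity can be invoked cleanly. Once this is in place, everything else — the dimension bookkeeping, the Grassmannian stratification, and the component count — is routine.
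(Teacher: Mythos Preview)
Your inductive strategy is exactly what the paper has in mind: it states only that ``one can prove inductively the following results,'' and your peeling-off-$D^1$ argument is the natural way to execute this, reducing each step to the $N=2$ machinery of Lemma~\ref{dimrho}, Proposition~\ref{property-rho}, and Theorem~\ref{Pi-crystal} with the second factor replaced by the $(N-1)$-fold tensor product variety. One small correction: the reason $X^1\in\mbf L^s_{V^1,D^1}$ is not the inclusion $\overline{p(\check D^0)}\subseteq\hat V$ (that only gives $p^{V^1D}=0$), but rather the \emph{maximality} of $\hat V=\underline{q^{-1}(\check D^1)}$ among $x$-invariant subspaces mapped into $\check D^1$ by $q$; a nonzero element of $\ker X^1(i)$ would let you enlarge $\hat V$, contradicting this maximality (compare the line after~(\ref{X1X2}) in the $N=2$ case).
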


Note that if we consider the class of constructible functions on $\mbf \Pi_{V, D^{\bullet}}$,  similar to the complexes $L_{\underline{\mbf a}}$, 
we will get a geometric realization of the module  of the enveloping algebra $U$ associated with the graph $\Gamma$, similar to the module 
$M_{\lambda^N}\otimes V_{\lambda^{N-1}}\otimes \cdots \otimes V_{\lambda^1}$.  The $E_i$-action can be defined in a way similar to that of $M_{\lambda}$  in ~\cite{GLS06}.  A basis for this $U$-module can also be obtained as the semicanonical basis  for $U^-$ in ~\cite{L00a}.

\subsection{Results on $\mathcal P_{V, D^{\bullet}}$}
Let 
\[
\underline{\mbf a} := (\mbf i^1, \mbf a^1) \cdot d^1\cdots (\mbf i^{N-1}, \mbf a^{N-1})  \cdot d^{N-1} \cdot  (\mbf i^N, \mbf a^N) \cdot d^N.
\]
We can define the object $\Q_{V, D^{\bullet}}$, $\mathcal P_{V, D^{\bullet}}$,  $\mathcal P_{D^{\bullet}}$, $\mathcal N_{V, D^{\bullet}}$, $\mathcal M_{V, D^{\bullet}}$, $\mathcal V_{V, D^{\bullet}}$ 
$\mathcal B_{D^{\bullet}}$ and $\mathcal K_{D^{\bullet}}$ in exactly the same manner as those in Section ~\ref{twocopy}.
Moreover, the results in Section ~\ref{twocopy} are still true with  minor modifications.
The proofs are similar to those in $N=2$ cases by taking into consideration of Proposition ~\ref{property-rho-N}. In particular, we have 

\begin{thm}
\begin{enumerate}
\item The set $\mathcal P_{D^{\bullet}}$, together with the crystal structure defined similar to the one in Section ~\ref{mapY},  is isomorphic to the crystal $B(\lambda^1)\otimes \cdots \otimes B(\lambda^{N-1})        \otimes B(\lambda^N,\infty)$. 

\item For any $K\in \mathcal P_{V D^{\bullet}}$, its singular support satisfies a similar relation as in Proposition \ref{YY}.

\item $(\mathcal K_{D^{\bullet}}, \mathcal P_{D^{\bullet}})$ is isomorphic to the based  module $(M_{\lambda^N }\otimes V_{\lambda^{N-1}} \otimes  V_{\lambda^{N-2}} \otimes \cdots     \otimes V_{\lambda^1}, P_{\diamond}) $, 
          where $P_{\diamond}$ is the  canonical basis of $M_{\lambda^N }\otimes V_{\lambda^{N-1}} \otimes  V_{\lambda^{N-2}} \otimes  \cdots \otimes  V_{\lambda^1}$. 

\item $\N_{V,D^{\bullet}}=\M_{V, D^{\bullet}}$.
\item The based module $(\mathcal L_{D^{\bullet}}, \mathcal B_{D^{\bullet}})$ is isomorphic to $(V_{\lambda^{\bullet} }, B_{\diamond})$ where $B_{\diamond}$ is the canonical basis of $V_{\lambda^{\bullet}}$.
\end{enumerate}
\end{thm}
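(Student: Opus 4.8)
The plan is to run an induction on $N$ in which the case $N\le 2$ is exactly the content of Section~\ref{twocopy}, the only genuinely new geometric input is Theorem~\ref{property-rho-N}, and everything else mimics, step by step, the $N=2$ development. Concretely, I would (a) install the crystal operators on $\mathcal P_{D^{\bullet}}$ and build the singular-support map $Y_{\bullet}$; (b) deduce (1) and (2) by transporting the crystal isomorphism of Theorem~\ref{property-rho-N}(2) along $Y_{\bullet}$; (c) equip $\mathcal K_{D^{\bullet}}$ with a $\U$-module structure, exhibit it as a quotient of $M_{\lambda^N}\otimes V_{\lambda^{N-1}}\otimes\cdots\otimes V_{\lambda^1}$, and promote the surjection to an isomorphism by a dimension count; (d) match $\mbf D$ with $\Psi$ and the geometric bilinear form with the tensor-product form, and characterize $\pm\mathcal P_{D^{\bullet}}$, thereby proving (3); (e) run the support-plus-counting argument of Theorem~\ref{main} to get (4) and localize to get (5).

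For (a)--(b): the local computation behind Lemma~\ref{strut-lem} only involves the last composition factor of $\underline{\mbf a}$ and is insensitive to the flag $D^{\bullet}$, so it carries over verbatim, producing the bijections $_i\tilde{\mathcal R}^{(n)}$, $\tilde{\mathcal F}_i^{(n)}$ and hence the maps $\wt,\e_i,\varphi_i,\te_i,\tf_i$ on $\mathcal P_{D^{\bullet}}$. Generalizing Propositions~\ref{Tensor-singular-1} and~\ref{Tensor-singular-2}, I would show $\SSS(L_{\underline{\mbf a}})\subseteq\tilde Y_{\underline{\mbf a}}\subseteq\mbf\Pi_{V,D^{\bullet}}$: an $X$-invariant flag refining all the $\check D^a$ forces, one index at a time, $\overline{p(\check D^a)}\subseteq\underline{q^{-1}(\check D^{a+1})}$ and $p(\check D^{N-1})=0$. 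Then \cite[Theorem~6.2.2]{KS97}, \cite{K07} and Remark~4.27 of \cite{Sch09} produce a map $Y_{\bullet}:\mathcal P_{V,D^{\bullet}}\to\Irr\mbf\Pi_{V,D^{\bullet}}$ with the singular-support sandwich and $\e_i(K)=\e_i(Y_K)$; that is exactly statement (2). To see $Y_{\bullet}$ is bijective I would follow the remark after Proposition~\ref{YY} and generalize the filtration of Section~\ref{filtration-P}: order $\Xi_{D^{\bullet}}$ compatibly with $\dim V$, form $\mathcal P^{\le n}_{V,D^{\bullet}}$ and the matching subsets $\mbf\Pi^n_{V,D^{\bullet}}$, and establish the analogues of Propositions~\ref{finer-Pi} and~\ref{filtration-prop}, the inductive input $\xi_n\in\Xi_{W,D^{\bullet}}$ now living in a $\mbf\Pi_{W,D^{\bullet}}$ of strictly smaller flag length, identified via the $\cdot L_{d}$-type functors. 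Matching the three bijections so obtained gives $Y_{\bullet}$ bijective, hence a strict crystal isomorphism, and composing with Theorem~\ref{property-rho-N}(2) yields (1).

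For (c)--(d): as in Section~\ref{twocopy}, the functors $\mathcal F_i^{(n)}$, $_i\mathcal R^{(n)}$, $\mathcal R_i^{(n)}$ descend to $\mathcal K_{D^{\bullet}}$ and its $\mbb A$-form, and formula (\ref{Ei}), together with the identification of $_i\bar r$, $\bar r_i$ with $_i\mathcal R$, $\mathcal R_i$, makes $\mathcal K_{D^{\bullet}}$ a $\U$-module. The argument around (\ref{psi'}) yields a surjection $M_{\lambda^N}\otimes V_{\lambda^{N-1}}\otimes\cdots\otimes V_{\lambda^1}\twoheadrightarrow\mathcal K_{D^{\bullet}}$, and part (1) forces equal graded dimensions on the two sides, so it is an isomorphism (and the same over $\mbb A$). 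Rewriting the $E_i$-action as in (\ref{Ei-2}) shows $\mbf D$ commutes with the $\U$-action, and the analogue of Corollary~\ref{duality-P} identifies $\mbf D$ with the involution $\Psi$ of \cite[27.3.1]{Lusztig93}; the analogue of Lemma~\ref{bilinear} matches the geometric bilinear form on $\mathcal K_{D^{\bullet}}$ with the tensor-product form; and the analogue of Theorem~\ref{signed}, modeled on \cite[Theorem~14.2.3]{Lusztig93}, characterizes $\pm\mathcal P_{D^{\bullet}}$ by $\mbf D$-invariance, $\mbb A$-integrality and almost-orthonormality. Together these identify $(\mathcal K_{D^{\bullet}},\mathcal P_{D^{\bullet}})$ with $\mbf D$ as the based module $(M_{\lambda^N}\otimes V_{\lambda^{N-1}}\otimes\cdots\otimes V_{\lambda^1},P_{\diamond})$ with involution $\Psi$, which is (3).

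Finally, (4) is the verbatim analogue of Theorem~\ref{main}: a simple object of $\N_{V,D^{\bullet}}$ is a summand of some $L_{(\mbf i^1,\mbf a^1)d^1\cdots(\mbf i^N,\mbf a^N)(i,d^N_i+1)d^N}$, so by the generalized Proposition~\ref{Tensor-singular-2} its singular support lies in a $\tilde Y_{\underline{\mbf a}}$ every point $X$ of which has $X(i)$ non-injective, whence it is disjoint from $\mbf\Pi^s_{V,D^{\bullet}}$ and $\N_{V,D^{\bullet}}\subseteq\M_{V,D^{\bullet}}$; equality follows because the simple objects of $\N_{V,D^{\bullet}}$ and the components of $\mbf\Pi_{V,D^{\bullet}}$ disjoint from $\mbf\Pi^s_{V,D^{\bullet}}$ are both counted by $\sum_{\nu^1+\cdots+\nu^N=\nu}\dim(V_{\lambda^1,\nu^1}\otimes\cdots\otimes V_{\lambda^{N-1},\nu^{N-1}}\otimes T_{\lambda^N,\nu^N})$ (using the $\mbf\Pi^s$-analogue of Proposition~\ref{stable}, Theorem~\ref{property-rho-N}(1) and part (3)), after which the Kashiwara--Saito inequality argument of Proposition~\ref{N=M} forces $\N_{V,D^{\bullet}}=\M_{V,D^{\bullet}}$. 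Statement (5) then comes out by passing to the localizations $\V_{V,D^{\bullet}}$: by (3) and (4) one gets $_{\mbb A}V_{\lambda^N}\otimes\cdots\otimes{}_{\mbb A}V_{\lambda^1}\simeq{}_{\mbb A}\mathcal L_{D^{\bullet}}$ and $V_{\lambda^{\bullet}}\simeq\mathcal L_{D^{\bullet}}$, the set $\mathcal B_{D^{\bullet}}=\pi_{D^{\bullet}}(\mathcal P_{D^{\bullet}})\setminus\{0\}$ is a basis, $\mbf D$ descends, and checking \cite[27.1.2 (a)--(d)]{Lusztig93} --- with (d) via a \cite[Proposition~18.1.7]{Lusztig93}-analogue for $\mathcal P_{D^{\bullet}}$ --- identifies $(\mathcal L_{D^{\bullet}},\mathcal B_{D^{\bullet}})$ with $(V_{\lambda^{\bullet}},B_{\diamond})$. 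I expect the main obstacle to be organizational rather than conceptual: all of the crystal data, the singular-support sandwich, the $\U$-module and bilinear-form identifications, and the $\mbf\Pi$-counting must be carried through the induction simultaneously, and in particular the filtration of Section~\ref{filtration-P} has to be made to mesh with the recursively nested structure of $\mbf\Pi_{V,D^{\bullet}}$; once the case $N\le 2$ and Theorem~\ref{property-rho-N} are granted, no new phenomenon arises.
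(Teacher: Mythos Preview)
Your proposal is correct and follows precisely the approach the paper indicates: the paper itself does not give a detailed proof here, stating only that the results ``can be proved inductively or by mimicking the one in the $N=2$ case'' with Theorem~\ref{property-rho-N} as the additional input, and your plan spells out exactly that inductive/mimicking scheme step by step. If anything, you have given considerably more detail than the paper does, but the overall strategy and the individual ingredients you invoke (the generalized Lemma~\ref{strut-lem}, Propositions~\ref{Tensor-singular-1}--\ref{Tensor-singular-2}, the filtration of Section~\ref{filtration-P}, and the counting argument of Theorem~\ref{main}) are exactly the ones the paper intends.
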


Note that (5) has been proved in ~\cite{Zheng08} by a different method.

\section{Acknowledgement}
We thank Catharina Stroppel for her invitation to University of Bonn, where this work was originated and sending us the paper ~\cite{FSS11}. 
We thank Wei Liang Gan for his invitation to present this work in the workshop of Lie groups, Lie algebras and their representations, University of California, Riverside, May 21-22, 2011.
We  also thank  Igor B. Frenkel, Zongzhu  Lin, Raphael Rouquier,  Mark  Shimozono,
Josh Sussan and Ben Webster for helpful conversations. We thank Jim Humphreys for sending the author a copy of ~\cite{H77}. 
We thank Yoshiyuki Kimura for pointing out a mistake in a previous version of this paper and sending us the paper ~\cite{K07}.

Partial support from NSF grant DMS 1101375 / 1160351  is  acknowledged. Part of the work was done while the author was in Virginia Tech.


\begin{thebibliography}{99999} \frenchspacing


\bibitem[AM11]{AM11} H. Andersen, V. Mazorchuk,
         {\em Category $\mathcal O$ for quantum groups}, arXiv:1105.5500.

\bibitem[BBD82]{BBD82} A. A. Beilinson, J. Bernstein, P. Deligne,
         {\em Faisceaux pervers},
         Ast\'{e}risque {\bf 100} (1982).
         
         
\bibitem[BGG71]{BGG71} I. N.  Bernstein, I. M. Gelfand, I. S.  Gelfand, 
         {\em  Structure of representations  generated by vectors of highest weight.} Functional Anal.  Appl.  {\bf 5} (1971), 1-8.

\bibitem[BGG75]{BGG75}  I.N. Bernstein, I.M.  Gelfand, S.I.  Gelfand, 
        {\em Differential Operators on the Base Affine Space and a Study of $\mathfrak g$-Modules}, 
        Lie Groups and their representations (New York) (I. M. Gelfand, ed.), Halsted Press, 1975, pp. 21-64.

\bibitem[BGG76]{BGG76}  I.N. Bernstein, I.M.  Gelfand, S.I.  Gelfand, 
       {\em A certain category of $\mathfrak g$-modules}. (Russian) Funkcional. Anal. i Prilozen. 10 (1976), no. 2, 1-8. 

\bibitem[BG80]{BG80} J. N. Bernstein, S.I.  Gelfand, 
      {\em Tensor products of finite  and infinite dimensional representations of semisimple Lie algebras.}  Compositio Math. {\bf 41}  (1980), no. 2, 245-285. 



\bibitem[FSS11]{FSS11} I. Frenkel, C. Stroppel, J. Sussan, 
        {\em Categorifyig fractional Euler characteristics, Jones-Wenzl Projector and $3j$-symbols},
        Arxiv:1007.4680.
        
\bibitem[GLS06]{GLS06} C. Geiss, B. Leclerc and J. Schroer,
         {\em Verma modules and preprojective algebras}, Nagoya Math J.  {\bf 182}  (2006), 241-258.                      


\bibitem[H77]{H77} J. Humphreys, 
        {\em A construction of projective modules in the category $\mathcal O$  Bernstein-Gelfand-Gelfand.} 
        Nederl. Akad. Wetensch. Proc. Ser. A 80=Indag. Math. {\bf 39} (1977), no. 4, 301-303.
      
\bibitem[H08]{H08} J. Humphreys, 
        {\em Representations of semisimple Lie algebras in the BGG category O}. Graduate Studies in Math. {\bf 94}. 
        American Mathematical Society, Providence, RI, 2008.
         
\bibitem[K91]{K91}  M. Kashiwara, 
        {\em  On crystal bases of the Q-analogue of universal enveloping algebras.} 
        Duke Math. J. {\bf 63} (1991), no. 2, 465-516.

\bibitem[KS90]{KS90} M. Kashiwara, P.  Schapira, 
       {\em  Sheaves on manifolds. With a chapter in French by Christian Houzel}. 
      Fundamental Principles of Mathematical Sciences, 
       {\bf 292}. Springer-Verlag, Berlin, 1994. 
       
\bibitem[KS97]{KS97} M. Kashiwara, Y.  Saito, 
      {\em Geometric construction of crystal bases.}  Duke Math. J. {\bf  89} (1997), no. 1, 9Ð36.       
                      
\bibitem[K07]{K07} Y. Kimura, {\em Affine quivers and crystal bases}, master thesis, 2007.

          
\bibitem[L90]{Lusztig90} G. Lusztig,
        {\em Canonical bases arising from quantized enveloping
          algebras},
        J. Amer. Math. Soc. {\bf 3} (1990), 447-498.

\bibitem[L90b]{Lusztig90b} G. Lusztig, 
       {\em Canonical bases arising from quantized enveloping algebras. II. }
       {\em Common trends in mathematics and quantum field theories (Kyoto, 1990).}
        Progr. Theoret. Phys. Suppl. No. {\bf  102}  (1990), 175-201 (1991). 


\bibitem[L91]{Lusztig91} G. Lusztig,
        {\em Quivers, Perverse Sheaves, and Quantized Enveloping
          Algebras},
        J. Amer. Math. Soc., {\bf 4} (1991), 365-421.

\bibitem[L92]{L92}  G. Lusztig, 
       {\em Canonical bases in tensor products. } Proc. Nat. Acad. Sci. U.S.A. {\bf 89} (1992), no. 17, 8177-8179.

\bibitem[L93]{Lusztig93} G. Lusztig,
        {\em Introduction to Quantum Groups}, 
        Progress in Math. {\bf 110}, Birkh\"{a}user {1993}.
        

\bibitem[L00a]{L00a} G. Lusztig, 
        {\em Semicanonical bases arising from enveloping algebras.} 
        Adv. Math. {\bf 151} (2000), no. 2, 129-139. 
 


\bibitem[M03]{M03} A. Malkin, 
      {\em  Tensor product varieties and crystals: the ADE case.}  
      Duke Math. J. {\bf 116}  (2003), no. 3, 477-524. 
        
\bibitem[N94]{Nakajima94} H. Nakajima,
        {\em Instantons on ALE spaces, quiver varieties and Kac-Moody
          algebras},
        Duke Math. J. {\bf 76} (1994), 365-416.
        
\bibitem[N98]{N98}  H. Nakajima, 
       {\em Quiver varieties and Kac-Moody algebras.}  Duke Math. J. {\bf 91} (1998), no. 3, 515-560. 
        
\bibitem[N01]{Nakajima01}  H. Nakajima, 
        {\em Quiver varieties and tensor products.}  Invent. Math. {\bf 146} (2001), no. 2, 399-449.        
        

\bibitem[S02]{S02} Y.  Saito, 
       {\em  Crystal bases and quiver varieties.}  Math. Ann. {\bf 324} (2002), no. 4, 675-688.


\bibitem[Sch09]{Sch09} O. Schiffmann,
        {\em  Lectures on canonical and crystal bases of Hall algebras}, 
        arXiv:0910.4460.
        
\bibitem[Zh08]{Zheng08} H. Zheng,
        {\em Categorification of integrable representations of quantum groups},  arXiv:0803.3668.

\end{thebibliography}
\end{document}